\theoremstyle{definition}
\newtheorem{mydef}{Definition}[section]
\newtheorem{lem}[mydef]{Lemma}
\newtheorem{thm}[mydef]{Theorem}
\newtheorem{conjecture}[mydef]{Conjecture}
\newtheorem{cor}[mydef]{Corollary}
\newtheorem{question}[mydef]{Question}
\newtheorem{defin}[mydef]{Definition}
\newtheorem{example}[mydef]{Example}
\newtheorem{remark}[mydef]{Remark}
\newcommand{\fct}[2]{{}^{#1}#2}
\newcommand\Mod{\operatorname{\textbf{Mod}}}
\newcommand\Hom{\operatorname{Hom}}
\newcommand\id{\operatorname{id}}
\newcommand\Set{\operatorname{\bf Set}}
\newcommand\Ban{\operatorname{\bf Ban}}
\newcommand\Str{\operatorname{\bf Str}}
\newcommand\Elem{\operatorname{\bf Elem}}
\newcommand\Ab{\operatorname{\bf Ab}}
\newcommand\Gra{\operatorname{\bf Gra}}
\newcommand\Grp{\operatorname{\bf Grp}}
\newcommand\Bool{\operatorname{\bf Bool}}
\newcommand\Lin{\operatorname{\bf Lin}}
\newcommand{\Rmod}{R\text{-}\operatorname{\bf {Mod}}}
\newcommand\cof{\operatorname{cof}}
\newcommand\colim{\operatorname{colim}}
\newcommand\dom{\operatorname{dom}}
\newcommand{\Ff}{\mathcal{F}}
\newcommand\ck{\mathcal {K}}
\newcommand\cl{\mathcal {L}}
\newcommand{\ba}{\bar{a}}
\newcommand{\bb}{\bar{b}}
\newcommand{\bc}{\bar{c}}
\newcommand{\bx}{\bar{x}}
\newcommand{\by}{\bar{y}}
\newcommand{\bz}{\bar{z}}
\newcommand{\sea}{\mathfrak{C}}
\newcommand{\ran}{\operatorname{ran}}
\newcommand{\cf}[1]{\text{cf} (#1)}
\newcommand{\seq}[1]{\langle #1 \rangle}
\newcommand{\rest}{\upharpoonright}
\newcommand{\leap}[1]{\le_{#1}}
\newcommand{\ltap}[1]{<_{#1}}
\newcommand{\lta}{\ltap{\K}}
\newcommand{\lea}{\leap{\K}}
\newcommand{\K}{\mathbf{K}}
\newbox\noforkbox \newdimen\forklinewidth
\noforkbox\hbox{\lower 2pt\box1\lower
2pt\box0\relax}
\def\unionstick{\mathop{\copy\noforkbox}\limits}
\newcommand{\nf}{\unionstick}
\newcommand{\smallnf}{\downarrow}
\newcommand{\nfs}[4]{#2 \nf_{#1}^{#4} #3}
\def\1nf{\unionstick^{(1)}}
\def\2nf{\unionstick^{(2)}}
\def\3nf{\unionstick^{(3)}}
\newcommand{\tp}{\mathbf{tp}}
\newcommand{\Ss}{\gS}
\newcommand{\gS}{\textbf{S}}
\newcommand{\ehanf}[1]{\beth_{\left(2^{#1}\right)^+}}
\newcommand{\Ll}{\mathbb{L}}
\newcommand{\tlt}{\triangleleft}
\newcommand{\ntlt}{\ntriangleleft}
\newcommand{\ccl}{\operatorname{cl}}
\newcommand{\Mm}{\mathcal{M}}
\newcommand{\Nn}{\mathcal{N}}
\newcommand{\Hh}{\mathcal{H}}
\newcommand{\Xx}{\mathcal{X}}
\newcommand{\LS}{\text{LS}}
\newcommand{\Pp}{\mathbb{P}}
\newbox\doesforkbox
\doesforkbox\hbox{\lower 0pt\box1 \lower
2pt\box2\lower2pt\box0\relax}
\def\nunionstick{\mathop{\copy\doesforkbox}\limits}
\newcommand{\nnf}{\nunionstick}
\title{Accessible categories, set theory, and model theory: an invitation}
\date{\today \\AMS 2010 Subject Classification: Primary: 18C35. Secondary: 03C45, 03C48, 03C52, 03C55, 03C75, 03E05, 03E55.}
\keywords{accessible categories, internal size, stable independence, abstract elementary class}
\author{Sebastien Vasey}
\email{sebv@math.harvard.edu}
\urladdr{http://math.harvard.edu/\textasciitilde sebv/}
\address{Department of Mathematics \\ Harvard University \\ Cambridge, Massachusetts, USA}
\begin{document}

\begin{abstract}
  We give a self-contained introduction to accessible categories and how they shed light on both model- and set-theoretic questions. We survey for example recent developments on the study of presentability ranks, a notion of cardinality localized to a given category, as well as stable independence, a generalization of pushouts and model-theoretic forking that may interest mathematicians at large. We give many examples, including recently discovered connections with homotopy theory and homological algebra. We also discuss concrete versions of accessible categories (such as abstract elementary classes), and how they allow nontrivial ``element  by element'' constructions. We conclude with a new proof of the equivalence between saturated and homogeneous which does not use the coherence axiom of abstract elementary classes.
\end{abstract}

\maketitle

\tableofcontents

\section{Introduction}

Category theory, model theory, and set theory are all foundational branches of mathematics. In this paper, I will attempt to give a taste of the food one obtains when mixing the three of them together. I do not really know how to name this dish but, for the present paper at least and at the risk of scaring researchers from all three fields, I will call it \emph{categorical model theory}. This terminology appears in the title of Makkai and Paré's book \cite{makkai-pare}, but unfortunately leaves out the set-theoretic aspect. Let me then reassure the set theorists: set theory is very much part of categorical model theory (see Section \ref{set-sec}).

So what is categorical model theory? To start with, a \emph{model} (or \emph{structure}) consists of a set (the ``universe'') together with relation and functions on that set. For example the set of integers together with addition (seen as a binary function) is a model. Part of model theory studies the \emph{definable subsets} of such structures: sets that can be expressed from the relations and functions using simple set operations such as union, intersection, complement, and projection. This approach has had quite a bit of success (one of the easiest outcomes to describe is the theory of o-minimality \cite{vandendries, pila-wilkie}). It \emph{is} moreover possible to study definable sets in a categorical setup (a field named categorical logic, see e.g.\ \cite{makkai-reyes}). Nevertheless, this is \emph{not at all} what we will do here.

To make a loose analogy, categorical model theory relates to studying definable sets of a fixed structure in roughly the same way that thermodynamics relates to quantum physics: category theory looks at \emph{an entire class of structures}, together with morphisms between them. Thus instead of only looking at $(\mathbb{Z}, +)$, we will look at the class of abelian groups. What should be the morphisms in that class? The classical answer is that it should be the group homomorphisms -- the maps preserving the additive structure -- yielding the well known category $\Ab$. There are however other possible choice of morphisms. For example, an abelian group is a structure $(A, +)$ satisfying certain axioms that can be expressed using first-order sentences (including for example $(\forall x \forall y) (x + y = y + x)$). In model-theoretic terminology, it is a model of the theory (i.e.\ set of sentences) $T_{ab}$ of abelian groups. When working abstractly with a class of models of a given first-order theory $T$, what should be the ``right'' notion of morphism? One answer that is well studied in model theory is the notion of an \emph{elementary embedding}: an embedding preserving all formulas, possibly with parameters. The elementary embeddings preserve a lot more than the homomorphisms (in particular, they preserve the definable sets) and yield another category, $\Elem (T)$, whose objects are the models of $T$ and whose morphisms are elementary embeddings. Tarski and Vaught showed that $\Elem (T)$ always has directed colimits (the class $\Elem (T)$ is closed under unions of chains ordered by elementary substructures), while this is not always true for the category $\Mod (T)$ of models of $T$ with homomorphisms.

An elementary embedding of abelian groups is quite difficult to describe, and for a category theorist the category $\Elem (T_{ab})$ of abelian groups with elementary embeddings is quite poorly behaved (all morphisms are monomorphisms, so it lacks a lot of limits and colimits; it does not have any notion of quotient for example). On the other hand, from the set-theoretic point of view the category $\Elem (T_{ab})$ is quite interesting and easy to work with, precisely because all morphisms are monomorphisms (for many purposes, one can think of the maps as inclusions, and think of this category as a kind of concrete poset, with certain isomorphisms between the elements). Still, there may be other interesting types of monomorphisms (for abelian groups, the injective homomorphisms, or the pure morphisms, are obvious choices). Another problem that comes up with the traditional model-theoretic approach is that interesting classes may (provably) not be models of a first-order theory: consider torsion abelian groups, Banach spaces, Zilber's pseudo-exponential fields \cite{zilber-pseudoexp}, etc. Moving to infinitary logics may partly fix this second problem but one is still somewhat tied to notions of elementary embeddings. There are other issues with the traditional ``Tarskian'' definition of a model as a set with functions and relations on it (see Macintyre's essay \cite{macintyre-bsl}). Several natural categories, such as arrow categories and more generally categories of functors, cannot easily be described within the Tarskian frame, for example. Let us then agree to take a categorical approach: categorical model theory will study categories that, in some sense, look or behave like the categories of models studied in classical model theory.

What kind of ``classical model-theoretic'' behavior are we looking for? Two basic results of model theory are the aforementioned Tarski-Vaught chain theorem (closure under chains of elementary substructures) and the downward Löwenheim-Skolem-Tarski theorem (every structure has a ``small'' elementary substructure). Category-theoretically, the first can be described by the existence of certain colimits (of chains, or more generally of directed diagrams). The second needs a notion of ``smallness'', i.e.\ really a notion of size. In a category, objects don't have a ``universe'', and even if they do, the cardinality of the universe may not tell us much. Nevertheless, it \emph{is} possible to define a notion of size, the presentability rank, by looking at how an object embeds into sufficiently directed colimits (as a simple example, a finite set contained inside an infinite union will be contained inside a component of the union; this property could serve as a definition of finiteness -- thus presentability ranks generalize cardinalities to other categories than the category of sets). After making this idea precise, we arrive at the definition of an \emph{accessible category}, one of the main frameworks of categorical model theory. Roughly, it is a category with all sufficiently directed colimits so that any object is a directed colimits of a fixed set of ``small'' subobjects (see Definition \ref{acc-def} here). For example, a set is a union of its finite subsets, an abelian group is a union of its finitely generated subgroups, etc. Accessible categories were first defined by Lair \cite{lair-accessible}, their theory was created by Makkai-Paré \cite{makkai-pare} and further developed in Adámek-Rosický \cite{adamek-rosicky}. I invite the reader to consult these references for more on the history. There are \emph{a lot} of examples of accessible categories, including all those discussed before, $\Ab$, Banach spaces with contractions (or isometries), $\Elem (T)$ for any first-order theory $T$, and more generally $\Elem (\phi)$ for $\phi$ an $\Ll_{\infty, \infty}$-formula\footnote{Roughly, $\Ll_{\infty, \infty}$ is the logic where we allow infinitary quantifications, conjunctions, and disjunctions. See Section \ref{struct-sec}.}  and a suitable notion of elementary embeddings. In passing, let us note that the fact that these examples encompass those studied in continuous model theory may well make accessible categories an interesting framework to reunify continuous with discrete model theory (see e.g.\ \cite{lrcaec-jsl}).

Before going further, however, let's address a question a logically-inclined reader may have: what about the (first-order) compactness theorem? Well, you cannot make an omelet without breaking eggs: in view of Lindström's theorem \cite[2.5.4]{chang-keisler}, going significantly beyond $\Elem (T)$ will imply losing the compactness theorem. In fact, the situation is worse than this: when passing to a category we ``forget'' a lot of the logical structures on the objects. For example, we cannot really study definable sets anymore: starting with a category of models with elementary embeddings, we can form a new category by ``Morleyizing'' -- adding a relations for each formula. The Morleyized category is (even concretely) isomorphic to the old one, but model theorists studying quantifier elimination would not want to identify the two categories\footnote{In fact I have seen categorical model theory described as ``model theory without logic''.}...

Why in the world, then, would one want to forget the logical structure? We have already argued that many mathematical categories of interest cannot be studied model-theoretically. Even in model theory, it can be helpful to distinguish between \emph{internal} properties (visible at the level of a single structure, such as quantifier elimination) and \emph{external} properties (visible by looking at the structure of the category). More precisely, let us define an external property to be a property that is invariant under equivalence of categories. One example of an interesting external property is the existence of a universal object in a given cardinality. Such properties show up in Shelah-style model theory \cite{shelahfobook}. In fact, Shelah has observed \cite[p.~23]{shelahaecbook} that what he calls \emph{dividing lines} (e.g.\ model-theoretic stability, simplicity, NIP, etc.) can be characterized by both internal and external properties\footnote{Shelah does not give a precise meaning to ``external'': the interpretation as ``invariant under equivalence of category'' is my own suggestion.}. Stability, for example, is equivalent to failure of the order property (internal), or to the existence of saturated models in certain cardinals (external). Thus, while dividing lines do have a logical characterization, they are also invariant under equivalence of categories. This is interesting insight suggests it may be possible to still do Shelah-style model theory categorically (and this is indeed the case, as exemplified in the large body of work on classification theory for AECs, see the references in Section \ref{reading-sec}).

At a broader level, working with accessible categories entails a higher level of generality. This has downsides but also benefits: more categories of interest are accessible, and accessible categories are closed under more operations. This can make the theory easier to develop in some cases. For example, starting from the class of models of a first-order $T$, one can form its category of $\lambda$-saturated models (for some $\lambda$). One can also form its class of models omitting some type. One could even just look at the class of models of $T$ of cardinality at least $\lambda$. \emph{None} of these examples are classes of models of a first-order but they are still accessible categories. As another example, there is a very general definition of stability, using forking independence, that can be given in any accessible category and makes no mention of logic. This definition specializes to (and is arguably much simpler than) the usual first-order one. See Section \ref{indep-sec}.

Coming back to the failure of the compactness theorem, another counterpoint is that there are many different types and levels of compactness\footnote{In fact, the condition in the definition of an accessible category that every object is a directed colimit of a fixed set of small subobjects can itself be thought of as a weak type of compactness.}. Some compactness can be recovered using large cardinals (e.g.\ compactness for $\Ll_{\kappa, \kappa}$), other types of compactness are implied by the ``complexity'' of the class or category under consideration (for example universal classes, see Definition \ref{univ-def}, do satisfy a weak version of the compactness theorem \cite[3.8]{abv-categ-multi-apal}; classes with low descriptive set-theoretic complexity also behave better than the general case \cite{almost-galois-stable}). Still other types of compactness are implied by the stability-theoretic properties of the class (e.g.\ in a first-order stable theory, any long-enough sequences contains an indiscernible subsequence, c.f.\ \cite[I.2.8]{shelahfobook} or Theorem \ref{indisc-extraction}, but this is not the case in general without large cardinals \cite[18.18]{jechbook}). This interplay between set-theoretic, model-theoretic, and stability-theoretic compactness is a fascinating aspect of categorical model theory, which is harder to see and study in setups where the compactness theorem applies.

At this point a still skeptical reader may say that, while all these philosophical points are interesting, the setup of accessible categories seems too general or difficult for an interesting theory. There are several answers to this objection. First, category theory itself is very general, but it still is an interesting framework in which to present and understand many different branches of mathematics. There \emph{are} interesting theorems in category theory (see the epilogue of \cite{ct-context}), but of course they often inform and supplement rather than completely supersede results in more specialized branches. This does not make the importance of category theory in doubt. Similarly, I think of accessible category as a framework rather than as an all-encompassing object of study. There are less general frameworks (abstract elementary classes, universal classes, first-order model theory, universal algebra, $\Ll_{\omega_1, \omega}$, locally presentable categories, Grothendieck abelian categories, etc.) in which one will be able to say more, but may also sometimes be more limited by the lower generality. The ``best'' framework depends on the question; finding it is a hard part of the mathematician's job.

Second, there \emph{are} nontrivial theorems about accessible categories. Let's just mention a basic one for now: \emph{any} accessible category is equivalent to the category of models of an $\Ll_{\infty, \infty}$ sentence, with morphisms the homomorphisms (Corollary \ref{acc-to-log}). For example, we have already seen that we can Morleyize the models of a first-order theory to obtain a category where the elementary embeddings are simply the injective homomorphisms. Since injective homomorphisms are simply homomorphisms that preserve the non-equality relation, we can add this non-equality relation to the models to get an instance of the theorem. This result gives a surprising correspondence between categorical and ``logical'' model theory (taken in the broad sense of studying classes of models of $\Ll_{\infty, \infty}$). Thus studying categories of models of $\Ll_{\infty, \infty}$ theories is just as hard as studying accessible categories generally. The correspondence also shows that, for the purpose of categorical model theory, the logics with generalized quantifiers described for example in \cite{model-theoretic-logics} are not necessary: if we study the category of models of a certain sentence expressed in a complicated logic with quantifiers such as ``there exists uncountably many'', together with a certain logical notion of morphism, then as long as this category is an accessible category, we can find an equivalent category that will be axiomatized simply in $\Ll_{\infty, \infty}$. See Example \ref{acc-to-log-ex}(\ref{toy-quasi-ex}), the ``toy quasiminimal class'', for an instance of this phenomenon. 

An intermediate type of setup is abstract model theory, which studies frameworks such as abstract elementary classes (AECs) \cite{sh88}. There we are still studying ``Tarskian'' classes of models, but the notion of embedding is axiomatically specified rather than defined to be elementarity. Any AEC can naturally be seen as an accessible category, and any class of models of an $\Ll_{\infty, \infty}$ theory can naturally be seen as an $\infty$-AEC (Definition \ref{infty-aec-def}). Thus there is a three way correspondence\footnote{Since any $\infty$-AEC has all morphisms monomorphisms, the precise correspondence should say that accessible categories with all morphisms monos are equivalent to $\infty$-AECs.} between categorical model theory (accessible categories), abstract model theory ($\infty$-abstract elementary classes), and logical model theory ($\Ll_{\infty, \infty}$).

Such a correspondence should be compared with Tarski's presentation theorem \cite{tarski-th-models-i} (see Theorem \ref{tarski} here): classes closed under substructure, isomorphisms, and unions of chains (abstract model theory) are the same as classes of models of universal theories (logical model theory). Another example of this phenomenon is given by the Birkhoff variety theorem \cite[10]{birkhoff-variety}: a class of algebras is a variety (logical model theory) if and only it is closed under products, subobjects, and quotients (categorical model theory), see \cite[3.9]{adamek-rosicky}. In fact it turns out that many types of accessible categories have natural logical and abstract classes characterizations \cite{adamek-rosicky, multipres-pams}. This is useful, because one can often take advantage of the concreteness of abstract model theory to use certain set-theoretic ideas. ``Element by element'' constructions in abstract elementary classes are a case in point (Section \ref{aec-sec}).

Third, accessible categories are already well connected to the rest of mathematics. While Makkai and Paré had a model-theoretic motivation, one of the first use of accessible categories was in algebraic topology: a \emph{model category} is a category endowed with three distinguished classes of morphisms, called fibrations, cofibrations and weak equivalences, satisfying axioms that are properties of the category of topological spaces (with the weak equivalences being the weak homotopy equivalences). In particular, weak equivalences should satisfy a ``two out of three'' property, and the morphisms should form certain weak factorization systems (e.g.\ any morphism should factor, in a somewhat canonical way, as a cofibration followed by a fibration that is also a weak equivalence). The reader can think of fibrations as ``nice surjections'' and cofibrations as ``nice inclusions''. See \cite{hess-model-survey} for a survey of the use of model categories in algebraic topology, and e.g.\ \cite{hoveybook} for the general theory. It turns out that the most convenient model categories to work with are the locally presentable ones (i.e.\ those that are bicomplete and accessible). This is because, roughly speaking, in such categories we can often find set-sized families of ``small'' cofibrations that generate the rest. Such model categories are called \emph{cofibrantly generated}. For example, the category of topological spaces is not accessible (Example \ref{pres-ex}(\ref{pres-ex-top})), but the category of \emph{simplicial sets} is. The model category induced on it will, it turns out, be cofibrantly generated, and in some homotopical sense equivalent to the one on topological spaces. An application of the theory of accessible categories to model categories is the existence, assuming Vopěnka's principle, of certain localizations \cite{localization-vopenka, left-det-rt}. See also \cite{beke-sheafifiable} for more on the connections between accessible categories, logic, and model categories.

Accessible categories have also been used in homological algebra. An only recently solved problem, the \emph{flat cover conjecture}, asks whether every module has a flat cover \cite{enochs-flat-cover-orig}. This was proven by Bican, El Bashir, and Enochs \cite{flat-cover}. Two proofs were given, one using set-theoretic tools of Eklof-Trlifaj \cite{ext-vanish}. This proof was recognized by Rosický \cite{flat-covers-factorizations} to really be an instance of a ``small object argument'': a way to build cofibrantly generated weak factorization systems in any locally presentable category.

I believe these connections are interesting, especially because logic and model theory have not historically threaded too much into algebraic topology and homological algebra. If accessible categories are really tied to model theory, then these results should have also model-theoretic meaning. Recently, joint work with Lieberman and Rosický \cite{more-indep-v2} showed that in fact the notion of a model category (or more generally a weak factorization system) being cofibrantly generated is closely tied to the theory of model-theoretic forking. Essentially, in a locally presentable model category, a weak factorization system is cofibrantly generated exactly when the category obtained by restricting to only to cofibrations is stable, in the sense of having a forking-like independence notion. See Section \ref{indep-sec} for an overview of this result. 

In conclusion, categorical model theory is a fascinating mix of set theory, category theory, and model theory that sheds light on all these topics and on some other parts of mathematics. This paper is meant to survey some basic results, as well as discuss some topics of current research. I will survey set-theoretic aspects (Section \ref{set-sec}), as well as questions on stable independence (Section \ref{indep-sec}), a recent research development that may be of interest to model theorists, category theorists, and mathematicians at large. Some methods applicable to concrete setups such as AECs, Section \ref{aec-sec} will also be discussed. I will end with some open problems (Section \ref{problem-sec}), as well as a short list of helpful resources to learn more (Section \ref{reading-sec}). A first appendix gives a streamlined method for handling the bookkeeping in point by point constructions, and the second appendix gives a very short introduction to first-order stability theory.  

I will assume that the reader is familiar with very basic logic and model theory \cite{chang-keisler}, set theory \cite{hrbacek-jech}, and category theory \cite{maclane, joy-of-cats}. Still, an effort has been made to repeat many standard definitions and provide examples and intuitions behind proofs for those that are not necessarily proficient in all three fields.

\subsection*{Acknowledgments}

I would like to thank Jiří Rosický and Michael Lieberman for introducing me to accessible categories and taking an early look at this survey. The work on category-theoretic sizes and stable independence presented here comes from our joint collaboration.

I thank Marcos Mazari-Armida and the referee for helpful comments. I also thank Justin Cavitt, Rebecca Coulson, and Rehana Patel, for encouraging me to write Appendix \ref{fo-sec}.

\section{Main definitions and examples}

\subsection{Set theory}

We assume familiarity with ordinals and cardinals. We think of ordinals as transitive sets ordered by membership, and identify cardinals with the corresponding ordinals. We denote by $\omega$ the first infinite ordinal, i.e.\ the set of natural numbers. For a cardinal $\lambda$, we write $\lambda^+$ for the successor of $\lambda$, the minimal cardinal strictly bigger than $\lambda$. Cardinals of the form $\lambda^+$ are called \emph{successor cardinals}, and cardinals that are not successors are called \emph{limit}. We write $\fct{Y}{X}$ for the set of functions from $Y$ to $X$ and, for an ordinal $\alpha$, $\fct{<\alpha}{X}$ for $\bigcup_{\beta < \alpha} \fct{\beta}{X}$. For $\lambda$ and $\mu$ cardinal, $\lambda^{<\mu} = |\fct{<\mu}{\lambda}|$ and $\lambda^\mu = |\fct{\mu}{\lambda}|$. A \emph{partially ordered set} (or poset for short) is a transitive, reflexive, and antisymmetric relation. A subset $I_0$ of a poset $I$ is \emph{cofinal} if for all $i \in I$ there is $i_0 \in I_0$ so that $i \le i_0$. The \emph{cofinality}, $\cf{I}$, of $I$ is the minimal cardinality of a cofinal subset of $I$. Of course, we identify an ordinal $\alpha$ with the corresponding linear order $(\alpha, \in)$. A cardinal $\lambda$ is \emph{regular} if it is infinite and $\cf{\lambda} = \lambda$.

\subsection{Category theory}

A category is called \emph{small} if it has only a set of objects, and it is called \emph{large} if it has a proper class of \emph{non-isomorphic} objects. All the categories in this paper will be locally small (i.e.\ the collection $\Hom (A, B)$ of morphisms from the object $A$ to the object $B$ is always a set, never a proper class). Our categorical notation and conventions will mostly be those of \cite{joy-of-cats} and \cite{adamek-rosicky}. In particular, a \emph{monomorphism} (or \emph{mono} for short) is a morphism $f$ such that $f g_1 = fg_2$ implies $g_1 = g_2$ for any two morphisms $g_1$ and $g_2$. The notion of an \emph{epimorphism} (\emph{epi} for short) is defined dually. A \emph{diagram} in a category $\ck$ is a functor $D: I \to \ck$, where $I$ is a (here always small) category, called the \emph{index} of the diagram. In this paper, the index $I$ will usually be a partially ordered set, which will be identified with the corresponding category. A \emph{cocone}\footnote{In this paper, we will be more interested in colimits than limits, so we also define cocones rather than cones.} for a diagram $D: I \to \ck$ consist in an object $A$ together with morphism $(D_i \xrightarrow{f_i} A)_{i \in I}$ such that whenever $i \xrightarrow{d} j$ is a morphism of $D$, we have that $f_i = f_j d$. The cocones for $D$ form a category, $\ck_D$, where a morphism from $(D_i \xrightarrow{f_i} A)_{i \in I}$ to $(D_i \xrightarrow{g_i} B)_{i \in I}$ is a $\ck$-morphism $A \xrightarrow{h} B$ so that $h f_i = g_i$ for all $i \in I$. A \emph{colimit} for $D$ is an initial object in the category $\ck_D$ (recall that an initial object in a category is one so that there is a unique morphism from it to any other object). 

\subsection{Presentability and accessible categories}

For $\lambda$ a cardinal, a partially ordered set is called \emph{$\lambda$-directed} if every subset of cardinality strictly less than $\lambda$ has an upper bound. Note that any non-empty poset is $\lambda$-directed for $\lambda \le 2$ (an empty poset is $0$-directed but not $1$-directed), $3$-directed is equivalent to $\aleph_0$-directed, and for a singular cardinal $\lambda$, $\lambda$-directed is equivalent to $\lambda^+$-directed. Thus we usually assume that $\lambda$ is a regular cardinal. We will say \emph{directed} instead of $\aleph_0$-directed. For $\lambda$ regular, an example of a $\lambda$-directed poset that the reader can keep in mind is $\lambda$ itself, seen as a chain (i.e.\ a linear order) of order type $\lambda$. Another typical example is the poset $[A]^{<\lambda}$ of all subsets of a fixed set $A$ which have cardinality strictly less than $\lambda$, ordered by containment.

A \emph{$\lambda$-directed diagram} is a diagram indexed by a $\lambda$-directed poset. A category has \emph{$\lambda$-directed colimits} if any $\lambda$-directed diagram has a colimit. Note that for $\lambda_1 < \lambda_2$, $\lambda_2$-directed implies $\lambda_1$-directed, hence having $\lambda_1$-directed colimits is \emph{stronger} than having $\lambda_2$-directed colimits. The following observation will be used without comments: 

\begin{lem}[Iwamura's lemma {\cite[1.7]{adamek-rosicky}}]
  If a category has colimits of all chains indexed by regular cardinals, then it has directed colimits.
\end{lem}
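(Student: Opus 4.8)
The plan is to reduce the statement to a purely combinatorial fact about directed posets, combined with the general principle that colimits may be computed iteratively. The combinatorial fact, essentially Iwamura's original observation, is that every infinite directed poset $I$ can be written as an increasing union $I = \bigcup_{\alpha < \cf{|I|}} I_\alpha$ of directed subposets $I_\alpha$, each of cardinality strictly less than $|I|$. Granting this, I would prove the lemma by transfinite induction on $\kappa = |I|$, where $D \colon I \to \ck$ is the given directed diagram and $\ck$ is assumed to have colimits of all chains indexed by regular cardinals.

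For the base case, when $\kappa$ is finite, a finite directed poset has a greatest element $m$, so $D(m)$ together with the maps $D(i) \to D(m)$ is trivially the colimit. For the inductive step, assume $\ck$ admits colimits of all directed diagrams indexed by posets of cardinality $< \kappa$, with $\kappa$ infinite. First I would establish the combinatorial decomposition: fixing an enumeration $I = \{x_\xi : \xi < \kappa\}$ and a cofinal sequence of infinite cardinals $\lambda_\alpha$ increasing to $\kappa$ for $\alpha < \cf{\kappa}$, build $I_\alpha$ by recursion, taking $I_\alpha$ to be a directed closure of $\bigcup_{\beta < \alpha} I_\beta \cup \{x_\xi : \xi < \lambda_\alpha\}$. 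Here the directed closure of a set $S$ is obtained by repeatedly adjoining, for each finite subset of the set built so far, a chosen upper bound, and iterating $\omega$ times; this preserves cardinality for infinite $S$ and yields a directed set. A cardinality count using the regularity of $\cf{\kappa}$ shows $|I_\alpha| < \kappa$ at every stage, the $I_\alpha$ are increasing and directed, and their union is all of $I$.

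Given the decomposition, I would apply the inductive hypothesis to each restriction $D \rest I_\alpha$ to obtain colimits $C_\alpha = \colim (D \rest I_\alpha)$. The inclusions $I_\alpha \subseteq I_{\alpha'}$ for $\alpha \le \alpha'$ induce, by the universal property of $C_\alpha$, a coherent family of comparison maps $C_\alpha \to C_{\alpha'}$, organizing the $C_\alpha$ into a chain indexed by the regular cardinal $\cf{\kappa}$. The hypothesis of the lemma then supplies a colimit $C = \colim_{\alpha < \cf{\kappa}} C_\alpha$. It remains to verify that $C$, equipped with the composite maps $D(i) \to C_\alpha \to C$ for any $\alpha$ with $i \in I_\alpha$, is the colimit of $D$; this is the standard interchange of iterated colimits, checked by exhibiting a natural bijection between cocones on $D$ and cocones on the chain $(C_\alpha)_{\alpha < \cf{\kappa}}$.

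I expect the main obstacle to be the combinatorial decomposition at limit stages of the recursion: one must verify, using that $\cf{\kappa}$ is regular and that $\sup_{\beta < \alpha} \lambda_\beta < \kappa$ for $\alpha < \cf{\kappa}$, that the accumulated union stays of cardinality $< \kappa$ before its directed closure is taken. The other delicate point, though routine, is producing the comparison maps $C_\alpha \to C_{\alpha'}$ coherently, so that they genuinely assemble into a functor on $\cf{\kappa}$ rather than merely a family compatible up to the chain relations; this coherence is precisely what makes the final interchange-of-colimits verification go through.
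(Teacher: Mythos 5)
Your overall strategy is the paper's: reduce to the combinatorial fact that an infinite directed poset is an increasing union of strictly smaller directed subposets, induct on $|I|$, and finish by interchanging iterated colimits. The one genuine difference is cosmetic but pleasant: you index the decomposition by $\cf{|I|}$ rather than by $|I|$, so the outer diagram is already a chain over a regular cardinal and you can invoke the hypothesis directly, whereas the paper first upgrades ``colimits of regular-cardinal chains'' to ``colimits of all ordinal-indexed chains'' via cofinal subchains and then decomposes over $|I|$ itself. Your cardinality count at limit stages is fine: fewer than $\cf{\kappa}$ many sets each of size $<\kappa$ have union of size $<\kappa$, which is exactly what regularity of $\cf{\kappa}$ buys you. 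The comparison maps $C_\alpha \to C_{\alpha'}$ and the cocone bijection are routine, as you say.

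There is one concrete step that fails as written: the case $\kappa = \aleph_0$. Your recursion starts from ``a cofinal sequence of \emph{infinite} cardinals $\lambda_\alpha$ increasing to $\kappa$,'' which does not exist below $\aleph_0$; and even replacing the $\lambda_\alpha$ by finite ordinals, your directed-closure operation (adjoin upper bounds for finite subsets, iterate $\omega$ times) only preserves cardinality for \emph{infinite} inputs --- the directed closure of a finite set can be infinite, so you cannot conclude $|I_n| < \aleph_0$, and the inductive step becomes circular (you would need colimits over countably infinite directed posets to build them). The fix is easy and standard: for countable directed $I$, enumerate $I = \{x_n : n < \omega\}$ and choose $m_{n+1}$ to be an upper bound of $\{x_0, \dots, x_{n+1}, m_0, \dots, m_n\}$; then either take $I_n = \{x_0, \dots, x_n, m_0, \dots, m_n\}$, which is finite and directed with maximum $m_n$, or note directly that $\seq{m_n : n < \omega}$ is a cofinal $\omega$-chain in $I$, so $\colim D$ is computed by a chain indexed by the regular cardinal $\aleph_0$ and the hypothesis applies immediately. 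With that boundary case handled, your argument is complete.
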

\begin{proof}[Proof sketch]
  It is immediate using the definition of cofinality that a category with all colimits of chains indexed by regular cardinals has colimits of all chains indexed by ordinals. Now any finite directed poset has a maximum, so colimits of such diagrams are trivial. Furthermore, any infinite directed partially ordered set $I$ can be written as $\bigcup_{\alpha < |I|} I_\alpha$, where each $I_\alpha$ has strictly smaller cardinality than $I$, is directed, and $I_\alpha \subseteq I_\beta$ for $\alpha < \beta$. We can therefore proceed by induction on $|I|$ to show that the category has all colimits indexed by $I$. 
\end{proof}
\begin{remark}
  A similar result no longer holds in the uncountable case \cite[Exercise 1.c(2)]{adamek-rosicky}: if $\lambda$ is uncountable, having colimits indexed by ordinals of cofinality at least $\lambda$ is \emph{strictly weaker} than having all $\lambda$-directed colimits.
\end{remark}

\begin{example}
  In many concrete algebraic cases, directed colimits exist and are essentially computed by taking unions. For example, the category $\Ab$ of abelian groups has directed colimits, and they can essentially be computed by taking unions of the resulting system of groups. However in the category of Banach spaces with contractions, colimits of increasing chains are given by taking the \emph{completion} of their union. Of course, some categories don't have directed colimits at all. For example, the category of well-orderings, with morphisms the order-preserving maps, does not have directed colimits. It does however have $\aleph_1$-directed colimits. The category of complete Boolean algebras (with homomorphisms), on the other hand, does not have $\lambda$-directed colimits for any $\lambda$.
\end{example}

The notion of a $\lambda$-presentable object is key to the definition of an accessible category.

\begin{defin}\label{pres-def}
  For $\lambda$ a regular cardinal, an object $A$ of a category $\ck$ is \emph{$\lambda$-presentable} if the functor $\Hom (A, -)$ preserves $\lambda$-directed colimits. Said more transparently, $A$ is \emph{$\lambda$-presentable} if for any $\lambda$-directed diagram $D: I \to \ck$ with colimit $(D_i \xrightarrow{g_i} \colim D)_{i \in I}$, any map $A \xrightarrow{f} \colim D$ factors essentially uniquely through $D$: there exists $i \in I$ and $A \xrightarrow{f_i} D_i$ such that $f_i g_i = f$. Essentially uniqueness means that, moreover, if $j \in I$ and $A \xrightarrow{f_j} D_j$ are such that $f_j g_j = f$, then there exists $k \ge i,j$ such that $g_{i,k} f_i = g_{j, k} f_j$ (here, $g_{i, j}$ and $g_{i, k}$ are the diagram maps from $i$ to $j$ and $i$ to $k$ respectively).

  $$  
    \xymatrix@=3pc{
      A \ar[r]^f \ar@{.>}[dr]_{f_i} & \colim D \\
      & D_i \ar[u]_{g_i} \\
    }
    $$

    When $\lambda = \aleph_0$, we will say that $A$ is \emph{finitely presentable}. We say that $A$ is \emph{presentable} if it is $\lambda$-presentable for some $\lambda$. The \emph{presentability rank} of a presentable object $A$ is the least regular cardinal $\lambda$ such that $A$ is $\lambda$-presentable.
\end{defin}

Note that a $\lambda$-presentable object is also $\mu$-presentable for all regular $\mu > \lambda$. 

\begin{example}\label{pres-ex} \
  \begin{enumerate}
  \item\label{pres-ex-set} In the category of sets (with morphisms all functions), a set is $\lambda$-presentable if and only if it has cardinality strictly less than $\lambda$. Indeed, directed colimits are essentially unions, and a set $A$ has cardinality strictly less than $\lambda$ if and only if whenever $A \subseteq \bigcup_{i < \lambda} A_i$, there exists $i < \lambda$  such that $A \subseteq A_i$. Thus the presentability rank of an infinite set is the successor of its cardinality. Similarly, in many algebraic categories (in fact in any abstract elementary classes -- see Definition \ref{infty-aec-def} and Example \ref{unif-ex}(\ref{unif-ex-aec})), the presentability rank of a big-enough object will be the successor of the cardinality of its universe.
    
  \item In very algebraic categories, such as $\Ab$, an object is $\lambda$-presentable if and only if it can be presented using strictly less than $\lambda$-many objects and equations \cite[3.12]{adamek-rosicky}. This is the motivation for the term ``presentable''.
  \item\label{pres-ex-wo} \cite[6.2]{internal-sizes-jpaa} Consider the category of well-orderings, with morphisms the initial segment embeddings. Note that (as opposed to what would happen if the maps were just order-preserving maps) this category has all directed colimits. An infinite well-order is $\lambda$-presentable precisely when it has cofinality strictly less than $\lambda$.    
  \item Consider the category of all algebras with one $\omega$-ary operation, and morphisms the homomorphisms. Then an object is $\lambda$-presentable if and only if it is generated by a set of cardinality strictly less than $\lambda$. Here, the free algebra on $\lambda$-many generators is $\lambda^+$-presentable but has cardinality $\lambda^{\aleph_0}$ (which could be strictly bigger than $\lambda$).    
  \item\label{pres-ex-hilb} Consider the category of complete metric spaces, with isometries as morphisms. For $\lambda$ regular \emph{uncountable}, a space $A$ is $\lambda$-presentable if and only if its it has \emph{density character} strictly less than $\lambda$ (the density character of a topological space is defined as the least cardinality of a dense subset). Thus the presentability rank is the successor of the density character. This holds more generally in all the ``continuous'' examples, including Banach and Hilbert spaces \cite[3.1]{lrcaec-jsl}. Note that in general the density character is different from the cardinality. For example, there are no Hilbert spaces of cardinality $\lambda$ when $\lambda < \lambda^{\aleph_0}$, but there are no problems finding a Hilbert space generated by an orthonormal basis of cardinality $\lambda$. Finite sums from such a basis will give the desired dense subset.
  \item\label{pres-ex-met} \cite[6.1]{internal-sizes-jpaa}\footnote{This example was pointed out to me by Ivan Di Liberti.} Consider the one point metric space $\{0\}$ in the category of complete metric spaces with isometries. This space is $\aleph_1$-presentable (by the preceding discussion) but it is \emph{not} $\aleph_0$-presentable. Indeed, the inclusion of $\{0\}$ into $\{0\} \cup \{\frac{1}{n} \mid 0 < n < \omega\}$ does not factor through $\{\frac{1}{n} \mid 0 < n \le m\}$, for any $m < \omega$. In fact, the empty metric space is the only $\aleph_0$-presentable object of the category.

  \item\label{pres-ex-top} \cite[1.2(10)]{adamek-rosicky} In the category of topological spaces (with morphisms the continuous functions), a discrete spaces (i.e.\ where every set is open) is $\lambda$-presentable exactly when it has cardinality strictly less than $\lambda$. On the other hand, a non-discrete space $A$ is \emph{never} presentable. To see this, fix a regular $\lambda$ and a non-open set $X \subseteq A$. Without loss of generality, $A \cap \lambda = \emptyset$. For $\alpha <\lambda$, let $D_\alpha$ be the space $A \cup \lambda$, where the non-trivial open sets in $D_\alpha$ are of the form $X \cup [\beta, \lambda)$ for $\beta \in [\alpha, \lambda)$. For $\alpha \le \beta$, the inclusion of $D_\alpha$ into $D_\beta$ is continuous ($D_\alpha$ has more open sets than $D_\beta$), and these inclusions give a $\lambda$-directed diagram. The colimit $D$ of this diagram is the indiscrete space on the set $A \cup \lambda$. Because $D$ is indiscrete, the inclusion of $A$ into $D$ is continuous, but it cannot factor through any $D_\alpha$ (because the inverse image of $X \cup [\alpha, \lambda)$, an open set of $D_\alpha$, is $X$, which is not open in $A$).
  \end{enumerate}

\end{example}

In view of the many examples above where the presentability rank was the \emph{successor} of some natural notion of size (see Section \ref{set-size-sec} on what is true in general), it is convenient to have a name for this predecessor. We will call it the internal, or category-theoretic, \emph{size}:

\begin{defin}\label{size-def}
  The \emph{(internal -- or category-theoretic) size} of an object $A$ is the predecessor (if it exists) of its presentability rank. 
\end{defin}

We can now precisely say what an accessible category is:

\begin{defin}\label{acc-def}
  For a regular cardinal $\lambda$, a category $\ck$ is \emph{$\lambda$-accessible} if:

  \begin{enumerate}
  \item $\ck$ has $\lambda$-directed colimits.
  \item (Smallness condition) There is a set $S$ of $\lambda$-presentable objects such that every object of $\ck$ is a $\lambda$-directed colimits of elements of $S$.
  \end{enumerate}

  If $\ck$ is $\lambda$-accessible and bicomplete (i.e.\ complete and cocomplete: it has all limits and colimits), we say that it is \emph{locally $\lambda$-presentable}. When $\lambda = \aleph_0$, we will talk about a finitely accessible or locally finitely presentable category. We say that $\ck$ is \emph{accessible} [\emph{locally presentable}] if it is $\lambda$-accessible [locally $\lambda$-presentable] for some regular cardinal $\lambda$. 
\end{defin}
\begin{remark}
  There are three occurrences of the parameter $\lambda$ in Definition \ref{acc-def}, and there are no real reasons why these occurrences should all be the same cardinal. Thus we could parameterize the definition further into three cardinals, see \cite[3.6]{internal-sizes-jpaa}. In fact, we can be even more precise and allow singular cardinals in the definition. For example, an $(\aleph_0, \aleph_1, <\aleph_{\omega_1})$-accessible category would be a category with directed colimits where every object is an $\aleph_1$-directed colimits of a fixed set of $(<\aleph_{\omega_1})$-presentable objects, where $(<\aleph_{\omega_1})$-presentable means $\lambda_0$-presentable for some regular $\lambda_0 < \aleph_{\omega_1}$. Thus we want each object in the diagram to have presentability rank some $\aleph_\alpha$, $\alpha < \omega_1$, but there may not be a single $\aleph_\alpha$ that bounds the rank of each object. For simplicity, we will not take this approach here but will often discuss, for example, accessible categories with all directed colimits.
\end{remark}

\begin{example}\label{acc-ex} \
  \begin{enumerate}
  \item The category $\Set$ of sets with functions as morphisms is locally finitely presentable: it is bicomplete and every set is a directed colimit of finite sets.
  \item Any complete lattice is (when seen as a category) a locally presentable category. 
  \item The category $\Set_{mono}$ of sets with injective functions as morphisms, is finitely accessible but not bicomplete (for example, it does not have coequalizers; intuitively it is impossible to quotient). There are two general phenomenons at play here: first \cite[6.2]{indep-categ-advances}, given any accessible category $\ck$, the category $\ck_{mono}$ obtained by restricting to its monomorphisms will be accessible as well. Second, if $\ck$ is an accessible category where all morphisms are monomorphisms (this is typically the case in model-theoretic setups), then $\ck$ cannot be locally presentable, unless it is a complete lattice (see the previous example), so in particular small.
  \item The category $\Ab$ of a abelian groups is locally finitely presentable. The finitely generated abelian groups are the finitely presentable objects and any other abelian group is a directed colimit of those. More generally, for a fixed ring $R$, the category $\Rmod$ of $R$-modules is locally finitely presentable.
  \item The category $\Gra$ of graphs (symmetric reflexive binary relations) with morphisms the graph homomorphisms is locally finitely presentable (finitely presentable objects are finite graphs).
  \item The category $\Ban$ of Banach spaces (with morphisms the contractions) is locally $\aleph_1$-presentable, but not locally $\aleph_0$-presentable (for reasons connected to Example \ref{pres-ex}(\ref{pres-ex-met})).
  \item The category $\Bool$ of boolean algebras (with morphisms the boolean algebra homomorphisms) is locally finitely presentable. On the other hand, the category of \emph{complete} boolean algebras is not even accessible: it does not have $\lambda$-directed colimits for any $\lambda$. For a fixed regular cardinal $\lambda$, the category of $\lambda$-complete Boolean algebras is however $\lambda$-accessible.
  \item The category of well-orderings with morphisms the order-preserving maps is $\aleph_1$-accessible but not finitely accessible (it does not have directed colimits). On the other hand the category of well-orderings with morphisms the initial segment maps is \emph{not} accessible, even though it has directed colimits: for any regular cardinal $\lambda$, there is a proper class of non-isomorphic well-orderings of cofinality $\lambda$, see Example \ref{pres-ex}(\ref{pres-ex-wo}). We could still look at the full subcategory of well-orderings of order type $\lambda^+$ or less. It turns out that this will be a $\lambda^+$-accessible category.
  \item The category of all well-founded models of (a sufficiently-big fragment of) ZFC, or of all well-founded models of ZFC + V = L, with morphisms the elementary embeddings, is $\aleph_1$-accessible. A variation of this example is studied in \cite[\S 6.1]{internal-sizes-jpaa}.
  \item Any Fraïssé class can be seen as generating a finitely accessible category. In fact, there exists a general categorical theory of Fraïssé constructions \cite{fraisse-kubis}.
  \item\label{free-ex} The category of all free abelian groups (with group homomorphisms) is not accessible if $V = L$, but is $\kappa$-accessible for $\kappa$ a strongly compact cardinal \cite[\S5.5]{makkai-pare}. Thus whether a category is accessible can, in certain cases, be a set-theoretic question. See Section \ref{set-func-sec} for more on this phenomenon.
  \end{enumerate}
\end{example}

\subsection{Categories of structures}\label{struct-sec}
In order to get more examples of accessible categories and introduce related frameworks, let us move toward logic. For completeness and because we will work with infinitary languages, we start by repeating the basic definitions. The reader will not lose much by skipping them. More details can be found in \cite{dickmann-book} or \cite[Chapter 5]{adamek-rosicky}. For $\kappa$ an infinite cardinal, a \emph{$\kappa$-ary vocabulary} (or signature) is a set $\tau$ containing\footnote{We will work with a single sort for simplicity.} relations and functions symbols of arity strictly less than $\kappa$. When $\kappa = \aleph_0$, we call $\tau$ a finitary vocabulary, and when $\kappa \ge \aleph_1$ an infinitary vocabulary. By default, a \emph{vocabulary} means a $\kappa$-ary vocabulary for some $\kappa$ (so possibly infinitary). More precisely, a $\kappa$-ary vocabulary $\tau$ is a a set $\seq{n_i : i \in I_R, m_j: j \in I_F}$, where $I_R$ and $I_F$ are disjoint index sets and $n_i, m_j$ are cardinals strictly less than $\kappa$. For such a vocabulary, a \emph{$\tau$-structure} $M$ consists of a set $A = U M$ (the \emph{universe}), for each $n_i \in \tau$, an $n_i$-ary relation $R_i$ on $A$, and for each $m_j \in \tau$ an $m_j$-ary function $f_j$ on $A$. A \emph{term} in the vocabulary $\tau$, over a given set of variables $V$ (disjoint from any other object that we consider, and of cardinality $\kappa$ -- we will never mention $V$ again), is defined inductively as either a variable $x$, or as $f_j (\rho_\alpha)_{\alpha < m_j}$, for each $\rho_\alpha$ a term. An \emph{atomic formula} is an expression of the form $\top$, $\bot$, $\rho = \rho'$ or $R_i (\rho_\alpha)_{\alpha < n_i}$ for terms $\rho$ ,$\rho'$, $\rho_\alpha$. For $\lambda \ge \kappa$ an infinite cardinal, an \emph{$\Ll_{\lambda, \kappa}$ formula} is defined inductively as either an atomic formula, $\bigwedge_{\alpha \in S} \phi_\alpha$, $\bigvee_{\alpha \in S} \phi_\alpha$, $\phi \rightarrow \psi$, $\neg \phi$, $(\forall \bx) \phi$, $(\exists \bx) \phi$, where $\phi_\alpha$, $\phi$, $\psi$, are formulas, $|S| < \lambda$, $\bx$ is a sequence of variables of length strictly less than $\kappa$, and we require that the formulas obtained from conjunctions and disjunctions still have fewer than $\kappa$-many variables. We also define $\Ll_{\infty, \kappa} = \bigcup_{\lambda} \Ll_{\lambda, \kappa}$ and $\Ll_{\infty, \infty} = \bigcup_{\kappa} \Ll_{\infty, \kappa}$ as expected. The \emph{free variables} of a formula are the ones that appear in it and are not bound by any quantifier. We write $\phi (\bx)$ for a formula $\phi$ with free variables among $\bx$. A \emph{sentence} is a formula without free variables, and a \emph{theory} is a set of sentences. For a $\tau$-structure $M$, $\ba$ a sequence of elements from $M$, and $\phi (\bx)$ a formula ($\bx$ and $\ba$ of the same length), we define what it means for $M$ to satisfy (or be a model of) $\phi$, with $\ba$ standing for $\bx$, $M \models \phi (\ba)$ for short, as expected. A $\tau$-structure satisfies (or is a model of) a theory if it satisfies all sentences of the theory.

For a vocabulary $\tau$, a \emph{homomorphism} from a $\tau$-structure $M$ to a $\tau$-structure $N$ is a function $f$ from $U M$ to $U N$ that preserves all atomic formulas: if $\phi (\bx)$ is atomic, $\ba$ is a sequence in $M$, and $M \models \phi (\ba)$, then $N \models \phi (f (\ba))$. We let $\Str (\tau)$ denote the category of all $\tau$-structures with homomorphisms. For $T$ a theory in $\Ll_{\infty, \infty}$, we let $\Mod (T)$ denote the full subcategory of $\Str (\tau)$ consisting of all models of $T$. When $\phi$ is a sentence, $\Mod (\phi)$ will denote $\Mod (\{\phi\})$. Note that $\Mod (\phi)$ is \emph{not} always an accessible category, essentially because the homomorphisms are not the right notion of embedding when $\phi$ is too complex. Thus we more generally define, for $\Phi \subseteq \Ll_{\infty, \infty}$, a \emph{$\Phi$-elementary map from $M$ to $N$} to be a function $f$ from $U M$ to $U N$ that preserves all formulas in $\Phi$. We let $\Elem_\Phi (\tau)$ denote the category of all $\tau$-structures with homomorphism, and $\Elem_\Phi (T)$ denote the full subcategory of $\Elem_\Phi (\tau)$ consisting of models of $T$. One can check that for $\lambda$ a regular cardinal and $T$ an $\Ll_{\lambda, \lambda}$ theory, $\Elem_{\Ll_{\lambda, \lambda}} (T)$ is $\lambda$-accessible \cite[\S 3.4]{makkai-pare}.

We will focus on the following simple type of formulas:

\begin{defin}
  A formula of $\Ll_{\infty, \infty}$ is \emph{positive existential} if it can be built from atomic formulas using only conjunctions, disjunctions and existential quantifications. A formula is \emph{basic} if it is of the form $(\forall \bx)(\phi \rightarrow \psi)$, where $\phi$ and $\psi$ are positive existential. A \emph{basic} theory is a set of basic sentences.
\end{defin}

Note that any theory in $\Ll_{\infty, \infty}$ can be ``Morleyized'' to a basic theory, by adding a relation symbol for each formula \cite[3.2.8]{makkai-pare}. In fact, for $\kappa \le \lambda$ and $T$ a theory in $\Ll_{\lambda, \kappa}$, $\Elem_{\Ll_{\lambda, \kappa}} (T)$ is isomorphic to $\Mod (T')$, for some basic $\Ll_{\lambda, \kappa}$-theory $T'$. This fact is sometimes called \emph{Chang's presentation theorem} by model theorists and is the reason why we can restrict ourselves to basic formulas without losing generality.

\begin{example}\label{logical-ex} Let $\kappa$ and $\lambda$ be regular cardinals.
  \begin{enumerate}
  \item \cite[\S 5.1]{adamek-rosicky} If $\tau$ is a $\kappa$-ary vocabulary then $\Str (\tau)$ is a locally $\kappa$-presentable category.
  \item \cite[5.35]{adamek-rosicky} For any basic theory $T$ in $\Ll_{\infty, \lambda}$, $\Mod (T)$ is accessible with $\lambda$-directed colimits.
  \end{enumerate}

  In the second result, existence of $\lambda$-directed colimits can be proven directly. The smallness condition in the definition of an accessible category follows, for example, from the infinitary downward Löwenheim-Skolem theorem. 
\end{example}

It turns out that Example \ref{logical-ex} is sharp, in the sense that any accessible category is equivalent to a category of the form $\Mod (T)$, for $T$ a basic theory (Corollary \ref{acc-to-log}). Similarly, it is possible to characterize the locally presentable categories as the categories of models of certain theories (called limit). See \cite[5.30]{adamek-rosicky}.

\subsection{$\infty$-abstract elementary classes}

We now introduce $\infty$-abstract elementary classes ($\infty$-AECs). They are in some sense a compromise: less abstract than accessible categories, but still more abstract than categories of models of basic theories. The definition of an AEC (the case $\mu = \aleph_0$ below) is due to Shelah \cite{sh88}. It was generalized to the case of a $\mu$-AEC in \cite{mu-aec-jpaa}. We introduce essentially the same notion, but to be precise we make $\mu$ (and the vocabulary) part of the data. The reader should first read it with the case $\mu = \aleph_0$ in mind. 

\begin{defin}\label{infty-aec-def}
  An \emph{$\infty$-abstract elementary class} (or $\infty$-AEC for short) $\K$ consists of:

  \begin{enumerate}
  \item A regular cardinal $\mu = \mu (\K)$.
  \item A $\mu$-ary vocabulary $\tau = \tau (\K)$.
  \item A class $K$ of $\tau$-structures.
  \item A partial order $\lea$ on $K$.
  \end{enumerate}

  satisfying the following four axioms:

  \begin{itemize}
  \item \underline{Abstract class axiom}: $K$ is closed under isomorphisms, $M \lea N$ implies that $M$ is a $\tau$-substructure of $N$ (i.e.\ $U M \subseteq U N$ and the inclusion $M \to N$ is a homomorphism; we write $M \subseteq N$), and $\lea$ respects isomorphisms in the sense that if $M, N \in K$, $M \lea N$, and $f: N \cong N'$, then $f[M] \lea N'$.
  \item \underline{Coherence axiom}: if $M_0, M_1, M_2 \in K$, $U M_0 \subseteq UM_1$, $M_1 \lea M_2$, and $M_0 \lea M_2$, then $M_0 \lea M_1$.
  \item \underline{Tarski-Vaught (TV) chain axiom}: if $\seq{M_i : i \in I}$ is a $\mu$-directed system in $\K$ (i.e.\ $I$ is a $\mu$-directed poset, all the $M_i$'s are in $K$, and $i \le j$ implies $M_i \lea M_j$), then letting $M := \bigcup_{i \in I} M_i$ (defined as expected), we have:

    \begin{itemize}
    \item $M \in K$.
    \item $M_i \lea M$ for all $i \in I$.
    \item If $N \in K$ and $M_i \lea N$ for all $i \in I$, then $M \lea N$.
    \end{itemize}
  \item \underline{Löwenheim-Skolem-Tarski (LST) smallness axiom}: there exists a cardinal $\lambda = \lambda^{<\mu} \ge |\tau| + \mu$ such that for any $M \in K$ and any $A \subseteq U M$, there exists $M_0 \in K$ with $M_0 \lea M$, $A \subseteq U M_0$, and $|U M_0| \le |A|^{<\mu} + \lambda$. We write $\LS (\K)$ (the \emph{Löwenheim-Skolem-Tarski number} of $\K$) for the minimal such $\lambda$.
  \end{itemize}

  Unless $K$ is empty, the vocabulary $\tau (\K)$ can always be recovered from $K$. Thus we usually just write $\K = (K, \lea)$ and say that $\K$ is a $\mu$-AEC to make the $\mu$ associated with it clear. When $\mu = \aleph_0$, we omit it and just say that $\K$ is an AEC. We also will not distinguish between $K$ and $\K$, writing for example $M \in \K$ instead of $M \in K$. Another convention: if we write $M \lea N$, we will automatically mean that also $M, N \in K$.
\end{defin}

In any $\infty$-AEC, there is a natural notion of morphism.

\begin{defin}
  Let $\K$ be an $\infty$-AEC. For $M, N \in \K$, a \emph{$\K$-embedding} from $M$ to $N$ is an injective $\tau (\K)$-homomorphism $f$ from $M$ to $N$ such that $f[M] \lea N$. When $\K$ is clear from context, we write $f: M \to N$ to mean that $f$ is a $\K$-embedding from $M$ to $N$. We will often identify $\K$ with the category whose objects are the structures in $\K$ and morphisms the $\K$-embeddings.
\end{defin}
\begin{remark}\label{aec-categ-prop}
  If $\K$ is a $\mu$-AEC, then it has the following properties as a category:

  \begin{itemize}
  \item It is a subcategory of $\Str (\tau (\K))$. Further, it is \emph{isomorphism-closed} in $\Str (\tau (\K))$ (a subcategory $\cl$ of a category $\ck$ is \emph{isomorphism-closed} -- or \emph{replete} -- if whenever $A$ is an object of $\cl$ and $f: A \to B$ is an isomorphism of $\ck$, then both $f$ and $B$ are in $\cl$). This is the essential content of the abstract class axiom.
  \item It is a \emph{concrete} category, as witnessed by the universe functor $U : \K \to \Set$.
  \item All its morphisms are monomorphisms, and in fact concrete monomorphisms (i.e.\ they are also monomorphisms in the category of sets -- injective functions). More is true: it is noticed in \cite[\S 8]{joy-of-cats} that the ``right'' notion of monomorphism in many examples ends up being the notion of a concrete embedding \cite[8.6]{joy-of-cats} whose definition mirrors the coherence axiom of AECs. In fact, what the coherence axiom says is exactly that the morphisms of $\K$ are concrete embeddings in the sense of \cite[8.6]{joy-of-cats}.
  \item It has $\mu$-directed colimits (this is the essential content of the chain axiom). In fact these $\mu$-directed colimits are \emph{concrete} in the sense that the functor $U$ preserves them: they are computed the same way as in $\Set$, by taking unions.
  \item It is an $\LS (\K)^+$-accessible category. Indeed, the objects of cardinality at most $\LS (\K)$ are all $\LS (\K)^+$-presentable, and there is only a set of them up to isomorphism. Moreover, the coherence and smallness axioms together imply that any other object $M \in \K$ can be written as the $\LS (\K)^+$-directed union of $\{M_0 \in \K \mid M_0 \lea M, |U M_0| \le \LS (\K)\}$ (we think of it as a $\lea$-system indexed by itself). Note however that $\K$ need \emph{not} be $\mu$-accessible. In fact it is easy to see that for any regular cardinal $\lambda$, the ($\aleph_0$-)AEC of all sets of cardinality at least $\lambda$ (ordered with subset) is \emph{not} $\lambda$-accessible. We will see later (Theorem \ref{aec-acc}) that the smallness axiom is, modulo the other axioms, \emph{equivalent} to the smallness condition in the definition of an accessible category.
  \end{itemize}

  In fact, one will not loose much by forgetting about logic and simply thinking of an $\mu$-AEC as a concrete category $(\ck, U)$ with concrete $\mu$-directed colimits and where all morphisms are concrete embeddings.
\end{remark}

Examples of AECs can be found in \cite[\S 3.2]{bv-survey-bfo}, and some more examples of $\infty$-AECs can be found in \cite[\S2]{mu-aec-jpaa}. Some other recently studied examples of interests include the category of flat modules with flat monomorphisms (see Example \ref{indep-ex}(\ref{indep-ex-mod})), a special case of AECs on class of modules of the form $\fct{\perp}{N}$ \cite{bet} and an example of what module theorists call a \emph{Kaplansky classes}, see e.g.\ \cite{kaplansky-finite}. For now, we note the following general fact: 

\begin{example}
  The class of models of a basic $\Ll_{\infty, \lambda}$-theory is a $\lambda$-AEC, when ordered with substructure (see Example \ref{logical-ex}). In fact, given any basic $\Ll_{\infty, \lambda}$-theory $T$, if the vocabulary contains a binary relation $R$ and $T$ contains the two basic sentences

  \begin{itemize}
  \item $(\forall x \forall y)((R (x, y) \land x = y) \rightarrow \bot)$
  \item $(\forall x \forall y)(x = y \lor R (x, y))$
  \end{itemize}

  then all the morphisms of $\Mod (T)$ will be monomorphisms (the two sentences above say that $R$ should be interpreted as the non-equality relation in any model of $T$), and the category $\Mod (T)$ will essentially be a $\lambda$-AEC.
\end{example}

Closing the loop, we will see later that \emph{any} accessible category where all morphisms are monomorphisms is equivalent to an $\infty$-AEC (Theorem \ref{acc-to-aec}). This also follows from the already mentioned equivalence between accessible categories and categories of models of $\Ll_{\infty, \infty}$ sentences.

Two remarks are in order. First, the reader might wonder about the hypothesis that all morphisms are monomorphisms. There is a generalization of the notion of an AEC, called an \emph{abstract elementary category} which removes this restriction \cite[5.3]{beke-rosicky}. In fact, let us define a \emph{$\mu$-abstract elementary category} $\K$ to be a subcategory of $\Str (\tau)$, $\tau = \tau (\K)$ a $\mu$-ary vocabulary, satisfying all the properties listed in Remark \ref{aec-categ-prop}, except that the morphisms are only required to be initial (in the sense of \cite[8.6]{joy-of-cats}), but no longer required to be concrete monomorphisms. We will not discuss $\mu$-abstract elementary categories further after this section.

Second, and more importantly, the correspondences between $\Ll_{\infty, \infty}$, accessible categories, and $\infty$-AECs do not hold cardinal by cardinal: as we have seen in Remark \ref{aec-categ-prop} it is \emph{not} true that any $\lambda$-AEC is $\lambda$-accessible (and it is similarly not true that a category of models of a basic $\Ll_{\infty, \lambda}$ theory is $\lambda$-accessible). It \emph{is} however true that a $\lambda$-accessible category is a $\lambda$-abstract elementary category as well as a category of models of a basic $\Ll_{\infty, \lambda}$ theory. In fact, we have, for a fixed regular $\lambda$, the following hierarchy, where each level has more categories than the next:

\begin{enumerate}
\item $\lambda$-accessible categories.
\item Categories of models of a basic $\Ll_{\infty, \lambda}$ theory.
\item $\lambda$-abstract elementary categories.
\item Accessible categories with $\lambda$-directed colimits.
\item $\infty$-abstract elementary categories = accessible categories = categories of models of a basic $\Ll_{\infty, \infty}$ sentence.
\end{enumerate}

This is relevant, especially because of the big differences between an $\aleph_1$-AEC and an $(\aleph_0$-)AEC: a lot more can be done in the latter setup (see Section \ref{aec-sec}).

\begin{example}\label{ban-ex}
  Consider the category $\Ban_{mono}$ of Banach spaces with isometries. This is an $\aleph_1$-accessible category with all directed colimits and all morphisms monos. However, these directed colimits are \emph{not} concrete (they are not unions, but completions of unions), so $\Ban_{mono}$ does not seem to obviously be an AEC. One can show \cite{hilb-mono-v3} that in fact \emph{no} faithful functor from $\Ban_{mono}$ into $\Set$ preserves directed colimits. Thus $\Ban_{mono}$ is indeed \emph{not} equivalent to an AEC. It will however be an $\aleph_1$-AEC.
\end{example}

\section{Fundamental results}

We start developing the theory of accessible categories from scratch, proving some basic results and ending by sketching the equivalence of the three frameworks described above. Most of the results of this section are well known and appear in \cite{adamek-rosicky} or \cite{mu-aec-jpaa}.

First, we recall some more category-theoretic terminology. Monomorphisms are in a sense a very weak generalization of the notion of an injection\footnote{For example, in the category of divisible abelian groups, the quotient map $\mathbb{Q} \to \mathbb{Q} / \mathbb{Z}$ turns out to be a monomorphism \cite[7.33(5)]{joy-of-cats}.}. A much stronger one is given by the following definition: suppose we have $A \xrightarrow{i} B \xrightarrow{r} A$ which compose to the identity ($ri = \id_A$). Then we call $i$ a \emph{section} (or \emph{split monomorphism}) and $r$ a \emph{retraction} (or \emph{split epimorphism}). In such a situation, we say that $A$ is a \emph{retract} of $B$. It is easily checked that sections and retractions are indeed monomorphisms and epimorphisms respectively. The canonical inclusions $A \to A \oplus B$ and projections $A \oplus B \to A$ in the category of $R$-modules are good examples of sections and retractions. In case we also have that $ir = \id_B$, then we write $r = i^{-1}$, $i = r^{-1}$, and call them \emph{isomorphisms}. Note that a retraction which is also a mono is an isomorphism \cite[7.36]{joy-of-cats}. Thus when all morphisms are monos (e.g.\ in an $\infty$-AEC), any retraction (and thus any section) is an isomorphism.

Our first goal will be to show that any object in an accessible category is presentable. This follows from the following result, which essentially says that a small union (colimit) of small objects is small (a diagram is called \emph{$\lambda$-small} if its indexing category has strictly less than $\lambda$-many objects):

\begin{thm}\label{colimit-size}
  For $\lambda$ a regular cardinal, a colimit of a $\lambda$-small diagram of $\lambda$-presentable objects is $\lambda$-presentable. In particular all the objects of an accessible category are presentable.
\end{thm}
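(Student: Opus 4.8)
The plan is to prove the first sentence straight from the definition of $\lambda$-presentability and then to deduce the ``in particular'' clause by enlarging $\lambda$. So let $D \colon J \to \ck$ be a $\lambda$-small diagram of $\lambda$-presentable objects, with colimit cocone $(D_j \xrightarrow{c_j} C)_{j \in J}$; recall that the $c_j$ are jointly epic (a morphism out of $C$ is determined by its composites with the $c_j$, and any family making the $(u c_j)$ a cocone arises from a unique such $u$). To show $C$ is $\lambda$-presentable, I fix a $\lambda$-directed diagram $E \colon I \to \ck$ with colimit cocone $(E_i \xrightarrow{g_i} L)_{i \in I}$ and transition maps $e_{i i'} \colon E_i \to E_{i'}$, together with a map $f \colon C \to L$, and I look for an essentially unique factorization of $f$ through some $g_i$. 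First I factor each composite $f c_j \colon D_j \to L$: since $D_j$ is $\lambda$-presentable there are $i_j \in I$ and $h_j \colon D_j \to E_{i_j}$ with $g_{i_j} h_j = f c_j$. As $J$ has fewer than $\lambda$ objects and $I$ is $\lambda$-directed, I pick a single upper bound $i \in I$ of all the $i_j$ and set $\tilde h_j := e_{i_j i} h_j \colon D_j \to E_i$, so that $g_i \tilde h_j = f c_j$ for every $j$.

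The hard part is that the $\tilde h_j$ need not form a cocone over $D$, so they need not glue to a map out of $C$. For each morphism $d \colon j \to j'$ of $J$, the two maps $\tilde h_{j'} D_d$ and $\tilde h_j$ from $D_j$ to $E_i$ agree after composing with $g_i$ (both equal $f c_j$, using the cocone identity $c_{j'} D_d = c_j$), so the \emph{essential uniqueness} clause applied to $D_j$ yields $k_d \ge i$ with $e_{i k_d}\tilde h_{j'} D_d = e_{i k_d}\tilde h_j$. Here I use that $J$ has fewer than $\lambda$ morphisms (automatic for the poset indices relevant here, since between any two objects there is at most one morphism and $\lambda$ is regular), so $\lambda$-directedness produces a common $k \in I$ above $i$ and every $k_d$. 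Setting $\hat h_j := e_{i k}\tilde h_j$, the family $(\hat h_j)$ now genuinely is a cocone over $D$, hence induces a unique $h \colon C \to E_k$ with $h c_j = \hat h_j$; and since $g_k h c_j = g_i \tilde h_j = f c_j$ for all $j$ and the $c_j$ are jointly epic, $g_k h = f$. This is the factorization, and correcting the $\tilde h_j$ into an actual cocone is the step I expect to be the main obstacle.

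Essential uniqueness of the factorization follows by the same technique. Suppose $g_k h = f = g_{k'} h'$ with $h \colon C \to E_k$ and $h' \colon C \to E_{k'}$. For each $j$ the maps $h c_j$ and $h' c_j$ are two factorizations of $f c_j$, so $\lambda$-presentability of $D_j$ gives $m_j \ge k, k'$ with $e_{k m_j}(h c_j) = e_{k' m_j}(h' c_j)$. Taking an upper bound $m$ of the fewer-than-$\lambda$ many $m_j$ and composing up, I get $(e_{k m} h) c_j = (e_{k' m} h') c_j$ for all $j$, whence $e_{k m} h = e_{k' m} h'$ by joint epic-ness of the $c_j$. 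This completes the proof that $C$ is $\lambda$-presentable.

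For the ``in particular'' clause, let $A$ be an object of a $\lambda$-accessible category, written (by the smallness condition) as a $\lambda$-directed colimit of a poset-indexed diagram $P \colon I \to \ck$ of $\lambda$-presentable objects drawn from the witnessing set $S$. I choose the regular cardinal $\mu := (|I| + \lambda)^+$, which is a successor and so regular, and which exceeds both $|I|$ and $\lambda$. Then $P$ is a $\mu$-small diagram (its poset index has fewer than $\mu$ objects, hence fewer than $\mu$ morphisms), and each $P_i$, being $\lambda$-presentable, is also $\mu$-presentable because $\mu > \lambda$. Applying the first part of the theorem with $\mu$ in place of $\lambda$ shows that $A = \colim P$ is $\mu$-presentable, so in particular $A$ is presentable.
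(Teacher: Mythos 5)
Your proof is correct and follows essentially the same route as the paper's sketch: factor each leg $f c_j$ through the $\lambda$-directed diagram, use $\lambda$-smallness plus $\lambda$-directedness to land in a single stage, and then invoke the colimit's universal property, with the ``in particular'' clause obtained by enlarging $\lambda$ to a regular cardinal exceeding the size of the witnessing diagram. The only difference is that you carefully carry out the step the paper's sketch elides --- using essential uniqueness to push the factorizations up to a further stage $k$ so that they actually form a cocone over $D$ before applying the universal property --- and you also verify the essential uniqueness of the resulting factorization, both of which are genuine (and correctly handled) details rather than new ideas.
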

\begin{proof}[Proof sketch]
  Let $D: I \to \ck$ be a $\lambda$-small diagram consisting of $\lambda$-presentable objects, with colimit $(D_i \xrightarrow{d_i} A)_{i \in I}$. Let $B$ be a $\lambda$-directed colimit of another diagram $E: J \to \ck$, and let $A \xrightarrow{f} B$. For each $i \in I$, the map $f d_i$ factors through some $E_{j_i}$, $j_i \in J$, by $\lambda$-presentability of $D_i$. Since $J$ is $\lambda$-directed and $|I| < \lambda$, there exists $j \in J$ so that $j \ge j_i$ for all $i \in I$. Then the universal property of the colimit implies that $f$ must factor through $E_j$. The ``in particular'' part follows from the rest because in a $\lambda$-accessible category, any object is (by the smallness condition) a colimit of a (set-sized) diagram consisting of $\lambda$-presentable objects. 
\end{proof}

We now work toward proving that an accessible category will, for each $\lambda$, have only a set (up to isomorphism) of $\lambda$-presentable objects. We start with the following technical observations:

\begin{remark}\label{retract-rmk} Work in a category $\ck$.
  \begin{enumerate}
  \item If $A_1 \xrightarrow{i_1} B \xrightarrow{r_1} A_1$ and $A_2 \xrightarrow{i_2} B \xrightarrow{r_2} A_2$  are such that $r_\ell i_\ell = \id_{A_\ell}$ (i.e.\ they are section/retraction pairs) and $i_1 r_1 = i_2 r_2$, then $A_1$ and $A_2$ are isomorphic (as witnessed by $r_2 i_1$ and $r_1 i_2$). Since $B$ has only a set of endomorphisms (i.e.\ morphisms from and to itself), there is only a set (up to isomorphism) of retracts of any given object.
  \item\label{retract-rmk-2} A retract of a $\lambda$-presentable object is $\lambda$-presentable (straightforward diagram chase from the definition of $\lambda$-presentability).
  \item\label{retract-rmk-3} If a $\lambda$-presentable object $A$ is a $\lambda$-directed colimit of a diagram $D: I \to \ck$, with colimit cocone $(D_i \xrightarrow{d_i} A)_{i \in I}$, then $d_i$ is a retraction for some $i \in I$, so $A$ is a retract of $D_i$ (by $\lambda$-presentability, the identity map on $A$ must factor through one of the components of the diagram: $\id_A = d_i f$ for some $i \in I$).
  \end{enumerate}
\end{remark}

We obtain:

\begin{lem}\label{pres-set}
  A $\lambda$-accessible category has, up to isomorphism, only a set of $\lambda$-presentable objects.
\end{lem}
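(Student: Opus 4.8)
The plan is to show that every $\lambda$-presentable object is, up to isomorphism, a retract of one of the fixed generating objects supplied by the smallness condition, and then to count such retracts. The whole argument is essentially a combination of the smallness condition with the observations recorded in Remark \ref{retract-rmk}.

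Let $S$ be a set of $\lambda$-presentable objects witnessing that $\ck$ is $\lambda$-accessible, so that every object of $\ck$ is a $\lambda$-directed colimit of members of $S$. First I would fix an arbitrary $\lambda$-presentable object $A$. By the smallness condition, $A$ is the colimit of some $\lambda$-directed diagram $D : I \to \ck$ with every $D_i \in S$. Now I invoke Remark \ref{retract-rmk}(\ref{retract-rmk-3}): since $A$ is $\lambda$-presentable and is the $\lambda$-directed colimit of $D$, the identity $\id_A$ must factor through one of the colimit legs $d_i$, so $A$ is a retract of $D_i$ for some $i \in I$. Hence $A$ is, up to isomorphism, a retract of a member of $S$.

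It remains to bound the number of retracts. By the first part of Remark \ref{retract-rmk}, a fixed object $B$ has, up to isomorphism, only a set of retracts: each retract is determined up to isomorphism by the idempotent endomorphism $ir$ of $B$, and $B$ has only a set of endomorphisms because $\ck$ is locally small. Since $S$ is a set, the union over all $B \in S$ of the (set-many) isomorphism classes of retracts of $B$ is again a set, and by the previous paragraph every $\lambda$-presentable object lies in this collection. Therefore there are, up to isomorphism, only set-many $\lambda$-presentable objects. The only step carrying real content is the factorization in the second paragraph, where $\lambda$-presentability is used to exhibit $A$ as a retract of a single diagram component; once that is in hand, the conclusion is pure bookkeeping with the local smallness of $\ck$.
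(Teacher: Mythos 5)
Your proof is correct and follows exactly the paper's own argument: write a $\lambda$-presentable $A$ as a $\lambda$-directed colimit of members of the generating set $S$, use Remark \ref{retract-rmk}(\ref{retract-rmk-3}) to exhibit $A$ as a retract of some $D_i \in S$, and then count retracts via the idempotent endomorphisms as in the first part of Remark \ref{retract-rmk}. Nothing to add.
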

\begin{proof}
  Let $S$ be the set of $\lambda$-presentable objects given by the smallness condition. By Remark \ref{retract-rmk}, any $\lambda$-presentable objects will be a retract of members of $S$, and there is only a set of such retracts.
\end{proof}

In order to say more, we try to understand when a $\lambda$-accessible will be $\mu$-accessible for $\mu > \lambda$. This is \emph{not} a trivial consequence of the definition because a $\lambda$-directed poset may not be $\mu$-directed. In fact, as we will see, it is \emph{not} true in general that a $\lambda$-accessible category is $\mu$-accessible for all $\mu > \lambda$ (it turns out that the \emph{accessibility spectrum} -- the class of cardinals $\lambda$ such that a category is $\lambda$-accessible -- is an interesting measure of the complexity of the category, see Section \ref{set-size-sec}). Before looking at counterexamples, let us state a positive result. For a regular cardinal $\mu$, an infinite cardinal $\lambda$ is called \emph{$\mu$-closed}\footnote{When $\lambda$ is regular, this is written $\mu \ll \lambda$ in \cite[A.2.6.3]{htt-lurie}. The notation can be a misleading though, because it is \emph{not} true that $\mu \ll \lambda < \lambda'$ implies $\mu \ll \lambda'$ (take for example $\mu = \aleph_1$, $\lambda = \aleph_2$, $\lambda' = \aleph_{\omega + 1}$).} if $\theta^{<\mu} < \lambda$ for all $\theta < \lambda$. Note that any uncountable cardinal is $\aleph_0$-closed and in general for any fixed $\mu$ there is a proper class of regular $\mu$-closed cardinal (given any infinite cardinal $\lambda_0$, the cardinal $\left(\lambda_0^{<\mu}\right)^+$ is always $\mu$-closed).

\begin{thm}[Raising the index of accessibility]\label{raise-index}
  Let $\mu \le \theta \le \lambda$ be regular cardinals and let $\ck$ be a $\theta$-accessible category with $\mu$-directed colimits. If $\lambda$ is $\mu$-closed, then $\ck$ is $\lambda$-accessible. 
\end{thm}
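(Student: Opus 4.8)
The plan is to verify the two clauses of Definition \ref{acc-def} for the index $\lambda$. The first clause is immediate: since $\mu \le \theta \le \lambda$, every $\lambda$-directed poset is in particular $\theta$-directed, so every $\lambda$-directed diagram has a colimit because $\ck$ has $\theta$-directed colimits. (We may also assume $\theta < \lambda$, since if $\theta = \lambda$ there is nothing to prove; thus in fact $\mu \le \theta < \lambda$.) The real content is the smallness clause, and the guiding idea is to manufacture $\lambda$-presentable objects as \emph{$\mu$-directed colimits of fewer than $\lambda$ many $\theta$-presentable objects}. Let $S$ be the set of $\theta$-presentable objects witnessing $\theta$-accessibility, and let $S'$ be a set of representatives for the objects of the form $\colim D'$, where $D' : J \to \ck$ is a $\mu$-directed diagram with $|J| < \lambda$ taking values in $S$. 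Up to isomorphism there is only a set of such $J$ (a poset of size $<\lambda$ is a relation on a cardinal $<\lambda$) and, for each, only a set of $S$-valued functors, so $S'$ is genuinely a set. Each member of $S'$ is $\lambda$-presentable: every object of $S$ is $\theta$-presentable hence $\lambda$-presentable (as $\theta \le \lambda$), and a colimit of a $\lambda$-small diagram of $\lambda$-presentable objects is $\lambda$-presentable by Theorem \ref{colimit-size}.

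It remains to show that an arbitrary $M \in \ck$ is a $\lambda$-directed colimit of members of $S'$. First I would fix, using $\theta$-accessibility, a presentation $M = \colim_{i \in I} D_i$ with $I$ a $\theta$-directed poset and each $D_i \in S$; note $I$ is automatically $\mu$-directed. Now let $P$ be the poset, ordered by inclusion, of all sub-posets $J \subseteq I$ that are $\mu$-directed and satisfy $|J| < \lambda$. For each such $J$ the restriction $D \rest J$ is a $\mu$-directed diagram, so it has a colimit $M_J$ (here we use the hypothesis that $\ck$ has $\mu$-directed colimits), and $M_J$ is a member of $S'$ up to isomorphism, hence $\lambda$-presentable. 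Inclusions $J \subseteq J'$ induce coherent maps $M_J \to M_{J'}$, and each $M_J$ maps canonically to $M$, so we obtain a diagram $P \to \ck$ together with a cocone to $M$. A routine ``colimits computed in stages'' argument then shows $M = \colim_{J \in P} M_J$: it uses only that $P$ is directed and that $\bigcup_{J \in P} J = I$, the latter holding because each singleton $\{i\}$ lies in $P$.

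The crux, and the step I expect to be the main obstacle, is proving that $P$ is $\lambda$-\emph{directed}, for this is exactly what makes the displayed presentation an admissible witness to $\lambda$-accessibility. Given fewer than $\lambda$ many members $J_s$ of $P$, I would put $X = \bigcup_s J_s$, so that $|X| < \lambda$ by regularity of $\lambda$; the task is to enclose $X$ in a single $\mu$-directed set of size $<\lambda$. I would build this by the usual closure process: set $X_0 = X$ and $X_{\alpha + 1} = X_\alpha \cup \{ b_Y : Y \in [X_\alpha]^{<\mu} \}$, where $b_Y$ is a chosen upper bound of $Y$ in $I$ (available since $I$ is $\mu$-directed), taking unions at limits and stopping at $J := \bigcup_{\alpha < \mu} X_\alpha$. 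Regularity of $\mu$ guarantees that every subset of $J$ of size $<\mu$ already appears in some $X_\alpha$ and hence has an upper bound inside $J$, so $J$ is $\mu$-directed. The size bound is precisely where $\mu$-closedness of $\lambda$ enters: from $|X_\alpha| < \lambda$ we get $|X_{\alpha+1}| \le |X_\alpha|^{<\mu} < \lambda$, and since $\mu < \lambda$ with $\lambda$ regular, the union of these $\mu$ many stages stays below $\lambda$. Thus $J \in P$ bounds all the $J_s$. I want to stress that this is exactly why the theorem trades $\theta$-directed colimits for the weaker $\mu$-directed ones together with $\mu$-closedness rather than $\theta$-closedness: one cannot in general close a $<\lambda$-sized set under upper bounds of its $<\theta$-subsets while keeping it small, but $\mu$-closedness makes the analogous closure under $<\mu$-subsets work, and the assumption that $\ck$ has $\mu$-directed colimits is what lets the resulting restricted diagrams have colimits at all.
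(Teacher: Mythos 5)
Your proof is correct and follows essentially the same route as the paper's: write the object as a $\theta$-directed colimit of $\theta$-presentable objects, pass to the poset of $\mu$-directed subdiagrams of size $<\lambda$, observe their colimits are $\lambda$-presentable by Theorem \ref{colimit-size}, and use $\mu$-closedness of $\lambda$ (via the transfinite closure argument you spell out) to see this poset is $\lambda$-directed. The only difference is presentational: you verify the "only a set of $\lambda$-presentable objects" clause up front by fixing the set $S'$ of representatives, whereas the paper deduces it at the end from Lemma \ref{pres-set}; both are fine.
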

\begin{proof}
  Given an object $A$ of $\ck$, we have to write $A$ as a $\lambda$-directed colimit of $\lambda$-presentable objects. First, by $\theta$-accessibility we know we can write $A$ as a $\theta$-directed colimit of a diagram $D: I \to \ck$ of $\theta$-presentable objects. Now let $J$ be the poset of all $\mu$-directed subsets of $I$ of cardinality strictly less than $\lambda$, ordered by containment. For each $I_0 \in J$, we can use that $\ck$ has $\mu$-directed colimits to look at the colimit $\colim (D \rest I_0)$ of the diagram $D$ restricted to $I_0$. This process induces a new diagram $E: J \to \ck$, where $E_{I_0} = \colim (D \rest I_0)$. Notice that by Theorem \ref{colimit-size}, $E$ consists of $\lambda$-presentable objects. Further, because $\lambda$ is $\mu$-closed, any subset of $I$ of cardinality strictly less than $\lambda$ will be contained in some member of $J$. In particular, $J$ is $\lambda$-directed and $\colim E = \colim D = A$. Thus any object is a $\lambda$-directed colimits of $\lambda$-presentable objects. The argument also shows that each of these $\lambda$-presentable object is a colimit of $\mu$-presentable objects indexed by a poset of cardinality strictly less than $\lambda$. By Lemma \ref{pres-set}, there is only a set of $\mu$-presentable objects, hence only a set of such diagrams up to isomorphism, so there is only a set of $\lambda$-presentable objects.
\end{proof}
\begin{cor}
  Any accessible category is $\lambda$-accessible for a proper class of cardinals $\lambda$. Moreover, any $\theta$-accessible category with directed colimits (in particular any finitely accessible or locally $\theta$-presentable category) is $\lambda$-accessible for \emph{all} regular cardinals $\lambda > \theta$.
\end{cor}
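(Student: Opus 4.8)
The plan is to reduce both assertions to Theorem \ref{raise-index} and do no further real work. For the first assertion, suppose $\ck$ is accessible, so it is $\theta$-accessible for some regular cardinal $\theta$; in particular it has $\theta$-directed colimits by the first clause of Definition \ref{acc-def}. I would then apply Theorem \ref{raise-index} with $\mu = \theta$: for any regular $\theta$-closed cardinal $\lambda \ge \theta$ the hypotheses $\mu \le \theta \le \lambda$ and ``$\ck$ has $\mu$-directed colimits'' hold automatically, so the theorem yields that $\ck$ is $\lambda$-accessible. Since (as noted just before Theorem \ref{raise-index}) there is a proper class of regular $\theta$-closed cardinals — for instance $\left(\lambda_0^{<\theta}\right)^+$ for arbitrarily large infinite $\lambda_0$ — and only a set of cardinals lie below $\theta$, the $\theta$-closed cardinals that are $\ge \theta$ still form a proper class. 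This gives the desired proper class of $\lambda$ for which $\ck$ is $\lambda$-accessible.

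For the second assertion the key observation is that ``has directed colimits'' means ``has $\aleph_0$-directed colimits'', so I would instead apply Theorem \ref{raise-index} with $\mu = \aleph_0$. Let $\ck$ be $\theta$-accessible with directed colimits and let $\lambda > \theta$ be regular. Because $\theta$ is regular we have $\aleph_0 = \mu \le \theta \le \lambda$, and $\ck$ has $\mu$-directed colimits by assumption, so the only thing left to verify is that $\lambda$ is $\aleph_0$-closed. But $\lambda > \theta \ge \aleph_0$ forces $\lambda$ uncountable, and every uncountable cardinal is $\aleph_0$-closed: for infinite $\nu < \lambda$ one has $\nu^{<\aleph_0} = \nu < \lambda$, while for finite $\nu$ one has $\nu^{<\aleph_0} \le \aleph_0 < \lambda$. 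Theorem \ref{raise-index} then gives that $\ck$ is $\lambda$-accessible, for \emph{all} regular $\lambda > \theta$. The parenthetical cases are immediate: a finitely accessible category is the special case $\theta = \aleph_0$, and a locally $\theta$-presentable category is $\theta$-accessible and cocomplete, hence certainly has directed colimits.

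I do not expect a genuine obstacle, as the corollary is bookkeeping on top of Theorem \ref{raise-index}; the only points needing care are conceptual matchings rather than estimates. The first is recognizing that the accessibility index $\theta$ can itself serve as the directedness parameter $\mu$, which is exactly what makes the choice $\mu = \theta$ legitimate in part one. The second is the arithmetic fact that uncountable cardinals are $\aleph_0$-closed, which is precisely what upgrades the conclusion from ``a proper class of $\lambda$'' to ``all regular $\lambda > \theta$'' in the sharper second statement. I would simply double-check that the proper class of $\theta$-closed cardinals produced in part one meets $[\theta, \infty)$ in a proper class, which is clear since deleting a set from a proper class of cardinals leaves a proper class.
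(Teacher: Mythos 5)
Your proof is correct and is exactly the intended argument: the paper states this corollary without proof immediately after Theorem \ref{raise-index}, having already noted in the preceding paragraph both that every uncountable cardinal is $\aleph_0$-closed and that $\left(\lambda_0^{<\mu}\right)^+$ gives a proper class of regular $\mu$-closed cardinals, which are precisely the two facts you invoke. The two instantiations you choose ($\mu = \theta$ for the first claim, $\mu = \aleph_0$ for the second) are the right ones, and the bookkeeping is all in order.
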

\begin{cor}
  For any regular cardinal $\lambda$, an accessible category has only a set, up to isomorphism, of $\lambda$-presentable objects.
\end{cor}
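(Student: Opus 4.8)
The plan is to reduce the statement to Lemma \ref{pres-set} by first passing to a sufficiently large index of accessibility. The point is that $\ck$ need not be $\lambda$-accessible for the particular $\lambda$ we are handed, so we cannot apply Lemma \ref{pres-set} directly; but we can arrange to apply it at a larger regular cardinal. First I would invoke the preceding corollary (itself a consequence of Theorem \ref{raise-index}), which guarantees that $\ck$ is $\lambda'$-accessible for a proper class of regular cardinals $\lambda'$. Since a proper class of cardinals is unbounded, I may fix one such $\lambda'$ with $\lambda' \ge \lambda$.

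The key observation is then a monotonicity of presentability: every $\lambda$-presentable object of $\ck$ is automatically $\lambda'$-presentable. This is exactly the remark recorded just after Definition \ref{pres-def}, namely that $\lambda$-presentability implies $\mu$-presentability for every regular $\mu > \lambda$; the boundary case $\lambda' = \lambda$ is of course trivial. Consequently the $\lambda$-presentable objects of $\ck$ form, up to isomorphism, a subcollection of the $\lambda'$-presentable objects.

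Finally I would apply Lemma \ref{pres-set} to the $\lambda'$-accessible category $\ck$: it yields that there is, up to isomorphism, only a \emph{set} of $\lambda'$-presentable objects. Since a subclass of a set is a set, it follows that there is only a set of $\lambda$-presentable objects up to isomorphism, which is the desired conclusion.

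I do not expect any genuine obstacle here, since the essential work has already been done in Theorem \ref{raise-index} and Lemma \ref{pres-set}. The only step demanding a moment's care is the choice of $\lambda'$: one must check that the proper class of indices of accessibility really contains a \emph{regular} cardinal $\ge \lambda$, so that both the presentability-monotonicity remark and Lemma \ref{pres-set} are in force. This is immediate from unboundedness of a proper class of regular cardinals, so the argument is short.
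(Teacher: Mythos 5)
Your proposal is correct and is essentially identical to the paper's proof: both pass to a regular cardinal $\theta \ge \lambda$ at which the category is accessible (using the corollary of Theorem \ref{raise-index}), invoke the monotonicity of presentability, and conclude via Lemma \ref{pres-set}. No issues.
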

\begin{proof}
  Let $\ck$ be an accessible category and fix $\theta \ge \lambda$ regular such that $\ck$ is $\theta$-accessible. By Lemma \ref{pres-set}, $\ck$ has only a set of $\theta$-presentable objects, hence (because $\lambda$-presentable implies $\theta$-presentable) a set of $\lambda$-presentable objects.
\end{proof}
\begin{remark}
In the proof of Theorem \ref{raise-index}, the hypothesis that $\lambda$ is $\mu$-closed was used to show that for any $\mu$-directed poset $I$, any subset of $I$ of cardinality strictly less than $\lambda$ can be completed to a \emph{$\mu$-directed} subset of cardinality strictly less than $\lambda$. It more generally suffices to assume that for any $\theta < \lambda$, $\cf{[\theta]^{<\mu}} < \lambda$ (recall that $[\theta]^{<\mu}$ is the set of all subsets of $\theta$ of cardinality strictly less than $\mu$, ordered by containment). Following \cite[2.3.1]{makkai-pare}, we will write $\mu \tlt \lambda$ when this holds (one can check the relation $\tlt$ indeed gives a partial order on the regular cardinals). Since we always have that $\lambda \tlt \lambda^+$, it follows, for example, that any $\lambda$-accessible category is also $\lambda^+$-accessible \cite[2.13(2)]{adamek-rosicky}. However, when $\lambda > 2^{<\mu}$, $\lambda$ is $\mu$-closed if and only if $\mu \tlt \lambda$ (because of the equation $\lambda^{<\mu} = 2^{<\mu} \cdot \cf{[\lambda]^{<\mu}}$, well known to set theorists, see \cite[2.5]{internal-sizes-jpaa}). Below $2^{<\mu}$, the behavior of $\cf{[\lambda]^{<\mu}}$ can be somewhat understood through the lens of Shelah's PCF theory \cite{shg, cardarithm}.
\end{remark}
\begin{example}[{\cite[2.11]{adamek-rosicky}}]
  For $\mu$ a regular uncountable cardinal, let $\ck$ be the category of $\mu$-directed posets, with morphisms the order-preserving maps. One can check that $\ck$ is $\mu$-accessible. Let $\lambda > \mu$ be a regular cardinal such that there exists a $\theta < \lambda$ with $\cf{[\theta]^{<\mu}} \ge \lambda$ (so $\theta$ witnesses that $\mu \ntlt \lambda$; take for example $\theta = \beth_{\omega} (\mu)$, $\lambda = \theta^+$, see discussion above). Then $\ck$ is \emph{not} $\lambda$-accessible because the poset $[\theta]^{<\mu}$ is $\mu$-directed, hence an object of $\ck$, but cannot be written as a $\lambda$-directed colimit of $\lambda$-presentable objects. Indeed, suppose it can, and let $D: I \to \ck$ be the corresponding diagram. The images of the colimit maps form a collection $(Y_i)_{i \in I}$ of $\lambda$-presentable subsets of $[\theta]^{<\mu}$, and the $\lambda$-presentable objects are those of cardinality strictly less than $\lambda$, so each $Y_i$ has cardinality strictly less than $\lambda$. Each $\{\alpha\}$, for $\alpha < \theta$, lies in some $Y_i$, and because $I$ is $\lambda$-directed, there must exist $i^\ast \in I$ such that each $\{\alpha\}$ lies in $Y_{i^\ast}$. Because $Y_{i^\ast}$ is $\mu$-directed, it must in fact be cofinal in $[\theta]^{<\mu}$, but we know that $Y_{i^\ast}$ has cardinality strictly less than $\lambda$, contradiction.
\end{example}

Different types of categories have different appropriate notions of functors. For accessible categories, an accessible functors play an important role:

\begin{defin}
  A functor $F: \ck \to \cl$ is \emph{$\lambda$-accessible} if both $\ck$ and $\cl$ are $\lambda$-accessible categories and $F$ preserves $\lambda$-directed colimits. We say that $F$ is \emph{accessible} if it is $\lambda$-accessible for some $\lambda$. 
\end{defin}

One reason accessible functors are important is the \emph{adjoint functor theorem}. In general category theory, Freyd's adjoint functor theorem \cite[18.12]{joy-of-cats} tells us that a functor between complete categories is (left) adjoint if and only if it preserves limits and satisfies a technical ``solution set condition'' that can be quite difficult to check. The statement simplifies when looking at accessible categories (recall that a bicomplete accessible category is just a locally presentable category). We will not look into this direction much further, so we omit the proof.

\begin{thm}[The adjoint functor theorem, {\cite[1.66]{adamek-rosicky}}]
  A functor between two locally presentable categories is adjoint if and only if it preserves limits and is accessible.
\end{thm}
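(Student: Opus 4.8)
The plan is to deduce the theorem from Freyd's general adjoint functor theorem \cite[18.12]{joy-of-cats} (GAFT), reading ``adjoint'' as ``right adjoint,'' i.e.\ as admitting a left adjoint (this is what matches the limit-preservation clause). Write $F : \ck \to \cl$. Since locally presentable categories are bicomplete, $\ck$ is complete, so GAFT applies: $F$ has a left adjoint if and only if it preserves limits and, for every $L \in \cl$, there is a \emph{set} of morphisms $(L \xrightarrow{f_s} F K_s)_{s \in S_L}$ through which every $L \to FK$ factors as $F(h)\circ f_s$. The implication (right adjoint $\Rightarrow$ preserves limits) is standard, so the two things to prove are: (a) a right adjoint $F$ is accessible; (b) a limit-preserving accessible $F$ satisfies the solution-set condition.

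For (b) --- the payoff of the theorem --- I would fix $L \in \cl$ and choose a regular $\lambda$ with $L$ being $\lambda$-presentable, $\ck$ being $\lambda$-accessible, and $F$ being $\lambda$-accessible (possible since $L$ is presentable by Theorem~\ref{colimit-size}, and each of $\ck$ and $F$ remains $\lambda$-accessible for arbitrarily large regular $\lambda$ by Theorem~\ref{raise-index}). Let $S_L$ be a representative set of $\lambda$-presentable objects of $\ck$. Given any $f : L \to FK$, write $K = \colim_i K_i$ as a $\lambda$-directed colimit of $\lambda$-presentable objects; since $F$ is $\lambda$-accessible, $FK = \colim_i FK_i$, and since $L$ is $\lambda$-presentable, $f$ factors through some colimit leg $FK_i \to FK$. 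Replacing $K_i$ by an isomorph in $S_L$ (and absorbing the isomorphism, which $F$ preserves), this exhibits $f$ as $F(h)\circ f_s$ with $f_s : L \to FK_s$ and $K_s \in S_L$. As $\cl$ is locally small and $S_L$ is a set, the relevant $f_s$ form a set, giving the solution-set condition, and GAFT then yields a left adjoint.

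For (a), the subtle point is that right adjoints need not preserve colimits, so accessibility is not automatic; the trick is to transport the problem across the adjunction to the \emph{left} adjoint $G \dashv F$. Let $\cg$ be a representative set of $\mu$-presentable objects of $\cl$, where $\cl$ is locally $\mu$-presentable; then $\cg$ is a strong generator. Each $GA$ (for $A \in \cg$) is presentable by Theorem~\ref{colimit-size}, so I choose a single regular $\lambda \ge \mu$ with $\ck$ and $\cl$ both $\lambda$-accessible and every $GA$ ($A \in \cg$) $\lambda$-presentable. Given a $\lambda$-directed colimit $X = \colim_i X_i$ in $\ck$, let $u : \colim_i FX_i \to FX$ be the canonical comparison. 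For each $A \in \cg$, using the adjunction isomorphisms together with $\lambda$-presentability of $A$ and of $GA$,
\[
  \Hom_\cl(A, \textstyle\colim_i FX_i) \cong \colim_i \Hom_\cl(A, FX_i) \cong \colim_i \Hom_\ck(GA, X_i) \cong \Hom_\ck(GA, X) \cong \Hom_\cl(A, FX),
\]
and this composite is $\Hom_\cl(A, u)$. Thus $\Hom_\cl(A, u)$ is a bijection for every $A$ in the strong generator $\cg$, whence $u$ is an isomorphism and $F$ preserves $\lambda$-directed colimits, i.e.\ $F$ is $\lambda$-accessible.

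The main obstacle is exactly step (a): seeing that a right adjoint between locally presentable categories must be accessible despite not preserving colimits in general. The resolution above hinges on two facts that must be set up carefully --- that the $\mu$-presentable objects form a strong generator of a locally $\mu$-presentable category (so that inducing bijections on their hom-functors suffices to conclude $u$ is invertible), and that the left adjoint $G$ sends this set of generators to presentable objects, which is what lets a single $\lambda$ simultaneously control $\ck$, $\cl$, and the images $GA$. Once (a) is in hand, (b) together with GAFT makes the equivalence essentially formal.
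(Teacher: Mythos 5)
Your proof is correct. The paper deliberately omits the proof of this theorem and simply defers to Adámek--Rosický, so there is no argument of the paper's to compare against; your two-step argument --- getting the solution-set condition from accessibility of $F$ plus presentability of $L$ and then invoking Freyd's theorem, and transporting accessibility of the right adjoint across the adjunction using that the $\mu$-presentable objects of $\cl$ form a strong generator and that the left adjoint sends them to presentable objects --- is exactly the standard one. The only point to make fully explicit is that your use of ``strong generator'' requires that bijectivity (not mere joint faithfulness) of $\Hom(A,u)$ for all $A$ in the generating set forces $u$ to be an isomorphism; this does hold for the $\mu$-presentables of a locally $\mu$-presentable category (injectivity makes $u$ a monomorphism and surjectivity rules out its being a proper subobject, or one can note that these objects are in fact dense), so the argument goes through.
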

\begin{remark}
  It is also true that any left or right adjoint functor between accessible categories is accessible \cite[2.23]{adamek-rosicky}.
\end{remark}

A question we will be interested in is how a given functor interacts with sizes: for a regular cardinal $\lambda$, we say that a functor $F$ \emph{preserve $\lambda$-presentable objects} if $F A$ is $\lambda$-presentable whenever $A$ is $\lambda$-presentable. Accessible functors are useful because they preserve certain sizes:

\begin{thm}[The uniformization theorem; {\cite[2.19]{adamek-rosicky}}]\label{unif-thm}
  For any accessible functor $F$, there exists a proper class of regular cardinals $\lambda$ such that $F$ is $\lambda$-accessible and preserves $\lambda$-presentable objects.
\end{thm}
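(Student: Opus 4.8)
The plan is to fix a regular cardinal $\mu$ witnessing that $F$ is accessible, so that both $\ck$ and $\cl$ are $\mu$-accessible and $F$ preserves all $\mu$-directed colimits, and then to show that every sufficiently large regular $\mu$-closed cardinal $\lambda$ does the job. The $\lambda$-accessibility half is almost free: by Theorem \ref{raise-index} (with $\theta = \mu$), both $\ck$ and $\cl$ are $\lambda$-accessible for every $\mu$-closed $\lambda \ge \mu$, and since any $\lambda$-directed colimit is in particular $\mu$-directed, $F$ automatically preserves $\lambda$-directed colimits. So $F$ is $\lambda$-accessible for each such $\lambda$, and there is a proper class of them. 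The real content is preservation of $\lambda$-presentable objects, and the idea is to reduce an arbitrary $\lambda$-presentable object to the $\mu$-presentable building blocks that $F$ already controls.

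To set up that control, I would first use Lemma \ref{pres-set}: since $\ck$ is $\mu$-accessible, there is, up to isomorphism, only a \emph{set} $S$ of $\mu$-presentable objects. Each $FA$ with $A \in S$ is presentable in $\cl$ by Theorem \ref{colimit-size}, so I may pick a single regular cardinal $\lambda_0 > \mu$ such that $FA$ is $\lambda_0$-presentable for every $A \in S$; as $F$ preserves isomorphisms, $FB$ is then $\lambda_0$-presentable for \emph{every} $\mu$-presentable object $B$. I now restrict attention to regular $\mu$-closed cardinals $\lambda \ge \lambda_0$, still a proper class.

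The key structural step reuses the construction from the proof of Theorem \ref{raise-index}. Given a $\lambda$-presentable object $A$ of $\ck$, write $A = \colim D$ for a $\mu$-directed diagram $D : I \to \ck$ of $\mu$-presentable objects. Letting $J$ be the poset of $\mu$-directed subsets of $I$ of cardinality strictly less than $\lambda$ and setting $E_{I_0} = \colim (D \rest I_0)$, the $\mu$-closedness of $\lambda$ makes $J$ a $\lambda$-directed poset with $\colim E = A$, and each $E_{I_0}$ is $\lambda$-presentable. Since $A$ is $\lambda$-presentable and is a $\lambda$-directed colimit of the $E_{I_0}$, Remark \ref{retract-rmk}(\ref{retract-rmk-3}) gives that $A$ is a retract of some $E_{I_0}$. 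Now apply $F$: because $E_{I_0}$ is a $\mu$-directed colimit of the $\mu$-presentable objects $D_i$ ($i \in I_0$) and $F$ preserves $\mu$-directed colimits, $FE_{I_0}$ is the colimit of the diagram $F \circ (D \rest I_0)$, which is $\lambda$-small (since $|I_0| < \lambda$) and consists of $\lambda$-presentable objects (each $FD_i$ is $\lambda_0$-presentable, hence $\lambda$-presentable). By Theorem \ref{colimit-size}, $FE_{I_0}$ is $\lambda$-presentable. Functors preserve section/retraction pairs, so $FA$ is a retract of $FE_{I_0}$, and by Remark \ref{retract-rmk}(\ref{retract-rmk-2}) a retract of a $\lambda$-presentable object is $\lambda$-presentable; thus $FA$ is $\lambda$-presentable, as desired.

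The main obstacle is precisely this last structural reduction: expressing a general $\lambda$-presentable object — not just one manufactured by the index-raising construction — as a retract of a $\lambda$-small colimit of $\mu$-presentable objects. Everything else (the two retract lemmas, closure of $\lambda$-presentability under $\lambda$-small colimits, and preservation of $\mu$-directed colimits) then slots in mechanically. One point to keep an eye on is that $\lambda_0$ is chosen \emph{once}, uniformly over the set $S$, which is exactly what makes the bound survive across the whole proper class of $\lambda$'s and yields the ``uniform'' conclusion.
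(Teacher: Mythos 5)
Your proposal is correct and follows essentially the same route as the paper's proof: choose a uniform bound $\lambda_0$ using the set of $\mu$-presentable objects, then use the decomposition from the proof of Theorem \ref{raise-index} to exhibit any $\lambda$-presentable $A$ as a retract of a $\lambda$-small $\mu$-directed colimit of $\mu$-presentable objects, and push this through $F$. The only difference is that you spell out the retract-closure step (Remark \ref{retract-rmk}) explicitly where the paper leaves it implicit.
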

\begin{proof}
  Let $F: \ck \to \cl$ be a $\mu$-accessible functor. Up to isomorphism, there is only a set of $\mu$-presentable objects in $\ck$, so there exists a regular cardinal $\mu' \ge \mu$ such that $F A$ is $\mu'$-presentable for every $\mu$-presentable objects $A$. Now let $\lambda \ge \mu'$ be a $\mu$-closed cardinal. Let $A$ be a $\lambda$-presentable object in $\ck$. By the proof of Theorem \ref{raise-index}, $A$ can be written as a $\lambda$-directed colimit of objects that are each $\lambda$-small $\mu$-directed colimits of $\mu$-presentable objects. Since $A$ is $\lambda$-presentable, it must be a retract of an object of this diagram (Remark \ref{retract-rmk}(\ref{retract-rmk-3})): hence $A$ is a retract of a $\lambda$-small $\mu$-directed colimits of $\mu$-presentable objects. Since by hypothesis $F$ preserves such colimits and any functor preserves retractions, $F A$ is a retract of a $\lambda$-small $\mu$-directed colimit of $\mu'$-presentable objects in $\cl$. By Theorem \ref{colimit-size}, $F A$ is $\lambda$-presentable.
\end{proof}
\begin{remark}\label{unif-thm-more}
  The proof gives more: if $\mu \le \lambda_0 \le \lambda_1 \le \lambda$ are all regular, $F: \ck \to \cl$, $\ck$ and $\cl$ are both $\lambda_0$-accessible, $F$ preserves $\mu$-directed colimits, $\lambda$ is $\mu$-closed, and $F A$ is $\lambda_1$-presentable whenever $A$ is $\lambda_0$-presentable, then $F$ preserves $\lambda$-presentable objects. In particular, an accessible functor preserving directed colimits will preserve $\lambda$-presentable objects for all high-enough regular $\lambda$.
\end{remark}

Dually, it is natural to ask when a functor \emph{reflects $\lambda$-presentable objects}, i.e.\ when $F A$ $\lambda$-presentable implies $A$ $\lambda$-presentable. A sufficient condition is for $F$ to reflect split epimorphisms (if $F f$ is a split epi -- i.e.\ a retraction -- then $f$ is a split epi), see \cite[3.6]{beke-rosicky}. 

\begin{thm}\label{reflect-pres}
  If $F: \ck \to \cl$ is a $\lambda$-accessible functor reflecting split epimorphisms, then $F$ reflects $\lambda$-presentable objects.
\end{thm}
\begin{proof}
  Assume that $F A$ is $\lambda$-presentable. Since $\ck$ is $\lambda$-accessible, $A$ is a the colimit of a $\lambda$-directed diagram $D: I \to \ck$ consisting of $\lambda$-presentable objects, with colimit cocone $(D_i \xrightarrow{d_i} A)_{i \in I}$. Since $F$ preserves $\lambda$-directed colimits, $F A$ is the colimit of $F D$, so as $F A$ is $\lambda$-presentable, Remark \ref{retract-rmk}(\ref{retract-rmk-3}) implies $F d_i$ is a retraction for some $i \in I$. Because $F$ reflects split epis, $d_i$ is a retraction, so $A$ is a retract of the $\lambda$-presentable object $D_i$, hence is $\lambda$-presentable (Remark \ref{retract-rmk}(\ref{retract-rmk-2})).
\end{proof}
\begin{remark}\label{reflect-pres-rmk}
  A functor that reflects isomorphism and whose image contains only monomorphisms will automatically reflect split epimorphisms.
\end{remark}

\begin{example}\label{unif-ex} \
  \begin{enumerate}
  \item Let $\ck$ be a locally presentable category. The binary product functor $F: \ck \to \ck$ sending $A$ to the category-theoretic product $A \times A$ is accessible, because it is adjoint to the diagonal functor. Thus by the uniformization theorem, $F$ preserves $\lambda$-presentable objects for a proper class of $\lambda$. That is, $F$ does not make the product ``too much bigger''.
  \item \cite[3.2(4)]{beke-rosicky} For $\mu$ a regular cardinal, let $F: \Set \to \Set$ send the set $X$ to the set $[X]^{<\mu}$ of subsets of $X$ of cardinality strictly less than $\mu$. This is an accessible functor but, if $\mu$ is uncountable and $\lambda < \lambda^{<\mu}$, $F$ will not preserve $\lambda^+$-presentable objects.
  \item \cite[3.3]{beke-rosicky} Let $F: \Grp \to \Ab$ be the abelianization functor from the category of group to the category of abelian groups. This is a right adjoint functor, so it preserves colimits, and hence by Remark \ref{unif-thm-more} preserves $\lambda$-presentable objects for all regular cardinals $\lambda$. However, if $G$ is a simple group then its abelianization $F (G)$ is the zero group. Thus (since there exists simple groups in all infinite cardinalities) $F$ can make sizes drop. Indeed, one can check that $F$ does not reflect split epimorphisms.
  \item If $\ck$ is locally presentable and $U: \ck \to \Set$ is an accessible functor preserving limits, then by the adjoint functor theorem, $U$ is left adjoint. If we think of $U$ as a forgetful functor, the right adjoint will be the free functor. Since $\ck$ has directed colimits, $U$ will preserve $\lambda$-presentable objects for all high-enough regular cardinals $\lambda$.
  \item\label{unif-ex-aec} Let $\K$ be a $\mu$-AEC, and let $U: \K \to \Set$ be the forgetful universe functor. By definition of a $\mu$-AEC, $U$ preserves $\mu$-directed colimits, hence is accessible. The abstract class axiom ensures that $U$ reflects isomorphisms. Moreover, any morphism in the image of $U$ must be a monomorphism so by Remark \ref{reflect-pres-rmk}, $U$ reflects split epis. Finally, it is easy to check that any $\LS (\K)^+$-presentable object in $\K$ will have cardinality at most $\LS (\K)$ (write the object as the $\LS (\K)^+$-directed colimit of its subobject of cardinality at most $\LS (\K)$). By the uniformization theorem, if $\lambda > \LS (\K)$ is a regular $\mu$-closed cardinal, then $F$ preserves and reflects $\lambda$-presentable objects. This means that $A$ is $\lambda$-presentable in $\K$ if and only if it its universe has cardinality strictly less than $\lambda$. In particular (taking $\mu = \aleph_0$) in an AEC, category-theoretic sizes correspond exactly to cardinalities (above $\LS (\K)$). See \cite[\S 4]{internal-sizes-jpaa} for more on such results.
  \item In the $\aleph_1$-AEC $\K$ of Banach spaces (with subspace inclusions), the universe functor $U$ does \emph{not} preserve $\aleph_1$-presentable objects: an $\aleph_1$-presentable Banach space will not have countable cardinality. This is because $U$ does not preserve $\aleph_0$-directed colimits (even though $\K$ \emph{does} have those colimits, they are not concrete: one \emph{cannot} compute them by taking unions).
  \end{enumerate}
\end{example}

Let's use the uniformization theorem to better understand the relationship between the smallness axiom in $\infty$-AEC (Definition \ref{infty-aec-def}) and the smallness condition in the definition of an accessible category (Definition \ref{acc-def}).

\begin{thm}[{\cite[5.5]{beke-rosicky}},{\cite[7.2]{indep-categ-advances}}]\label{aec-acc}
  Assume that $\K$ satisfies all the axioms of a $\mu$-AEC (Definition \ref{infty-aec-def}), except perhaps for the LST smallness axiom. The following are equivalent:

  \begin{enumerate}
  \item\label{aec-acc-1} $\K$ is accessible.
  \item\label{aec-acc-2} $\K$ satisfies the LST smallness axiom, and hence is a $\mu$-AEC.
  \end{enumerate}
\end{thm}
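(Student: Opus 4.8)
The plan is to prove the two implications separately, using the universe functor $U:\K\to\Set$ as the main bridge. The implication $(\ref{aec-acc-2})\Rightarrow(\ref{aec-acc-1})$ is essentially the accessibility argument already sketched in Remark \ref{aec-categ-prop}: granting the LST axiom, every $M\in\K$ is the $\LS(\K)^+$-directed union of its $\lea$-substructures of universe-cardinality at most $\LS(\K)$. Directedness of this family follows by applying LST to a union of $\leq\LS(\K)$-many such substructures and then invoking the coherence axiom to check that each still sits $\lea$-below the result; these substructures are $\LS(\K)^+$-presentable, there is only a set of them up to isomorphism, and the Tarski--Vaught chain axiom supplies the relevant $\mu$-directed (hence $\LS(\K)^+$-directed) colimits. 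Thus $\K$ is $\LS(\K)^+$-accessible.

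For the converse $(\ref{aec-acc-1})\Rightarrow(\ref{aec-acc-2})$ I would first arrange good parameters. By the Tarski--Vaught chain axiom, $U$ preserves $\mu$-directed colimits (they are concrete unions), so $U$ is an accessible functor. Applying the uniformization theorem (Theorem \ref{unif-thm}) and its refinement (Remark \ref{unif-thm-more}) to $U$, together with the raising-the-index theorem (Theorem \ref{raise-index}), I can fix a regular, $\mu$-closed cardinal $\chi\geq|\tau|+\mu$ such that (i) $\K$ is $\chi$-accessible, and (ii) $U$ preserves $\chi$-presentable objects, i.e.\ every $\chi$-presentable $N\in\K$ has $|UN|<\chi$ (using Example \ref{pres-ex}(\ref{pres-ex-set})). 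Since $\chi$ is regular and $\mu$-closed, $\chi^{<\mu}=\chi$. Unfolding $\chi$-accessibility, and using that $\chi$-directed colimits are $\mu$-directed and hence concrete, every $M\in\K$ is a union $\bigcup_{i\in I}M_i$ along a $\chi$-directed system of $\lea$-substructures $M_i\lea M$ with $|UM_i|<\chi$ (coherence guarantees $M_i\lea M_j$ when $i\le j$). A first consequence, a ``small'' version of LST, is that any $B\subseteq UM$ with $|B|<\chi$ is contained in some $M_i$, by $\chi$-directedness of $I$.

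The crux is to upgrade this to the sharp bound $|A|^{<\mu}+\chi$ for an arbitrary subset $A\subseteq UM$; I would take $\lambda:=\chi$ as the LST number. The main obstacle is that the only systems I am entitled to union are $\mu$-directed (via Tarski--Vaught), yet closing a family of witnesses under full $\chi$-directedness would inflate the cardinality to $|A|^{<\chi}$, which need not equal $|A|^{<\mu}$. The resolution is to close only under $\mu$-directedness. Concretely, set $\kappa:=|A|^{<\mu}+\chi$ (so $\kappa^{<\mu}=\kappa$), let $P:=\{N\in\K: N\lea M,\ |UN|<\chi\}$, choose for each $a\in A$ some $N_a\in P$ with $a\in UN_a$, and build an increasing chain $(\mathcal F_\alpha)_{\alpha<\mu}$ of subfamilies of $P$: start from $\{N_a:a\in A\}$ and, at stage $\alpha$, for every $G\in[\mathcal F_\alpha]^{<\mu}$ apply the small LST to $\bigcup_{N\in G}UN$ (of size $<\chi$) to adjoin a member $N_G\in P$ containing it, where the coherence axiom forces $N\lea N_G$ for each $N\in G$. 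Then $\mathcal F:=\bigcup_{\alpha<\mu}\mathcal F_\alpha$ is $\mu$-directed under $\lea$, and the estimate $|\mathcal F_{\alpha+1}|\leq|\mathcal F_\alpha|^{<\mu}$ keeps $|\mathcal F|\leq\kappa$.

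Finally, viewing $\mathcal F$ as a $\mu$-directed system of $\lea$-substructures of $M$, the Tarski--Vaught chain axiom produces $M_0:=\bigcup_{N\in\mathcal F}N\in\K$ with $N\lea M_0$ for all $N\in\mathcal F$ and, since every $N\lea M$, with $M_0\lea M$. By construction $A\subseteq UM_0$ and $|UM_0|\leq|\mathcal F|\cdot\chi\leq\kappa=|A|^{<\mu}+\chi$, so $\chi$ witnesses the LST axiom and $\K$ is a $\mu$-AEC. I expect the only delicate points to be the bookkeeping showing $|\mathcal F|\leq\kappa$ (which turns on $\chi^{<\mu}=\chi$) and the repeated appeal to coherence ensuring that the approximating substructures genuinely sit $\lea$-inside one another rather than merely having nested universes.
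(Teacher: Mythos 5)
Your proposal is correct, and the forward direction $(\ref{aec-acc-2})\Rightarrow(\ref{aec-acc-1})$ matches the paper (both just cite Remark \ref{aec-categ-prop}). For the crux $(\ref{aec-acc-1})\Rightarrow(\ref{aec-acc-2})$, both you and the paper begin the same way, applying the uniformization theorem to the universe functor $U$ to get a regular $\theta$ (your $\chi$) for which $\theta$-presentable objects of $\K$ have small universes; but the final step diverges. The paper avoids any closure construction by letting the accessibility index depend on the set $A$: it sets $\lambda:=\left(\left(\theta+|A|\right)^{<\mu}\right)^+$, invokes Remark \ref{unif-thm-more} to see that $\K$ is $\lambda$-accessible with $U$ preserving $\lambda$-presentables, and then uses $\lambda$-directedness to find a \emph{single} piece $M_i$ of the decomposition with $A\subseteq UM_i$ and $|UM_i|<\lambda$, which immediately gives $\LS(\K)\le\theta^{<\mu}$. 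You instead fix $\chi$ once and for all, extract only the ``small'' LST statement for subsets of size $<\chi$, and then run a length-$\mu$ transfinite closure argument (using coherence to nest the witnesses and the Tarski--Vaught axiom to take the $\mu$-directed union), with the cardinal bookkeeping resting on $\chi^{<\mu}=\chi$ and $(|A|^{<\mu})^{<\mu}=|A|^{<\mu}$. Both arguments are sound and yield a valid LST witness ($\chi$ for you, $\theta^{<\mu}$ for the paper); the paper's trick of re-choosing $\lambda$ per $A$ is shorter and pushes all the combinatorics into Theorem \ref{raise-index}, while your version is the classical model-theoretic downward L\"owenheim--Skolem closure made explicit, at the cost of the extra bookkeeping you correctly identify as the delicate point.
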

\begin{proof}
  We have seen already (Remark \ref{aec-categ-prop}) that (\ref{aec-acc-2}) implies (\ref{aec-acc-1}). Assume now that (\ref{aec-acc-1}) holds: $\K$ is accessible. Then the universe functor $U: \K \to \Set$ is accessible (preservation of $\mu$-directed colimits is just because they are computed the same way in $\K$ and $\Set$ -- this is what the chain axiom says). By the uniformization theorem, we can pick a regular cardinal $\theta \ge \mu + |\tau (\K)|$ such that $U$ is $\theta$-accessible and preserves $\theta$-presentable objects. We will show that $\LS (\K) \le \theta^{<\mu}$. Let $M \in \K$, and let $A \subseteq U M$. Set $\lambda := \left(\left(\theta + |A|\right)^{<\mu}\right)^+$. By Remark \ref{unif-thm-more}, $U$ is $\lambda$-accessible and preserves $\lambda$-presentable objects. In particular, $\K$ is $\lambda$-accessible so we can write $M$ as a $\lambda$-directed colimit (union) of $\lambda$-presentable objects: $M = \bigcup_{i \in I} M_i$. Now $|A| < \lambda$, so by $\lambda$-directedness there must exist $i \in I$ such that $A \subseteq U M_i$. Since $M_i$ is $\lambda$-presentable in $\K$, $U M_i = M_i$ must be $\lambda$-presentable in $\Set$, hence it must have cardinality strictly less than $\lambda$ (see Example \ref{pres-ex}(\ref{pres-ex-set})), as desired.
\end{proof}

We can now sketch part of the proof that any accessible category can be presented as the class of models of an $\Ll_{\infty, \infty}$ sentence. Let us make such a statement precise first: recall that two categories $\ck$ and $\cl$ are \emph{equivalent} if there exists a functor $F: \ck \to \cl$ that is full (surjective on morphisms), faithful (injective on morphisms), and essentially surjective on objects (any object of $\cl$ is isomorphic to an object in the image of $F$). Essential surjectivity (rather than surjectivity) on objects makes equivalence of categories weaker than \emph{isomorphism} of categories, but it is typically the former notion of ``being the same'' that is used in category theory\footnote{For example, the categories of finite dimensional vector spaces over $\mathbb{R}$ and of real matrices (where the objects are natural numbers and the morphisms matrices with the right dimension, with composition defined by matrix multiplication) are equivalent but not isomorphic, see \cite[3.35(2)]{joy-of-cats}.}. For $\infty \ge \lambda \ge \mu$, let us then define a category to be \emph{$(\lambda, \mu)$-elementary} if it is equivalent to a category of the form $\Mod (T)$, for $T$ a theory in $\Ll_{\lambda, \mu}$. We will see that \emph{any} $\lambda$-accessible category is $(\infty, \lambda)$-elementary. The proof proceeds in two steps. First, the category is embedded into a category of structures, and second this category of structures is axiomatized. We will only look at the first step (which is sufficient if one only cares about $\lambda$-AECs). The main idea is to represent each object using its Hom functor. The second step will be proven in the very special case of universal classes.

\begin{thm}[{\cite[4.8]{mu-aec-jpaa}}]\label{acc-to-aec}
  If $\ck$ is a $\lambda$-accessible category, then there is a (finitary) vocabulary $\tau$ and an embedding\footnote{That is, a functor which is faithful and injective on objects.} $E: \ck \rightarrow \Str (\tau)$ which is full and preserves $\lambda$-directed colimits. In particular, if in addition all the morphisms of $\ck$ are monomorphisms, $\ck$ is equivalent to a $\lambda$-AEC.
\end{thm}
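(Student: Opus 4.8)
The plan is to build the \emph{canonical structure functor} $E$ out of Hom-sets into $\ck$, following the Yoneda-style idea mentioned just before the statement. By Lemma \ref{pres-set}, fix a \emph{set} $\ca$ of representatives, up to isomorphism, of the $\lambda$-presentable objects of $\ck$. Let $\tau$ be the finitary vocabulary with a unary predicate $P_A$ for each $A \in \ca$ and a unary function symbol $F_f$ for each morphism $f \colon A \to B$ between members of $\ca$; there are only set-many of each since $\ck$ is locally small. For an object $K$ I would set the universe of $E(K)$ to be the disjoint union $\coprod_{A \in \ca} \ck(A, K)$, interpret $P_A$ as the ``sort'' $\ck(A,K)$, and interpret $F_f$ (for $f\colon A \to B$) as precomposition $g \mapsto g f$ sending $\ck(B,K)$ into $\ck(A,K)$ (and as the identity off $P_B$, to keep it total). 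On a morphism $h \colon K \to K'$, let $E(h)$ act by postcomposition $g \mapsto h g$. Functoriality and the homomorphism property are then immediate diagram chases, the latter using associativity $(hg)f = h(gf)$ to see that $E(h)$ commutes with each $F_f$.

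Next I would verify the three substantive properties. \emph{Preservation of $\lambda$-directed colimits} is essentially a restatement of Definition \ref{pres-def}: if $K = \colim_i K_i$ is $\lambda$-directed, then $\ck(A, K) = \colim_i \ck(A, K_i)$ for each $A \in \ca$ because $A$ is $\lambda$-presentable, and directed colimits of sets are unions; assembling these sort by sort shows $E(K) = \bigcup_i E(K_i)$ is the \emph{concrete} union, which is exactly how $\lambda$-directed colimits are computed in $\Str(\tau)$ (Example \ref{logical-ex}). \emph{Faithfulness} uses that the cocone presenting $K$ as a $\lambda$-directed colimit of objects of $\ca$ is jointly epic: if $h_1 \neq h_2 \colon K \to K'$ they must differ after composing with some colimit leg $d_i \colon D_i \to K$, and $d_i$ is then an element of sort $P_{D_i}$ on which $E(h_1)$ and $E(h_2)$ disagree. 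Injectivity on objects is a routine bookkeeping matter (tag the universes, or pass to a skeleton) and is not the mathematical point.

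The main obstacle is \emph{fullness}, and it is where $\lambda$-presentability does the real work. Given a $\tau$-homomorphism $t \colon E(K) \to E(K')$, I must produce $h \colon K \to K'$ with $E(h) = t$. Write $K$ as a $\lambda$-directed colimit $\colim_i D_i$ of objects $D_i \in \ca$ with legs $d_i$ and transition maps $\delta_{ij}\colon D_i \to D_j$. The key observation is that $\bigl(t(d_i)\colon D_i \to K'\bigr)_i$ is a \emph{cocone}: each $\delta_{ij}$ corresponds to a function symbol, so the relation $F_{\delta_{ij}}(d_j) = d_i$ gives, after applying $t$, the relation $F_{\delta_{ij}}(t(d_j)) = t(d_i)$, i.e.\ $t(d_j)\,\delta_{ij} = t(d_i)$. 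The universal property of the colimit then yields a unique $h \colon K \to K'$ with $h d_i = t(d_i)$. To see $E(h) = t$ on \emph{every} element $g \in \ck(A,K)$, I factor $g = d_i\, g'$ through some leg (possible precisely because $A$ is $\lambda$-presentable), recognize $g = F_{g'}(d_i)$, and compute $t(g) = F_{g'}(t(d_i)) = t(d_i)\, g' = h d_i g' = h g = E(h)(g)$; since every element lies in some sort, $t = E(h)$.

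For the final clause, suppose all morphisms of $\ck$ are monomorphisms. Then each $E(h)\colon g \mapsto hg$ is injective, so the replete image $\K$ of $E$ consists of $\tau$-structures with all connecting maps injective homomorphisms; I would declare $M \lea N$ when $M$ is a $\tau$-substructure of $N$ whose inclusion lies in the image of $E$. Fullness makes this order behave well: it immediately yields the \emph{coherence axiom} (any $\tau$-substructure inclusion between images equals $E(h)$ for some $h$), while concreteness together with preservation of $\lambda$-directed colimits gives the \emph{Tarski--Vaught chain axiom} for $\mu = \lambda$, and the abstract class axiom holds by repleteness. Since $\K$ is accessible, being equivalent to the $\lambda$-accessible $\ck$, the \emph{LST smallness axiom} comes for free from Theorem \ref{aec-acc}. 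Thus $\K$ is a $\lambda$-AEC equivalent to $\ck$, as required.
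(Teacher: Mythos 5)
Your proposal is correct and takes essentially the same route as the paper's own proof: the Yoneda-style functor $E(K)=\Hom(-,K)$ restricted to a set of representatives of the $\lambda$-presentable objects, encoded with unary sort predicates and function symbols for precomposition, with fullness and preservation of $\lambda$-directed colimits coming from $\lambda$-presentability, and the $\lambda$-AEC structure obtained on the replete image via Theorem \ref{aec-acc}. Your explicit cocone argument for fullness just unwinds the paper's appeal to the Yoneda lemma plus writing each object as a $\lambda$-directed colimit of $\lambda$-presentables, so there is no substantive difference.
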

\begin{proof}[Proof sketch]
  Let $\ck_\lambda$ be a small full subcategory of $\ck$ containing representatives of each $\lambda$-presentable object. For each $M \in \ck_\lambda$, let $S_M$ be a unary relation symbol and for each morphism $f$ in $\ck_\lambda$, let $\underline{f}$ be a binary function symbol. The vocabulary $\tau$ will consist of all such $S_M$ and $\underline{f}$. Now map each $M \in \ck$ to the following $\tau$-structure $E M$:

  \begin{enumerate}
  \item Its universe are the morphisms $g: M_0 \rightarrow M$, where $M_0 \in \ck_\lambda$.
  \item For each $M_0 \in \ck_\lambda$, $S_{M_0}^{EM}$ is the set of morphisms $g: M_0 \rightarrow M$.
  \item For each morphism $f: M_0 \rightarrow M_1$ of $\ck_\lambda$, and each $g: M_1 \rightarrow M$, $\underline{f}^{E M} (g) = g f$. When $g \notin S_{M_1}^{EM}$, just let $\underline{f}^{EM} (g) = g$.
  \end{enumerate}

  Map each morphism $f: M \rightarrow N$ to the function $\bar{f} : E M \rightarrow E N$ given by $\bar{f} (g) = fg$. Essentially, $E$ is the functor $E (M) = \Hom (-, M)$ from $\ck$ to $\fct{\ck_\lambda^{op}}{\Set}$. The Yoneda embedding lemma tells us that $E \rest \ck_\lambda$ is full and faithful \cite[6.20]{joy-of-cats}. The definition of a $\lambda$-presentable object also ensures that $E$ preserves $\lambda$-directed colimits. By writing any object as a $\lambda$-directed colimit of $\lambda$-presentable objects, we get that $E$ is full and faithful. To see the ``in particular'' part, consider the smallest isomorphism-closed subcategory $\cl$ of $\Str (\tau)$ that contains $E[\ck]$. This category is equivalent to $\cl$, satisfies the abstract class axiom, has concrete $\lambda$-directed colimits, and (trivially, because the morphisms are homomorphisms) satisfies the coherence axiom. Since $\ck$ is accessible and $E$ is full and faithful, $E[\ck]$ also is accessible, and hence $\cl$ is accessible. Now apply Theorem \ref{aec-acc}.
\end{proof}
\begin{cor}[{\cite[5.35]{adamek-rosicky}}]\label{acc-to-log}
  Any $\lambda$-accessible category is $(\infty, \lambda)$-elementary. In particular, a category is accessible if and only if it is $(\infty, \infty)$-elementary.
\end{cor}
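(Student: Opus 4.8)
The plan is to build on Theorem \ref{acc-to-aec} and carry out the axiomatization step deferred there. That theorem furnishes a full, faithful embedding $E \colon \ck \to \Str(\tau)$, injective on objects and preserving $\lambda$-directed colimits, with $\tau$ finitary; let $\cl$ be the smallest isomorphism-closed (replete) subcategory of $\Str(\tau)$ containing $E[\ck]$, so that $\ck \simeq \cl$. Since $E$ is full and faithful, the $\ck$-morphisms correspond bijectively to the $\tau$-homomorphisms between objects of $E[\ck]$. Hence if I can produce a basic theory $T \subseteq \Ll_{\infty,\lambda}$ with $\Mod(T) = \cl$ as full subcategories of $\Str(\tau)$, then $\ck \simeq \Mod(T)$ is $(\infty,\lambda)$-elementary and the first assertion follows.

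First I would record the basic axioms forcing a $\tau$-structure $N$ to be a presheaf on $\ck_\lambda$. Recalling that in $E(M)$ the predicate $S_{M_0}$ marks the morphisms out of $M_0 \in \ck_\lambda$ and $\underline{f}$ implements precomposition, these axioms assert: the $S_{M_0}$ partition the universe (the infinitary disjunction $(\forall x)\bigvee_{M_0 \in \ck_\lambda} S_{M_0}(x)$ together with pairwise disjointness -- it is precisely this set-indexed disjunction over the small category $\ck_\lambda$ that forces us into an infinitary logic); each $\underline{f}$ for $f \colon M_0 \to M_1$ maps $S_{M_1}$ into $S_{M_0}$; and $\underline{\id}_{M_0}$ acts as the identity while $\underline{g}\,\underline{f} = \underline{gf}$ on the appropriate sort. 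Each axiom has the basic shape $(\forall \bx)(\phi \to \psi)$, and a model of them is exactly a functor $F_N \colon \ck_\lambda^{\op} \to \Set$.

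The main obstacle is to isolate, among such presheaves, exactly those isomorphic to some $\Hom(-,M)$. The conceptual input is that a $\lambda$-accessible category is the free completion of $\ck_\lambda$ under $\lambda$-directed colimits, so the representable-type presheaves are precisely the $\lambda$-flat ones, that is, those whose category of elements is $\lambda$-filtered. I would encode $\lambda$-filteredness by further basic $\Ll_{\infty,\lambda}$ sentences: any fewer-than-$\lambda$ family of elements factors through a single element via $\ck_\lambda$-morphisms into a common codomain (an existential block of length $<\lambda$ whose witnessing morphisms are selected by a set-indexed disjunction), and any coincidence $\underline{f}(x) = \underline{g}(x)$ of a parallel pair is equalized by a further morphism. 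The crux is to verify these cut out exactly $\cl$: necessity is a direct computation inside $E(M)$, whereas sufficiency requires showing that a $\lambda$-flat $F_N$ is representable -- one takes $M$ to be the colimit in $\ck$ of the (now $\lambda$-filtered) category of elements of $F_N$, which exists because $\ck$ has $\lambda$-directed colimits, and then checks $F_N \cong \Hom(-,M)$ using the $\lambda$-presentability of the objects of $\ck_\lambda$. This representability statement -- essentially $\ck \simeq \operatorname{Ind}_\lambda(\ck_\lambda)$ -- is where the real work lies.

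The ``in particular'' is then immediate. If $\ck$ is accessible it is $\lambda$-accessible for some regular $\lambda$, hence $(\infty,\lambda)$-elementary and a fortiori $(\infty,\infty)$-elementary, since $\Ll_{\infty,\lambda} \subseteq \Ll_{\infty,\infty}$. Conversely, a category of models of a basic $\Ll_{\infty,\infty}$ theory is accessible: every sentence of such a theory $T$ lies in some $\Ll_{\infty,\kappa}$ and $T$ is a set, so a regular $\lambda$ above the supremum of these $\kappa$ places all of $T$ inside $\Ll_{\infty,\lambda}$, and the second item of Example \ref{logical-ex} then gives that $\Mod(T)$ is accessible with $\lambda$-directed colimits. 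Note that it is essential here that the realizing theory be taken basic -- which is exactly what the forward direction produces -- since for homomorphisms the models of an arbitrary $\Ll_{\infty,\infty}$ sentence need not form an accessible category.
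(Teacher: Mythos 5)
Your proposal is correct and follows the same route as the paper: reduce via Theorem \ref{acc-to-aec} to axiomatizing the essential image of the $\Hom$-embedding into $\Str(\tau)$, then cut that image out with a basic $\Ll_{\infty,\lambda}$ theory. The paper deliberately omits the second step, deferring to the cited \cite[5.35]{adamek-rosicky}, and your axiomatization (functoriality axioms plus $\lambda$-filteredness of the category of elements, with representability of $\lambda$-flat presheaves coming from $\ck \simeq \operatorname{Ind}_\lambda(\ck_\lambda)$) is exactly how that reference carries it out.
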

\begin{proof}
  We first apply Theorem \ref{acc-to-aec} to reduce the problem to axiomatizing a class of structures, and then axiomatize this class (we will not explain how here).
\end{proof}

\begin{example}\label{acc-to-log-ex} \
  \begin{enumerate}
  \item For $\lambda$ a regular cardinal, we have seen that the AEC of all sets of cardinality at least $\lambda$ is not $\lambda$-accessible. In fact, this AEC is not even $(\infty, \lambda)$-elementary but the proof is not trivial, see \cite{henry-aec-uncountable-jsl}.
  \item\label{toy-quasi-ex} \cite[\S4]{logic-intersection-bpas} Consider the following AEC, $\K$, sometimes called the \emph{toy quasiminimal class}: the vocabulary contains a single binary relation. The objects of $\K$ are the equivalence relations with countably infinite classes. The ordering says that equivalence classes do not grow. It is not difficult to see that $\K$ is finitely accessible: an object is $\lambda$-presentable if and only if the relation has fewer than $\lambda$-many classes, and every equivalence relation is the directed colimits of its restrictions to finitely-many classes. By Corollary \ref{acc-to-log}, $\K$ is $(\infty, \omega)$-elementary, hence is equivalent to the category of models of an $\Ll_{\infty, \omega}$-theory. This latter category is obtained, roughly speaking, by collapsing each equivalence class to a point. However it is well known that $\K$ itself is \emph{not} the category of models of a basic $\Ll_{\infty, \omega}$ theory. In fact, it is not \emph{finitary} in the sense of Hyttinen-Kesälä \cite{finitary-aec}: roughly, one \emph{cannot} figure out whether a map is a morphism by checking finitely-many points at a time (like we would be able to do for homomorphisms in a finitary vocabulary). This shows that the concept of being a finitary AEC is \emph{not} invariant under equivalence of categories.
  \item\label{inter-ex} An $\infty$-AEC $\K$ has \emph{intersections} if for any $M \in \K$, and any non-empty collection $\{M_i : i \in I\}$ of $\lea$-substructures of $M$, $\bigcap_{i \in I} M_i$ induces a $\lea$-substructure of $M$. Generalizing the previous example, one can show that any $\lambda$-AEC with intersections is a $\lambda$-accessible category \cite[3.3]{multipres-pams}, hence $(\infty, \lambda)$-elementary. In fact, as the definition makes apparent, $\lambda$-AECs with intersections correspond exactly to the $\lambda$-accessible categories with wide pullbacks (and all morphisms monos), see \cite[5.7]{multipres-pams}. This gives a clear sense in which this class of AECs is natural, and less complex than general AECs. See \cite[\S2]{ap-universal-apal}, \cite{logic-intersection-bpas}, or \cite[\S5]{internal-sizes-jpaa} for more on AECs with intersections.
  \end{enumerate} 
\end{example}

As a consolation prize for not proving the axiomatizability part of Corollary \ref{acc-to-log}, let us prove it for a simpler framework: that of universal classes.

\begin{defin}\label{univ-def}
  For $\mu$ a regular cardinal, a \emph{$\mu$-universal class} is a $\mu$-AEC $\K$ such that $\lea$ is just the $\tau (\K)$-substructure relation, and moreover if $M \in \K$ and $M_0 \subseteq M$, then $M_0 \in \K$. Said another way, a $\mu$-universal class is simply a class of structures in a fixed $\mu$-ary vocabulary that is closed under isomorphisms, substructures, and $\mu$-directed unions (we identify the class with the $\mu$-AEC). When $\mu = \aleph_0$, we omit it.
\end{defin}

\begin{thm}[Tarski's presentation theorem; \cite{tarski-th-models-i}, {\cite[2.2]{multipres-pams}}]\label{tarski}
  Let $\mu$ be a regular cardinal and let $K$ be a class of structures in a fixed $\mu$-ary vocabulary $\tau$. The following are equivalent:

  \begin{enumerate}
  \item\label{tarski-1} $K$ is the class of models of a universal $\Ll_{\infty, \mu}$ theory (i.e.\ a theory where each sentence is of the form $(\forall \bx) \psi$, with $\psi$ quantifier-free).
  \item\label{tarski-2} $K$ is a $\mu$-universal class.
  \end{enumerate}
\end{thm}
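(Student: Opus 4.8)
The plan is to prove both implications, the forward one being routine and the reverse one carrying the content. For $(\ref{tarski-1}) \Rightarrow (\ref{tarski-2})$, I would fix a universal theory $T$ with $K = \Mod(T)$ and check the three closure properties of Definition \ref{univ-def} directly. Closure under isomorphisms is immediate. For closure under substructures and under $\mu$-directed unions, the only point is that a quantifier-free formula $\psi(\bx)$ (even an infinitary one, with $|\bx| < \mu$) is \emph{absolute} between a structure and any substructure containing the relevant parameters. Thus if $(\forall \bx)\psi \in T$ and $N \subseteq M \models T$, then $\psi(\ba)$ holds in $N$ iff it holds in $M$ for every $\ba$ from $N$, giving $N \models (\forall \bx)\psi$. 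For a $\mu$-directed union $M = \bigcup_i M_i$ of models of $T$, any tuple $\ba$ from $M$ of length $< \mu$ already lies in some $M_i$ by $\mu$-directedness, where $\psi(\ba)$ holds, so upward absoluteness gives $M \models \psi(\ba)$; hence $M \models T$. This is exactly where the length restriction $|\bx| < \mu$ on universal quantifiers is matched by $\mu$-directedness.

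For the substantive direction $(\ref{tarski-2}) \Rightarrow (\ref{tarski-1})$, I would let $T$ be the set of all universal $\Ll_{\infty,\mu}$-sentences true in every member of $K$; then $K \subseteq \Mod(T)$ is clear, and the task is $\Mod(T) \subseteq K$. Suppose toward a contradiction that $N \models T$ but $N \notin K$. The first key step is a reduction to a \emph{small} witness: writing $N$ as the union of the family of its substructures generated by $< \mu$ elements, this family is $\mu$-directed (as $\mu$ is regular, the union of $< \mu$ generating sets each of size $< \mu$ again has size $< \mu$) and has union $N$. Were all of these in $K$, closure under $\mu$-directed unions would force $N \in K$; so there is a substructure $M_0 = \langle A \rangle \subseteq N$ with $|A| < \mu$ and $M_0 \notin K$. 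The point of this step is that $M_0$ is generated by a tuple of length $< \mu$, which is what will let me quantify over it inside $\Ll_{\infty,\mu}$.

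The second and main step is to manufacture from $M_0$ a universal sentence that separates it from $K$. Enumerating $A = \{a_s : s < \sigma\}$ with $\sigma < \mu$ and letting $\bx = \langle x_s : s < \sigma\rangle$, I would form the (infinitary but quantifier-free) formula $\chi(\bx)$ that is the conjunction of all atomic formulas $\theta(\bx)$ with $M_0 \models \theta(\ba)$ together with all negations $\neg\theta(\bx)$ with $M_0 \models \neg\theta(\ba)$; since every such $\theta$ has free variables among the $< \mu$ variables $\bx$, the formula $\chi$ lies in $\Ll_{\infty,\mu}$, and $\phi_{M_0} := (\forall \bx)\neg\chi(\bx)$ is universal. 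The crux is that $\chi$ pins down $M_0$ up to isomorphism among generated substructures: if some $P \in K$ satisfied $\chi(\bb)$, then the assignment $a_s \mapsto b_s$ would extend along terms to an isomorphism $M_0 \cong \langle \bb\rangle \subseteq P$ (well-definedness and injectivity from the recorded (in)equalities among terms, surjectivity from $\langle \bb\rangle$ being term-generated, and preservation of relations from the recorded atomic facts), whence $M_0 \in K$ by closure under substructures and isomorphisms --- a contradiction. Therefore $\phi_{M_0}$ holds throughout $K$, so $\phi_{M_0} \in T$; but $M_0 \models \chi(\ba)$ and $\chi$ is quantifier-free, so by absoluteness $N \models \chi(\ba)$, i.e.\ $N \not\models \phi_{M_0}$, contradicting $N \models T$. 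Thus $\Mod(T) \subseteq K$ and $K = \Mod(T)$ with $T$ universal. The one place demanding care --- and the main obstacle --- is verifying that $\chi(\bb)$ really forces $a_s \mapsto b_s$ to be an isomorphism onto $\langle \bb\rangle$; this is a term-by-term diagram chase relying on terms in a $\mu$-ary vocabulary involving $< \mu$ variables, and on having recorded \emph{both} the atomic facts and their negations in $\chi$.
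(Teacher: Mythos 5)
Your proposal is correct and is essentially the paper's argument: both directions hinge on the same two points, namely that quantifier-free $\Ll_{\infty,\mu}$-formulas are absolute between a structure and its substructures (so a structure is in $K$ iff all of its $\mu$-generated substructures are, by writing it as the $\mu$-directed union of those substructures), and that the isomorphism type of a $\mu$-generated structure is pinned down by a quantifier-free diagram formula in fewer than $\mu$ variables. The only differences are cosmetic --- the paper packages the \emph{good} isomorphism types into a single sentence $(\forall \bx)\phi(\bx)$ with $\phi$ a disjunction of their diagrams, whereas you exclude each \emph{bad} type by a separate sentence $(\forall\bx)\neg\chi(\bx)$ --- plus the small quibble that ``all universal sentences true in $K$'' is a proper class, so you should restrict $T$ to the set of exclusion sentences $\phi_{M_0}$, one per isomorphism type of $\mu$-generated structure outside $K$ (of which there is only a set).
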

\begin{proof}[Proof sketch]
  The implication (\ref{tarski-1}) implies (\ref{tarski-2}) is easy to check. Assume now that (\ref{tarski-2}) holds. Call $M \in K$ \emph{$\mu$-generated} if $M = \ccl^M (A)$, for some $A \in [U M]^{<\mu}$ (here, $\ccl^M$ denotes the closure of $A$ under the functions of $M$ -- note that such a closure is always a substructure of $M$, hence in $K$ by definition of a $\mu$-universal class). Note that $\mu$-generated is equivalent to $\mu$-presentable \cite[5.7]{internal-sizes-jpaa}, but this will not be needed. Now by listing all the isomorphism types of the $\mu$-generated models in $K$, form a quantifier-free formula $\phi (\bx)$ of $\Ll_{\infty, \mu}$ such that $N \models \phi[\ba]$ if and only if $\ccl^N (\ran(\ba))$ is in $K$. The formula $\psi := (\forall \bx) \phi (\bx)$ axiomatizes $K$. Indeed, if $N \in K$ then $N \models \psi$. Conversely, if $N \models \psi$ then $\{\ccl^N (\ran(\ba)) \mid \ba \in \fct{<\mu}{U N}\}$ is a $\mu$-directed system in $K$ whose union is $N$, hence $N$ is in $K$. This proves (\ref{tarski-1}).
\end{proof}
\begin{remark}
  There is also a category-theoretic characterization of $\mu$-universal classes: they are the $\mu$-accessible categories with all morphisms monos and all connected limits \cite[5.9]{multipres-pams}.
\end{remark}

\begin{example}\label{univ-ex} \
  \begin{enumerate}
  \item The class of all vector spaces over a fixed field $F$ is a universal class.
  \item The class of locally finite groups is a universal class (it is a good exercise to try to write down the universal sentence of \ref{tarski}(\ref{tarski-1}) for this case).
  \item\label{univ-skol-ex} If $T$ is an $\Ll_{\infty, \infty}$ theory, we can ``Skolemize it'' (add functions to pick witnesses of existential sentences) to get a universal class in an expanded vocabulary, whose restriction to the original vocabulary is the class of models of $T$. 
  \item Any $\mu$-universal class is a $\mu$-AEC with intersections (see Example \ref{acc-to-log-ex}(\ref{inter-ex})).    
  \item The AEC of algebraically closed fields (with subfield) is not universal: the rationals form a non-algebraically closed subfield of the complex numbers. This AEC does have intersections, however.
  \item The class of all linear orders, or the class of all graphs, is a universal class.
  \end{enumerate}
\end{example}

Universal classes are in a sense the simplest type of AECs. In fact, a key result is Shelah's presentation theorem \cite[I.1.9]{shelahaecbook}, which says that, given an AEC $\K$, we can find a universal class\footnote{Shelah stated his result in terms of a class of models omitting types, but the proof of Theorem \ref{tarski} shows that any universal class is a type-omitting class: omit the types of $\mu$-generated models outside of $K$.} $\K^\ast$ in an expanded vocabulary whose reduct (i.e.\ restriction to the original vocabulary) to $\K$ is $\K$. In fact, the reduct gives a functor from $\K^\ast$ to $\K$ which is faithful (injective on morphisms), surjective, and preserves directed colimits. This is the motivation for the following generalization of Shelah's presentation theorem to accessible categories with all morphisms monos.

\begin{thm}[Shelah's presentation theorem, categorical version {\cite[4.4]{internal-improved-v3-toappear}}]\label{shelah-pres}
  If $\cl$ is an accessible category with $\mu$-directed colimits and all morphisms monos, then there exists a $\mu$-universal class $\K$ and a faithful essentially surjective functor $H: \K \to \cl$ preserving $\mu$-directed colimits.
\end{thm}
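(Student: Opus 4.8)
The plan is to generalize Shelah's Skolemization argument. Recall that the classical presentation theorem builds, from an AEC $\K'$ with $\LS(\K') = \lambda$, a vocabulary with $\lambda$-many \emph{finitary} function symbols whose closure on a finite tuple names a fixed small $\lea$-substructure; the coherent expansions form a universal class and the reduct is the required functor. Since the only genuinely abstract feature of $\cl$ is that it is a category rather than a class of structures, I would first \emph{concretize} it. By Theorem \ref{acc-to-aec} we may assume $\cl$ is a full, isomorphism-closed subcategory of $\Str(\tau_0)$ for a finitary vocabulary $\tau_0$, in which $\theta$-directed colimits are concrete (unions), where $\theta \ge \mu$ is a fixed regular index of accessibility. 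The abstract $\mu$-directed colimits guaranteed by hypothesis need \emph{not} be concrete (as already in $\Ban_{mono}$, Example \ref{ban-ex}), but this will not matter, because the functor $H$ is allowed to compute them abstractly in $\cl$. With a concrete representation in hand, I can speak of underlying sets, of $\tau_0$-substructures, and of $<\mu$-tuples.

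Next I would expand $\tau_0$ to a \emph{$\mu$-ary} vocabulary $\tau^\ast$ by adjoining a family of function symbols $\{F^i : i < \chi\}$, each of arity strictly less than $\mu$ (but with $\chi$ as large as needed — possibly $\gg \mu$, exactly as the classical construction uses $\lambda$-many finitary symbols). In an expansion $M^\ast$ of some $M \in \cl$, I want the closure of any $<\mu$-tuple $\ba$ under the $F^i$ to be the universe of a designated $\theta$-presentable subobject $N_\ba \le M$ with $\ba$ among its elements. I would then let $\K$ be the class of all expansions satisfying Shelah-style coherence conditions: $\ba$ lies in $N_\ba$; if the entries of $\ba$ lie in $N_\bb$ then $N_\ba \le N_\bb$; and the witnessing $\cl$-morphisms compose coherently. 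By regularity of $\mu$, a $<\mu$-indexed family of $<\mu$-tuples concatenates to a single $<\mu$-tuple, so the subobjects $(N_\ba)_{\ba \in M^\ast}$ form a \emph{$\mu$-directed} system. The point of the coherence conditions is that $\K$ is then closed under isomorphism, under $\mu$-directed unions, and — crucially — under $\tau^\ast$-substructures: a substructure $B$ equals $\bigcup_{\ba \in B} N_\ba$, a $\mu$-directed system of $\theta$-presentable subobjects inheriting a coherent expansion. By Definition \ref{univ-def} this makes $\K$ a $\mu$-universal class (and Theorem \ref{tarski} even exhibits its defining universal theory).

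Finally I would define $H : \K \to \cl$ by sending $B$ to the colimit in $\cl$ of its associated $\mu$-directed diagram $(N_\ba)_{\ba \in B}$, which exists by the standing hypothesis that $\cl$ has $\mu$-directed colimits; a $\K$-embedding induces a morphism of diagrams and hence of colimits. Essential surjectivity would come from choosing, for each $M \in \cl$, a coherent expansion whose subobjects $N_\ba$ exhaust a cofinal part of the canonical $\theta$-directed presentation of $M$, so that $H(M^\ast) \cong M$. Preservation of $\mu$-directed colimits is then formal: a $\mu$-directed union $\bigcup_j B_j$ in $\K$ carries the union of the corresponding diagrams, whose $\cl$-colimit is $\colim_j H(B_j)$. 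Faithfulness follows since a $\K$-morphism is determined by its underlying function and all morphisms of $\cl$ are monos.

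I expect the main obstacle to be the \emph{coherent, uniform choice} of the subobjects $N_\ba$ and of their enumeration by the $F^i$, made simultaneously across all objects of $\cl$ and robust under passage to substructures — precisely the step where Shelah's original argument is most delicate. Two verifications must be watched. First, that every $\tau^\ast$-substructure's system $(N_\ba)$ stays $\mu$-directed with transition maps that are genuine $\cl$-morphisms (so that $H$ lands in $\cl$ and not merely in $\Set$), which is where all morphisms being monos is used. Second, that the $\mu$-directed colimit $\colim_\ba N_\ba$ of a chosen expansion of $M$ really recovers $M$, which requires the $\theta$-presentable subobjects meeting each $<\mu$-tuple to be cofinal in the canonical presentation of $M$. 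Once these are secured, that $\K$ is genuinely a $\mu$-universal class and that $H$ has the three stated properties become bookkeeping.
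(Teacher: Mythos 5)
Your plan is a direct generalization of Shelah's Skolemization argument, and this is genuinely \emph{not} the route the paper takes. The paper's proof first forms the free $\mu$-directed cocompletion $\cl^\ast$ of the $\mu$-presentable objects of $\cl$, checks that the projection $F\colon \cl^\ast\to\cl$ is faithful (this is where ``all morphisms are monos'' enters), essentially surjective, and preserves $\mu$-directed colimits, then uses Corollary \ref{acc-to-log} to present the $\mu$-accessible category $\cl^\ast$ as the models of a basic $\Ll_{\infty,\mu}$ theory, and finally Skolemizes that theory as in Example \ref{univ-ex}(\ref{univ-skol-ex}) to get the $\mu$-universal class. The whole point of this detour through $\cl^\ast$ is to avoid having to make the coherent choice of named subobjects inside the objects of $\cl$ itself; Remark \ref{pres-rmk}(1) explicitly warns that ``when $\mu>\aleph_0$, it is not so easy to imitate Shelah's proof.''

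The step you defer to the end --- the ``coherent, uniform choice of the subobjects $N_\ba$'' --- is therefore not just the delicate part of your argument; it is an unexecuted construction sitting exactly where the direct imitation is known to run into trouble, so as written the proposal has a genuine gap at its core. Concretely: (i) Shelah builds the $N_\ba$ by induction on the \emph{finite} length of $\ba$, dominating finitely many immediate subtuples at each step; for $\mu>\aleph_0$ the induction must pass through limit lengths $\delta<\mu$, or through index sets $\ran(\ba)$ with up to $2^{|\ba|}$-many predecessors, and you must produce a single designated $\theta$-presentable subobject of $M$ receiving all previous choices \emph{as $\cl$-subobjects, compatibly}; the category only supplies $\mu$-directed colimits, and the classical way of upgrading containment of universes to a $\lea$-relation is the AEC coherence axiom, which you only have at level $\theta$, not at level $\mu$. (ii) Even granting the coherent system, your essential surjectivity claim needs $\colim_{\ba} N_\ba\cong M$, where the colimit is the \emph{abstract} $\mu$-directed colimit in $\cl$ (the concrete union is useless here, cf.\ Example \ref{ban-ex}); since the $N_\ba$ are indexed by $<\mu$-tuples, they form a $\mu$-directed but not $\theta$-directed family, and arranging that they are cofinal in the canonical $\theta$-directed presentation of $M$ --- while simultaneously keeping them $\theta$-presentable, enumerable by a fixed family of $<\mu$-ary functions, and monotone --- is a further nontrivial constraint you do not address. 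Either supply these constructions (this is essentially the content of \cite[\S 3--4]{internal-improved-v3-toappear}) or switch to the cocompletion-plus-axiomatization route of the paper.
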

\begin{proof}[Proof sketch]
  First, let $\cl^\ast$ be the category obtained by taking free $\mu$-directed colimits of the $\mu$-presentable objects of $\cl$. This is a $\mu$-accessible category \cite[2.26]{adamek-rosicky}, and the natural ``projection'' functor $F: \cl^\ast \to \cl$ is essentially surjective and preserves directed colimits. Moreover, $F$ is also faithful: given two distinct morphisms $f, g: A \to B$ in $\cl^\ast$, separate them on $\lambda$-presentable objects by finding $f_0, g_0: A_0 \to B_0$ distinct with $A_0$ and $B_0$  $\lambda$-presentable and maps $u: A_0 \to A$, $v: B_0 \to B$ such that $fu = v f_0$, $g u = v g_0$:

  $$
  \xymatrix@=3pc{
    A \ar@<-.5ex>[r]_f \ar@<.5ex>[r]^g & B    \\
    A_0 \ar[u]_{u} \ar@<-.5ex>[r]_{f_0} \ar@<.5ex>[r]^{g_0} & B_0 \ar[u]_v    \\    
    }
    $$

  Since $F$ is faithful on $\lambda$-presentable objects, $F f_0 \neq F g_0$. Moreover, $F v$ is a mono, so it follows that $F f \neq F g$. By Corollary \ref{acc-to-log}, $\cl^\ast$ is $(\infty, \mu)$-elementary, hence without loss of generality is the category of models of a basic $\Ll_{\infty, \mu}$ theory $T$. Skolemizing $T$ (see Example \ref{univ-ex}(\ref{univ-skol-ex})), one obtains a $\mu$-universal class $\K$ so that the reduct functor $G: \K \to \cl^\ast$ is faithful, essentially surjective, and preserves $\mu$-directed colimits. Let $H := F \circ G$.
\end{proof}

\begin{remark}\label{pres-rmk} \
  \begin{enumerate}
  \item Any $\mu$-AEC is an accessible category with $\mu$-directed colimits and all morphisms monos (Remark \ref{aec-categ-prop}), so when $\mu = \aleph_0$ we in particular recover a version of Shelah's original presentation theorem. Note that, when $\mu > \aleph_0$, it is not so easy to imitate Shelah's proof, see \cite[\S3]{internal-improved-v3-toappear} for a discussion.
  \item\label{pres-morley-rmk} In the case $\mu = \aleph_0$, a classical result of model theory, \emph{Morley's omitting type theorem}  tells us in particular (in categorical language, see \cite[3.4.1]{makkai-pare}) that for any $(\infty, \omega)$-elementary category $\ck$ (hence by Tarski's presentation theorem in particular for a universal class), there is a faithful functor $\Lin \to \K$ preserving directed colimits (where $\Lin$ is the category of linear orders and order-preserving maps). The functor constructed by the model-theoretic process is called the \emph{Ehrenfeucht-Mostowski (EM) functor}, and its images are called \emph{Ehrenfeucht-Mostowski models}. They can be described as the models generated by order-indiscernibles satisfying a fixed collection of quantifier-free types (an \emph{EM blueprint}). In fact, from any faithful functor $\Lin \to \ck$ preserving directed colimits, one can recover such a blueprint \cite[5.5]{boney-er-v2}. Combining Morley's theorem with Tarski's presentation theorem and  the categorical version of Shelah's presentation theorem, we obtain that for any large accessible category $\cl$ with directed colimits and all morphisms monos there is a faithful functor $\Lin \to \cl$ preserving directed colimits. Notice that this works not only for AECs but also for continuous classes (such as Banach spaces). 
  \end{enumerate}
\end{remark}

\section{Set-theoretic topics}\label{set-sec}

\subsection{Cardinality vs presentability}\label{set-size-sec}

Is the behavior of presentability, the notion of size defined in Definition \ref{pres-def}, similar to the behavior of cardinality in concrete classes? There are at least two questions one can consider in this direction. First, in the category of sets, a set is $\lambda$-presentable if and only if it has cardinality strictly less than $\lambda$. In particular, the presentability rank of a set is a successor. This happens in all the examples listed in \ref{pres-ex}. Thus one can ask: \emph{in an arbitrary accessible category, are high-enough presentability ranks always successors}?

Second, the Löwenheim-Skolem theorem for first-order logic says that any theory with an infinite model has models of all high-enough cardinalities. Is there a similar version for accessible categories? Since we do not have the compactness theorem, let us restrict ourselves to \emph{large} categories, and let us consider only eventual behavior. Cardinality here is again not the right notion (consider Example \ref{pres-ex}(\ref{pres-ex-hilb}): there are no Hilbert spaces in certain cardinalities), but we can still ask: \emph{does every large accessible category have objects of each high-enough size?} (see Definition \ref{size-def}). Following \cite[2.4]{beke-rosicky}, let us call an accessible category $\ck$ \emph{LS-accessible} if there exists a cardinal $\theta$ such that $\ck$ has an object of size $\lambda$ for all cardinals $\lambda \ge \theta$. We are then asking whether every large accessible category is LS-accessible.

Both questions turn out to be connected to the \emph{accessibility spectrum}: the set of regular cardinals $\lambda$ such that a given category is $\lambda$-accessible. Regarding the first question, we have:

\begin{thm}[{\cite[5.3]{internal-improved-v3-toappear}}]\label{succ-rank-thm}
  Let $\ck$ be a category and let $\lambda$ be a weakly inaccessible cardinal\footnote{A cardinal is \emph{weakly inaccessible} if it is regular and limit.}. If $\ck$ is $\mu$-accessible for unboundedly-many regular $\mu < \lambda$, then no object of $\ck$ has presentability rank $\lambda$.
\end{thm}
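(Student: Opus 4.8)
The plan is to prove the statement by showing that every $\lambda$-presentable object $A$ of $\ck$ is in fact $\nu$-presentable for some \emph{regular} $\nu<\lambda$. Since the presentability rank is by definition the least regular cardinal witnessing presentability, this immediately yields that no object can have presentability rank exactly $\lambda$. So the whole task reduces to ``pushing the presentability of $A$ strictly below $\lambda$''.

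The key preliminary step is to isolate the set-theoretic input. Using that $\lambda$ is weakly inaccessible together with the hypothesis that the accessibility spectrum of $\ck$ is unbounded below $\lambda$, I would fix a regular cardinal $\mu<\lambda$ such that $\ck$ is $\mu$-accessible \emph{and} $\mu\tlt\lambda$ (that is, $\cf{[\theta]^{<\mu}}<\lambda$ for every $\theta<\lambda$). As recorded in the Remark following Theorem \ref{raise-index}, the relation $\mu\tlt\lambda$ is exactly the condition guaranteeing that every subset of a $\mu$-directed poset of cardinality $<\lambda$ can be completed to a $\mu$-directed subset that is still of cardinality $<\lambda$; note that here one genuinely needs $\tlt$ rather than $\mu$-closedness, since for $\lambda$ merely weakly (not strongly) inaccessible the quantity $\theta^{<\mu}$ may already reach $\lambda$.

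With such a $\mu$ fixed, I would run the construction from the proof of Theorem \ref{raise-index}. Write $A$ as a $\mu$-directed colimit $A=\colim D$ of a diagram $D:I\to\ck$ of $\mu$-presentable objects, which is possible by $\mu$-accessibility. Let $J$ be the poset of $\mu$-directed subsets $I_0\subseteq I$ with $|I_0|<\lambda$, ordered by inclusion, and set $E_{I_0}:=\colim(D\rest I_0)$, giving a diagram $E:J\to\ck$. Because $\mu\tlt\lambda$, the poset $J$ is $\lambda$-directed and $\colim E=\colim D=A$; and because each $E_{I_0}$ is a colimit of a diagram of fewer than $\lambda$ many $\mu$-presentable (hence $\lambda$-presentable) objects, Theorem \ref{colimit-size} shows that every $E_{I_0}$ is $\lambda$-presentable. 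Now apply the $\lambda$-presentability of $A$ to the $\lambda$-directed colimit $A=\colim E$: by Remark \ref{retract-rmk}(\ref{retract-rmk-3}), the identity of $A$ factors through a component, so $A$ is a \emph{retract} of $E_{I_0}$ for some $I_0\in J$. Finally, since $\lambda$ is a \emph{limit} cardinal and $|I_0|<\lambda$, I can choose a regular cardinal $\nu$ with $\mu,|I_0|<\nu<\lambda$; then $D\rest I_0$ is a $\nu$-small diagram of $\nu$-presentable objects, so $E_{I_0}$ is $\nu$-presentable by Theorem \ref{colimit-size}, and its retract $A$ is $\nu$-presentable by Remark \ref{retract-rmk}(\ref{retract-rmk-2}). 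Thus $A$ is $\nu$-presentable with $\nu<\lambda$, completing the argument.

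I expect the main obstacle to be the very first step: producing a regular $\mu<\lambda$ that lies in the accessibility spectrum \emph{and} satisfies $\mu\tlt\lambda$. This is precisely where both hypotheses are used in an essential way. The unboundedness of the spectrum is what lets one search for $\mu$ within it, while weak inaccessibility of $\lambda$ is what forces the covering numbers $\cf{[\theta]^{<\mu}}$ (for $\theta<\lambda$) to stay below $\lambda$, so that the $\mu$-directed closures in the construction do not blow up past $\lambda$. Establishing that a suitable $\mu$ always exists is a cardinal-arithmetic (PCF-flavored) lemma, and once it is granted the categorical portion above is entirely routine.
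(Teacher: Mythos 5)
The categorical half of your argument is fine and ends the same way the paper's does (every $\lambda$-presentable object is exhibited as a retract of an object of presentability rank $<\lambda$, and since $\lambda$ is a limit cardinal this drops the rank strictly below $\lambda$). The problem is the step you yourself flag as "the main obstacle": producing a \emph{single} regular $\mu<\lambda$ that lies in the accessibility spectrum \emph{and} satisfies $\mu\tlt\lambda$. You defer this to an unproven "PCF-flavored lemma", but it is the entire content of the theorem in your approach, and it is not a routine fact. The hypothesis only gives you an unbounded set $S$ of regular cardinals in the spectrum below $\lambda$; nothing prevents $S$ from consisting entirely of cardinals $\mu$ for which some $\theta<\lambda$ has $\cf{[\theta]^{<\mu}}\ge\lambda$. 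When $\lambda$ is weakly but not strongly inaccessible, the covering numbers $\cf{[\theta]^{<\mu}}$ are governed by Shelah's PCF theory and are not bounded by $\lambda$ for any obvious ZFC reason (the only cardinal for which $\mu\tlt\lambda$ is automatic is $\mu=\aleph_0$, which need not belong to $S$). Since the theorem is meant to be a ZFC result with no cardinal-arithmetic hypotheses, an argument that funnels everything through a single $\mu\tlt\lambda$ has a genuine gap at its foundation.

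The paper's proof avoids this issue entirely, and the contrast is instructive. Instead of fixing one $\mu$, it uses \emph{all} $\mu\in S$ simultaneously: for each such $\mu$ write $A$ as a $\mu$-directed colimit of a diagram $D^\mu$ of $\mu$-presentable objects, and then glue these decompositions into a single $\lambda$-directed diagram whose objects all have presentability rank strictly below $\lambda$. To check $\lambda$-directedness of the combined diagram one never computes a covering number: given fewer than $\lambda$ many components, the regularity of $\lambda$ together with the unboundedness of $S$ supplies a single $\mu'\in S$ exceeding both the number of components and all of their presentability ranks, and then all of these components factor compatibly through one object of the $\mu'$-directed diagram $D^{\mu'}$, which serves as an upper bound. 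Once $A$ is a $\lambda$-directed colimit of objects of rank $<\lambda$, Remark \ref{retract-rmk} finishes exactly as in your last paragraph. If you want to salvage your route, you would either have to prove the missing combinatorial lemma (which I do not believe is available in ZFC) or switch to this "combine the diagrams" argument, which uses only the regularity of $\lambda$ and the unboundedness of the spectrum.
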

\begin{proof}[Proof idea]
  We show more generally that every object of $\ck$ is a $\lambda$-directed colimits of objects of presentability rank strictly less than $\lambda$. This will imply the result by Remark \ref{retract-rmk}. Let $S$ be the set of regular cardinals $\mu < \lambda$ such that $\ck$ is $\mu$-accessible. Given an object $A$, we know that for each $\mu \in S$, we can write $A$ as a $\mu$-directed colimit of a diagram $D^\mu : I^\mu \to \ck$ consisting of $\mu$-presentable objects. One can then put together the $D^\mu$'s to obtain a new $\lambda$-directed diagram with colimit $A$ and consisting of objects of presentability rank strictly less than $\lambda$.
\end{proof}
\begin{cor}\label{succ-cor} \
  \begin{enumerate}
  \item\label{succ-cor-1} \cite[4.2]{beke-rosicky} In an accessible category with directed colimits, all high-enough presentability ranks are successors.
  \item\label{suc-cor-2} \cite[3.11]{internal-sizes-jpaa} If the singular cardinal hypothesis (SCH) holds\footnote{The singular cardinal hypothesis is the statement that $\lambda^{\cf{\lambda}} = \lambda^+ + 2^{\cf{\lambda}}$ for every infinite cardinal $\lambda$.}, then in every accessible category, all high-enough presentability ranks are successors.
  \end{enumerate}
\end{cor}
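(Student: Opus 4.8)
The plan is to deduce both statements from Theorem~\ref{succ-rank-thm}. The starting observation is that a presentability rank is, by Definition~\ref{pres-def}, always a \emph{regular} cardinal; hence a presentability rank that fails to be a successor must be a limit cardinal, i.e.\ weakly inaccessible. So in both parts it suffices to produce a cardinal $\theta_0$ such that every weakly inaccessible $\lambda > \theta_0$ fails to be a presentability rank. By Theorem~\ref{succ-rank-thm}, this in turn reduces to showing that for every such $\lambda$, the category $\ck$ is accessible at unboundedly many regular indices below $\lambda$.

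For part~(\ref{succ-cor-1}), fix a regular $\theta_0$ with $\ck$ $\theta_0$-accessible. Since $\ck$ has directed (i.e.\ $\aleph_0$-directed) colimits and every uncountable cardinal is $\aleph_0$-closed, Theorem~\ref{raise-index} (with colimit index $\aleph_0$) shows that $\ck$ is $\nu$-accessible for \emph{every} regular $\nu > \theta_0$. In particular, for any weakly inaccessible $\lambda > \theta_0$ the regular cardinals in the interval $(\theta_0, \lambda)$ form an unbounded family witnessing the hypothesis of Theorem~\ref{succ-rank-thm}. Thus every presentability rank exceeding $\theta_0$ is a successor, and no appeal to SCH is needed.

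For part~(\ref{suc-cor-2}), again fix a regular $\theta_0$ with $\ck$ $\theta_0$-accessible, so that $\ck$ has $\theta_0$-directed colimits. By Theorem~\ref{raise-index} (now with $\mu = \theta = \theta_0$), $\ck$ is $\nu$-accessible for every $\theta_0$-closed regular cardinal $\nu \ge \theta_0$; and, as recalled just before that theorem, $(\eta^{<\theta_0})^+$ is always $\theta_0$-closed and regular. Consequently, for a weakly inaccessible $\lambda$ the required unbounded family of indices of accessibility below $\lambda$ exists precisely when the cardinals $(\eta^{<\theta_0})^+$ with $\eta < \lambda$ stay below $\lambda$, that is, precisely when $\lambda$ itself is $\theta_0$-closed ($\eta^{<\theta_0} < \lambda$ for all $\eta < \lambda$). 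So the whole problem comes down to checking that, under SCH, every weakly inaccessible $\lambda$ with $\lambda > 2^{<\theta_0}$ is $\theta_0$-closed.

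This last point is the crux and the only place SCH enters. Fix such a $\lambda$, take $\eta < \lambda$ and an infinite cardinal $\kappa < \theta_0$, and bound $\eta^\kappa$ using the standard computation of cardinal exponentiation under SCH: if $\eta \le 2^\kappa$ then $\eta^\kappa = 2^\kappa < \lambda$; if $2^\kappa < \eta$ and $\cf{\eta} > \kappa$ then $\eta^\kappa = \eta < \lambda$; and if $2^\kappa < \eta$ and $\cf{\eta} \le \kappa$ then $\eta^\kappa = \eta^+ < \lambda$, where the final inequality uses that $\lambda$ is a limit cardinal (finite exponents give $\eta^\kappa = \eta$ trivially). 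In every case $\eta^\kappa < \lambda$, and since $\theta_0 < \lambda$ and $\lambda$ is regular, $\eta^{<\theta_0} = \sup_{\kappa < \theta_0} \eta^\kappa$, being a supremum of fewer than $\lambda$-many cardinals each below $\lambda$, is itself below $\lambda$. Hence $\lambda$ is $\theta_0$-closed, completing the reduction. The only genuine obstacle is this cardinal-arithmetic step; everything else is an assembly of the ``raising the index'' machinery of Theorem~\ref{raise-index} with the weakly-inaccessible input of Theorem~\ref{succ-rank-thm}.
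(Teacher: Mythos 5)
Your proposal is correct and follows essentially the same route as the paper: both parts reduce to Theorem \ref{succ-rank-thm} by using Theorem \ref{raise-index} to produce unboundedly many indices of accessibility below a weakly inaccessible $\lambda$ (with $\mu = \aleph_0$ in part (\ref{succ-cor-1}), and via the cardinals $(\eta^{<\theta_0})^+$ in part (\ref{suc-cor-2})). The only difference is that you spell out the SCH cardinal-arithmetic computation showing $\eta^{<\theta_0} < \lambda$, which the paper simply cites to Jech.
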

\begin{proof} \
  \begin{enumerate}
  \item By Theorem \ref{raise-index}, $\ck$ is $\mu$-accessible for \emph{all} high-enough regular $\mu$, so the result is immediate from Theorem \ref{succ-rank-thm}.
  \item Assume that $\ck$ is $\mu$-accessible, and let $\lambda > 2^{<\mu}$ be a weakly inaccessible cardinal. Let $\theta_0 < \lambda$ be infinite and let $\theta := \left(\theta_0^{<\mu}\right)^+$. By the SCH hypothesis, $\theta < \lambda$ (see \cite[5.22]{jechbook}) and by Theorem \ref{raise-index}, $\ck$ is $\theta$-accessible. We have shown that $\ck$ is accessible in unboundedly-many regular cardinals below $\lambda$, hence  by Theorem \ref{succ-rank-thm} $\ck$ has no objects of presentability rank $\lambda$.
  \end{enumerate}
\end{proof}
\begin{remark}
  Since SCH holds above a strongly compact cardinal \cite[20.8]{jechbook}, we can replace the SCH assumption by a large cardinal axiom.
\end{remark}

The rough idea here is that if an accessible category is of ``low-enough'' complexity (as measured by its accessibility spectrum), then automatically its presentability ranks will have good behavior. Often additional cardinal arithmetic hypotheses can lower the complexity of the accessibility spectrum, to the point that we can prove results about general accessible categories. Let us now give an example (without proof) of this behavior for the second question above, whether every large accessible category has objects of all high-enough sizes (see Definition \ref{size-def}):

\begin{thm}[{\cite[6.11]{internal-improved-v3-toappear}}]\label{ls-acc-thm}
  Assume SCH. If $\ck$ is a large accessible category with all morphisms monos, and $\lambda$ is a high-enough successor cardinal such that $\ck$ is $\lambda$-accessible, then $\ck$ has an object of presentability rank $\lambda$.
\end{thm}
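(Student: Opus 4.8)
The plan is to reduce the statement to a concrete question about abstract elementary classes, where the Löwenheim--Skolem--Tarski axiom does all the work. Since $\ck$ is accessible with all morphisms monos, fix a regular $\mu$ with $\ck$ being $\mu$-accessible. By Theorem~\ref{acc-to-aec} (applied at $\mu$), $\ck$ is equivalent to a $\mu$-AEC $\K$. Presentability rank (Definition~\ref{pres-def}) and the property of being large are both invariant under equivalence of categories, so it suffices to find an object of $\K$ of presentability rank $\lambda$. Writing $\lambda = \nu^+$, Example~\ref{unif-ex}(\ref{unif-ex-aec}) tells us that for every high-enough regular $\mu$-closed cardinal $\Lambda$ an object of $\K$ is $\Lambda$-presentable if and only if its universe has cardinality $<\Lambda$; hence producing an object of rank $\lambda$ amounts to producing some $M_0 \in \K$ whose universe has cardinality \emph{exactly} $\nu$.

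To obtain such an $M_0$ I would first produce a large model and then cut it down. Because $\K$ is large while, for each regular $\Lambda$, it has only a set of $\Lambda$-presentable objects (Lemma~\ref{pres-set} and its corollary), the presentability ranks occurring in $\K$ are unbounded in the cardinals; via the rank $=$ cardinality${}^+$ correspondence this yields models of unboundedly large cardinality. So fix $M \in \K$ with $|U M| \ge \nu$ and choose $A \subseteq U M$ with $|A| = \nu$. The LST smallness axiom (Definition~\ref{infty-aec-def}) then supplies $M_0 \lea M$ with $A \subseteq U M_0$ and $|U M_0| \le |A|^{<\mu} + \LS(\K) = \nu^{<\mu} + \LS(\K)$, an object of $\K$ containing $A$, so in particular with $|U M_0| \ge \nu$.

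Everything now hinges on the cardinal arithmetic: I need $|U M_0| = \nu$, that is, $\nu^{<\mu} = \nu$ and $\LS(\K) \le \nu$. This is exactly where SCH and the hypotheses on $\lambda$ enter. Under SCH the accessibility spectrum of $\ck$ consists, above a threshold, of precisely the $\mu$-closed cardinals (for $\lambda > 2^{<\mu}$ the relation $\mu \tlt \lambda$ governing raising the index coincides with $\mu$-closedness; see the remark after Theorem~\ref{raise-index}), so the high-enough successor $\lambda$ of the hypothesis, being a point of accessibility of $\ck$, is $\mu$-closed. From $\mu$-closedness applied to $\nu < \lambda$ we get $\nu^{<\mu} < \nu^+ = \lambda$, whence $\nu^{<\mu} = \nu$; and `high-enough' ensures $\nu \ge \LS(\K)$. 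Therefore $|U M_0| = \nu$, and by the correspondence above $\rank(M_0) = \nu^+ = \lambda$. Transporting $M_0$ back across the equivalence $\K \simeq \ck$ produces the required object of presentability rank $\lambda$.

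The main obstacle is precisely this final arithmetic step. Without control on $\nu^{<\mu}$ the Löwenheim--Skolem hull $M_0$ can overshoot $\nu$ — the closure of a $\nu$-sized set under the $(<\mu)$-ary operations of a $\mu$-AEC can be strictly larger than $\nu$ (e.g.\ when $\cf(\nu) < \mu$, where SCH forces $\nu^{<\mu} = \nu^+ = \lambda$) — and then $M_0$ would have rank $>\lambda$. SCH, together with the hypothesis that $\ck$ is $\lambda$-accessible (which pins $\lambda$ to the $\mu$-closed cardinals), is invoked exactly to tame $\nu^{<\mu}$ for the relevant successors. Should one prefer to bypass the Skolem hull, an alternative is to take $M_0$ to be an Ehrenfeucht--Mostowski model over a linear order of size $\nu$ (Remark~\ref{pres-rmk}(\ref{pres-morley-rmk})), whose universe has cardinality $\nu + \LS(\K)$ by construction; the same $\mu$-closedness input is then what guarantees this equals $\nu$.
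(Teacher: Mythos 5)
The paper states Theorem~\ref{ls-acc-thm} without proof, so I can only assess your argument on its own terms, and it has a genuine gap at its central reduction: you identify ``$M_0$ has presentability rank $\lambda=\nu^+$'' with ``$|U M_0|=\nu$'', but in a $\mu$-AEC with $\mu>\aleph_0$ these are not equivalent. Example~\ref{unif-ex}(\ref{unif-ex-aec}) gives the equivalence ``$\Lambda$-presentable iff $|U A|<\Lambda$'' only at regular \emph{$\mu$-closed} $\Lambda>\LS(\K)$, whereas the presentability rank is the \emph{least} regular $\Lambda$ at which $A$ is presentable, and that minimum can occur at a non-$\mu$-closed cardinal where the correspondence fails. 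The $\aleph_1$-AECs of Banach or Hilbert spaces (Examples~\ref{pres-ex}(\ref{pres-ex-hilb}) and~\ref{ban-ex}) make this concrete: a separable Banach space of cardinality $2^{\aleph_0}$ has rank $\aleph_1$, not $(2^{\aleph_0})^+$; and for $\nu$ of countable cofinality there is \emph{no} Hilbert space of cardinality $\nu$ at all (since $\nu<\nu^{\aleph_0}$), yet $\ell^2(\nu)$ has rank exactly $\nu^+$ while having cardinality $\nu^{\aleph_0}>\nu$. So your Löwenheim--Skolem hull $M_0$, even when its universe has cardinality exactly $\nu$, may have rank strictly below $\lambda$ (if $A$ sits inside a subobject of small internal size), and the genuine witnesses of rank $\lambda$ need not have cardinality $\nu$. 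A second error compounds this: $\lambda$-accessibility does \emph{not} pin $\lambda$ to the $\mu$-closed cardinals. The relation $\mu\tlt\lambda$ is a \emph{sufficient} condition for raising the index (Theorem~\ref{raise-index} and the remark after it), not a necessary one; for instance any $\mu$-accessible category with directed colimits is $\Lambda$-accessible for \emph{all} regular $\Lambda>\mu$, including $\nu^+$ with $\cf{\nu}<\mu$. Hence the case you dismiss --- $\cf{\nu}<\mu$, where SCH forces $\nu^{<\mu}=\nu^+$ and the hull provably overshoots --- lies squarely within the theorem's hypotheses, and it is exactly the hard case.

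Your Ehrenfeucht--Mostowski fallback does work, but only under the extra hypothesis that $\ck$ has directed colimits (which Remark~\ref{pres-rmk}(\ref{pres-morley-rmk}) requires), and there it reproves Corollary~\ref{ls-sch}(\ref{ls-sch-1}), which as noted in the remark following that corollary does not even need SCH; it does not yield Theorem~\ref{ls-acc-thm}. A correct proof has to work with presentability ranks internally to $\ck$ rather than with cardinalities of universes: roughly, one takes an object of rank $>\lambda$, filters it by a suitably continuous strictly increasing chain of $\lambda$-presentable subobjects of length $\nu$, and uses Theorem~\ref{colimit-size} together with Remark~\ref{retract-rmk} (plus the hypothesis that all morphisms are monos, so retractions are isomorphisms) to locate a link of rank exactly $\lambda$; SCH is what controls the accessibility spectrum enough for such filtrations to exist. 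Your argument does go through in the special case $\mu=\aleph_0$, i.e.\ when $\ck$ is an honest AEC, where internal size and cardinality agree above $\LS(\K)$ --- but that is precisely the case the general theorem is designed to go beyond.
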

\begin{cor}\label{ls-sch} \
  Assume SCH.
  \begin{enumerate}
  \item\label{ls-sch-1} Any large accessible category with directed colimits and all morphisms monos is LS-accessible.
  \item\label{ls-sch-2} \cite[7.12]{internal-improved-v3-toappear} Any large accessible category has objects in all sizes of high-enough cofinality.
  \end{enumerate}
\end{cor}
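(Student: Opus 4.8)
The plan is to derive both parts from Theorem \ref{ls-acc-thm}, using that, by Definition \ref{size-def}, an object has \emph{size} $\nu$ precisely when its presentability rank is the successor cardinal $\nu^+$.

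For (\ref{ls-sch-1}) I would first exploit the directed colimits to make $\ck$ accessible at every relevant successor cardinal. Concretely, applying Theorem \ref{raise-index} with $\mu = \aleph_0$ (every uncountable cardinal being $\aleph_0$-closed) shows that a $\theta$-accessible category with all directed colimits is $\lambda$-accessible for every regular $\lambda > \theta$. Hence, for any cardinal $\nu$ with $\nu^+ > \theta$, the regular successor cardinal $\nu^+$ is an index of accessibility for $\ck$. Since $\ck$ is large with all morphisms monos, Theorem \ref{ls-acc-thm} (with $\lambda = \nu^+$) then produces, for all sufficiently large $\nu$, an object of presentability rank $\nu^+$, i.e.\ of size $\nu$. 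As this holds for every large-enough $\nu$, the category $\ck$ has objects of all high-enough sizes and is LS-accessible.

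For (\ref{ls-sch-2}) the hypotheses of Theorem \ref{ls-acc-thm} fail in two ways, and I would repair them in turn. To recover the monomorphism condition, I would pass to $\ck_{mono}$, which is still accessible by Example \ref{acc-ex}(3), shares the same proper class of objects (so is again large), and has all morphisms monos; fix a regular $\mu$ with $\ck_{mono}$ being $\mu$-accessible. Without directed colimits, $\ck_{mono}$ need not be accessible at all successors, so here the cofinality restriction enters through cardinal arithmetic: assuming SCH, for $\nu > 2^{<\mu}$ one has $\nu^{<\mu} = \nu$ exactly when $\cf{\nu} \ge \mu$, and in that case $\theta^{<\mu} \le \nu < \nu^+$ for every $\theta \le \nu$, so $\nu^+$ is $\mu$-closed. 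Applying Theorem \ref{raise-index} (with index $\theta = \mu$ and $\lambda = \nu^+$) then makes $\ck_{mono}$ be $\nu^+$-accessible, and Theorem \ref{ls-acc-thm} yields an object of presentability rank $\nu^+$. Choosing the threshold $\theta_0 := (2^{<\mu})^+$ (enlarged if necessary to clear the bound in Theorem \ref{ls-acc-thm}), any $\nu$ with $\cf{\nu} \ge \theta_0$ automatically satisfies $\nu > 2^{<\mu}$ and $\cf{\nu} \ge \mu$; so one obtains an object of size $\nu$ for every such $\nu$, i.e.\ in all sizes of high-enough cofinality.

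The step I expect to be the main obstacle is the last one in (\ref{ls-sch-2}): Theorem \ref{ls-acc-thm} only delivers an object of the prescribed presentability rank \emph{within} $\ck_{mono}$, and since $\ck_{mono}$ has fewer morphisms and genuinely different directed colimits than $\ck$, one must verify that an object of size $\nu$ in $\ck_{mono}$ retains size $\nu$ back in $\ck$. This comparison of internal sizes under passage to monomorphisms is precisely the delicate ingredient behind \cite[7.12]{internal-improved-v3-toappear} (compare the size computations in \cite{internal-sizes-jpaa}); granting it, the argument closes, while the cardinal-arithmetic computation of $\nu^{<\mu}$ under SCH is routine.
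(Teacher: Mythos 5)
Your argument is correct and follows essentially the same route as the paper: part (\ref{ls-sch-1}) is exactly the paper's "immediate from Theorems \ref{raise-index} and \ref{ls-acc-thm}," and for part (\ref{ls-sch-2}) the paper likewise passes to $\ck_{mono}$, uses SCH to make $\nu^+$ an index of accessibility for $\nu$ of large cofinality, applies Theorem \ref{ls-acc-thm} there, and transfers the size back along the inclusion $\ck_{mono} \to \ck$ via the uniformization theorem and Theorem \ref{reflect-pres}. The transfer step you flag as the main obstacle is precisely the ingredient the paper also defers to \cite[6.2]{indep-categ-advances} and \cite[7.12]{internal-improved-v3-toappear}, so your proposal matches the paper's proof at the level of detail it provides.
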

\begin{proof}[Proof idea] \
  \begin{enumerate}
  \item Immediate from Theorems \ref{raise-index} and \ref{ls-acc-thm}.
  \item This can be obtained from Theorem \ref{ls-acc-thm} by combining careful use of the proofs of the uniformization theorem and Theorem \ref{reflect-pres}, together with the (nontrivial) fact that the inclusion $\ck_{mono} \to \ck$ is an accessible functor \cite[6.2]{indep-categ-advances}.
  \end{enumerate}
\end{proof}
\begin{remark}
  Corollary \ref{ls-sch}(\ref{ls-sch-1}) is due to Lieberman and Rosický, and can be proven without SCH \cite[2.7]{ct-accessible-jsl}: let $E: \Lin \to \ck$ be faithful and preserving directed colimits (see Remark \ref{pres-rmk}(\ref{pres-morley-rmk})). By the uniformization theorem, $E$ preserves $\lambda$-presentable objects for all high-enough regular $\lambda$, and by Theorem \ref{reflect-pres}, $E$ also reflects them (faithful functors reflect epimorphisms, and epimorphisms in $\Lin$ are isomorphisms). Thus for $\lambda$ a big-enough cardinal and $I$ a linear order of cardinality $\lambda$, $E (I)$ will have size exactly $\lambda$.
\end{remark}
    
\subsection{Large cardinals and images of accessible functors}\label{set-func-sec}

Consider the functor $F: \Set \to \Ab$ that associates to each set the free abelian group on that set. It is easily checked that $F$ is an accessible functor but, as noticed before (Example \ref{acc-ex}(\ref{free-ex})) the question of whether the image of $F$ (i.e.\ the full subcategory of $\Ab$ consisting of free abelian groups) is accessible is set-theoretic. One can ask this question generally: when is the image of an accessible functor accessible? The problem of course lies in proving existence of sufficiently directed colimits for this image. For technical reasons, we will close the image under subobjects: in the example of the free abelian group functor, subgroup of free groups are free, so the image is already closed under subobjects the only challenge is to check that a sufficiently directed diagram consisting of free groups has a cocone.

More precisely, define the \emph{powerful image} of an accessible functor $F: \ck \to \cl$ to be the smallest full subcategory $P$ of $\cl$ that contains $F[\ck]$ and is closed under subobjects (i.e.\ if $A \to B$ is a monomorphism and $B \in P$, then $A \in P$). The question becomes: when is the powerful image of an accessible functor accessible? The following result is due to Makkai and Paré:

\begin{thm}[{\cite[5.5.1]{makkai-pare}}]\label{mp-thm}
  If there is a proper class of strongly compact cardinals\footnote{A regular cardinal $\kappa$ is \emph{strongly compact} if every theory in $\Ll_{\kappa, \kappa}$ with all its subsets of size strictly less than $\kappa$ consistent (i.e.\ with a model) is consistent. See for example \cite[20.2]{jechbook}.}, then the powerful image of any accessible functor is accessible.
\end{thm}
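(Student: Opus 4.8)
The plan is to first reduce to a single well-chosen strongly compact cardinal and then exploit the compactness it provides to control $\kappa$-directed colimits inside the powerful image. Write $F\colon \ck \to \cl$ for the given accessible functor and $P$ for its powerful image. Since $F$ is accessible it is $\mu$-accessible for some $\mu$; using the hypothesis I would fix a strongly compact $\kappa \ge \mu$ and then, applying the uniformization theorem (Theorem \ref{unif-thm}) together with the raising-the-index theorem (Theorem \ref{raise-index}), enlarge $\kappa$ if necessary (staying strongly compact, which is possible since there is a proper class of such cardinals) so that $\ck$ and $\cl$ are both $\kappa$-accessible, $F$ is $\kappa$-accessible, and $F$ preserves $\kappa$-presentable objects. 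By Corollary \ref{acc-to-log} I may also present $\cl$ as (equivalent to) the category $\Mod(T)$ of models of a basic $\Ll_{\infty,\kappa}$ theory $T$, realizing the monomorphisms into $F[\ck]$ as substructure inclusions; this gives a concrete model-theoretic handle on the subobjects making up $P$.

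Next I would pin down what must be checked. By definition $P$ is the closure of $F[\ck]$ under subobjects, so its objects are exactly the subobjects of objects of the form $FA$. To show $P$ is accessible it suffices to produce a set of $\kappa$-presentable generators and to verify closure under $\kappa$-directed colimits computed in $\cl$ (after which, raising the index once more if needed yields full accessibility). The generating set is the easy half: because $\cl$ is accessible it has, up to isomorphism, only a set of $\kappa$-presentable objects, and those lying in $P$ form the candidate set $S$. Every object $B \in P$ is a $\kappa$-directed colimit in $\cl$ of its $\kappa$-presentable subobjects, and each such subobject is again a subobject of the ambient $FA$, hence lies in $P$; so $B$ is a $\kappa$-directed colimit of members of $S$. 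The real content is therefore the closure of $P$ under $\kappa$-directed colimits.

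This closure is where strong compactness does the work. The technical vehicle is the theory of $\kappa$-pure subobjects: in any $\kappa$-accessible category these are closed under $\kappa$-directed colimits, and the defining feature of a strongly compact $\kappa$ is that it forces enough purity, concretely through a $\kappa$-complete fine ultrafilter on $[I]^{<\kappa}$. Given a $\kappa$-directed diagram $(B_i)_{i \in I}$ in $P$ with colimit $B$ in $\cl$, each $B_i$ embeds into some $FA_i$, but these embeddings need not be compatible along the transition maps, so there is no coherent target in $F[\ck]$ a priori. The idea is to manufacture one using the ultrafilter: the $A_i$ assemble (via a $\kappa$-complete ultraproduct) into a single object $A^\ast$ of $\ck$, and the monomorphisms $B_i \to FA_i$ combine, using compactness of $\Ll_{\kappa,\kappa}$, into a cocone $(B_i \to C)$ with $C$ a subobject of $FA^\ast$, hence $C \in P$. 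The universal property then yields a comparison map $B \to C$, which one checks is monic, so that $B$, being a subobject of $C \in P$, itself lies in $P$. I expect this step — producing one object of the image that coherently receives the whole incoherent system, and verifying the comparison map out of the colimit is a monomorphism — to be the main obstacle; it is exactly the point at which the compactness supplied by a strongly compact cardinal is indispensable, incompactness being precisely what makes powerful images fail to be accessible in models such as $V=L$ (compare Example \ref{acc-ex}(\ref{free-ex})).
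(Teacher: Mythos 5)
Your main line of attack is the same as the paper's: uniformize so that $F$ preserves $\kappa$-presentable objects for a strongly compact $\kappa$ above the accessibility index, present everything logically via Corollary \ref{acc-to-log}, and then reduce the existence of a cocone in $P$ for a $\kappa$-directed diagram to the consistency of an $\Ll_{\kappa,\kappa}$-theory all of whose small subtheories are consistent (the ultrafilter formulation you invoke is just the other face of the same large-cardinal property). That part of the proposal is sound, modulo the glossed verification that the cocone legs $B_i \to C$ can be taken monic so that the comparison map out of the colimit is itself a mono --- but the paper's own ``proof idea'' is no more detailed there.

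There is, however, a genuine gap in what you call the easy half. You assert that every $B \in P$ is a $\kappa$-directed colimit of its $\kappa$-presentable \emph{subobjects}, and that these lie in $P$ because $P$ is closed under subobjects. But $\kappa$-accessibility of $\cl$ only gives that $B$ is a $\kappa$-directed colimit of some diagram of $\kappa$-presentable objects; the objects of the canonical diagram come with arbitrary morphisms into $B$, not monomorphisms, so there is no reason they are subobjects of $B$ (or of the ambient $FA$), hence no reason they belong to $P$. In general a $\kappa$-presentable object need not admit any mono into $B$ at all, so the generator condition does not come for free from closure under $\kappa$-directed colimits, and ``raising the index once more'' does not repair this. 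The standard fix is exactly the purity machinery you mention but do not quite deploy: a full, isomorphism-closed subcategory of a $\kappa$-accessible category that is closed under $\kappa$-directed colimits and under $\kappa$-pure subobjects is accessible \cite[2.36]{adamek-rosicky}, and the powerful image qualifies because it is closed under \emph{all} subobjects, in particular the pure ones. So the smallness half of accessibility should be routed through that theorem (or through a separate argument producing, for each $B \in P$, a $\kappa$-directed system of genuine small subobjects), not through the canonical diagram of $\kappa$-presentable approximations.
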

\begin{proof}[Proof idea]
  Fix a $\lambda$-accessible functor $F$ with powerful image $P$. By the uniformization theorem, we can assume without loss of generality that $F$ preserves $\lambda$-presentable objects. Let $\kappa > \lambda$ be strongly compact. We show that $P$ has $\kappa$-directed colimits. Using ideas around Corollary \ref{acc-to-log}, we can reduce the problem of finding a cocone to the consistency of a certain $\Ll_{\kappa, \kappa}$-theory. This theory can be shown to have all subsets of size strictly less than $\kappa$ consistent hence, by the compactness theorem for $\Ll_{\kappa, \kappa}$, to be consistent. 
\end{proof}
\begin{remark}
  The large cardinal assumption can be slightly weakened to a proper class of \emph{almost} strongly compact cardinals \cite{btr-almost-compact-tams} but this is best possible \cite{lc-tame-pams}: the powerful image of every accessible functor is accessible if and only if there is a proper class of almost strongly compact cardinals. However, weaker statements can be proven from weaker large cardinal axioms (e.g.\ measurable or weakly compacts), see \cite{lieberman-almost-measurable-v4,bl-powerful-images-v4}. We will even mention a ZFC theorem about images of accessible functors (Theorem \ref{sing-compact}).
\end{remark}

Questions about the image of accessible functors can be used to study various kinds of compactness. One example is tameness in AECs (\cite[5.2]{ct-accessible-jsl}). As a simpler example, we consider the following property:

\begin{defin}\label{ap-def}
  An object $A$ in a category is an \emph{amalgamation base} if any span $C \leftarrow A \rightarrow B$ can be completed (not necessarily canonically) to a commutative square:

  $$
  \xymatrix@=3pc{
    C \ar@{.>}[r] & D \\
    A \ar[r] \ar[u] & B \ar@{.>}[u]
    }
  $$
  
  A category has the \emph{amalgamation property} (or \emph{has amalgamation}) if every object is an amalgamation base.
\end{defin}

A question one might ask is whether amalgamation up to a certain level (e.g.\ for all $\lambda$-presentable objects for some big $\lambda$) implies amalgamation the rest of the way. Large cardinals imply a simple answer (earlier results used model-theoretic techniques, see e.g.\ \cite{baldwin-boney}):

\begin{thm}[{\cite[3.6]{bootstrapping-accessible}}]
  If $\ck$ is a $\lambda$-accessible category, $\kappa > \lambda$ is strongly compact, and the full subcategory of $\ck$ consisting of $\kappa$-presentable objects has amalgamation, then $\ck$ has amalgamation.
\end{thm}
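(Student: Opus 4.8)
The plan is to transfer amalgamation from the $\kappa$-presentable level to all of $\ck$ by writing an arbitrary span as a highly directed colimit of $\kappa$-presentable spans, amalgamating each of these by hypothesis, and then gluing the local amalgams with a $\kappa$-complete ultrafilter supplied by the strong compactness of $\kappa$. This is the same circle of ideas used for the powerful image (Theorem \ref{mp-thm}): the existence of the desired amalgam is reduced to the consistency of an $\Ll_{\kappa,\kappa}$-theory, and strong compactness does the rest. As a preliminary normalization, note that a strongly compact cardinal is inaccessible, so $\theta^{<\lambda}<\kappa$ for all $\theta<\kappa$, i.e.\ $\kappa$ is $\lambda$-closed; hence Theorem \ref{raise-index} (with base cardinal $\lambda$) makes $\ck$ a $\kappa$-accessible category. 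Since amalgamation (Definition \ref{ap-def}) and $\kappa$-presentability are invariant under equivalence, Corollary \ref{acc-to-log} lets me assume $\ck=\Mod(T)$ for a basic $\Ll_{\infty,\kappa}$-theory $T$, with the $\ck$-morphisms being exactly the $\tau$-homomorphisms. The payoff is that any $\tau$-homomorphism between objects of $\ck$ is automatically a $\ck$-morphism, so once I produce a single object of $\ck$ receiving compatible homomorphisms from $B$ and $C$, I am done.

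Given a span $C \xleftarrow{g} A \xrightarrow{f} B$, I would present $A=\colim_{k\in K}A_k$ as a $\kappa$-directed colimit of $\kappa$-presentable objects. Using $\kappa$-presentability to factor the composites $A_k\to B$ and $A_k\to C$ through $\kappa$-presentable subobjects and passing to a cofinal reindexing, I obtain a $\kappa$-directed system of $\kappa$-presentable spans $C_{j(k)}\leftarrow A_k\rightarrow B_{i(k)}$ with $\colim_k B_{i(k)}=B$ and $\colim_k C_{j(k)}=C$. The hypothesis supplies, for each $k$, an amalgam $D_k$ in the subcategory of $\kappa$-presentable objects, with maps $B_{i(k)}\to D_k$ and $C_{j(k)}\to D_k$ agreeing on $A_k$. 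The interaction of $\kappa$-directedness with local amalgamation is that any subfamily of size $<\kappa$ admits a common amalgam, since a single dominating index maps everything into one $D_{k^\ast}$. Phrased via the diagram $\mathcal D$ consisting of all the $A_k, B_{i(k)}, C_{j(k)}$ and their structure maps: every $\kappa$-small subdiagram of $\mathcal D$ has a cocone, and a cocone over all of $\mathcal D$ is exactly an amalgam of the original span.

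To glue, observe that the tails $\{k:k\ge k_0\}$ generate a $\kappa$-complete filter on the $\kappa$-directed poset $K$, which strong compactness extends to a $\kappa$-complete ultrafilter $U$ with $\{k:k\ge k_0\}\in U$ for every $k_0$. Forming $D:=\prod_U D_k$ in $\Str(\tau)$, this cofinality of $U$ lets me assemble the local maps into homomorphisms $B\to D$ and $C\to D$: each element of $B$ lies in some $B_{i(k_0)}$ (as $\kappa$-directed colimits are computed as unions here) and thus has a coordinate in $D_k$ for all $k\ge k_0$. These maps agree on $A$ because they agree in every $D_k$, and only {\L}o\'{s}'s theorem for \emph{atomic} formulas — valid for any ultrafilter — is needed to check that they are $\tau$-homomorphisms and that they agree. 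By the normalization of the first paragraph, they are genuine $\ck$-morphisms the moment $D$ is an object of $\ck$.

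The main obstacle is precisely this last point: verifying that $D=\prod_U D_k$ lies in $\ck$, i.e.\ that $\ck$ is closed under $\kappa$-complete ultraproducts. This is where strong compactness is genuinely used, and it is more than the bare existence of a $\kappa$-complete ultrafilter: since $\ck$ is only known to be $\Ll_{\infty,\kappa}$-axiomatizable, a naive {\L}o\'{s} argument breaks on conjunctions and disjunctions of size $\ge\kappa$, so membership $D\in\ck$ must itself be obtained by a compactness argument. Concretely, I would encode the assertion that $D$ is a $\kappa$-directed colimit of $\kappa$-presentable objects of $\ck$ — equivalently, a cocone over $\mathcal D$ with apex in $\ck$ — as an $\Ll_{\kappa,\kappa}$-theory in a vocabulary with constants for $D$ and for its approximating $\kappa$-presentable pieces, show that each subtheory of size $<\kappa$ is consistent (this is exactly where $\kappa$-directedness and local amalgamation re-enter, via the $\kappa$-small cocones of the second paragraph), and conclude consistency from the $\Ll_{\kappa,\kappa}$-compactness theorem equivalent to the strong compactness of $\kappa$. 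Carrying out this encoding cleanly, so that a model really does yield a $\ck$-amalgam rather than merely a $\tau$-structure, is the technical heart of the argument.
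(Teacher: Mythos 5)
Your overall strategy --- decompose the span into a $\kappa$-directed system of $\kappa$-presentable spans, amalgamate each by hypothesis, and invoke strong compactness to glue --- is the same circle of ideas as the paper's, but the organization is genuinely different and the difference matters. The paper does not redo any compactness argument: it considers the forgetful functor $F: \ck^\square \to \ck^{sp}$ from commutative squares to spans, observes that its image is exactly the class of amalgamable spans and is closed under subobjects (hence equals its powerful image), and applies Theorem \ref{mp-thm} to conclude that this class is $\kappa$-accessible, in particular closed under $\kappa$-directed colimits in $\ck^{sp}$. Your second paragraph (writing the given span as a $\kappa$-directed colimit of $\kappa$-presentable spans, each amalgamable by hypothesis) is precisely the remaining, easy half of that proof. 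What you buy with the hands-on version is a concrete picture of where the large cardinal enters; what you lose is that you are now re-proving a special case of Theorem \ref{mp-thm} from scratch.

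And that re-proof has a genuine gap exactly where you flag it. The ultraproduct route, as literally stated, does not work: {\L}o\'s for a $\kappa$-complete ultrafilter covers $\Ll_{\kappa,\kappa}$, but after the normalization of your first paragraph $\ck = \Mod(T)$ for a basic $\Ll_{\infty,\lambda}$-theory whose conjunctions and disjunctions have no bound below $\kappa$, so nothing guarantees $\prod_U D_k \models T$; closure of an accessible category under $\kappa$-complete ultraproducts is itself a nontrivial theorem whose proof is essentially the content of Theorem \ref{mp-thm}, so invoking it here would be circular relative to what you are trying to establish. Your fallback --- encode ``there is a cocone over $\mathcal D$ with apex in $\ck$'' as an $\Ll_{\kappa,\kappa}$-theory and apply $\Ll_{\kappa,\kappa}$-compactness --- is the right idea, but the one step you defer (``carrying out this encoding cleanly'') is the entire difficulty: you cannot write ``the apex satisfies $T$'' in $\Ll_{\kappa,\kappa}$, so the theory must instead assert that the apex is a $\kappa$-directed colimit of named $\kappa$-presentable objects of $\ck$ together with a compatible system of maps from the $B_{i(k)}$ and $C_{j(k)}$, and one must then check that a model of this theory really yields such a colimit computed in $\ck$. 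That is the proof of the powerful image theorem. The fix is simply to quote it: apply Theorem \ref{mp-thm} to $F: \ck^\square \to \ck^{sp}$ (after checking $F$ is $\lambda$-accessible, preserves $\lambda$-presentables, and has image closed under subobjects), and your paragraph two finishes the argument with no ultrafilters and no encoding.
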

\begin{proof}[Proof sketch]
  Let $\ck^{\square}$ be the category of commutative squares in $\ck$, and let $\ck^{sp}$ be the category of spans $B \leftarrow A \rightarrow C$, with the morphisms in each category defined as expected. Consider the functor $F: \ck^\square \to \ck^{sp}$ that ``forgets'' the top corner of each square. One can check that this is a $\lambda$-accessible functor preserving $\lambda$-presentable objects, and moreover its image $P$ is closed under subobjects, hence is equal to its powerful image. This image is, by definition of $F$, the category of spans that \emph{can} be amalgamated. By Theorem \ref{mp-thm}, $P$ is $\kappa$-accessible. Now any span $S$ of $\ck^{sp}$ is a $\kappa$-directed colimit of $\kappa$-presentable objects, and each of these objects can by assumption be amalgamated, hence are in $P$. Because $P$ has $\kappa$-directed colimits, $S$ is also in $P$. This shows that any span can be amalgamated, hence that $\ck$ has amalgamation.
\end{proof}

As a final application, we mention how Shelah's singular compactness theorem \cite{shelah-singular} can be restated as a theorem about image of accessible functors. One of the most well known statement of the singular compactness theorem is that an abelian group of singular cardinality all of whose subgroups of lower cardinality are free is itself free. The proof can be axiomatized to apply to other kinds of objects than groups: modules, well-coloring in graphs, transversals, etc. In \cite{cellular-singular-jpaa}, Beke and Rosický state the following general form:

\begin{thm}\label{sing-compact}
  Let $F: \ck \to \cl$ be an $\aleph_0$-accessible functor. Assume that $F$-structures extend along morphisms. Let $A \in \cl$ be an object whose size is a singular cardinal. If all subobjects of $A$ of lower size are in the image of $F$, then $A$ itself is in the image of $F$.
\end{thm}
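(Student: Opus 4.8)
The plan is to reduce the statement to the coherent lifting of a directed diagram. Since $F$ is $\aleph_0$-accessible it preserves directed colimits, so it suffices to produce a \emph{directed} diagram $X \colon P \to \ck$ together with an identification of $\colim (F \circ X)$ with $A$: then $A \cong F(\colim X)$ lies in the image $F[\ck]$. Let $\lambda$ be the size of $A$, so that $\mu := \cf{\lambda} < \lambda$ by singularity. The target $\cl$ is finitely accessible (it is the codomain of an $\aleph_0$-accessible functor), so $A$ is a directed colimit of its finitely presentable subobjects; grouping fewer than $\lambda$ of these at a time and invoking Theorem~\ref{colimit-size}, one sees that $A$ is a directed colimit of subobjects of size strictly less than $\lambda$, and by hypothesis each such subobject already lies in $F[\ck]$. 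Thus the difficulty is neither to find the small subobjects nor to know that each is individually in the image, but to choose their lifts to $\ck$ \emph{compatibly}, so that they assemble into a single diagram.

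Second, I would set up the singular skeleton on which the construction runs. Fix an increasing continuous sequence $(\lambda_i)_{i < \mu}$ of regular cardinals, cofinal in $\lambda$, with $\mu < \lambda_0$, and build a continuous increasing chain of subobjects $(A_\alpha)_{\alpha < \lambda}$ of $A$ with $A = \colim_\alpha A_\alpha$, each $A_\alpha$ of size $< \lambda$, stratified compatibly with the $\lambda_i$. The role of singularity is that the recursion is driven along the short cofinality $\mu$: at the $i$-th stage one controls only an initial segment of size about $\lambda_i < \lambda$, and the hypothesis that subobjects of size $< \lambda$ lie in $F[\ck]$ applies at every stage.

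The heart of the proof, and the step I expect to be the main obstacle, is to arrange the filtration so coherently that a compatible system of lifts can be built by transfinite recursion. Choosing for each $\alpha$ some $X_\alpha \in \ck$ with $F X_\alpha \cong A_\alpha$ is immediate, but these choices need not respect the inclusions $A_\alpha \to A_\beta$, and at a limit stage the colimit of the lifts already built need not lift $A_\delta$. This is precisely the failure responsible for the existence of almost free, non-free abelian groups at \emph{regular} cardinals, so singularity must be used essentially. Following Shelah, the idea is to replace the naive filtration by a \emph{coherent} one in which each successor inclusion $A_\alpha \to A_{\alpha+1}$ has the shape to which the assumption that \emph{$F$-structures extend along morphisms} applies, so that a lift of $A_\alpha$ extends to a lift of $A_{\alpha+1}$, while continuity makes every limit stage a colimit in $\ck$ that $F$ carries back to $A_\delta$. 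Producing such a coherent filtration is the genuinely hard combinatorial core: one runs the construction along $\mu = \cf{\lambda} < \lambda$, repeatedly exploiting that every initial segment has size $< \lambda$ and is therefore liftable, and uses a club/stationary-set argument over $\lambda$ to thread the local lifts into a global one. It is here that $\mu < \lambda$ is indispensable.

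Finally, with the coherent diagram $X \colon P \to \ck$ in hand, whose composite with $F$ has colimit $A$, I conclude by a single application of preservation: since $F$ is $\aleph_0$-accessible it preserves the directed colimit, so $F(\colim X) \cong \colim (F \circ X) \cong A$, and hence $A$ lies in the image of $F$. Everything outside the third paragraph is bookkeeping about directed colimits and the size calculus established earlier; the one genuinely hard input is the coherent lifting, which is exactly Shelah's singular compactness argument transported across the extension hypothesis.
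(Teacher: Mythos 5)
The paper does not actually prove this theorem: it is quoted from Beke--Rosick\'y \cite{cellular-singular-jpaa} and stated without proof, so there is no in-paper argument to compare yours against. Judged on its own, your outer frame is right and is the standard one: since $F$ preserves directed colimits, it suffices to exhibit a directed diagram $X \colon P \to \ck$ with $\colim (F \circ X) \cong A$, and the whole content is the coherent construction of that diagram along the cofinality $\mu = \cf{\lambda} < \lambda$.

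The problem is that your third paragraph, which you correctly identify as the heart of the matter, describes the goal rather than achieving it, and the two places where it is vague are exactly where the proof lives. First, the hypothesis that $F$-structures extend along morphisms is, as defined in the paper, a statement about $\ck$-morphisms $g \colon A \to B$: to use it to extend a lift of $A_\alpha$ along the inclusion $A_\alpha \to A_{\alpha+1}$ you must already know that this inclusion is isomorphic to $F g$ for some $\ck$-morphism $g$. The theorem's hypothesis only puts the small \emph{objects} in the image of $F$, not the connecting maps; in the motivating example this is precisely the gap between ``$A_{\alpha+1}$ is free'' and ``$A_{\alpha+1}/A_\alpha$ is free,'' and closing it is the entire difficulty. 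Second, a single continuous chain $(A_\alpha)_{\alpha<\lambda}$ cannot be made to work by a club or stationary-set argument alone --- this is exactly the configuration that produces almost-free non-free groups at regular cardinals, as you note, but singularity is exploited not through Fodor-style reflection but through interleaving $\mu$-many filtrations (one of size $\lambda_i$ for each $i<\mu$, or a tree of subobjects indexed by finite sequences from the $\lambda_i$) so that along a suitable diagonal every successor step becomes a morphism already in the image of $F$. Deferring this step to ``Shelah's singular compactness argument'' is circular, since that argument \emph{is} the theorem being proved. As written, the proposal is a correct roadmap but not a proof.
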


Here, we say that \emph{$F$-structure extend along morphisms} if for any $\ck$-morphism $g: A \to B$, any object $A'$ of $\ck$, and any isomorphism $i: F A' \cong F A$, there exists $f: A' \to B'$ and an isomorphism $j: F B' \cong F B$ such that the following diagram commutes:

  $$
  \xymatrix@=3pc{
    F A' \ar[d]_i\ar@{.>}[r]_{F f} & F B' \ar@{.>}[d]_{j} \\
    F A  \ar[r]_{F g} & F B 
    }
  $$

  This can be thought of as a generalization of the Steinitz exchange property in vector spaces and fields.

  \begin{example}
    Let $F: \Set_{mono} \to \Ab$ be the restriction of the free abelian group functor to $\Set_{mono}$. This is an $\aleph_0$-accessible functor and $F$-structures extend along morphisms (we can rename to the case where $g$ is the inclusion of $A$ into a superset $B$; then if $i: F A' \cong F A$, we know that both free groups have the same number of generators, and one can add $|B \backslash A|$-many elements to $A'$ to obtain a superset $B'$ so that $j$ extends $i$ to an isomorphism of $FB'$ with $FB$). Thus we recover Shelah's original application of the singular compactness theorem: if $A$ is an abelian group of singular cardinality, all of whose subobjects of lower cardinality are free, then all these subobjects lie in the image of $F$, hence by Theorem \ref{sing-compact} $A$ must also be in this image, i.e.\ be free.
  \end{example}

  \subsection{Vopěnka's principle} is a large cardinal axiom, whose consistency strength is between huge and extendible. A thorough introduction to the category-theoretic implications of Vopěnka's principle is in \cite[\S6]{adamek-rosicky}. Students of set theory may be familiar with Vopěnka's principle as the statement that in any proper class of structures in the same vocabulary, there exists an elementary embedding between two distinct members of the class (see \cite[p.~380]{jechbook}). Another logical characterization of Vopěnka's principle, due to Stavi, is that every logic has a Löwenheim-Skolem-Tarski number (see \cite[Theorem 6]{magidor-van2011}). A purely combinatorial characterization of Vopěnka's principle --- and the one that Vopěnka first stated --- is that there are no rigid proper classes of graphs. Stated in category-theoretic terms, no large full subcategory of the category of graphs is rigid, where a category is \emph{rigid} if all of its morphisms are identities.

  In fact, there is a more general category-theoretic formulation of Vopěnka's principle: any locally presentable category can be fully embedded into the category of graphs \cite[2.65]{adamek-rosicky}. Thus Vopěnka's principle is equivalent to the statement that no locally presentable category has a large rigid full subcategory. Even more strongly (because by Theorem \ref{acc-to-aec} any accessible category can be fully embedded into $\Str (\tau)$, a locally presentable category), no large full subcategory of an \emph{accessible} category can be rigid. Thus if $C$ is a proper class of objects from an accessible category, Vopěnka's principle tells us there must be a morphism between two distinct objects of $C$ (the first logical version mentioned above is the special case of the accessible category of $\tau$-structures with elementary embeddings).

  The following criteria makes it easy to check that a category is accessible \cite[6.9, 6.17]{adamek-rosicky}. It can be seen as a category-theoretic version of the fact that every logic has a Löwenheim-Skolem-Tarski number. 

  \begin{thm}
    Assume Vopěnka's principle. If $\cl$ is a full subcategory of an accessible category $\ck$ and $\cl$ has $\mu$-directed colimits for some $\mu$, then the inclusion $\cl \to \ck$ preserves $\lambda$-directed colimits for some $\lambda$ and $\cl$ is accessible.
  \end{thm}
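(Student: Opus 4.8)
The plan is to reduce to the case where the ambient category is locally presentable, then to extract from Vopěnka's principle a \emph{set} of generators for $\cl$, and finally to run a standard ZFC argument converting such a generating set into an accessibility witness.

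\emph{Reduction to a locally presentable ambient.} First I would fix a regular $\theta \geq \mu$ with $\ck$ $\theta$-accessible (Theorem~\ref{raise-index}) and apply Theorem~\ref{acc-to-aec} to obtain a finitary vocabulary $\tau$ and a full, faithful, injective-on-objects functor $E \colon \ck \to \Str(\tau)$ preserving $\theta$-directed colimits. Since $\cl$ is full in $\ck$ and $E$ is full, the image $E[\cl]$ is a full subcategory of $\Str(\tau)$ isomorphic to $\cl$ (the hom-sets agree on both sides), and $\Str(\tau)$ is locally presentable (Example~\ref{logical-ex}); so I may assume $\ck = \Str(\tau)$ is locally presentable. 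Note that, having $\mu$-directed colimits, $\cl$ has $\lambda$-directed colimits for every regular $\lambda \geq \mu$, because a $\lambda$-directed diagram is $\mu$-directed. A fully faithful functor reflects isomorphisms, so once the theorem is established for the locally presentable ambient it transfers back along $E$: the comparison map between the colimit computed in $\cl$ and the one computed in $\ck$ is sent by $E$ to an isomorphism, hence is itself one.

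\emph{Boundedness from Vopěnka's principle.} The heart of the proof is to produce a regular cardinal $\lambda \geq \theta$ and a \emph{set} $\mathcal S$ of objects of $\cl$ such that every object of $\cl$ is a $\lambda$-directed colimit, computed in $\ck$, of objects from $\mathcal S$. Here I would use Vopěnka's principle in its rigidity form: no proper class of pairwise non-isomorphic objects of an accessible category is rigid, i.e.\ among any such class there is a morphism between two distinct members. I would argue by contradiction. If no such $\lambda$ and $\mathcal S$ work, then for each regular $\lambda$ there is an object $L_\lambda \in \cl$ for which the canonical $\lambda$-directed diagram formed by the maps into $L_\lambda$ from objects of $\cl$ that are $\lambda$-presentable in $\ck$ fails to reconstruct $L_\lambda$ as a colimit. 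From the $L_\lambda$ and the factorizations forced by the canonical diagrams in $\ck$ one extracts a proper class of objects of $\ck$ that admits no morphism between distinct members — a rigid proper class, contradicting Vopěnka's principle. This step is the main obstacle: setting up the right proper class and verifying its rigidity is delicate, precisely because one must forbid morphisms using the failure of factorization through small objects of $\cl$, all the while keeping track of the difference between the $\mu$-directed colimits that merely \emph{exist} abstractly in $\cl$ and those actually computed in $\ck$.

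\emph{From a generating set to accessibility.} Once boundedness holds, the remainder is ZFC bookkeeping of the kind used in Section~3. Enlarging $\lambda$ if necessary (Theorem~\ref{raise-index}), I may assume every object of $\mathcal S$ is $\lambda$-presentable in $\ck$. Since each $L \in \cl$ is a $\lambda$-directed colimit of members of $\mathcal S$ computed in $\ck$, and $\cl$ is closed under these colimits, the generating colimits are preserved by the inclusion; because the generators are dense and colimits commute with colimits, a routine interchange argument upgrades this to preservation of \emph{all} $\lambda$-directed colimits, giving the first conclusion. Preservation in turn makes each object of $\mathcal S$ also $\lambda$-presentable \emph{in} $\cl$: for a $\lambda$-directed $D$ in $\cl$, fullness and preservation yield $\Hom_{\cl}(A, \colim D) = \Hom_{\ck}(A, \colim D) = \colim \Hom_{\ck}(A, D) = \colim \Hom_{\cl}(A, D)$. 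Thus $\cl$ has $\lambda$-directed colimits together with a set $\mathcal S$ of $\lambda$-presentable objects of which every object of $\cl$ is a $\lambda$-directed colimit; by Definition~\ref{acc-def} this exhibits $\cl$ as $\lambda$-accessible (Theorem~\ref{colimit-size} guaranteeing along the way that the small colimits of generators stay $\lambda$-presentable). Transporting back along $E$ as in the first paragraph yields the statement for the original $\ck$.
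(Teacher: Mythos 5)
The paper does not actually prove this theorem --- it is quoted from Adámek--Rosický [6.9, 6.17] --- so I am judging your proposal against the standard argument. Your skeleton (reduce to a full subcategory of $\Str(\tau)$ via Theorem \ref{acc-to-aec}, use Vopěnka's principle to extract a set of generators, then do ZFC bookkeeping) is the right one, and your reduction paragraph is correct. But the two substantive steps both have gaps, and the first one is fatal.

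The Vopěnka step is not a proof but a restatement of what needs to be proved. You propose to take, for each $\lambda$, an object $L_\lambda$ witnessing the failure of boundedness and to ``extract a rigid proper class'' from these objects and their failed factorizations. There is no reason such a class is rigid: the $L_\lambda$ themselves will in general admit plenty of morphisms among one another, and failure of factorization through small objects of $\cl$ does not forbid any morphism between the $L_\lambda$'s. The actual content of Adámek--Rosický 6.9 is a delicate transfinite construction showing that an \emph{unbounded} full subcategory of a locally presentable category yields a full embedding of the ordered class of ordinals (equivalently, a large rigid class of graphs), and this uses specific combinatorial gadgets (rigid graphs, coproduct tricks) to kill the unwanted morphisms --- precisely the work you flag as ``delicate'' and then omit. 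Since this is the only place Vopěnka's principle enters, the proof is incomplete without it. A secondary issue in the same paragraph: the canonical diagram over $L_\lambda$ of objects of $\cl$ that are $\lambda$-presentable in $\ck$ need not be $\lambda$-filtered (that fact is special to the canonical diagram of \emph{all} $\lambda$-presentables in a $\lambda$-accessible category), so even the setup of your intended contradiction needs repair.

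The ``routine interchange argument'' in the last paragraph is also not routine, and I do not see how to make it work as stated. Given a $\lambda$-directed diagram $D$ in $\cl$ with colimit $L$ computed in $\cl$ and colimit $C$ computed in $\ck$, the natural move is to write $L$ as a $\lambda$-directed $\ck$-colimit of generators $S_j$ and factor each $S_j \to L$ through some $D_p$; but $\lambda$-presentability of $S_j$ in $\ck$ only gives factorizations through colimits \emph{computed in $\ck$}, and whether $L$ is such a colimit of $D$ is exactly what you are trying to prove. Your phrase ``$\cl$ is closed under these colimits'' conflates $\cl$ \emph{having} $\mu$-directed colimits with $\cl$ being \emph{closed under} them in $\ck$; the latter is the conclusion, not a hypothesis. (There is also a smaller parameter issue: after enlarging $\lambda$ so that all of $\mathcal{S}$ is $\lambda$-presentable, the boundedness diagrams are only $\lambda_0$-directed for the original $\lambda_0$, and re-indexing them as $\lambda$-directed diagrams requires taking colimits of small subdiagrams, which need not stay in $\mathcal{S}$ or even in $\cl$.) Both halves of the deduction from boundedness require genuine arguments, which is why Adámek--Rosický devote separate results to them.
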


  This result has been used to prove existence of certain homotopy localizations \cite{localization-vopenka, left-det-rt}. There is also an accessible functor characterization of Vopěnka's principle: every subfunctor of an accessible functor is accessible \cite[6.31]{adamek-rosicky}. 

\subsection{Weak diamond and amalgamation bases}
In complicated categories where pushouts are not available (e.g.\ when all morphisms are monos), the amalgamation property can play a key role. In this subsection, we look at a set-theoretic way to obtain it in a general class of concrete categories. More generally, we will look at \emph{amalgamation bases}: objects $A$ such that any span with base $A$ can be completed to a commutative square (see Definition \ref{ap-def}). We will study them in AECs (Definition \ref{infty-aec-def}), although several of the concepts are category-theoretic and the result can be generalized to $\mu$-AECs \cite[6.12]{mu-aec-jpaa}, or other kinds of concrete categories \cite[5.8]{multidim-v2}.

The key set-theoretic component is the weak diamond, a combinatorial principle introduced by Devlin and Shelah \cite{dvsh65}. We will use it in the following form (see \cite[6.1,7.1]{dvsh65}):

\begin{thm}[Devlin-Shelah]\label{wd-thm}
  Let $\lambda$ be an infinite cardinal and let $\seq{f_\eta : \lambda \to \lambda \mid \eta \in \fct{\lambda}{2}}$ be a sequence of functions. If there exists $\theta < \lambda$ such that $2^{\theta} = 2^{<\lambda} < 2^\lambda$, then ($\lambda$ is regular uncountable and) there exists $\eta \in \fct{\lambda}{2}$ such that the set $S_\eta$ defined below is stationary\footnote{A subset of a regular uncountable cardinal $\lambda$ is called \emph{stationary} if it intersects every closed unbounded subset of $\lambda$. If we think of closed unbounded subsets as having full measure, being stationary means having positive measure.}.

  $$
  S_\eta := \{\delta < \lambda \mid \exists \nu \in \fct{\lambda}{2} : \eta \rest \delta = \nu \rest \delta, \eta \rest (\delta + 1) \neq \nu \rest (\delta + 1), \text{ and }f_\eta \rest \delta = f_\nu \rest \delta\}
  $$

\end{thm}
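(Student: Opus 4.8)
The plan is to first observe that the arithmetic hypothesis already forces the parenthetical claim, and then to run the Devlin--Shelah contradiction argument, whose geometric heart I would isolate as a \emph{determination property}.

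\textbf{Regularity and uncountability.} First I would check that $2^\theta = 2^{<\lambda} < 2^\lambda$ with $\theta < \lambda$ makes $\lambda$ regular and uncountable, so nothing extra is hidden in the parenthetical. If $\lambda = \aleph_0$ then $2^{<\lambda} = \aleph_0$ while $2^\theta$ is finite for every $\theta < \aleph_0$, so no such $\theta$ exists; hence $\lambda$ is uncountable. Note that for every $\mu$ with $\theta \le \mu < \lambda$ we have $2^\theta \le 2^\mu \le 2^{<\lambda} = 2^\theta$, so $2^\mu = 2^{<\lambda}$ throughout $[\theta, \lambda)$. If $\lambda$ were singular, set $\rho := \cf{\lambda} < \lambda$ and enlarge $\theta$ so that $\theta \ge \rho$ (still $\theta < \lambda$). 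Writing $\lambda$ as the supremum of a cofinal sequence $\langle \lambda_i : i < \rho\rangle$ gives $2^\lambda = \prod_{i<\rho} 2^{\lambda_i} \le (2^{<\lambda})^\rho = (2^\theta)^\rho = 2^\theta = 2^{<\lambda}$, contradicting $2^{<\lambda} < 2^\lambda$. So $\lambda$ is regular.

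\textbf{The determination property.} For the main statement I would argue by contradiction: assume $S_\eta$ is nonstationary for every $\eta \in \fct{\lambda}{2}$, and fix for each $\eta$ a club $C_\eta$ disjoint from $S_\eta$. Unwinding the definition of $S_\eta$ yields what I would call the \emph{determination property}: for every $\delta \in C_\eta$ and every $\nu \in \fct{\lambda}{2}$ with $\nu \rest \delta = \eta \rest \delta$ and $f_\nu \rest \delta = f_\eta \rest \delta$, one must have $\nu(\delta) = \eta(\delta)$ (otherwise $\nu$ would witness $\delta \in S_\eta$). In words: along $C_\eta$, the next bit $\eta(\delta)$ is forced by the local data $(\eta \rest \delta, f_\eta \rest \delta)$. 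This is exactly the pattern the weak diamond is built to exploit: regard $(\eta \rest \delta, f_\eta \rest \delta)$ as a node of the tree $\fct{<\lambda}{(2 \times \lambda)}$, which has only $\lambda^{<\lambda} = 2^{<\lambda}$ nodes, and regard $\eta(\delta)$ as a two-valued colour that these nodes predict on $C_\eta$.

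\textbf{The counting, and the main obstacle.} The remaining step is to convert ``each $\eta$ is, on its own club, determined by $2^{<\lambda}$-worth of local data'' into the impossible inequality $2^\lambda \le 2^{<\lambda}$. This is the genuine content of the Devlin--Shelah theorem, and I would carry it out by their counting scheme: build a continuous increasing filtration of length $\lambda$ (an $\in$-chain of elementary submodels, or equivalently a tree of partial guesses), use the regularity of $\lambda$ to extract a club of closure points, and use $2^\theta = 2^{<\lambda}$ to bound the number of local configurations below each closure point by $2^{<\lambda}$; a fusion across the $2^\lambda$ many branches $\eta$ is then arranged to force two distinct branches to share their local data along a common club and hence, by the determination property, to agree there, which drives the count of possibilities below $2^\lambda$. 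The delicate point --- and the step I expect to be the true obstacle --- is precisely the \emph{$\eta$-dependence of the clubs} $C_\eta$: off $C_\eta$ the bits of $\eta$ are unconstrained, so one cannot read $\eta$ off from bounded data, and a naive node-count is useless (the full binary tree already has $2^{<\lambda}$ nodes but $2^\lambda$ branches). It is exactly in amalgamating the varying clubs that the hypothesis $2^{<\lambda} < 2^\lambda$, rather than the mere size of the tree, must be brought to bear.
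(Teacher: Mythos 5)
Your opening paragraph is correct and matches the only part of this theorem that the paper actually justifies: the paper quotes the result from Devlin--Shelah \cite{dvsh65} without proof, and only the parenthetical claim (regularity and uncountability of $\lambda$) is argued, in Remark \ref{wd-rmk}, via the identity $2^\lambda = \left(2^{<\lambda}\right)^{\cf{\lambda}}$ --- your product computation $2^\lambda = \prod_{i<\rho}2^{\lambda_i} \le (2^\theta)^\rho = 2^\theta$ is the same argument. Your ``determination property'' is also a correct unwinding of the negation: if $\delta \in C_\eta$ and $\nu$ shares the local data $(\eta \rest \delta, f_\eta \rest \delta)$, then $\nu(\delta) = \eta(\delta)$, and the count of $2^{<\lambda}$ many possible nodes $(\eta\rest\delta, f_\eta\rest\delta)$ is right, using $\lambda \le 2^\theta$ and $2^\mu = 2^{<\lambda}$ for $\theta \le \mu < \lambda$.

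However, the third paragraph is not a proof but a description of an intention, and you say so yourself: the ``fusion across the $2^\lambda$ many branches'' that is supposed to ``drive the count of possibilities below $2^\lambda$'' is never constructed, and you explicitly flag the $\eta$-dependence of the clubs $C_\eta$ as ``the step I expect to be the true obstacle.'' That obstacle is real and is precisely where the content of the theorem lives. The determination property alone yields nothing: even on a single common club $C$, knowing that $\eta(\delta)$ is a function of $(\eta\rest\delta, f_\eta\rest\delta)$ for $\delta \in C$ does not bound the number of branches, because $\lambda \setminus C$ can have cardinality $\lambda$, so $\eta$ restricted off the club already ranges over $2^\lambda$ possibilities, and the ``local data'' $f_\eta\rest\delta$ is itself branch-dependent. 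The actual Devlin--Shelah argument does not proceed by counting branches against nodes; it runs a recursion of length $\lambda$ that uses the counterexample data $(\eta, C_\eta)$ to build an injection-modulo-small-data of $\fct{\lambda}{2}$ into a set of size $2^{<\lambda}$, and it is there --- not in the setup --- that $2^\theta = 2^{<\lambda} < 2^\lambda$ is consumed. Since that recursion is absent, the proposal establishes the parenthetical claim and a correct reformulation of the goal, but not the theorem.
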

\begin{remark}\label{wd-rmk}
  Given any fixed cardinal $\theta$, there is a unique cardinal $\lambda$ such that $2^{\theta} = 2^{<\lambda} < 2^\lambda$, which can equivalently be described as the minimal cardinal $\lambda$ such that $2^{\theta} < 2^{\lambda}$. Note that $\theta < \lambda \le 2^{\theta}$. If $\theta$ is finite, $\lambda = \theta^+ = \theta + 1$ but if $\theta$ is infinite, $\lambda$ is uncountable and moreover regular (because of the formula $2^\lambda = \left(2^{<\lambda}\right)^{\cf{\lambda}}$, see \cite[5.16(iii)]{jechbook}). If the generalized continuum hypothesis\footnote{The statement that $2^\lambda = \lambda^+$ for any infinite cardinal $\lambda$.} (GCH) holds, then $\lambda = \theta^+$, but in general it could be that $\lambda = \theta^+$ even if GCH fails (e.g.\ if $\theta = \aleph_0$, $2^{\aleph_0} = \aleph_2$, and $2^{\aleph_1} = \aleph_3$). Nevertheless, it is also consistent with the axioms of set theory that $\lambda > \theta^+$. We will say that the \emph{weak generalized continuum hypothesis (WGCH)} holds if $2^\theta < 2^{\theta^+}$ for all infinite cardinals $\theta$. In this case, the hypothesis of the previous theorem holds exactly when $\lambda$ is an infinite successor cardinal. Note also that, when $\lambda$ is regular uncountable, the conclusion of the Devlin-Shelah theorem implies that $2^{\lambda_0} < 2^{\lambda}$ for all $\lambda_0 < \lambda$. Indeed, given $F: \fct{\lambda}{2} \to \fct{\lambda_0}{2}$, we can let $f_\eta (\alpha)$ be $F (\eta) (\alpha)$ if $\alpha < \lambda_0$, or $0$ otherwise. Fixing the given $\eta \in \fct{\lambda}{2}$, and $\delta$ in $S_\eta$ bigger than $\lambda_0$, we obtain $\nu \neq \eta$ with $F (\nu) = F (\eta)$, so $F$ is not injective \cite[Appendix, 1.B(3)]{proper-and-imp}.
\end{remark}

The essence of the conclusion of the Devlin-Shelah Theorem (\ref{wd-thm}) is that if we think of the $f_\eta$'s as being indexed by branches of a binary splitting tree of height $\lambda$, then there exists two branches (i.e.\ some $\eta$ and some $\nu$) that split at some big height $\delta < \lambda$ and where moreover the corresponding functions are equal up to $\delta$. If we think of the $f_\eta$'s as embedding structures into a common codomain, them being equal up to $\delta$ will mean that a certain diagram commutes.

To state the promised application to amalgamation bases in AECs, we first give some terminology: for $\K$ an AEC and $\lambda$ a cardinal, we write $\K_\lambda$ (resp.\ $\K_{<\lambda}$) for the class of objects in $\K$ of cardinality $\lambda$ (resp.\ strictly less than $\lambda$). Of course, we identify it with the corresponding category. An object $N \in \K_\lambda$ is called \emph{universal} if any $M \in \K_\lambda$ embeds into $N$. We will show that, if $\lambda$ satisfies the hypotheses of the Devlin-Shelah theorem and $\K$ has a universal model in $\K_\lambda$, then amalgamation bases are cofinal in $\K_{<\lambda}$. The result is due to Shelah \cite{sh88}. The proof proceeds by contradiction: if amalgamation bases are not cofinal, we can build a tree of failure, and embed each branch of this tree into the universal model. Applying the weak diamond to this tree will yield enough commutativity to get that the tree has a lot of amalgamation bases.

One reason Theorem \ref{ap-thm} is interesting is that it turns a one-dimensional property in $\lambda$ (existence of a universal model) into a two-dimensional property below $\lambda$ (existence of amalgamation bases). There is in fact a higher-dimensional generalization \cite[11.16]{multidim-v2}, which is much harder to state (but see Section \ref{higher-dim-sec}).

\begin{thm}[{\cite[I.3.8]{shelahaecbook}}]\label{ap-thm}
  Let $\K$ be an AEC and let $\lambda > \LS (\K)$ be such that there exists $\theta < \lambda$ with $2^{\theta} = 2^{<\lambda} < 2^\lambda$. If there exists a universal model in $\K_\lambda$, then for any $M \in \K_{<\lambda}$, there exists $N \in \K_{<\lambda}$ such that $M \lea N$ and $N$ is an amalgamation base in $\K_{<\lambda}$.
\end{thm}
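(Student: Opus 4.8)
The plan is to argue by contradiction, building a \emph{tree of failure} of height $\lambda$ whose branches are embedded into the universal model, and then using the weak diamond (Theorem~\ref{wd-thm}) to manufacture an amalgam that should not exist. Suppose the conclusion fails, so there is a fixed $M \in \K_{<\lambda}$ such that no $N \in \K_{<\lambda}$ with $M \lea N$ is an amalgamation base in $\K_{<\lambda}$. By the hypothesis on $\theta$ together with Remark~\ref{wd-rmk}, $\lambda$ is regular and uncountable, and of course $\lambda > \LS(\K)$. I would build, by recursion on $\operatorname{length}(\eta)$, a tree $\seq{N_\eta : \eta \in \fct{<\lambda}{2}}$ of members of $\K_{<\lambda}$, all extending $M$, with $N_\eta \lea N_\nu$ whenever $\eta$ is an initial segment of $\nu$, taking unions at limit levels (legitimate by the TV chain axiom, and staying inside $\K_{<\lambda}$ because $\lambda$ is regular). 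The crucial clause is that at each node $\eta$ the span $N_{\eta ^\frown 0} \leftarrow N_\eta \rightarrow N_{\eta ^\frown 1}$ \emph{cannot} be amalgamated inside $\K_{<\lambda}$; this is possible exactly because $N_\eta$ extends $M$ and so, by assumption, is not an amalgamation base.

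Next I would arrange the bookkeeping so that the universe of each $N_\eta$ is an ordinal below $\lambda$, increasing and continuous along the tree, with the branch unions reaching $\lambda$. Concretely, each full branch $N_\eta := \bigcup_{\alpha < \lambda} N_{\eta \rest \alpha}$ (for $\eta \in \fct{\lambda}{2}$) then lies in $\K_\lambda$, and for any fixed branch $\eta$ the set $C_\eta$ of those $\delta < \lambda$ for which the universe of $N_{\eta \rest \delta}$ is exactly $\delta$ is a closed unbounded subset of $\lambda$ (the fixed points of a continuous increasing function with supremum $\lambda$). Fixing a universal model $U \in \K_\lambda$, which we may take to have universe $\lambda$, I then choose for each branch $\eta \in \fct{\lambda}{2}$ a $\K$-embedding $N_\eta \to U$ and regard it as a function $f_\eta : \lambda \to \lambda$.

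Now apply the Devlin--Shelah Theorem~\ref{wd-thm} to the sequence $\seq{f_\eta : \eta \in \fct{\lambda}{2}}$: there is a branch $\eta$ with $S_\eta$ stationary. Since $C_\eta$ is club, pick $\delta \in S_\eta \cap C_\eta$, and let $\nu$ witness $\delta \in S_\eta$, so that $\eta \rest \delta = \nu \rest \delta$, $\eta(\delta) \neq \nu(\delta)$, and $f_\eta \rest \delta = f_\nu \rest \delta$. Writing $N_\delta := N_{\eta \rest \delta} = N_{\nu \rest \delta}$, whose universe is $\delta$ because $\delta \in C_\eta$, the two successors $A := N_{\eta \rest (\delta + 1)}$ and $B := N_{\nu \rest (\delta + 1)}$ are precisely the two children forming a non-amalgamable span over $N_\delta$. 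But $f_\eta \rest \delta = f_\nu \rest \delta$ says exactly that the $\K$-embeddings $f_\eta \rest A$ and $f_\nu \rest B$ agree on $N_\delta$. Using downward LST inside $U$, I find $N' \lea U$ of size $< \lambda$ containing both images; by transitivity of $\lea$ and the coherence axiom, $f_\eta \rest A$ and $f_\nu \rest B$ are $\K$-embeddings into $N'$. They then amalgamate the span $A \leftarrow N_\delta \rightarrow B$ inside $\K_{<\lambda}$, contradicting the choice of that span, and the theorem follows.

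The genuine work lies in the bookkeeping of the second step: one must coordinate the universes of the tree so that they are ordinals aligning on a club, and code the branch embeddings as functions $\lambda \to \lambda$ in such a way that the weak-diamond conclusion $f_\eta \rest \delta = f_\nu \rest \delta$ translates \emph{literally} into commutativity of the amalgamation square over $N_\delta$. The conceptual crux --- and the reason the hypothesis $2^{<\lambda} < 2^{\lambda}$ is needed --- is that there are $2^\lambda$ branches but only a single universal model of size $\lambda$ to receive them; the weak diamond is exactly what overcomes this cardinality obstruction, pinning down two distinct branches whose embeddings collide on a common submodel. Everything else (unions at limits via the chain axiom, size control via LST, and the use of coherence to land the restricted maps in a small $\K$-substructure of $U$) is routine AEC manipulation.
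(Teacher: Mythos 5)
Your proposal is correct and follows essentially the same route as the paper: tree of non-amalgamable spans, branches embedded into the universal model, the Devlin--Shelah weak diamond applied to the coded embeddings, and a club of levels where universes align. The only (harmless) divergence is at the very end, where you pull the amalgam down into $\K_{<\lambda}$ via downward LST and coherence before deriving the contradiction, whereas the paper contradicts the unrestricted non-amalgamability of the span directly inside the universal model --- a reduction the paper itself implicitly needs when justifying the successor step of the tree construction.
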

\begin{proof}
  Suppose not. First recall that $\lambda$ is regular uncountable (Remark \ref{wd-rmk}). Fix $M \in \K_{<\lambda}$ with no amalgamation base extending it in $\K_{<\lambda}$. Because $\K$ is isomorphism-closed in $\Str (\tau (\K))$, we may do some renaming to assume without loss of generality that $U M \subseteq \lambda$. We build $\seq{M_\eta \mid \eta \in \fct{<\lambda}{2}}$ such that for all $\beta < \lambda$ and all $\eta \in \fct{\beta}{2}$:

  \begin{enumerate}
  \item $M_{\seq{}} = M$.    
  \item\label{req-2} $M_\eta \in \K_{<\lambda}$.
  \item\label{req-3} $U M_\eta \subseteq \lambda$.
  \item $M_{\eta \rest \alpha} \lea M_\eta$ for all $\alpha < \beta$.
  \item\label{req-5} If $\beta$ is limit, $M_\eta = \bigcup_{\alpha < \beta} M_\alpha$.
  \item\label{req-6} The span $M_{\eta \smallfrown 0} \leftarrow M_\eta \rightarrow M_{\eta \smallfrown 1}$ \emph{cannot} be completed to a commutative square (where the maps are inclusion embeddings).
  \end{enumerate}

  This is possible: the construction proceeds by transfinite induction on the length of $\eta$. The base case is given, at successors we use that $M_\eta$ cannot be an amalgamation base in $\K_{<\lambda}$ by assumption (and do some renaming to implement (\ref{req-3})). At limit stages, we take unions (and use the axioms of AECs).

  This is enough: for each $\eta \in \fct{\lambda}{2}$, define $M_\eta := \bigcup_{\alpha < \lambda} M_{\eta \rest \alpha}$. The axioms of AECs imply, of course, that $M_{\eta \rest \alpha} \lea M_\eta$ for all $\alpha < \lambda$. Moreover, $M_\eta \in \K_\lambda$. Indeed, $M_{\eta \rest \alpha} \neq M_{\eta \rest (\alpha + 1)}$, as otherwise we would trivially have been able to amalgamate the span $M_{\eta \rest \alpha \smallfrown 0} \leftarrow M_{\eta \rest \alpha} \rightarrow M_{\eta \rest \alpha \smallfrown 1}$. Finally, requirement (\ref{req-3}) ensures that $UM_\eta \subseteq \lambda$. Fix a universal model $N \in \K_\lambda$. By doing some renaming again, we can assume without loss of generality that $U N \subseteq \lambda$. For each $\eta \in \fct{\lambda}{2}$, fix a $\K$-embedding $g_\eta: M_\eta \rightarrow N$. Our goal will be to get a contradiction to requirement (\ref{req-6}). We are not there yet, because even if $\eta \rest \delta = \nu \rest \delta$, the maps $g_\eta$ and $g_{\nu}$ will not necessarily agree on $M_{\eta \rest \delta}$. This is where the weak diamond will come in.

  Define $f_\eta : \lambda \to \lambda$ by $f_\eta (\alpha) = g_\eta (\alpha)$ if $\alpha \in U M_\eta$, and $f_\eta (\alpha) = 0$ if $\alpha \notin U M_\eta$. We are now in the setup of the Devlin-Shelah theorem: fix $\eta \in \fct{\lambda}{2}$ such that the set $S_\eta$ defined there is stationary. Consider the set $C := \{\delta < \lambda \mid U M_{\eta \rest \delta} \subseteq \delta\}$. The set $C$ is closed (because of requirement (\ref{req-5})) and unbounded. To see the latter, we run a standard ``catching your tail'' argument\footnote{This is in fact an instance of Theorem \ref{refl-thm} in the appendix, see Example \ref{forcing-ex}.}: fix $\alpha < \lambda$. We inductively build an increasing sequence of ordinals $\seq{\alpha_n : n < \omega}$ as follows: take $\alpha_0 = \alpha$, and given $\alpha_n$, we know $U M_{\eta \rest \alpha_n} \subseteq \lambda$ (requirement (\ref{req-3})), and $|U M_{\eta \rest \alpha_n}| < \lambda$ (requirement (\ref{req-2})), so use regularity of $\lambda$ to pick $\alpha_{n + 1} \in [\alpha, \lambda)$ with $U M_{\eta \rest \alpha_n} \subseteq \alpha_{n + 1}$. At the end, we let $\delta := \sup_{n < \omega} \alpha_n$. The construction, together with requirement (\ref{req-5}), implies that $\delta \ge \alpha$ and $\delta \in C$.

  Because $C$ is closed unbounded and $S_\eta$ is stationary, we can pick $\delta \in C \cap S_\eta$. By definition of $S_\eta$, there exists $\nu \in \fct{\lambda}{2}$ such that $\eta \rest \delta = \nu \rest \delta$, $\eta (\delta) \neq \nu (\delta)$, and $f_\eta \rest \delta = f_\nu \rest \delta$. Let $\rho := \eta \rest \delta$. By definition of $C$, $f_\eta \rest M_\rho = f_\nu \rest M_\rho$. This implies that $f_\eta$, $f_\nu$, and $N$ witness the span $M_{\eta \rest (\delta + 1)} \leftarrow M_{\rho} \rightarrow M_{\nu \rest (\delta + 1)}$ can be completed to a commutative square, contradicting requirement (\ref{req-6}).
\end{proof}

If we know weak GCH holds, and moreover we know that $\K$ has a single object (up to isomorphism) in two successive cardinalities\footnote{Model-theorists call this property \emph{categoricity}, see \ref{categ-def}.}, the theorem simplifies and we get the amalgamation property locally: 

\begin{cor}
  Let $\K$ be an AEC, and let $\mu \ge \LS (\K)$ be such that $2^{\mu} < 2^{\mu^+}$. If $\K$ has a single model (up to isomorphism) in cardinalities $\mu$ and $\mu^+$, then $\K_\mu$ has the amalgamation property.
\end{cor}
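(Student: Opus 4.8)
The plan is to apply Theorem \ref{ap-thm} with $\lambda = \mu^+$ and $\theta = \mu$, and then upgrade its conclusion using the two categoricity hypotheses together with a size-squeeze argument. First I would verify the hypotheses of Theorem \ref{ap-thm}. Since $\mu \ge \LS(\K)$, we have $\mu^+ > \LS(\K)$. Because $\mu^+$ is a successor cardinal, $2^{<\mu^+} = 2^\mu$, so the assumption $2^\mu < 2^{\mu^+}$ is exactly $2^\theta = 2^{<\lambda} < 2^\lambda$ for $\theta = \mu < \lambda = \mu^+$. Finally, categoricity in $\mu^+$ provides a universal model in $\K_{\mu^+}$: the unique object of $\K_{\mu^+}$ is universal, since every object of $\K_{\mu^+}$ is isomorphic to it and an isomorphism is in particular a $\K$-embedding. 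Thus Theorem \ref{ap-thm} applies and gives that amalgamation bases are cofinal in $\K_{<\mu^+} = \K_{\le \mu}$: every $M \in \K_{\le \mu}$ has an extension $M \lea N$ with $N \in \K_{\le \mu}$ an amalgamation base in $\K_{<\mu^+}$.

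The key second step uses categoricity in $\mu$ to promote this cofinality into a statement about the single model $M^\ast$. Let $M^\ast$ denote the unique (up to isomorphism) object of $\K_\mu$. Applying the cofinality just obtained to $M^\ast$ yields $N \in \K_{\le\mu}$ with $M^\ast \lea N$ and $N$ an amalgamation base in $\K_{<\mu^+}$. Now $M^\ast \lea N$ forces $|N| \ge \mu$, while $N \in \K_{\le\mu}$ forces $|N| \le \mu$, so $|N| = \mu$, i.e.\ $N \in \K_\mu$; by categoricity in $\mu$ this gives $N \cong M^\ast$. Since being an amalgamation base is invariant under isomorphism, $M^\ast$ itself is an amalgamation base in $\K_{<\mu^+}$.

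It then remains to transfer this down from $\K_{<\mu^+}$ to $\K_\mu$. Given any span $C \leftarrow A \rightarrow B$ in $\K_\mu$, note that $A \cong M^\ast$ is an amalgamation base in $\K_{<\mu^+}$, so the span completes to a commutative square with apex $D \in \K_{<\mu^+} = \K_{\le\mu}$. The maps $B \to D$ and $C \to D$ are $\K$-embeddings, hence $|D| \ge |B| = \mu$; combined with $|D| \le \mu$ this gives $|D| = \mu$, so $D \in \K_\mu$ and the span is amalgamated \emph{within} $\K_\mu$. As every object of $\K_\mu$ is isomorphic to $M^\ast$, every object of $\K_\mu$ is an amalgamation base in $\K_\mu$, which is precisely the amalgamation property for $\K_\mu$. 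The genuine content is all contained in Theorem \ref{ap-thm}; the only real care needed here is the double size-squeeze (to see that both $N$ and the amalgam apex $D$ land exactly in $\K_\mu$, not merely in $\K_{\le\mu}$) and the observation that categoricity in $\mu$ is exactly what converts ``amalgamation bases are cofinal'' into ``every object of $\K_\mu$ is an amalgamation base.'' No combinatorics beyond the identity $2^{<\mu^+} = 2^\mu$ is required.
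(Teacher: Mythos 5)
Your proposal is correct and follows essentially the same route as the paper's proof: uniqueness in $\mu^+$ gives the universal model, Theorem \ref{ap-thm} (with $\lambda = \mu^+$, $\theta = \mu$) produces an amalgamation base above the unique $M^\ast \in \K_\mu$, and categoricity in $\mu$ forces that base to be $M^\ast$ itself. The extra care you take with the two size squeezes (locating $N$ and the amalgam apex $D$ in $\K_\mu$ rather than merely $\K_{\le\mu}$) just makes explicit what the paper leaves implicit.
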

\begin{proof}
  By its uniqueness, the object of cardinality $\mu^+$ must be universal. Applying Theorem \ref{ap-thm} with $\lambda = \mu^+$, we get that there exists an amalgamation base in $\K_\mu$, and this amalgamation base must be isomorphic to any other object of cardinality $\mu$, hence $\K_\mu$ has amalgamation.
\end{proof}

\section{Generalized pushouts and stable independence}\label{indep-sec}

Many of the ``classical'' categories of mainstream mathematics are bicomplete: they have all limits and colimits. However, problems will occur if we want to study such categories set-theoretically, and specifically if we want to restrict ourselves to certain classes of monomorphisms. It is clear quotients (coequalizers) will be lost, but one will also lose pushouts: any category with pushouts, all morphisms monos, and an initial object must be thin, hence essentially just a poset \cite[3.30(3)]{indep-categ-advances}. Still, it is natural to ask for approximations to pushouts. The results of this section are a survey of the work of Lieberman, Rosický, and the author on this question \cite{indep-categ-advances, more-indep-v2}. 

For the purpose of the discussion to follow, let's briefly repeat that for a given a diagram $D: I \to \ck$, a  \emph{cocone} for that diagram is an object $A$ together with maps $(D_i \xrightarrow{f_i} A)_{i \in I}$ commuting with the diagram. One can look at the category $\ck_D$ of cocones for $D$, where the morphisms are defined as expected. The colimit of a diagram $D$ is then simply an initial object (one that has a unique morphism to every other object) in the category of cocones. In case $D$ is a span $B \xleftarrow{f} A \xrightarrow{g} C$, a cocone is simply an amalgam of this span, and an initial object in the category $\ck_D$ of cocones would be a pushout. The amalgamation property (Definition \ref{ap-def}) simply says that $\ck_D$ is non-empty, i.e.\ that there is \emph{some} cocone, maybe satisfying no universal property whatsoever. A much stronger approximation is the existence of \emph{weak pushouts}: a weak pushout of a given span $D$ is a cocone that is weakly initial in the category of cocones for $D$: there is a morphism to every other cocone, but that morphism is not required to be unique. In categories where all morphisms are monos, weak pushouts are still too strong of a requirement:

\begin{example}
  Consider the category $\Set_{mono}$ of sets with injections. Consider the inclusion of $A = \emptyset$ into $B = \{0,1 \}$ and $C = \{0, 2\}$. Let $D = \{0,1,2\}$. Then $D$, together with the corresponding inclusions, is a cocone/amalgam for the span: $B \leftarrow A \rightarrow C$. On the other hand, consider $D' = \{1,2,3,4\}$ and $f_1: B \to D'$, $f_2: C \to D'$ defined by $f_1 (0) = 3$, $f_1 (1) = 1$, $f_2 (0) = 4$, $f_2 (2) = 2$. $(f_1, f_2)$ is also an amalgam of $B \leftarrow A \rightarrow C$, but $D$ has no morphisms to $D'$ (in the category of cocones for the span $B \leftarrow A \rightarrow C$), and by cardinality considerations $D'$ has no morphisms to $D$ either. Thus $\Set_{mono}$ does not have weak pushouts.
\end{example}

What is happening in the example is that we had two choices for amalgamating $B$ and $C$: either sending $0$ to the same place, or sending it two different elements. These two choices are then incompatible, in the sense that no amalgam of one type will ever have a morphism into an amalgam of the other type (in the appropriate category of amalgams of a fixed span). In other words, the category of amalgams is not connected. Let us make this explicit, first in complete generality,  then for the specific case of amalgams:

\begin{defin}
  Two objects $A$ and $B$ are called \emph{comparable}\footnote{The terminology comes from posets.} if either $\Hom (A, B) \neq \emptyset$ or $\Hom (B, A) \neq \emptyset$. We say that $A$ and $B$ are \emph{connected} if there exists $A = A_0, A_1, \ldots, A_n = B$ such that $A_{i}$ and $A_{i + 1}$ are comparable for all $i < n$. This is an equivalence relation, and the equivalence class is called a \emph{connected component} of the category. The category is called \emph{connected} if any two of its objects are connected. We say that $A$ and $B$ are \emph{jointly connected} if there exists an object $C$ with morphisms $A \to C$, $B \to C$.
\end{defin}

Of course, the connected components of a category are exactly the connected component of the undirected graph whose vertices are objects and where there is an edge between $A$ and $B$ exactly when $A$ and $B$ are comparable. Assuming amalgamation, joint connectedness coincides with connectedness:

\begin{lem}\label{compat-lem}
  In a category with the amalgamation property, two objects are connected if and only if they are jointly connected.
\end{lem}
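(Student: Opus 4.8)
The plan is to show that, under the amalgamation property, joint connectedness is an equivalence relation containing the comparability relation. Since connectedness is by definition the equivalence relation \emph{generated} by comparability (chains of length $0$ give reflexivity, reversing a chain gives symmetry, and concatenating chains gives transitivity, using that comparability is itself symmetric), it is the \emph{smallest} equivalence relation containing comparability. Hence once joint connectedness is known to be an equivalence relation containing comparability, it must contain connectedness, giving one inclusion; the reverse inclusion is immediate and needs no amalgamation.

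First I would dispose of the inclusion that requires nothing: if $A$ and $B$ are jointly connected, witnessed by $A \to C \leftarrow B$, then $A, C, B$ is a chain of pairwise comparable objects (as $\Hom(A,C) \neq \emptyset$ and $\Hom(B,C) \neq \emptyset$), so $A$ and $B$ are connected. Next I would record that comparability implies joint connectedness: if there is a morphism $A \to B$, then $A \to B \xleftarrow{\id_B} B$ witnesses joint connectedness, and symmetrically if the morphism points the other way. Reflexivity (via $A \xrightarrow{\id_A} A \xleftarrow{\id_A} A$) and symmetry of joint connectedness are built into the definition.

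The only nontrivial step — and the place where the amalgamation hypothesis is used — is transitivity of joint connectedness, which I expect to be the main (indeed the sole) obstacle. Suppose $A$ and $B$ are jointly connected via $A \to C_1 \xleftarrow{\beta_1} B$, and $B$ and $D$ are jointly connected via $B \xrightarrow{\beta_2} C_2 \leftarrow D$. I would apply the amalgamation property (Definition \ref{ap-def}) to the span $C_1 \xleftarrow{\beta_1} B \xrightarrow{\beta_2} C_2$, obtaining an object $E$ together with morphisms $\gamma_1 : C_1 \to E$ and $\gamma_2 : C_2 \to E$. Composing gives $A \to C_1 \xrightarrow{\gamma_1} E$ and $D \to C_2 \xrightarrow{\gamma_2} E$, so $A$ and $D$ are jointly connected via $E$. (Note that only the \emph{existence} of the amalgam $E$ with maps out of $C_1$ and $C_2$ is needed here, not the commutativity of the resulting square; amalgamation over the common object $B$ is exactly what supplies it.) This establishes that joint connectedness is an equivalence relation containing comparability, and combined with the first paragraph completes the proof.
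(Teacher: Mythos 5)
Your proof is correct and follows essentially the same route as the paper: show joint connectedness contains comparability, then establish transitivity by amalgamating the two witnessing cospans over the common middle object, and conclude via the fact that connectedness is the smallest equivalence relation containing comparability. No gaps.
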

\begin{proof}
  Let $\ck$ be a category with the amalgamation property. First note that if two objects $A$ and $B$ are comparable, then they are jointly connected (for example, if $A \xrightarrow{f} B$ then $A \xrightarrow{f} B = C \xleftarrow{\id_B} B$ witness the joint connectivity). Also, if $A$ and $B$ are jointly connected, as witnessed by $A \to C$, $B \to C$, then $A$ and $C$ are comparable, and $C$ and $B$ are comparable, so $A$ and $B$ are connected. We now show that being jointly connected is an equivalence relation. This will be enough because being connected is the smallest equivalence relation extending comparability. So assume that $A_0$, $A_1$ are jointly connected, as witnessed by $A_0 \rightarrow B \leftarrow A_1$, and $A_1$, $A_2$ are jointly connected, as witnessed by $A_1 \rightarrow C \leftarrow A_2$. Amalgamate $B$ and $C$ over $A_1$, forming the diagram below:

  $$
  \xymatrix@=1pc{
    & & D & & \\
    & B \ar@{.>}[ru] & & C \ar@{.>}[lu] & \\
    A_0 \ar[ru] & & A_1 \ar[lu] \ar[ru] & & A_2 \ar[lu]
  }
  $$

  Then $D$ and the expected compositions witness that $A_0$ and $A_2$ are jointly connected. 
\end{proof}

\begin{defin}
  Two amalgams $(B \xrightarrow{f^a} D^a, C \xrightarrow{g^a} D^a)$ and $(B \xrightarrow{f^b} D^b, C \xrightarrow{g^b} D^b)$ of a span $B \leftarrow A \rightarrow C$ are \emph{jointly connected} if they are jointly connected in the category of cocones for the appropriate span. Explicitly, there exists $D$ and morphisms into $D$ making the following diagram commute:

          \[
        \xymatrix{ & D^b \ar@{.>}[r] & D \\
    B \ar[ru]^{f^b} \ar[rr]|>>>>>{f^a} & & D^a \ar@{.>}[u] \\
    A \ar[u] \ar[r] & C \ar[uu]|>>>>>{g^b}  \ar[ur]_{g^a} & \\
  }
        \]

        We say that two amalgams $D^a$ and $D^b$ are \emph{connected} if they are connected in the appropriate category of cocones, i.e.\ there exists a chain of amalgams $D^0, D^1, \ldots, D^n$ (along with respective morphisms) such that $D^0 = D^a$, $D^n = D^b$, and $D^{i}$ is jointly connected to $D^{i + 1}$ for all $i < n$.
\end{defin}

Note that if $\ck$ has the amalgamation property, then the category of cocones over a diagram in $\ck$ also has the amalgamation property, so in this case connectedness already implies joint connectedness. The replacement for pushouts we are looking for will therefore consist in a choice of connected component for each span. Eventually, we may want to look for a weakly initial object for this specific component class (called a \emph{prime} object by model theorists), but this seems to be too strong a requirement to start with: we want to prove it, not assume it.\footnote{For example, the proofs of existence and uniqueness of differential closure in differentially closed fields (Example \ref{indep-ex-0}(\ref{diff-field-ex})) rely on properties of the independence notion.} Thus we instead impose a transitivity conditions on the choice of connected component that make the resulting squares into the morphisms of an arrow category (and also holds of pushouts). Still by analogy with pushouts, and for reasons to be discussed later, we require this new category to be accessible and call the result a stable independence notion.

\begin{defin}[{\cite[3.24]{indep-categ-advances}}]\label{indep-def}
      A \emph{stable independence notion} in a category $\ck$ is a class of squares (called \emph{independent squares} and marked here with the anchor symbol $\nf$) such that:

    \begin{enumerate}
    \item Independent squares are closed under connectedness: if one amalgam of a given span is independent, then all the connected amalgams are also independent.
    \item Existence: any span can be amalgamated to an independent square.
    \item Uniqueness: any two independent amalgams of the same span are connected.
    \item Transitivity:

      $$  
      \xymatrix@=1pc{
        B \ar[r]\ar@{}[dr]|{\nf} & D\ar@{}[dr]|{\nf} \ar[r] & F \ar@{}[dr]|{\Rightarrow} & B \ar[r]\ar@{}[dr]|{\nf} & F \\
        A \ar [u] \ar [r] & C \ar[u] \ar[r] & E \ar[u] & A \ar [u] \ar [r] & E \ar[u]
      }
      $$
    \item Symmetry: ``swapping the ears'' $B$ and $C$ preserves independence.
    \item Accessibility: the arrow category whose objects are morphisms of $\ck$ and whose morphisms are independent squares is accessible. We let $\ck_{\smallnf}$ denote this category.
    \end{enumerate}
\end{defin}
\begin{remark}
  The existence property implies the amalgamation property. Hence any two connected amalgams are, in fact, jointly connected. Note also that any two amalgams of a span which contains an isomorphism will be connected, hence (by the existence property) independent squares \cite[3.12]{indep-categ-advances}. In particular, the identity morphism in the arrow category $\ck^2$ will indeed be an independent square, hence a morphism of $\ck_{\smallnf}$. The transitivity property ensures that $\ck_{\smallnf}$ is closed under composition, and hence is indeed a category.
\end{remark}

Most of the examples below are listed in \cite[3.31]{indep-categ-advances}, though of course many are trivial or date back to the beginning of stable first-order theories. See there for more references.

\begin{example}\label{indep-ex-0} \
  \begin{enumerate}
  \item In an accessible category with weak pushouts, there is a stable independence notion given by all commutative squares. This holds more generally in any accessible category with the amalgamation property where any two amalgams are always connected, for example in accessible categories with a terminal object.
  \item The category $\Set_{mono}$ has a stable independence notion: identifying the morphisms with inclusions, given $A \subseteq B \subseteq D$, $A \subseteq C \subseteq D$, we say that the resulting square $A, B, C, D$ is independent exactly when $B \cap C = A$. That is, the amalgam must be disjoint. In general, the ears of an independent squares ``do not interact'': independent squares are a notion of ``free'' amalgam.
  \item The category of vector spaces over a fixed field, with morphisms the injective linear transformations, has a stable independence notion, again given by disjoint amalgamation. This holds more generally for the category of modules over a fixed ring.
  \item The category of all algebraically closed fields of a fixed characteristic (with morphisms field homomorphisms) has a stable independence notion, essentially given by algebraic independence: again identifying the morphisms with inclusions, if we have a square of fields $F_0 \subseteq F_1 \subseteq F_3$, $F_0 \subseteq F_2 \subseteq F_3$, we say it is independent if for any subset $A \subseteq F_1$ and any $b \in F_1$, if $b$ is in the algebraic closure (computed in $F_3$) of $A \cup F_2$, then it is in the algebraic closure of $A \cup F_0$. Here, it does \emph{not} suffice to require that $F_1 \cap F_2 = F_0$ (because the pregeometry induced by algebraic closure is not modular).
  \item\label{diff-field-ex} A \emph{differential field} is a field together with an operator $D$ that preserves sums and satisfies Leibnitz' law: $D fg = g D f + f D g$. A \emph{differentially closed field} is, roughly, a differential field in which every system of linear differential equations that could possibly have a solution has a solution. Model-theoretic methods were used to obtain the first proof that every differential field of characteristic zero has, in some sense, a differential closure. Uniqueness of that differential closure was proven by Shelah, using the fact that the category of differentially closed fields of characteristic zero has a stable independence notion. See \cite{sacks-diff} for a short overview.
  \item\label{indep-stable-ex-0} Generalizing the last four examples, let $T$ be a stable first-order theory ($T$ is \emph{stable} if it does \emph{not} have the order property, and $T$ has the \emph{order property} if there exists a formula $\phi (\bx, \by)$, a model $M$ of $T$, and a sequence $\seq{a_i : i < \omega}$ of elements in $M$ such that $M \models \phi[\ba_i, \ba_j]$ if and only if $i < j$, see Definition \ref{op-def}). Consider the category $\ck = \Elem (T)$ of models of $T$ with elementary embeddings. Then $\ck$ has a stable independence notion. The proof (originally due to Shelah), is not trivial, see Appendix \ref{fo-sec} for a short exposition. The definition of an independent square mirrors that of fields (the reader should think of the formula $\psi$ in the next sentence as a polynomial): a square $M_0 \preceq M_1 \preceq M_3$, $M_0 \preceq M_2 \preceq M_3$ is \emph{independent} exactly when for any finite sequences $\ba$ from $M_1$ and $\bb$ from $M_2$, and any formula $\psi (\bx, \by)$, if $M_3 \models \psi[\ba, \bb]$, then there exists a sequence $\bb_0$ in $M_0$ so that $M_3 \models \psi[\ba, \bb_0]$. In fact, independent squares are the same as what model theorists call nonforking squares. Thus we have recovered the important model-theoretic notion of forking from simple category-theoretic considerations! In fact, the original motivation behind the definition of stable independence was to generalize forking.
  \item Conversely, if $T$ is not stable then $\Elem (T)$ does \emph{not} have a stable independence notion \cite[9.9]{indep-categ-advances}. 
  \item The previous item implies, in particular, that the category of graphs, with morphisms the full subgraph embeddings does \emph{not} have a stable independence notion. Another proof will be given in Remark \ref{canon-rmk}. The category $\Lin$ of linear orders with order-preserving maps similarly does not have a stable independence notion.
  \item The category of graphs with morphisms the (not necessarily full) subgraph embeddings \emph{does} have a stable independence notion: two graphs are independent over a base graph (inside an ambient graph) if all the cross-edges between the two are inside the base graph. See Example \ref{indep-ex}(\ref{indep-gr-2-ex}).
  \item \cite[4.8(5)]{indep-categ-advances} The category of Hilbert spaces with isometries has a stable independence notion, given by pullback squares. These correspond roughly to squares where everything is ``as orthogonal as possible''.
  \item We will give later general constructions of a stable independence notion, giving many other nontrivial examples. For example, for a fixed ring, the category of all flat modules with morphisms the flat monomorphisms has a stable independence notion (Example \ref{indep-ex}(\ref{indep-ex-mod})).
  \end{enumerate}
\end{example}

For model theorists, we note that in concrete cases (e.g.\ inside an $\infty$-AEC $\K$), the independence notion can be extended to a relation of the form $\nfs{M}{A}{B}{N}$, where $M \lea N$ and $A, B \subseteq U N$, satisfying generalizations of the properties of a stable independence notion. The accessibility axiom can then be shown to be equivalent to the conjunction of the two classical properties of forking: the \emph{witness property} (failure to be independent is witnessed by small subsets of the ears) and the \emph{local character property} (every type is independent over a small set). The proof of this fact is essentially a generalization of the proof of Theorem \ref{aec-acc}, see \cite[8.14]{indep-categ-advances}. We deduce, in particular, that an $\infty$-AEC has quite a bit of structure when it has a stable independence notion (e.g.\ it is tame and stable \cite[8.16]{indep-categ-advances}).

One of the most important fact about stable independence (and a strong justification for the definition) is that it leads to a canonical notion: if a category has a stable independence notion, then under very reasonable conditions it can have only one. For first-order forking, this is well known \cite{hh84}, but in fact it holds even in the very general categorical setup. The proof for AECs in \cite{bgkv-apal} was adapted to $\infty$-AECs with chain bounds in \cite[9.1]{indep-categ-advances}, and finally to any category with chain bounds in \cite[A.6]{more-indep-v2}. We say a category has \emph{chain bounds} if any ordinal-indexed chain has a cocone (this holds, of course, anytime the category has directed colimits).

\begin{thm}[The canonicity theorem, {\cite[A.6]{more-indep-v2}}]\label{canon-thm}
  A category with chain bounds has at most one stable independence notion. In fact, given a stable independence notion $\nf$, any other relation satisfying all the axioms of stable independence notion except perhaps accessibility will have to be $\nf$.
\end{thm}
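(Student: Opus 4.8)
The plan is to exploit the fact that each of the two notions is really the data of one distinguished connected component per span. Write $\sigma$ for a span $B \leftarrow A \rightarrow C$ and $\ck_\sigma$ for its category of cocones (amalgams). Since the existence axiom for $\nf$ yields the amalgamation property for $\ck$, each cocone category $\ck_\sigma$ also has amalgamation, so Lemma \ref{compat-lem} applies inside it: there, being connected and being jointly connected coincide. Combining the existence, uniqueness, and closure-under-connectedness axioms, the $\nf$-independent amalgams of $\sigma$ form \emph{exactly one} connected component of $\ck_\sigma$: existence makes it nonempty, uniqueness places all of them in a single component, and closure makes that component full. The same holds for $\nf'$. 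Thus any notion satisfying axioms (1)--(5) is literally a choice of one distinguished component of $\ck_\sigma$ for each $\sigma$, and proving $\nf = \nf'$ reduces to showing these distinguished components coincide for every span.

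Next I would record two soft reductions. Given the reframing, it suffices to connect, inside each $\ck_\sigma$, one $\nf$-independent amalgam to one $\nf'$-independent amalgam: if some $\nf$-independent $s$ is connected to some $\nf'$-independent $s'$, then closure-under-connectedness of $\nf'$ makes $s$ a $\nf'$-independent amalgam; and by uniqueness of $\nf$ every other $\nf$-independent amalgam is connected to $s$, hence is also $\nf'$-independent by closure of $\nf'$. So every $\nf$-independent amalgam is $\nf'$-independent, and the reverse inclusion is symmetric. The whole theorem therefore rests on a single \emph{cross-uniqueness} statement: an $\nf$-independent amalgam and an $\nf'$-independent amalgam of the same span always lie in the same component of $\ck_\sigma$.

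For cross-uniqueness I would bring in the one hypothesis that breaks the symmetry between the two notions, namely that $\nf$ is accessible. As indicated in the discussion preceding the theorem, accessibility of $\nf$ is equivalent to the witness and local-character properties; concretely it provides an \emph{intrinsic} description of $\nf$-independence in terms of presentable (small) subobjects of the legs, together with an extension property for $\nf$-independent amalgams. Using existence of $\nf'$ and this local-character/extension apparatus of $\nf$, I would build by transfinite recursion an increasing tower of amalgams of $\sigma$ whose stages alternate between $\nf$-independent and $\nf'$-independent, each including into the next; transitivity and symmetry keep the $\nf$-independent stages independent as the tower grows, and the local-character characterization controls where the newly introduced independence sits. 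Since $\ck$ has chain bounds, the tower has a bound $s_\infty$ in $\ck_\sigma$. Continuity of $\nf$ (from accessibility) shows $s_\infty$ is $\nf$-independent and connected to the initial $\nf$-independent stage, while uniqueness of $\nf'$ connects the $\nf'$-independent stages, and hence $s_\infty$, to the chosen $\nf'$-independent amalgam. Chaining these connections gives exactly cross-uniqueness.

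The main obstacle is this last step: showing that $\nf'$-independence is \emph{forced} to obey the same presentable-level, local characterization that accessibility hands us for $\nf$. This is where all the structural axioms are genuinely spent: transitivity and symmetry to propagate independence up the tower, existence to feed the recursion from both sides, uniqueness of each notion at the limit stages, and chain bounds to form the bound $s_\infty$; and it is the only place the accessibility of $\nf$ enters. Everything else, namely the component reframing and the two reductions, is formal. The real content is that a merely combinatorial independence notion cannot diverge from an accessible one once both are coherent under stacking and swapping of the ears.
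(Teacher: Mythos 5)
Your reframing of a stable independence notion as the choice of one full connected component of the cocone category $\ck_\sigma$ per span $\sigma$, and the reduction of the whole theorem to a single ``cross-uniqueness'' statement (connecting one $\nf$-independent amalgam to one $\nf'$-independent amalgam of the same span), are correct and agree with the setup of the paper's argument, which likewise reduces everything to producing a single amalgam that is independent in both senses.

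The gap is in the transfinite tower. Its successor step requires, given an $\nf$-independent amalgam $s$ of $\sigma$, an $\nf'$-independent amalgam of the \emph{same} span receiving a morphism from $s$ in $\ck_\sigma$. No axiom provides this: existence for $\nf'$ only produces \emph{some} $\nf'$-independent amalgam of $\sigma$, with no control over where it sits relative to $s$, and an ``extension along arbitrary amalgams'' property of this kind is essentially as strong as the conclusion you are trying to prove. Moreover, if you could carry out even one alternation, closure under connectedness of both notions would immediately identify the two distinguished components, so the remaining ordinal-many stages, the chain bound $s_\infty$, and the limit analysis would be doing no work; and if you cannot carry out one alternation, the recursion never starts. (A secondary issue: the ``continuity of $\nf$'' you invoke at the bound of the chain does not follow from accessibility either --- accessibility of $\ck_{\smallnf}$ concerns $\lambda$-directed \emph{colimits} in the arrow category, whereas chain bounds only supplies some cocone of the chain, with no universal property.) The paper's construction is genuinely different: it first completes $\sigma$ to an $\nf'$-independent square and then builds a \emph{long family of mutually $\nf'$-independent copies} of one ear over the base; the local character furnished by accessibility of $\nf$ (the vector-space phenomenon you allude to but do not deploy) then shows that all but a small set of these copies are also $\nf$-independent from the other ear, and any one of the remaining copies yields the doubly independent amalgam. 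That pigeonhole over a long independent family, rather than an alternating tower over a fixed span, is the step your proposal is missing.
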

\begin{proof}[Proof idea]
  First, existence of a stable independence notion implies that the category itself is accessible \cite[3.27]{indep-categ-advances}. Let $\nf^1$ be a stable independence notion and let $\nf^2$ satisfy all the axioms of stable independence, except perhaps accessibility. Given a span $B \leftarrow A \rightarrow C$, it suffices to show that it has an amalgam which is independent in the sense of both $\nf^1$ and $\nf^2$ (then uniqueness and invariance under connectedness show that $\nf^1$ and $\nf^2$ must coincide for any amalgam of this span). The idea of the construction is perhaps best described by the following property of a vector space: given any sequence $I$ of vectors and any other vector $a$, there exists a finite subset $I_0$ of $I$ such that $(I - I_0) \cup \{a\}$ is independent. Thus the idea is to first complete the span $B \leftarrow A \rightarrow C$ to a $\nf^2$-independent square, then build many $\nf^2$-independent ``copies'' of that square. Analogously to the fact mentioned for vector spaces, all except a few of these copies must also be $\nf^1$-independent. 
\end{proof}
\begin{remark}\label{canon-rmk}
  The canonicity theorem provides us with a tool to prove the \emph{non-existence} of stable independence notions. Indeed, it suffices for this purpose to find two different notions satisfying all the axioms of stable independence except perhaps for accessibility. For example, the category of graph with full subgraph embeddings does not have a stable independence notion. Indeed, the relations ``all cross-edges are contained inside the base'' and ``all cross-edges outside those in the base are present'' satisfy all the axioms of stable independence except for accessibility \cite[3.31(6)]{indep-categ-advances}.
\end{remark}

We now survey two different general constructions of a stable independence notion. In both cases, the hypotheses are provably optimal in the sense that the existence of stable independence implies them. The first construction works in any $\infty$-AEC with chain bounds, but uses large cardinals and only obtains an independence notion on a cofinal subclass (consisting of ``sufficiently homogeneous'' objects).

\begin{thm}[{\cite[10.3]{indep-categ-advances}}]\label{vopenka-constr}
  Assume Vopěnka's principle. Let $\K$ be an $\infty$-AEC. The following are equivalent:

  \begin{enumerate}
  \item $\K$ does not have the order property\footnote{This is defined by generalizing the definition given in Example \ref{indep-ex-0}(\ref{indep-stable-ex-0}) and in \ref{op-def}: there is no long sequence that can be ordered by a type in the sense to be given in Section \ref{aec-sec}. See \cite[9.7]{indep-categ-advances}.}.
  \item $\K$ has a cofinal subclass of ``sufficiently homogeneous'' objects which has a stable independence notion.
  \end{enumerate}
\end{thm}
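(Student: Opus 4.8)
The equivalence splits into an easy and a hard direction. For $(2) \Rightarrow (1)$ I would invoke the structure theory already available: by \cite[8.16]{indep-categ-advances}, any $\infty$-AEC carrying a stable independence notion is stable, i.e.\ fails the order property. Since the order property of $\K$ is witnessed by a long sequence ordered by a Galois type (see \cite[9.7]{indep-categ-advances}), and any such witness can be located inside a sufficiently homogeneous object, an order property in $\K$ would descend to the cofinal subclass and contradict its stability. Thus $(1)$ follows.

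The substance is $(1) \Rightarrow (2)$. First I would use Vopěnka's principle to set up the arena: VP yields a proper class of strongly compact cardinals, which guarantees that for arbitrarily large $\kappa$ there exist $\kappa$-model-homogeneous, $\kappa$-universal objects, and that these form a cofinal subclass $\K'$ closed under sufficiently directed colimits (the \emph{sufficiently homogeneous} objects). Working inside such objects, Galois types are well behaved and amalgamation holds, so one may propose a candidate independence relation $\nfs{M}{A}{B}{N}$ by \emph{nonsplitting}: the Galois type of $A$ over $M \cup B$ computed in $N$ does not split over $M$. The plan is then to verify that the induced class of squares satisfies the axioms of Definition \ref{indep-def}.

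The failure of the order property is precisely the hypothesis powering each axiom. Symmetry and uniqueness of nonforking extensions are the classical consequences of stability, proved here by the usual arguments that extract an order-indiscernible sequence from a long sequence and average over it, now carried out abstractly with the compactness supplied by VP rather than by first-order compactness; were uniqueness or symmetry to fail, an order property would reappear. Existence follows from amalgamation in the homogeneous subclass together with the construction of nonsplitting extensions. For the accessibility axiom I would not argue directly: instead I would establish the \emph{witness property} (failure of independence is witnessed by small subsets of the ears) and the \emph{local character property} (every Galois type is nonsplitting over some small submodel), both of which are the standard stability-theoretic outputs of the failure of the order property, and then invoke the equivalence of \cite[8.14]{indep-categ-advances} between these two properties and accessibility of $\ck_{\smallnf}$. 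Canonicity (Theorem \ref{canon-thm}) then confirms this is the unique such notion.

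The main obstacle is the uniqueness axiom, together with the symmetry intertwined with it. In the first-order case this rests on definability of types or on uniqueness of nonforking extensions, and transporting it to an arbitrary $\infty$-AEC requires replacing syntactic types by Galois types and compensating for the absence of compactness with the strong compactness granted by Vopěnka's principle. Controlling the local character cardinal — the size of the submodel over which a type is guaranteed nonsplitting — is exactly what forces the restriction to the cofinal subclass of sufficiently homogeneous objects rather than all of $\K$, and making this bookkeeping precise is the technical heart of the argument.
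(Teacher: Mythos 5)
Your overall architecture matches the paper's: restrict to a cofinal subclass of sufficiently ($\kappa$-)homogeneous objects for $\kappa$ strongly compact (Vopěnka's principle supplies a proper class of such $\kappa$), use Vopěnka's principle again to see that this subclass is itself accessible, hence an $\infty$-AEC, verify the axioms of Definition \ref{indep-def} for a concretely defined candidate relation, and obtain the accessibility axiom indirectly via the witness and local character properties together with \cite[8.14]{indep-categ-advances}. The direction (2)$\Rightarrow$(1) is also fine, though the paper argues it directly (extract an independent indiscernible subsequence from a putative order-property witness using accessibility and uniqueness, then use symmetry to permute it and contradict the ordering) rather than quoting \cite[8.16]{indep-categ-advances} as a black box.

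The gap is in your choice of candidate relation. You propose nonsplitting, but the paper imitates the construction of Appendix \ref{fo-sec}, whose primitive is $<\kappa$-satisfiability (the coheir): $\ba \nf_M \bb$ when every small fragment of the type of $\ba$ over $M\bb$ is realized in $M$, with $\aleph_0$ replaced by the strongly compact $\kappa$. This is not a cosmetic difference. The proofs of symmetry and uniqueness in Theorem \ref{indep-constr} work by repeatedly pulling witnesses down into $M$ using the definition of finite satisfiability, and the proof of extension assembles the free extension from the type fragments realized in $M$ and then appeals to compactness --- for $\Ll_{\kappa,\kappa}$ this is exactly where strong compactness enters. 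None of these arguments is available for nonsplitting: nonsplitting extensions over arbitrary larger sets need not exist, transitivity and symmetry fail for nonsplitting in general (even in the first-order stable case one proves them by passing through finite satisfiability or forking), and local character for nonsplitting is tied to resolutions by chains of models rather than to arbitrary sets. Note that Lemma \ref{split-lem} shows the coheir implies nonsplitting, not conversely, so starting from nonsplitting you would still have to build the coheir-style relation to run the verification. Replacing your candidate by $<\kappa$-satisfiability restricted to the locally $\kappa$-homogeneous subclass recovers the paper's argument.
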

\begin{proof}[Proof ideas]
  If $\K$ has the order property, as witnessed by a long sequence $\seq{\ba_i : i < \lambda}$ ($\lambda$ here is a big regular cardinal), we can use the accessibility and uniqueness properties of stable independence to get a subsequence $\seq{\ba_i : i \in S}$ with $|S| = \lambda$ that is independent and indiscernible (essentially, this means that the sequence is ``very homogeneous'' -- it looks like a sequence of mutually transcendental elements in an algebraically closed fields). The symmetry axiom can then be use to show that the elements of the subsequence can be permuted without impacting their properties. In particular, the sequence cannot serve as a witness for the order property, contradiction.

  Going from no order property to stable independence, the very rough idea is to imitate the standard construction given Appendix \ref{fo-sec}, but replace $\aleph_0$ by a sufficiently-big strongly compact, and look only at the locally $\kappa$-homogeneous models (see \cite[7.3]{indep-categ-advances}). Vopěnka's principle is used to prove that this subclass is an $\infty$-AEC, and also provides enough compactness to prove the existence property.
\end{proof}

For the second method, we start with a ``classical'' category $\ck$: a locally presentable category. We single out a certain class of morphisms $\Mm$ (usually some class of nice monomorphisms), and we want to build a stable independence notion on the category $\ck_{\Mm}$ obtained by restricting the morphisms in $\ck$ to be those of $\Mm$. To make this setup more precise, we will have to make some assumptions on $\Mm$. Of course, we want at minimum $\Mm$ to contain all isomorphisms and be closed under compositions. It turns out it is very convenient to assume that $\Mm$ satisfies a coherence property, similar to the coherence axiom of $\infty$-AECs: for morphisms $f, g$ of $\ck$, if $gf \in \Mm$ and $g \in \Mm$, then $f \in \Mm$. We will say that $\Mm$ is \emph{coherent}. For technical reasons, we will also want $\Mm$ to be closed under retracts (in the arrow category $\ck^2$). This holds under very mild conditions. For example, if $\Mm$ is closed under compositions, contains all split monos, and is coherent, then it is closed under retracts. Since we want to use pushouts to build our stable independence notion, we will require $\Mm$ to be \emph{closed under pushouts}: a pushout of a morphism in $\Mm$ (not necessarily along a morphism in $\Mm$) should be in $\Mm$. Finally, to perform iterative constructions, we will need $\Mm$ to be \emph{closed under transfinite compositions}.

Under all these conditions, we get that $\ck_{\Mm}$ has a stable independence notion \emph{if and only if} it is accessible and $\Mm$ is cofibrantly generated. Here, $\Mm$ is \emph{cofibrantly generated} if there is a subset $\Xx$ of $\Mm$ (i.e.\ not a proper class) such that closing $\Xx$ under retracts, pushouts, and transfinite compositions gives back $\Mm$. This notion originated in algebraic topology, where the cell complexes are precisely the objects that can be obtained from finitely-many pushouts and composition by starting from inclusions $\partial D^n \to D^n$ of the boundary of the $n$-ball. As we will see, similar notions have been used in homological algebra as well.

\begin{thm}[{\cite[4.9]{more-indep-v2}}]\label{cofib}
  Let $\ck$ be a locally presentable category and let $\Mm$ be a coherent class of morphisms containing all isomorphisms, closed under retracts, transfinite compositions, and pushouts. The following are equivalent:

  \begin{enumerate}
  \item\label{cofib-1} $\ck_{\Mm}$ has a stable independence notion.
  \item\label{cofib-2} $\ck_{\Mm}$ is accessible and $\Mm$ is cofibrantly generated.
  \end{enumerate}
\end{thm}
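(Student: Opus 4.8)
The plan is to produce a single canonical candidate for the independence notion and then let the canonicity theorem do most of the bookkeeping. Given a span $B \leftarrow A \rightarrow C$ in $\ck_{\Mm}$, form its pushout $P$ in $\ck$; since $\Mm$ is closed under pushouts, the two legs $B \to P$, $C \to P$ lie in $\Mm$, so $P$ is an amalgam in $\ck_{\Mm}$. I would declare an amalgam $D$ of the span to be \emph{independent} precisely when the canonical comparison $P \to D$ belongs to $\Mm$. A short diagram chase using coherence shows this is the same as asking that $D$ lie in the connected component of $P$ in the category of cocones: if $D \to E \leftarrow P$ are $\Mm$-cocone maps then $P \to E \in \Mm$ factors as $P \to D \to E$ with $D \to E \in \Mm$, so coherence forces $P \to D \in \Mm$, and conversely. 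With this candidate fixed, the canonicity theorem (Theorem \ref{canon-thm}) means I only ever have to analyze this one relation: in the direction (\ref{cofib-1}) $\Rightarrow$ (\ref{cofib-2}) any stable independence notion must coincide with it, and in the direction (\ref{cofib-2}) $\Rightarrow$ (\ref{cofib-1}) it is the only thing I could possibly build. The genuine content of the theorem then collapses to a single equivalence: \emph{the category $\ck_\smallnf$ of independent squares is accessible if and only if $\Mm$ is cofibrantly generated} (with accessibility of $\ck_{\Mm}$ available on both sides, via \cite[3.27]{indep-categ-advances} on the one side and by hypothesis on the other).

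Before attacking that equivalence, I would dispatch the five ``algebraic'' axioms, all of which follow from the closure properties of $\Mm$ and are independent of any accessibility or generation hypothesis. Closure under connectedness is exactly the coherence computation above. Existence holds because $P$ itself is an independent amalgam ($\id_P \in \Mm$). Uniqueness follows by pushing out $D^a \leftarrow P \rightarrow D^b$: the resulting object receives $\Mm$-maps from both, exhibiting the two independent amalgams as jointly connected. Symmetry is immediate since the pushout is symmetric in its two legs. Transitivity is the pushout pasting lemma dressed up: if the left and right squares are connected to pushouts, then writing the outer pushout as an iterated pushout and comparing it to the middle object via a map which is itself a pushout of an $\Mm$-map, closure of $\Mm$ under pushout and composition yields that the outer comparison map is again in $\Mm$.

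The main obstacle is the accessibility--cofibrant-generation equivalence. For (\ref{cofib-2}) $\Rightarrow$ (\ref{cofib-1}) I would use that, by the small object argument in the locally presentable $\ck$, a generating set $\Xx$ for $\Mm$ produces a weak factorization system and presents every $\Mm$-map as a retract of a transfinite composite of pushouts of maps of $\Xx$; the plan is to turn these cell presentations into a set of $\lambda$-presentable independent squares through which every independent square factors as a $\lambda$-directed colimit, giving the smallness condition for $\ck_\smallnf$ (directed colimits in $\ck_\smallnf$ being inherited from $\ck$). For (\ref{cofib-1}) $\Rightarrow$ (\ref{cofib-2}), canonicity already tells me $\ck_\smallnf$ is accessible, and I would extract from a set $S$ of $\lambda$-presentable generators of $\ck_\smallnf$ an underlying set $\Xx$ of $\Mm$-morphisms between $\lambda$-presentable objects; the crux is to prove $\Mm = \cof(\Xx)$, by writing an arbitrary $\Mm$-map as a $\lambda$-directed colimit of objects of $S$ in $\ck_\smallnf$ and reading this colimit --- whose connecting maps are independent, hence pushout, squares --- as a cell construction witnessing membership in $\cof(\Xx)$. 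The hard part throughout is precisely this dictionary between $\lambda$-directed colimits in the independent-square category and transfinite cellular constructions for the weak factorization system; making it precise, and checking the presentability bookkeeping so that ``small independent square'' really corresponds to ``generator in $\Xx$,'' is where the real work lies.
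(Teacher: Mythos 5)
Your overall architecture matches the paper's: the candidate notion is exactly the effective squares (comparison map from the pushout lands in $\Mm$), the five algebraic axioms are dispatched from the closure properties of $\Mm$ alone, canonicity reduces both directions to the single equivalence ``$\ck_{\Mm,\smallnf}$ accessible iff $\Mm$ cofibrantly generated,'' and the (\ref{cofib-2})$\Rightarrow$(\ref{cofib-1}) direction is handled by showing that the maps expressible as $\lambda$-directed colimits of small $\Mm$-maps in $\ck_{\Mm,\smallnf}$ exhaust $\cof(\Xx)$. So far this is the paper's proof.

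There is, however, a genuine gap in your sketch of (\ref{cofib-1})$\Rightarrow$(\ref{cofib-2}), localized in the phrase ``whose connecting maps are independent, hence pushout, squares.'' Independent squares here are \emph{effective} squares, not pushout squares: in $\ck_{\Mm}$ (typically all morphisms monos) the pushout of a span almost never remains a cocone in the subcategory, and if effective squares were literally pushouts the category would be thin. Consequently you cannot read a $\lambda$-directed colimit in $\ck_{\Mm,\smallnf}$ directly as a relative cell complex built from $\Xx$; each connecting square only gives you a factorization $f = p\,\bar f_0$ where $\bar f_0$ is the pushout of a small map (hence in $\cof(\Xx)$) and $p$ is the induced comparison map, which lies in $\Mm$ but is \emph{not yet} exhibited as cellular. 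The paper's proof closes this gap by (a) first replacing the $\lambda$-directed diagram by a \emph{chain} $\seq{f_i : i < \delta}$ in $\ck_{\Mm,\smallnf}$ --- itself nontrivial, and obtained via the reflective embedding into a finitely accessible category and the AEC filtration machinery --- and then (b) recursing: after peeling off the pushout of $f_0$ one repeats the whole procedure on the residual map $p_0$ against the tail of the chain, so that $f$ is ultimately a $\delta$-length transfinite composition of pushouts of generators, all framed as an induction on the presentability rank of $\dom f$ and $\cod f$ showing $\Mm_\mu \subseteq \cof(\Mm_\lambda)$ for every regular $\mu$. You correctly flag the colimit-to-cell dictionary as ``where the real work lies,'' but the specific mechanism you propose for it rests on the false identification of effective squares with pushout squares, so as written the hard direction does not go through.
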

\begin{proof}[Proof ideas]
  Neither direction is easy. First, we need a candidate definition for a stable independence notion. Let us say that a commutative square with morphisms in $\Mm$ is \emph{effective} if the induced map from the pushout is in $\Mm$. That is, the outer square in the diagram below is effective if all its maps are in $\Mm$ and the induced map $f$ from the pushout $P$ is in $\Mm$.

  $$  
  \xymatrix@=1pc{
    B \ar[dr] \ar[rr] & & D \\
    & P \ar@{.>}[ur]_{f} & \\
    A \ar[rr] \ar[uu] & & C \ar[uu] \ar[lu]
      }
  $$

  Without any additional hypotheses, it is not too difficult to show that effective squares will satisfy all the axioms of stable independence, except perhaps for accessibility. Let $\ck_{\Mm, \smallnf}$ be the category whose objects are morphisms in $\Mm$ and morphisms are effective squares. We can also show that $\ck_{\Mm, \smallnf}$ has directed colimits (and they are computed as in $\ck^2$). 

  \begin{itemize}
  \item \underline{(\ref{cofib-1}) implies (\ref{cofib-2})}: Starting from a stable independence notion on $\ck_{\Mm}$, it is not too difficult to show that $\ck_{\Mm}$ must be accessible \cite[3.27]{indep-categ-advances}. Moreover, the stable independence notion must be given by effective squares, by the canonicity theorem (\ref{canon-thm}). Let $\lambda$ be a big-enough regular uncountable cardinal such that all the relevant categories are $\lambda$-accessible. For $\mu \ge \lambda$ regular, let $\Mm_\mu$ denote the class of morphisms in $\Mm$ with $\mu$-presentable domain and codomain (in $\ck$). For a class $\Hh$ of morphisms, let $\cof (\Hh)$, the \emph{cofibrant closure of $\Hh$}, denote the closure of $\Hh$ under retracts, pushouts, and transfinite compositions. We will show that $\Mm = \cof (\Mm_\lambda)$. For this, we prove by induction that for all regular cardinals $\mu$, $\Mm_\mu \subseteq \cof (\Mm_\lambda)$. For $\mu \le \lambda$ this is trivial and if $\mu$ is limit, $\Mm_{\mu} = \bigcup_{\theta < \mu} \Mm_{\theta}$ by Corollary \ref{succ-cor}(\ref{succ-cor-1}). Thus we can assume $\mu = \mu_0^+$. Let $\delta := \cf{\mu_0}$, and fix a morphism $A \xrightarrow{f} B$ in $\Mm_{\mu}$. We write $f$ as the directed colimit of a chain $\seq{f_i : i < \delta}$ in $\ck_{\Mm, \smallnf}$, where each $f_i$ is in $\Mm_{\theta}$ for a regular $\theta \le \mu_0$. Finding such a chain is nontrivial (in accessible categories, objects can be written as directed colimits of directed diagrams of small objects, it is not clear you can find chains like this): assuming for simplicity all morphisms are monos, we use that it is possible in AECs, together with the fact every $\lambda$-accessible category $\cl$ with directed colimits is a reflexive full subcategory of a finitely accessible category (which is given by taking free directed colimits of the $\lambda$-presentable objects of $\cl$), and finitely accessible categories with all morphisms monos are AECs (Theorem \ref{acc-to-aec}).

    Once we have the $f_i$'s, say $A_i \xrightarrow{f_i} B_i$, they are each by the induction hypothesis part of $\cof (\Mm_\lambda)$. It suffices to use them to generate $f$. Let $(A_i \xrightarrow{g_i} A, B_i \xrightarrow{h_i} B)_{i < \delta}$ be the colimit maps. 

  $$  
  \xymatrix@=1pc{
    A   \ar@{}[dr]|{\nf}    \ar[r]^f & B \\
    A_i \ar[u]^{g_i} \ar[r]_{f_i}  & B_i \ar[u]_{h_i}
  }
  $$

  Take the pushout $P_0$ of $f_0$ along $g_0$. We get that $f = p_0 \bar{f}_0$, where $p_0$ is the induced map from the pushout, which is also in $\Mm$ by assumption. We have managed to generate $\bar{f}_0$, and we now repeat what we did for $f$ but for $p_0$ instead (write it as the colimit of an increasing chain of morphisms that are above $f_1$, take the pushout of the first morphism, etc.). In the end, we will have written $f$ as a $\delta$-length transfinite compositions of pushouts of members of $\cof (\Mm_\lambda)$, as desired.
  \item (\ref{cofib-2}) implies (\ref{cofib-1}): As before, fix $\lambda$ an uncountable regular cardinal so that all relevant categories are $\lambda$-accessible and $\Mm = \cof (\Mm_\lambda)$. By using what is called ``good colimits'' \cite[B.1]{fat-small-obj}, we can in fact show that $\Mm$ is generated from $\Mm_\lambda$ by just using pushouts and transfinite compositions (no retracts). Let $\Hh$ be the class of all morphisms that, in $\ck_{\Mm, \smallnf}$ are $\lambda$-directed colimits of morphisms of $\Mm_\lambda$. It suffices to show that $\Hh = \Mm$, and for this it suffices to see that $\Hh$ is closed under pushouts and transfinite compositions. This can readily be done, using the definition of an effective square.
  \end{itemize}
\end{proof}
\begin{remark}
  In Theorem \ref{cofib}, if $\ck$ is $\lambda$-accessible and all morphisms of $\Mm$ are monos, then $\ck_{\Mm}$ will be (equivalent to) a $\lambda$-AEC \cite[3.10]{more-indep-v2}. This is especially interesting when $\lambda = \aleph_0$ (i.e.\ $\ck$ is locally finitely presentable), in which case we get an AEC.
\end{remark}

Compared to Theorem \ref{vopenka-constr}, Theorem \ref{cofib} does not use large cardinal axioms and more importantly gives a stable independence notion on the whole category. The hypotheses seem to hold often-enough in practice (all the examples below and more are in \cite[\S6]{more-indep-v2}):

\begin{example}\label{indep-ex} \
  \begin{enumerate}
  \item Let $\ck$ be the category of graphs (reflexive and symmetric binary relations) with homomorphisms. This is locally finitely presentable. Let $\Mm$ be the full subgraph embeddings. We have seen (Remark \ref{canon-rmk}) that $\ck_{\Mm}$ does not have a stable independence notion. This automatically tells us that $\Mm$ is \emph{not} cofibrantly generated.
  \item\label{indep-gr-2-ex} Let $\ck$ again be the category of graphs with homomorphisms. This time, let $\Mm$ be the subgraph embeddings (corresponding to monomorphisms in $\ck$ -- the full subgraph embeddings correspond to the \emph{regular monomorphisms}: those that are equalizers of some pair of morphism). Then $\Mm$ is cofibrantly generated by $\emptyset \to 1$ and $1 + 1 \to 2$, where $1$ is a vertex, $2$ is an edge, and $1 + 1$ is an empty graph on two vertices. Therefore $\ck_{\Mm}$ has a stable independence notion.
  \item An \emph{orthogonal factorization system} in a category $\ck$ consist of two classes of morphisms $(\Mm, \Nn)$ such that both contain all the isomorphisms, both are closed under composition, and every map factors as $gf$ with $f \in \Mm$ and $g \in \Nn$ in a way that is unique up to unique isomorphism. For example, in the category of sets (epi, mono) is an orthogonal factorization system. A \emph{weak factorization system} is defined in a somewhat similar way, except the uniqueness condition is considerably relaxed. An example is (mono, epi) in the category of sets. It turns out that (in a cocomplete category) the left part of a weak factorization system is always closed under pushouts, retracts, and transfinite compositions. Thus we can call a weak factorization system $(\Mm, \Nn)$ \emph{cofibrantly generated} if $\Mm$ is cofibrantly generated. Thus if $\Mm$ is the left part of a weak factorization system in a locally presentable category $\ck$, $\Mm$ is coherent, and $\ck_{\Mm}$ is accessible, then $\ck_{\Mm}$ has a stable independence notion if and only if the weak factorization system is cofibrantly generated. In fact, the \emph{small object argument} tells us that if $\Xx$ is any set of morphisms in a locally presentable category, then $\cof (\Xx)$ will form the left part of a weak factorization system \cite[1.3]{beke-sheafifiable}. Thus in the setup of Theorem \ref{cofib}, existence of a stable independence notion on $\ck_{\Mm}$ automatically implies that $\Mm$ forms the left part of a weak factorization system.
  \item In algebraic topology, a \emph{model category} is a bicomplete category together with three classes of morphisms, $F$ (fibrations), $C$ (cofibrations), and $W$ (weak equivalences), such that the weak equivalences satisfy the two out of three property (if two of $g, f$ and $gf$ are weak equivalences, so is the third), and both $(C, F \cap W)$ and $(C \cap W, F)$ form weak factorization systems. A model category is called \emph{cofibrantly generated} when both weak factorization systems are cofibrantly generated. The typical example includes the usual fibrations, cofibrations, and weak homotopy equivalences in the category of topological spaces, or the monomorphisms, Kan fibrations, and weak homotopy equivalences in the category of simplicial sets. There are however model category of a more algebraic flavor, including a model category on chain complexes of modules \cite[Chapter 2]{hoveybook}. Thus the previous example also describes a two way connection between model categories and stable independence.
  \item\label{indep-ex-mod} Let $R$ be a (associative and unital) ring and let $\ck$ be the category of $R$-modules with homomorphisms. One can check that $\ck$ is locally finitely presentable. We want to study the modules that are \emph{flat} (i.e.\ directed colimits of free modules -- this is the easiest equivalent definition for the purpose of this discussion). We will do this through the class $\Mm$ of \emph{flat monomorphisms}: monomorphisms whose cokernel is flat. In particular, an inclusion $A \subseteq B$ is flat if and only if $B / A$ is flat. One can check that $\Mm$ contains all isomorphisms, is coherent, and closed under pushouts, retracts, and transfinite compositions. Now the class $\ck_{\Mm}$ is not quite the right category, we really want to study the category $\Ff_{\Mm}$, where $\Ff$ is the full subcategory of $\ck$ consisting of flat modules. Note that $\Ff$ can be described in terms of $\Mm$: an object $A$ is in $\Ff$ if and only if the initial morphism $0 \to A$ is in $\Mm$. We will say that $A$ is a cofibrant object (with respect to $\Mm$). Before even worrying about stable independence, is $\Ff_{\Mm}$ an accessible category? Using Theorem \ref{acc-to-aec}, one can see that this is equivalent to asking whether it is an AEC. Similar examples were studied by Baldwin-Eklof-Trlifaj \cite{bet}, where it is shown that $\Ff_{\Mm}$ is an AEC if and only if $\Ff$ has refinements: there is a regular cardinal $\lambda$ such that any flat module $A$ is the colimit of an increasing chain $\seq{A_i : i < \delta}$ with $A_0 = 0$ and $A_{i + 1} / A_i$ flat and $\lambda$-presentable. Earlier, Rosický had essentially shown \cite[4.5]{flat-covers-factorizations} that having refinements is equivalent to $\Mm$ being cofibrantly generated (even by a subset of $\Mm \cap \Ff$)! Thus having refinements is yet another disguise for being cofibrantly generated. Using a slight variation on Theorem \ref{cofib}, one can then close the loop to deduce that if $\Ff$ has refinements then $\Ff_{\Mm}$ has a stable independence notion. It turns out that having refinement is closely connected to the (now resolved) \emph{flat cover conjecture} \cite{enochs-flat-cover-orig} (a dualization of the fact that every module has an injective envelope). An argument of Bican, El Bashir, and Enochs \cite{flat-cover} (which also easily could have been deduced from existing results on accessible categories, see \cite[3.2]{flat-covers-factorizations} or \cite[6.21]{more-indep-v2}) establishes that indeed, $\Ff$ has refinements, and thus $\Ff_{\Mm}$ indeed is an AEC with a stable independence notion.
  \end{enumerate}
\end{example}

\subsection{Higher dimensional independence and categoricity}\label{higher-dim-sec}

Recall that stable independence was defined as a class of squares satisfying certain properties, the most important of which was that the arrow category $\ck_{\smallnf}$ (whose objects are the arrows of $\ck$ and whose morphisms are the independent squares) should be accessible.

Now, given any accessible category, it makes sense to ask whether it has a stable independence notion. Does $\ck_{\smallnf}$ have a stable independence notion? If it does (call this stable independence notion $\nf^\ast$), then what do the objects and arrow look like in the corresponding category $(\ck_\smallnf)_{\smallnf^\ast}$? Well, the objects are the morphisms of $\nf^\ast$, i.e.\ independent squares. The morphisms, in turn, are $\nf^\ast$-independent ``squares'' in $\ck_{\smallnf}$, but they really are morphisms between two independent squares of $\ck$, hence it makes sense to call them independent \emph{cubes}. Continuing in this way, one can define when a category has an $n$-dimensional stable independence notion, for any $n < \omega$ (the original definition of stable independence is the case $n = 2$). 

Very nice, but what are (possibly multidimensional) stable independence notions good for? Very roughly, they are useful to prove the existence and uniqueness of certain objects in a category. As a simple example, let $\K$ be an AEC with $\LS (\K) = \aleph_0$ (e.g.\ the AEC of abelian groups, ordered with subgroup). Let's assume that there is an object of cardinality $\aleph_0$, and suppose we want to establish that $\K$ has an object of cardinality $\aleph_1$. A simple way would be to first show that for every countable $A \in \K$, there exists a countable $B \in \K$ with $A \lta B$. Equivalently, there is a morphism with domain $A$ that is \emph{not} an isomorphism. If this last property holds, then we can build a strictly increasing chain $\seq{A_i : i < \omega_1}$ of countable objects (taking  unions at limits) and the union of this chain will be the desired object of cardinality $\aleph_1$.

One can think of this construction as establishing a $0$-dimensional property in $\aleph_1$ (existence of an object) by using a $1$-dimensional property in $\aleph_0$ (existence of extensions). There was nothing special about $\aleph_0$ and $\aleph_1$ in this example, we could have replaced them with $\lambda$ and $\lambda^+$ for an arbitrary infinite cardinal $\lambda$. In particular, existence in $\aleph_2$ is implied by extensions in $\aleph_1$. Now how do we get existence of extensions in $\aleph_1$? Well, it is a $1$-dimensional property, so it seems reasonable that we should look for a $2$-dimensional property in $\aleph_0$. Such a property is the \emph{disjoint amalgamation property}: any span can be completed to a pullback square. Note that in many cases, an independent square will be a pullback square \cite[10.6]{indep-categ-advances}. Still, existence is a purely combinatorial property. Stable independence notions become really useful to prove \emph{uniqueness} properties. The simplest uniqueness property is what model theorists call \emph{categoricity} (this is somewhat unfortunate terminology, dating back from before the invention of category theory \cite{veblen-geom-categ, los-conjecture}).

\begin{defin}\label{categ-def}
  For an infinite cardinal $\lambda$, a category is called \emph{$\lambda$-categorical} (or \emph{categorical in $\lambda$}) if it has exactly one object of size $\lambda$ (up to isomorphism).
\end{defin}

The following are classical examples of the occurrence of categoricity:

\begin{example} \
  \begin{enumerate}
  \item The category of abelian groups is not categorical in any infinite cardinal (for a cardinal $\lambda$, take $\lambda$ copies of $\mathbb{Z}$ and $\lambda$ copies of $\mathbb{Z} / 2 \mathbb{Z}$: the first is torsion-free the other is not -- they are not isomorphic).
  \item The category of all dense linear orders without endpoints is categorical in $\aleph_0$, but not in any uncountable cardinal.
  \item The category of sets is categorical in every infinite cardinal.
  \item The category of vector spaces over $\mathbb{Q}$ is categorical in every uncountable cardinal (but not in $\aleph_0$: consider the spaces of dimension 1 and 2).
  \item The category of all algebraically closed fields of a fixed characteristic is categorical in every uncountable cardinal (but again not in $\aleph_0$).
  \item \cite[6.3]{internal-sizes-jpaa} The category of all Hilbert spaces is categorical in every uncountable size (but not in every uncountable cardinal). 
  \end{enumerate}
\end{example}

A classical theorem of Morley \cite{morley-cip} says that for a countable first-order theory $T$, if $\Elem (T)$ is categorical in some uncountable cardinal, then it is categorical in all uncountable cardinals. A proof can be sketched as follows: derive the existence of a stable independence notion from categoricity, then use it to build enough of a dimension theory to transfer categoricity. While it seems that having a single object of a given size is a relatively rare occurrence, it seems to be a useful test case as tools developed in proofs of categoricity transfers are often much more general. For example, the category of abelian groups with monomorphisms still has a stable independence notion \cite[5.3]{indep-categ-advances}. For this reason, many generalizations of Morley's theorem have been conjectured. One variation is:

\begin{conjecture}[Shelah's eventual categoricity conjecture]\label{categ-conj}
  If an AEC is categorical in a proper class of cardinals, then it is categorical on a tail of cardinals.
\end{conjecture}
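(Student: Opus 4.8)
The plan is to follow the classical Morley-style strategy---extract a forking-like independence notion from categoricity, build a dimension theory, and use it to transfer categoricity---but carried out at the level of generality of AECs and organized around the stable independence notion of Definition \ref{indep-def}. The first step is to extract structural consequences from categoricity in a proper class of cardinals. Categoricity in unboundedly many cardinals should, via Ehrenfeucht--Mostowski arguments built on the faithful functor $\Lin \to \K$ of Remark \ref{pres-rmk}(\ref{pres-morley-rmk}), force $\K$ to be \emph{stable}, to fail the order property above some threshold $\chi \ge \LS(\K)$, and to have no maximal models. The more delicate structural inputs are \emph{amalgamation} (Definition \ref{ap-def}) and \emph{tameness}---the locality property asserting that Galois types are determined by their small restrictions.

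Once amalgamation, tameness, and failure of the order property hold on a tail, I would produce a stable independence notion $\nf$ on a cofinal subclass of $\K$ (the sufficiently saturated objects). Under Vop\v{e}nka's principle this is exactly Theorem \ref{vopenka-constr}; the ZFC version instead requires constructing forking by hand from the no-order-property hypothesis and verifying the accessibility axiom through the witness and local-character properties discussed after Theorem \ref{canon-thm}. The canonicity theorem (\ref{canon-thm}) then guarantees this notion is \emph{unique}, which is what makes the invariants extracted from it well defined.

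With $\nf$ in place I would develop the dimension theory using the higher-dimensional independence of Section \ref{higher-dim-sec}: define orthogonality of types, isolate \emph{regular} types as building blocks, and attach to each saturated model a dimension, namely the size of a maximal $\nf$-independent set of realizations of a fixed regular type. The aim is to show that a categorical model of size $\lambda$ must be saturated and that its isomorphism type is completely determined by this dimension, which in a categorical cardinal is forced to equal $\lambda$. Transfer then follows formally: in every large-enough $\mu$ the saturated model of size $\mu$ is unique and is the \emph{only} model of size $\mu$, giving categoricity on a tail.

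The main obstacle---and the reason the statement remains a conjecture in full ZFC generality---is securing amalgamation and tameness without large-cardinal hypotheses. Tameness is automatic above a strongly compact cardinal but can consistently fail in general AECs, and the \emph{downward} transfer is genuinely hard because the dimension theory above is most naturally built going upward. I would therefore expect any complete argument either to assume a tameness hypothesis (reducing to the known tame case) or to extract enough locality directly from the categoricity assumption, via a careful analysis of the accessibility spectrum together with Shelah's weak-diamond machinery (Theorem \ref{ap-thm}) applied in the successor cardinals below $\lambda$.
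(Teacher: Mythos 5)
The statement you are trying to prove is labeled a \emph{conjecture} in the paper, and the paper offers no proof of it: Section \ref{problem-sec} explicitly lists ``Is the eventual categoricity conjecture for AECs provable in, or at least consistent with, ZFC?'' as an open question, and only records partial results under extra hypotheses (a proper class of strongly compacts; measurables plus WGCH; amalgamation plus WGCH; universal classes; etc.). So there is no ``paper proof'' to compare against, and your proposal cannot be judged as a correct proof of the stated claim.

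What you have written is an accurate summary of the known \emph{strategy} (Morley-style: extract stability, amalgamation, tameness, and a canonical stable independence notion from categoricity; build a dimension theory via multidimensional independence as in Section \ref{higher-dim-sec}; transfer categoricity through uniqueness of saturated models), and you correctly locate the obstruction: obtaining amalgamation and tameness from categoricity alone, in ZFC, and carrying out the downward transfer. But that obstruction is not a technical loose end to be patched --- it \emph{is} the open problem. Your first step (``categoricity in unboundedly many cardinals should \ldots force amalgamation and tameness'') is not known to be provable in ZFC; tameness can consistently fail in AECs, and the weak-diamond machinery of Theorem \ref{ap-thm} only yields cofinal families of amalgamation bases under cardinal-arithmetic hypotheses, not full amalgamation outright. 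Likewise, Theorem \ref{vopenka-constr} requires Vop\v{e}nka's principle, and the ZFC construction of forking you allude to is only available once stability-theoretic hypotheses are already secured. In short, your proposal reduces the conjecture to sub-problems that are themselves open, and you acknowledge as much in your final paragraph; as a proof it therefore has an irreparable gap at the very first step, which is exactly why the statement remains a conjecture.
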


Note that we have strengthened the starting assumption to categoricity on a proper class. Often, assuming categoricity in a single ``high-enough'' cardinal, where ``high-enough'' has some effective meaning, is enough. Note also that the conjecture is open even for accessible categories (although I suspect it should be wrong in full generality there). The eventual categoricity conjecture for AECs implies the eventual categoricity conjecture for \emph{finitely} accessible categories \cite[5.8(2)]{beke-rosicky}. 

It should be noted that multidimensional stable independence notions were introduced by Shelah to make progress on categoricity questions in $\Ll_{\omega_1, \omega}$ (in a very different form and a much more special situation than described here), see \cite{sh87a, sh87b}. Very recently, multidimensional independence was used by Shelah and myself to prove the eventual categoricity conjecture in AECs, assuming the existence of a proper class of strongly compact cardinals \cite{multidim-v2}.

So how exactly is multidimensional stable independence used? It is a complicated story, that I do not have space to tell here. One idea is that if we have some version of multidimensional stable independence notion just for objects of size $\lambda$, then we can lift it up to a multidimensional stable independence notion on the whole category. Another key result is that once we have a multidimensional stable independence notion, we can get a much better approximation to existence of pushouts. In particular, among the independent amalgams of a given span, there will be a weakly initial one, and it will be unique up to (not necessarily unique) isomorphism. Such an object is called a \emph{prime object} (over the span) by model theorists. In this sense, every span has a ``very weak'' colimit. This is enough to obtain a notion of generation that helps build a dimension theory. 

Finally, let's note that categoricity is a $0$-dimensional uniqueness property, and we may want to know higher dimensional versions (even before proving existence of prime objects). For the two-dimensional case, instead of building amalgam that are, in the sense above, minimal, we will want to get amalgams that are maximal, in the sense of being very homogeneous (i.e.\ with a lot of injectivity). This is given by the notion of a \emph{limit object}, which we survey (in the one dimensional case) in the next section.

\section{Element by element constructions in concrete categories}\label{aec-sec}

The framework of abstract elementary classes is very useful to perform element by element constructions: we can use the concreteness to build certain objects ``point by point''. We really will just use that we work in an accessible category with concrete directed colimits and all morphisms embedding (so nothing about the vocabulary will be needed). The reader uncomfortable with logic can think about this more general case.

We first need a definition of a ``point'' and what it means for two points to be the same. This depends of course on a base set, e.g.\ in the category of fields, the elements $e^{1/2}$ and $e^{1/4}$ are the same over $\mathbb{Q}$ but not the same over $\mathbb{Q} (e)$. The equivalence class of a point over a given base will be called a \emph{type}\footnote{In the AEC literature, the terms ``Galois types'' or ``orbital types'' are used. This is to avoid confusion with the model-theoretic syntactic types. However, there is not really a Galois theory and types are not necessarily orbits. Moreover in the first-order case the syntactic types are the same as the orbital types, and we will never refer to syntactic type. Thus we prefer the simpler terminology.}. The definition for AECs is due to Shelah, but in the wider setup of concrete categories it was first explored by Lieberman and Rosický \cite[4.1]{ct-accessible-jsl}:

\begin{defin}\label{point-def}
  Let $\K = (\ck, U)$ be a concrete category and let $A$ be an object of $\ck$. The \emph{category of points over $A$}, $\K_A^\ast$, is defined as follows:

  \begin{itemize}
  \item Its objects are pairs $(f, a)$, where $f: A \to B$ and $a \in U B$.
  \item A morphism from $(A \xrightarrow{f} B, a)$ to $(A \xrightarrow{g} C, b)$ is a $\ck$-morphism $B \xrightarrow{h} C$ such that $h f = g$ and $h (a) = b$.
  \end{itemize}
\end{defin}
\begin{remark}
  This is simply a pointed version of the category of cocones over the diagram consisting of only $A$. Morphisms between two cocones must respect points. Note that we wrote $h (a)$ instead of the more pedantic $(U h) (a)$.
\end{remark}

\begin{defin}\label{type-def}
  Let $\K = (\ck, U)$ be a concrete category and let $A$ be an object of $\ck$. Two points $(f, a)$, $(g, b)$ over $A$ \emph{have the same type} if they are connected in the category of points over $A$ (Definition \ref{point-def}).  The \emph{type} of a point over $A$ is just its equivalence class under the relation of having the same type (in the category of points over $A$).
\end{defin}
\begin{remark}
  If $\K$ has amalgamation, then the category of points over $A$ also has amalgamation. Thus two points $(A \xrightarrow{f} B, a)$ and $(A \xrightarrow{g} C, b)$ have the same type exactly when they are jointly connected in the category of points over $A$ (see Lemma \ref{compat-lem}). Explicitly, this means there exists $B \xrightarrow{h_1} D$ and $C \xrightarrow{h_2} D$ such that the diagram below commutes and moreover $h_1 (a) = h_2 (b)$:

  $$  
  \xymatrix@=3pc{
    B   \ar@{.>}[r]^{h_1} & D \\
    A \ar[u]^{f} \ar[r]_{g}  & C \ar@{.>}[u]_{h_2}
  }
  $$  
\end{remark}

Note that if $(A \xrightarrow{f} B, a)$ is a point in an AEC, then after some ``renaming'', it has the same type as some point $(A \xrightarrow{i} C, b)$, where $i$ is an inclusion (i.e.\ $A \lea C$). It can sometimes be convenient (to avoid keeping track of morphisms) to just look at points induced by inclusions, and this gives a slightly simpler notation for types. Nevertheless, we will not follow this approach here. 

\begin{defin}
  Let $\K$ be an AEC and let\footnote{We shift the notation to use $M$ and $N$ instead of $A$ and $B$ -- this is to emphasize that the objects are structures (models).} $M \in \K$.

  \begin{enumerate}
  \item Let $\gS (M)$ be the collection of all types over $M$.
  \item For $M \lea N$, a type $p \in \gS (M)$ is \emph{realized in $N$} if there exists a point $(M \xrightarrow{i} N, b)$ whose type over $M$ is $p$ (here, $i$ is the inclusion).
  \end{enumerate}
\end{defin}
\begin{remark}\label{type-monot}
  The following are easy exercises from the definitions. Let $\K$ be an AEC, $M \lea N$.
  
  \begin{enumerate}
  \item If $p \in \gS (M)$ is realized in $N$ and $N \lea N'$, then $p$ is realized in $N$'.
  \item If $p \in \gS (M)$, then there exists an extension of $M$ in which $p$ is realized. Moreover, by the smallness axiom, we can ensure that extension has size at most $\LS (\K) + | U M|$. In particular, $\gS (M)$ is a set.
  \item If $q \in \gS (M_0)$, $M_0 \lea M$, and $\K$ has the amalgamation property, then there exists an extension of $M$ in which $q$ is realized (and we can similarly ensure the extension has size at most $\LS (\K) + | U M|$).
  \end{enumerate}
\end{remark}

How can types allow us to construct category-theoretically interesting objects? The following is an interesting definition:

\begin{defin}
  Let $\K$ be an AEC, let $\lambda$ be an infinite cardinal, and let $M \lea N$ both be in $\K$. We say that $N$ is \emph{$\lambda$-universal over $M$} if for any given $M \xrightarrow{f} M'$ such that $M'$ has size strictly less than $\lambda$, there exists $M' \xrightarrow{g} N$ such that the following diagram commutes (by convention, we will not label inclusions):

  $$  
  \xymatrix@=3pc{
    N  & \\
    M \ar[u] \ar[r]_{f}  & M' \ar@{.>}[ul]_{g}
  }
  $$

  When $\lambda = |U M|^+$, we omit it and say that $N$ is universal over $M$.
\end{defin}
\begin{remark}
  By renaming, we can assume without loss of generality that $f$ is also an inclusion. Recall also that ``size'' in AECs means the same as ``cardinality of the universe'' (see \ref{unif-ex}(\ref{unif-ex-aec})).
\end{remark}

In an AEC with amalgamation, we can build universal objects categorically by a simple exhaustion argument. However, in typical cases, $N$ above will be much bigger in size than $M$. For example, if $M$ has size $\lambda$ and $|\gS (M)| > \lambda$, then because a universal object over $M$ must realize all types over $M$, it must have at least $\lambda^+$-many elements. What if, on the other hand, $|\gS (M)| = \lambda$? This condition is given a name (recall that we use $\K_\lambda$ to denote the class of objects of size $\lambda$):

\begin{defin}\label{stable-def}
  An AEC is said to be \emph{$\lambda$-stable} (or \emph{stable in $\lambda$}) if $|\gS (M)| = \lambda$ for any $M \in \K_\lambda$.
\end{defin}

The name ``stable'' refers to the same kind of model-theoretic stability as ``stable independence notion''. In fact, in the first-order case, being stable on a proper class of cardinals is equivalent to the existence of a stable independence notion. The reader can think of stability as saying that ``most'' elements over a given base are transcendentals (i.e.\ all have the same type). In general, the existence of a stable independence notion in an AEC implies stability in certain cardinals \cite[8.16]{indep-categ-advances}. We will show that, assuming stability in $\lambda$, we can build universal extensions of the same cardinality. The result is due to Shelah \cite[II.1.16]{shelahaecbook} but the proof we give here is new and, as opposed to previous proofs (see \cite[6.2]{ct-accessible-jsl}) does \emph{not} use the coherence axiom.

\begin{thm}\label{univ-ext}
  Let $\K$ be an AEC and let $\lambda \ge \LS (\K)$. Assume that $\K$ is stable in $\lambda$ and $\K_\lambda$ has amalgamation. For any $M \in \K_\lambda$, there exists $N \in \K_\lambda$ such that $M \lea N$ and $N$ is universal over $M$.
\end{thm}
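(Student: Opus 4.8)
The plan is to build $N$ by iteratively realizing types and then to verify universality by a point-by-point embedding; the genuinely new ingredient, and the main difficulty, is to carry out the embedding step without invoking the coherence axiom, using only the abstract class, Tarski--Vaught, and LST axioms of Definition \ref{infty-aec-def} together with amalgamation in $\K_\lambda$.

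First I would isolate a one-step lemma: every $M \in \K_\lambda$ has an extension $M^\ast \in \K_\lambda$ with $M \lea M^\ast$ realizing every type in $\gS(M)$. By stability in $\lambda$ (Definition \ref{stable-def}) enumerate $\gS(M) = \{p_\alpha : \alpha < \lambda\}$, and build a $\lea$-increasing continuous chain $\langle R_\alpha : \alpha \le \lambda\rangle$ with $R_0 = M$: given $R_\alpha \in \K_\lambda$, since $M \lea R_\alpha$ and $\K_\lambda$ has amalgamation, Remark \ref{type-monot}(3) yields $R_{\alpha+1} \gea R_\alpha$ of size at most $\LS(\K) + \lambda = \lambda$ in which $p_\alpha$ is realized; take unions at limits via the TV chain axiom. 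Then $M^\ast := R_\lambda \in \K_\lambda$ realizes each $p_\alpha$ by upward monotonicity of realization (Remark \ref{type-monot}(1)). Every $\lea$ used here is produced \emph{directly} by the amalgamation, LST, and TV axioms, so coherence is never needed. Iterating, I would form a $\lea$-increasing continuous chain $\langle M_i : i < \lambda\rangle$ with $M_0 = M$ and $M_{i+1} := M_i^\ast$, and set $N := \bigcup_{i<\lambda} M_i$. By the TV axiom $N \in \K_\lambda$ and $M \lea N$, and $N$ realizes every type over every $M_i$.

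It remains to show $N$ is universal over $M$. Given $f : M \to M'$ with $M'$ of size at most $\lambda$, I must produce $g : M' \to N$ with $gf$ the inclusion; after renaming (abstract class axiom) I may assume $f$ is the inclusion $M \lea M'$ with $U M' \subseteq \lambda$. I would embed $M'$ into $N$ over $M$ by a transfinite construction of length $\lambda$, realizing one element of $M'$ at a time (interleaved by the bookkeeping of the first appendix). The engine is that $N$ realizes every type over each $M_i$: maintaining a $\lea$-increasing continuous chain inside $N$ whose $\lea$-relations are produced by our own construction (so that no substructure must be \emph{recognized} as a $\lea$-submodel), each new element $a_j \in U M'$ is mapped in by realizing its type over the current image and amalgamating in $\K_\lambda$, and at limit stages the concrete union of the partial $\K$-embeddings is again a $\K$-embedding by the TV axiom, since the images form a $\lea$-directed system in $N$. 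The union $g := \bigcup_j g_j$ is then the desired factorization.

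The main obstacle — and exactly where previous proofs use coherence — is the \emph{domain side} of this recursion. Classically one resolves $M'$ as a $\lea$-increasing continuous union of small submodels, and to pass from $P_i \subseteq P_{i+1}$ (with both $\lea M'$) to $P_i \lea P_{i+1}$ one appeals precisely to the coherence axiom. The coherence-free plan must instead manufacture the approximations to $M'$, their $\lea$-relations, and their compatible maps into $N$ \emph{simultaneously} out of the amalgamation and LST axioms as the construction proceeds, never certifying an inclusion of substructures of the fixed $M'$ as a $\lea$-embedding after the fact; arranging that these approximations both exhaust $M'$ and cohere with the images in $N$ is the delicate point on which the argument turns. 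Two secondary matters need care: the cofinality bookkeeping when $\lambda$ is singular, so that every image of size $<\lambda$ still lands inside a single $M_i$ (and hence its next type is realized in $M_{i+1} \lea N$), and the verification that each limit map is a $\K$-embedding via the TV axiom. Once these are handled, the union of the $g_j$ provides the universal extension $N \in \K_\lambda$.
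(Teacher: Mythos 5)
Your first half (the chain $\langle M_i : i \le \lambda\rangle$ with $M_{i+1}$ realizing all types over $M_i$, built from stability, amalgamation, LST and TV alone) matches the paper and is fine. The gap is in the universality step. You correctly locate where coherence enters the classical argument --- certifying $P_i \lea P_{i+1}$ for an increasing resolution of $M'$ --- but your proposed workaround (``manufacture the approximations to $M'$, their $\lea$-relations, and their compatible maps into $N$ simultaneously\dots the delicate point on which the argument turns'') is a description of the difficulty, not a mechanism for resolving it. Concretely: to ``realize the type of $a_j$ over the current image'' you need a point $(P_j \to M', a_j)$, hence a $\K$-embedding of the current approximation into $M'$; and to extend both that map and the partial embedding into $N$ you must extract, from an amalgamation diagram over the current image, a new approximation $P_{j+1}$ sitting over both $M'$ and $N$. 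The only way to cut such a $P_{j+1}$ out of the amalgam (or out of $M'$ itself) is to recognize some substructure as a $\lea$-submodel after the fact --- which is exactly the use of coherence you set out to avoid. As long as the recursion grows the domain side toward $M'$ while keeping $N$ as the fixed target, this obstruction reappears at every successor step.

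The paper's proof avoids it by reversing the direction of the construction. One never embeds pieces of $M'$ into $N$; instead one extends the given $f_0: M_0 \to M_0'$ along the already-built chain, producing maps $f_i: M_i \to M_i'$ together with a coherent system $h_{ij}: M_i' \to M_j'$ of growing codomains obtained purely by amalgamation, so that every $\lea$-relation used is produced by the construction rather than recognized. At stage $i$ one realizes in $M_{i+1}$ the type of a chosen point $(f_i, a_i)$ with $a_i \in U M_i'$; the witnessing square forces $h_{i,i+1}(a_i)$ into the range of $f_{i+1}$. Bookkeeping (using $|\lambda \times \lambda| = \lambda$) ensures every element of $M_\lambda'$ is eventually caught, so the colimit map $f_\lambda: M_\lambda \to M_\lambda'$ is surjective, hence an isomorphism since all morphisms are monos, and $g := f_\lambda^{-1} h_{0\lambda}$ is the required embedding of $M_0'$ into $M_\lambda$. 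This reversal --- proving that a colimit map is surjective rather than building a partial embedding of $M'$ element by element --- is the idea missing from your proposal.
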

\begin{proof}
  We first build $\seq{M_i : i \le \lambda}$ increasing continuous in $\K_\lambda$ (i.e.\ $i < j$ implies $M_i \lea M_j$, all objects are in $\K_\lambda$, and at limit ordinals we take unions) such that $M_0 = M$ and $M_{i + 1}$ realizes all types over $M_i$ for all $i < \lambda$. This is possible: the only step to implement is the successor step, so assume that $M_i$ is given and we have to build $M_{i + 1}$. We know that $|\gS (M_i)| = \lambda$ by stability, so list the types as $\seq{p_j : j < \lambda}$. We build $\seq{M_{i, j} : j \le \lambda}$ increasing continuous in $\K_\lambda$ such that $M_{i, 0} = M_i$ and $M_{i, j + 1} \in \K_\lambda$ realizes $p_j$ for all $j < \lambda$. This is possible by Remark \ref{type-monot} and at the end we can set $M_{i + 1} = M_{i, \lambda}$.

  We now prove that $M_\lambda$ is universal over $M_0$. This is the hard part of the argument. Fix $M_0 \xrightarrow{f_0} M_0'$, with $M_0' \in \K_\lambda$. We want to find $g: M' \to M_\lambda$ with $gf_0$ fixing $M_0$. For each $i \le j \le \lambda$, we will build $M_i \xrightarrow{f_i} M_i'$ and $M_i' \xrightarrow{h_{ij}} M_j'$ such that for $i \le j \le k \le \lambda$, $M_i' \in \K_\lambda$, $h_{ik} = h_{jk}h_{ij}$, $h_{ii} = \id_{M_i'}$, and $h_{ij} f_i = f_j \rest M_i$. That is, the following commutes:

  $$  
  \xymatrix@=3pc{
    M_i' \ar[r]_{h_{ij}} & M_j' \\
    M_i \ar[u]_{f_i} \ar[r]  & M_j \ar[u]_{f_j}
  }
  $$

  We further require this diagram to be smooth (i.e.\ at limit ordinals $k$, $f_k$ is given by the colimit of $(f_i, h_{ij})_{i \le j < k}$). To go from $i$ to $i + 1$, we make sure that the type of a certain point $(M_i \xrightarrow{f_i} M_i', a_i)$ is realized in $M_{i + 1}$: there is $b_i \in U M_{i + 1}$ so that $(M_i \to M_{i + 1}, b_i)$ has the same type as $(f_i, a_i)$. This type equality is witnessed by $f_{i + 1}$ and $h_{i, i + 1}$. In particular, $h_{i, i + 1} (a_i) = f_{i + 1} (b_i)$, so $h_{i, i + 1} (a_i)$ is in the range of $f_{i + 1}$. We make sure to choose the $a_i$'s in such a way that, in the end, we catch our tail: $\{h_{j, \lambda} (a_j) : j < \lambda\} =  U M_\lambda'$. This is possible by some bookkeeping (essentially just using that $|\lambda \times \lambda| = \lambda$). We give the details (for a more general setup) in the appendix, see Theorem \ref{univ-ext-pf}.

  The construction we just described ensures that $f_\lambda$ is a surjection, hence an isomorphism! Thus $g := f_\lambda^{-1} h_{0 \lambda}$ is the desired embedding of $M_0'$ into $M_\lambda$.
\end{proof}
\begin{remark}\label{mh-sat-rmk}
  A similar argument (see Theorem \ref{mh-sat-pf}) establishes the \emph{model-homogeneous = saturated lemma} \cite[II.1.14]{shelahaecbook}: when $\lambda \ge \LS (\K)$, objects that realize all types over every substructure of size $\lambda$ will in fact be universal over every substructure of size $\lambda$.
\end{remark}

Once we know we can build universal extensions of the same cardinality, we can consider iterating them, and we get to the notion of a \emph{limit object}:

\begin{defin}
  Let $\K$ be an AEC, let $M \lea N$, let $\lambda \ge \LS (\K)$, and let $\delta < \lambda^+$ be a limit ordinal. We say that $N$ is \emph{$(\lambda, \delta)$-limit over $M$} if there exists an increasing continuous chain $\seq{M_i : i \le \delta}$ in $\K_\lambda$ such that $M_0 = M$, $M_\delta = N$, and $M_{i + 1}$ is universal over $M_i$ for all $i < \delta$.
\end{defin}

Theorem \ref{univ-ext} establishes that, if $\K$ is stable in $\lambda$, there exists limit models. It turns out that limit models have uniqueness properties as well. For example, a back and forth argument establishes that for $\delta_1, \delta_2 < \lambda^+$, if $M_\ell$ is $(\lambda, \delta_\ell)$-limit over $M$ for $\ell = 1,2$ and $\cf{\delta_1} = \cf{\delta_2}$, then there exists an isomorphism from $M_1$ to $M_2$ which fixes $M$.

What if $\cf{\delta_1} \neq \cf{\delta_2}$? The question of \emph{uniqueness of limit objects} asks whether the above is still true in this case (this can be thought of as a one-dimensional version of categoricity). For model theorists, we note that a positive answer is equivalent to superstability for first-order theories (thus limit objects provide a good way to define superstability categorically). More generally, in a stable theory $T$, limit models will be isomorphic exactly when their cofinality is at least $\kappa (T)$. In any case, proving uniqueness of limit models requires a good notion of stable independence for elements, allowing one to build very independent objects point by points. This is another (much harder than in Theorem \ref{univ-ext})  example of the use of ``points'' in the theory of AECs. See \cite{gvv-mlq} for an exposition of some result in the theory of limit objects in AECs, though stronger theorems have now been found, e.g.\ \cite{vandieren-symmetry-apal}. A state of the art result from categoricity is \cite[5.7(2)]{categ-saturated-afml} (an AEC $\K$ has \emph{no maximal objects} if for any $M \in \K$ there exists $N \in \K$ with $M \lea N$ and $M \neq N$):

\begin{thm}
  Let $\K$ be an AEC with amalgamation and no maximal objects. Let $\mu > \LS (\K)$. If $\K$ is categorical in $\mu$, then for any $\lambda \in [\LS (\K), \mu)$, any two limit objects of cardinality $\lambda$ are isomorphic.
\end{thm}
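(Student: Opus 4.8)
The plan is to reduce the statement to two facts: that saturated objects of a fixed size $\lambda$ (realizing every type over every $\lea$-substructure of size $<\lambda$) are unique up to isomorphism, and that under categoricity \emph{every} limit object of size $\lambda$ is saturated. The first fact is a standard back-and-forth argument using amalgamation, and is essentially the content of the model-homogeneous $=$ saturated lemma (Remark \ref{mh-sat-rmk}): model-homogeneous objects of the same size are isomorphic, so there is at most one saturated object of size $\lambda$. Thus the whole theorem will follow once I show that an arbitrary $(\lambda,\delta)$-limit object is saturated.

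Before that, I would record that categoricity makes the machinery of Theorem \ref{univ-ext} available. A standard consequence of categoricity in $\mu$ for an AEC with amalgamation is stability in every $\lambda \in [\LS(\K),\mu)$: were $\K$ unstable in some such $\lambda$, one could realize more than $\lambda$ types over a single object of size $\lambda$ and, using amalgamation and no maximal objects, manufacture two non-isomorphic objects of size $\mu$ (one ``full'', one generated by order-indiscernibles as in Remark \ref{pres-rmk}), contradicting categoricity. Hence limit objects of each size $\lambda\in[\LS(\K),\mu)$ exist by Theorem \ref{univ-ext}, and it remains only to compare them. The equal-cofinality case is already handled by the back-and-forth argument noted in the text: if $\cf{\delta_1}=\cf{\delta_2}$ then the corresponding limit objects are isomorphic over their common base. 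So every limit object of size $\lambda$ is isomorphic to a $(\lambda,\theta)$-limit for a regular cardinal $\theta\le\lambda$, and I must show these are saturated uniformly in $\theta$.

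For $\theta=\lambda$ with $\lambda$ regular this is direct: if $M_0\lea M_\lambda=\bigcup_{i<\lambda}M_i$ with $|U M_0|<\lambda$, then $M_0$ is absorbed into some $M_i$, the type over $M_0$ extends by amalgamation to a type over $M_i$, and $M_{i+1}$ — being universal over $M_i$ — realizes it. The genuine difficulty is the transfer to small regular cofinalities (for instance $\theta=\omega$, or $\lambda$ singular), where a $\lea$-substructure of size $<\lambda$ need not sit inside any single stage $M_i$, so the naive argument breaks down.

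This transfer is the heart of the matter, and it is where categoricity must be used a second time, in the guise of superstability. The key fact to extract from categoricity in $\mu$ is that the unique object of size $\mu$ is saturated and, consequently, that the union of any increasing continuous chain of $\lambda$-saturated objects is again $\lambda$-saturated. Granting this chain property, one shows a $(\lambda,\theta)$-limit is saturated by re-resolving it as an increasing union of $\lambda$-saturated objects — themselves built by iterating Theorem \ref{univ-ext} together with the model-homogeneous $=$ saturated lemma — and then applying the chain property to the union. I expect this superstability step, namely establishing that increasing chains of saturated objects have saturated unions below the categoricity cardinal, to be the main obstacle: it is precisely the ingredient equivalent to superstability, it is exactly what separates the different-cofinality case from the easy equal-cofinality one, and it is where the cited argument does its real work. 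Once it is in place, every limit object of size $\lambda$ is saturated, and the uniqueness of saturated objects closes the proof.
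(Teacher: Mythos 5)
This theorem is not proved in the paper: it is quoted from \cite[5.7(2)]{categ-saturated-afml}, and the paragraph preceding it states explicitly that uniqueness of limit objects ``requires a good notion of stable independence for elements'', pointing to the tower machinery of \cite{gvv-mlq} and the symmetry results of \cite{vandieren-symmetry-apal}. So there is no in-paper argument to compare yours against; I can only assess your outline on its own terms and against that indicated route.

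Your outline has a genuine gap, and it sits exactly where you yourself place it. The reduction ``every limit object of size $\lambda$ is saturated, and saturated objects are unique'' is reasonable, but the step you defer --- that below the categoricity cardinal the union of an increasing chain of $\lambda$-saturated objects is $\lambda$-saturated --- is not an auxiliary lemma: it carries essentially the entire content of the theorem. It is not a soft consequence of categoricity. In the literature both this chain property and the uniqueness of limit objects are obtained as consequences of $\lambda$-symmetry, which is in turn extracted from categoricity via the element-by-element independence (nonsplitting/reduced-tower) analysis; neither of the two consequences is a cheap stepping stone to the other. So declaring the chain property to be ``where the cited argument does its real work'' and stopping there leaves the proof where it started, and the proposed reduction risks being circular, since the known proofs of the chain property pass through the very machinery that yields uniqueness of limit objects directly. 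Two further concrete problems. First, the theorem includes $\lambda = \LS(\K)$, where ``saturated'' in the sense of realizing all types over $\lea$-substructures of size strictly less than $\lambda$ is vacuous (there need be no such substructures in $\K$), so the saturation route cannot reach the left endpoint of the interval at all; that case genuinely requires the tower argument. Second, both the uniqueness of saturated objects and the comparison of limit objects over \emph{different} bases need joint embedding in $\K_\lambda$; this does follow from categoricity in $\mu$ together with amalgamation and no maximal objects, but it must be said. The parts you do carry out --- stability everywhere below $\mu$ via Ehrenfeucht--Mostowski models, and saturation of the $(\lambda,\lambda)$-limit for regular $\lambda$ --- are correct.
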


The reader may ask what limit models look like in specific categories. This is studied in recent work of Kucera and Mazari-Armida \cite{limit-abelian-apal, univ-modules-pp-v4} for certain categories of modules.

We end this section by noting that stable independence notions themselves can be built ``element by element''. For this, one starts with a good notion of stable independence for types (what Shelah calls a good frame), and tries to ``paste the points together''. See \cite{jrsh875} for an introduction to the theory of good frames and \cite{downward-categ-tame-apal, categ-primes-mlq, categ-amalg-selecta} for recent applications to the categoricity spectrum problem. We state in particular the following result \cite{categ-amalg-selecta}:

\begin{thm}
  Let $\K$ be an AEC with amalgamation. Assume the weak generalized continuum hypothesis: $2^\lambda < 2^{\lambda^+}$ for all infinite cardinals $\lambda$. If $\K$ is categorical in some cardinal $\mu \ge \ehanf{\LS (\K)}$, then $\K$ is categorical in all cardinals $\mu' \ge \ehanf{\LS (\K)}$, and moreover there is a stable independence notion on the category $\K_{\ge \ehanf{\LS (\K)}}$.
\end{thm}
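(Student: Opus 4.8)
The plan is to follow the now-standard route through Shelah's theory of good frames, deploying the weak generalized continuum hypothesis precisely where the combinatorics of the weak diamond are required, and then to convert the resulting local (type-level) independence data into the global stable independence notion promised in the conclusion. Throughout, categoricity is used both as an input (to manufacture structure) and, at the very end, as the invariant to be transferred.

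First I would extract the soft structural consequences of categoricity above the Hanf number. Since $\K$ has a model of size $\mu \ge \ehanf{\LS(\K)}$ and all morphisms are monos, the Morley/EM functor $\Lin \to \K$ of Remark \ref{pres-rmk}(\ref{pres-morley-rmk}) produces models of every cardinality $\ge \LS(\K)$; in particular $\K$ has no maximal objects and has arbitrarily large models, so the extra hypothesis of the previous theorem comes for free. Combining the EM construction with categoricity in $\mu$ and amalgamation, I would show that the unique model of size $\mu$ is saturated (it realizes all types over its substructures of smaller size) and deduce stability in a tail of cardinals below $\mu$. This part is ``element by element'' in the spirit of Section \ref{aec-sec}: once stability holds, Theorem \ref{univ-ext} yields universal extensions of the same size, hence limit and saturated models of each relevant cardinality.

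The technical heart is to build a good $\lambda$-frame and to establish tameness. Here WGCH enters: the weak diamond (Theorem \ref{wd-thm}), through its consequence on amalgamation bases (Theorem \ref{ap-thm}), promotes one-dimensional data (existence of universal and saturated models coming from categoricity) into two-dimensional data --- cofinal amalgamation bases, uniqueness of limit objects, and ultimately a good frame --- at a cardinal $\lambda$ situated around $\ehanf{\LS(\K)}$. From the frame one reads off superstability, and by iterating it upward and analyzing its successful (excellent) extensions one derives $(<\lambda)$-tameness: types over models in $\K_{\ge \ehanf{\LS(\K)}}$ are determined by their restrictions to $\lambda$-sized submodels. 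With tameness and superstability available, I would ``paste the points together'' as described at the end of Section \ref{aec-sec}, globalizing the local nonforking of the frame into a relation $\nfs{M}{A}{B}{N}$ on $\K_{\ge \ehanf{\LS(\K)}}$ satisfying existence, uniqueness, symmetry, transitivity, and the witness and local character properties. By the $\infty$-AEC version of the accessibility characterization (a generalization of Theorem \ref{aec-acc}) this is a stable independence notion, and the canonicity theorem (Theorem \ref{canon-thm}) shows it is the only one. Finally, the dimension theory afforded by this notion --- prime objects over independent systems and uniqueness of saturated models --- lets me transfer categoricity: the model in $\mu$ is saturated, every model of any size $\mu' \ge \ehanf{\LS(\K)}$ is forced to be saturated (otherwise a dimension comparison produces a second non-isomorphic model of size $\mu$), and saturated models of a fixed size are isomorphic, giving categoricity throughout $[\ehanf{\LS(\K)}, \infty)$ in both directions.

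The main obstacle is unquestionably the good-frame construction under WGCH alone (rather than a large cardinal) together with the derivation of tameness from it, since tameness is \emph{not} a formal consequence of categoricity and its proof requires the full weak-diamond-driven frame machinery; controlling the Hanf threshold so that the frame lives exactly at $\ehanf{\LS(\K)}$ is part of the same difficulty. By contrast, the globalization of nonforking and the categoricity transfer itself are comparatively routine once tameness, superstability, and the frame are in place.
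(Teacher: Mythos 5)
The first thing to note is that the paper does not actually prove this theorem: it is stated as a black-box citation to \cite{categ-amalg-selecta}, so there is no in-paper argument to compare yours against. Your outline does track the architecture of the cited proof reasonably well --- saturation of the categoricity model, stability, a good frame built with the weak diamond, multidimensional (excellent) extensions of the frame, prime objects, and a two-way saturation-based categoricity transfer --- and you are right that the frame construction under WGCH is where the work lives. But precisely because the entire mathematical content is deferred to that step, what you have written is a roadmap rather than a proof; the cited paper spends most of its length on exactly the parts you set aside as ``the main obstacle.''

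Two specific inaccuracies in the roadmap are worth flagging. First, full $(<\lambda)$-tameness is \emph{not} derivable from categoricity plus amalgamation plus WGCH, and it is not what the cited proof uses; what one gets (from categoricity via Ehrenfeucht-Mostowski models and omitting-types arguments, \emph{not} as an output of the frame machinery) is \emph{weak} tameness, i.e.\ tameness over saturated models only. You have the arrow backwards: weak tameness is an input to the frame construction on the class of saturated models, not a consequence of iterating the frame upward. Second, and relatedly, the stable independence notion is first built only on the subclass of saturated models; it becomes an independence notion on all of $\K_{\ge \ehanf{\LS (\K)}}$ only \emph{after} the categoricity transfer shows that every model in that range is saturated, so the logical order of your last two steps must be interleaved rather than sequential. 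Neither point is fatal to the strategy, but as written the proposal would not assemble into a proof without repairing them, and the genuinely hard content --- constructing an $\omega$-successful good frame from WGCH and categoricity alone --- remains entirely unaddressed.
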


Note that the statements of the last two theorems are purely category-theoretic, in the sense that their statement does not use concreteness, points, etc (if the reader is worried about whether the definition of an AEC is category-theoretic, they can look at the results for the special case of a finitely accessible category with all morphisms monos, see Theorem \ref{aec-acc}). I am not aware of ``purely category-theoretic'' proofs of any statements like this, so I suspect that the element by element methods used to study AECs can be useful even if one is interested only in categorical problems.

\section{Some known results and open questions}\label{problem-sec}

\subsection{Questions on categoricity}

Shelah's eventual categoricity conjecture for AECs (Conjecture \ref{categ-conj}) is still open, in ZFC, but is known to hold from many different types of (quite mild) assumptions. In many cases, we can say more about the ``high-enough'' bound and even (in (\ref{categ-3}) below) list exactly what the possibly categoricity spectrums are. For example:

\begin{enumerate} 
\item (Large cardinal axioms, \cite{multidim-v2}) Assuming there is a proper class of strongly compact cardinals, an AEC categorical in a proper class of cardinals is categorical on a tail of cardinals.
\item (Large cardinal axioms plus cardinal arithmetic, \cite{multidim-v2}) Assuming there is a proper class of \emph{measurable} cardinals and WGCH\footnote{Recall that this means that $2^\lambda < 2^{\lambda^+}$ for all infinite cardinals $\lambda$.}, an AEC categorical in a proper class of cardinals is categorical on a tail of cardinals.
\item\label{categ-3} (Amalgamation plus cardinal arithmetic \cite[9.7]{categ-amalg-selecta}) Assume WGCH. Let $\K$ be a large AEC with amalgamation (and $\K_{<\LS (\K)} = \emptyset$). Exactly one of the following holds:

  \begin{enumerate}
  \item $\K$ is not categorical in any cardinal above $\LS (\K)$.
  \item There exists $n \le m < \omega$ such that $\K$ is categorical in all cardinals in $[\LS (\K)^{+n}, \LS (\K)^{+m}]$ and no other cardinals.
  \item There exists $\chi < \ehanf{\LS (\K)}$ such that $\K$ is categorical in all $\mu \ge \chi$ (and no other cardinals).
  \end{enumerate}

  It is known that examples exist of all three types.
\item (No maximal objects plus strong cardinal arithmetic \cite[10.14]{categ-amalg-selecta}) Assume $\Diamond_S$ for every stationary set $S$ (this holds for example if V = L). An AEC with no maximal objects categorical in a proper class of cardinals is categorical on a tail of cardinals.
\item (Universal class \cite{ap-universal-apal, categ-universal-2-selecta}) If a universal class $\K$ is categorical in some $\mu \ge \beth_{\ehanf{\LS (\K)}}$, then it is categorical in all $\mu' \ge \beth_{\ehanf{\LS (\K)}}$. This holds more generally for multiuniversal classes \cite{abv-categ-multi-apal}
\end{enumerate}

Other approximations to categoricity (for example from tameness, a locality property of types that has not been discussed here) are in \cite{sh394, tamenesstwo, tamenessthree}. Note that given any finitely accessible category $\ck$, we have that $\ck_{mono}$ is an AEC and the embedding $\ck_{mono} \to \ck$ preserves and reflects presentability ranks, hence categoricity. Thus the results above are, in particular, valid for any finitely accessible category.

\begin{question} \
  \begin{enumerate}
  \item Is the eventual categoricity conjecture for AECs provable in, or at least consistent with, ZFC?
  \item Is there a counterexample to eventual categoricity for accessible categories? Is it true at least for accessible categories with directed colimits? Can we generalize the proof of eventual categoricity for continuous first-order logic \cite{shus837} to category-theoretic setups?
  \item (Diliberti) Can one give ``purely category-theoretic'' proofs of categoricity transfers, at least in simple cases (for example, for eventual categoricity in locally finitely presentable categories)?
  \item Is eventual categoricity true for locally presentable categories?
  \end{enumerate}
\end{question}

\subsection{Questions on set-theoretic aspects}

\begin{question} \
  \begin{enumerate}
  \item Is the presentability rank of every high-enough object in an accessible category always a successor?
  \item What is an example of a large accessible category that is not LS-accessible\footnote{Such an example would yield to a failure of eventual categoricity: take the coproduct with the category of sets \cite[6.3]{beke-rosicky}.}?
  \item If a category is $\lambda$-accessible, for $\lambda$ big-enough, does it have an object of size $\lambda$? (see Theorem \ref{ls-acc-thm})
  \item Is any large accessible category with directed colimits LS-accessible? (This is asked already in \cite{ct-accessible-jsl}).
  \item What other types of compactness can be expressed using accessible functors (Section \ref{set-func-sec})?
  \item Are there other local methods than Theorem \ref{ap-thm} that allow us to prove amalgamation in ZFC?    
  \end{enumerate}
\end{question}

Regarding the second question, we can give a version that does not mention category-theoretic sizes: does there exist a $\mu$-AEC $\K$ such that, for a proper class of cardinals $\lambda$ with $\lambda < \lambda^{<\mu}$, $\K$ is categorical in $\lambda^{<\mu}$, and $\K$ has no objects of cardinality in $[\lambda, \lambda^{<\mu})$? See \cite[4.16]{internal-sizes-jpaa} for why such a $\mu$-AEC is not LS-accessible.

\subsection{Questions on accessible categories vs AECs}

\begin{question} \
  \begin{enumerate}
  \item What is the role of the coherence axiom of AECs?
  \item Is there a short characterization of AECs that is completely category-theoretic, in the sense that it does not refer to concrete functors (as in \cite{ct-accessible-jsl}), or embeddings into (variations on) category of structures (as in \cite[5.7]{beke-rosicky})?
  \item Is there a natural logic axiomatizing AECs (see \cite[\S4]{logic-intersection-bpas})?
  \end{enumerate}
\end{question}

\subsection{Questions on stable independence}

\begin{question} \
  \begin{enumerate}
  \item What are more occurrences of stable independence in mainstream mathematics?
  \item How does stable independence interact with accessible functors?
  \item Can one characterize when the uniqueness of limit objects holds in terms of properties of stable independence (say in accessible categories with directed colimits)?
  \item Does stable independence tell us anything interesting about metric classes (Banach spaces, Hilbert spaces, etc.)? See \cite{byuscontlog} on first-order stability theory for metric classes.
  \item Is there a theory of independence in accessible categories mirroring that of independence in simple unstable first-order theories (see \cite{kp97})? What about other model-theoretic classes of unstable theories?    
  \end{enumerate}
  \end{question}

\subsection{Questions on the model theory of AECs}

The questions below are more technical, and cannot be understood just from the material of this paper. I chose to collect them here for the convenience of the expert reader:

\begin{question} \
  \begin{enumerate}
  \item If $\K$ is a $\lambda$-superstable AEC, does it have $\lambda$-symmetry (see \cite{vandieren-symmetry-apal})? More generally, what are the exact relationships between uniqueness of limit objects in $\lambda$, $\lambda$-symmetry, and $\lambda$-superstability?
  \item If $\K$ is a $\lambda$-stable AEC and $\K_\lambda$ has amalgamation and no maximal objects, can we prove uniqueness of long-enough limit objects in $\lambda$, as in \cite{limit-strictly-stable-v4}, but without using a continuity property for splitting?
  \item Is the following stability spectrum theorem true? In a large stable $\LS (\K)$-tame AEC with amalgamation, there exists $\chi < \ehanf{\LS (\K)}$ such that for all $\lambda \ge \ehanf{\LS (\K)}$, $\K$ is stable in $\lambda$ if and only if $\lambda = \lambda^{<\chi}$. Approximations are in \cite{stab-spec-jml}.
  \end{enumerate} 
\end{question}

\section{Further reading}\label{reading-sec}

In addition to all the references cited already, we mention some resources that may help newcomers become acquainted with the field. We repeat again that Makkai-Paré \cite{makkai-pare} and especially Adámek-Rosický \cite{adamek-rosicky} are the standard textbooks on accessible categories. The category-theoretic singular compactness theorem (Theorem \ref{sing-compact}) appears in \cite{cellular-singular-jpaa}, which has numerous examples and explanations. The relationship between accessible categories and abstract elementary classes is investigated in, for example, \cite{lieberman-categ,beke-rosicky, ct-accessible-jsl}. The Beke-Rosický paper, specifically, started the abstract study of category-theoretic sizes continued in \cite{internal-sizes-jpaa, internal-improved-v3-toappear}. The category-theoretic notion of stable independence is introduced in \cite{indep-categ-advances}, and a follow-up (establishing the connection with cofibrant generation) is \cite{more-indep-v2}.

To become acquainted with abstract elementary classes specifically, two introductions are Grossberg's survey \cite{grossberg2002} and Baldwin's book \cite{baldwinbook09}. Recently, classes about AECs were given at Harvard University by both Will Boney and myself, and both classes had lecture notes \cite{wb-aec-notes, sv-aec-notes} that give an updated take on the subject. The survey about tame AECs \cite{bv-survey-bfo} may also be helpful to get acquainted with the literature. When one starts studying independence for types, Shelah's good frames, a ``pointed'' and localized version of stable independence, become an unavoidable concept. Currently, the best introduction to good frames is the paper of Jarden and Shelah \cite{jrsh875}. Finally, it is impossible not to mention Shelah's two volume book \cite{shelahaecbook, shelahaecbook2} which has a very interesting and readable introduction, and is a gold mine of deep (but not always easily readable) results on good frames and AECs generally.

\appendix

\section{Forcing and construction categories}

I give here a general category-theoretic framework for point by point ``exhaustion arguments'' such as building algebraic closure of fields (or more generally saturated models), or proving a given extension realizing all types many times is universal as in Theorem \ref{univ-ext}. It is also a natural framework in which to understand set-theoretic forcing. To the best of my knowledge, this is new.

\begin{defin}
  A \emph{construction category} is a triple $\K = (\ck, U, U_0)$, where:

  \begin{enumerate}
  \item $\ck$ is a category.
  \item $U: \ck \to \Set$ is a faithful functor.
  \item $U_0: \ck \to \Set$ is a faithful\footnote{It seems the faithfulness of $U$ and $U_0$ is never used.} subfunctor of $U$: a faithful functor such that for all morphisms $A \xrightarrow{f} B$, $U_0 A \subseteq U A$ and $U_0 f = (U f) \rest U_0 A$.
  \end{enumerate}
\end{defin}

The idea is that, for an object $A$, $U A$ gives the elements that could ``conceivably'' be constructed at some point (e.g.\ $\ck$ could be the category of fields and $U A$ give the polynomials with coefficients from $A$, see Example \ref{forcing-ex}), while $U_0 A$ gives the elements that have been constructed already (e.g.\ in the example of the category of fields, $U_0 A$ could give the polynomials with coefficients from $A$ that have a root in $A$). We will be trying to find an object (called \emph{full} below) where everything that can be constructed in some extension has been constructed already. It may help the reader to think of the category $\ck$ as a partially ordered set.

\begin{example}[Set-theoretic forcing]
  Let $\Pp$ be a partially ordered set (we think of it also as a category). A notion of forcing for $\Pp$ (e.g.\ in the sense of \cite{forcing-omitting}) associates to each $p \in \Pp$ a set of sentences that it ``forces''  in such a way that if $p \le q$ then $q$ forces more sentences than $p$. Setting $U_0 p$ to be the formulas that $p$ forces, and $U p$ to be the set of all formulas, we obtain a construction category. 
\end{example}

\begin{defin}
  Let $\K$ be a construction category.

  \begin{enumerate}
  \item Given an object $A$ and an element $x \in U A$, we say that $x$ is \emph{constructed by stage $A$} if $x \in U_0 A$. We say that $x$ is \emph{constructible from $A$} if there exists a morphism $A \xrightarrow{f} B$ so that $f (x)$ is constructed by stage $B$.
  \item A directed diagram $D: I \to \K$ with maps $D_i \xrightarrow{d_{ij}} D_j$ is \emph{full} whenever the following is true: for any $i \in I$ and any $x \in U D_i$, \emph{if} for all $j \ge i$, $d_{ij} (x)$ is constructible from $D_j$, \emph{then} there exists $j \ge i$ such that $d_{ij} (x)$ is constructed by stage $D_j$.
  \item For an object $A$ and a set $X \subseteq U A$, we say that $A$ is \emph{full for $X$} if any $x \in X$ that is constructible from $A$ is constructed by stage $A$. We say that $A$ is \emph{full} if it is full for $U A$.
  \end{enumerate}
\end{defin}

The following are basic properties of the definitions:

\begin{remark}
  Let $\K$ be a construction category, $A \xrightarrow{f} B$ be a morphism, and $x \in U A$.
  
  \begin{enumerate}
  \item If $x$ is constructed by stage $A$, then $x$ is constructible from $A$.
  \item If $x$ is constructed by stage $A$ then $f (x)$ is constructed by stage $B$.
  \item If $f(x)$ is constructible from $B$, then $x$ is constructible from $A$.
  \item If $A$ is an amalgamation base (Definition \ref{ap-def}) and $x$ is constructible from $A$, then $f (x)$ is constructible from $B$.
  \item If $A$ is full for $X$, then $B$ is full for $f[X]$.
  \item An object $A$ is full if and only if the corresponding directed diagram with one object is full. 
  \end{enumerate}
\end{remark}

\begin{lem}\label{full-lem}
  Let $\K$ be a construction category and let $D: I \to \ck$ be a directed diagram with maps $D_i \xrightarrow{d_{ij}} D_j$.

  \begin{enumerate}
  \item If $D$ is full, then for any cocone $(D_i \xrightarrow{f_i} A)_{i \in I}$ for $D$ and any $i \in I$, $f_i^{-1}[U_0 A] \cap U D_i \subseteq \bigcup_{j \ge i} d_{ij}^{-1}[U_0 D_j]$.
  \item An object $A$ is full if and only if for any morphism $A \xrightarrow{f} B$, $U A \cap f^{-1}[U_0 B] = U_0 A$.
  \item\label{full-lem-3} If $D$ is full and $(D_i \xrightarrow{f_i} A)_{i \in I}$ is a cocone for $D$, then $A$ is full for $\bigcup_{i \in I} f_i[U D_i]$. In particular, if $U A = \bigcup_{i \in I} f_i[U D_i]$ then $A$ is full.
  \end{enumerate}
\end{lem}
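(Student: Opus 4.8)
The plan is to establish the three items in order, where (1) carries the real content, (2) is a direct reformulation of the definition of fullness, and (3) recombines the two ideas with one further appeal to fullness of $D$. Throughout, the two facts I will lean on are the cocone identities $f_j d_{ij} = f_i$ and the subfunctor property $U_0 f = (U f)\rest U_0 A$, which guarantees that an element already constructed is carried by any morphism to a constructed element.

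For (1), I would fix a cocone $(D_i \xrightarrow{f_i} A)_{i \in I}$, an index $i$, and an element $x \in f_i^{-1}[U_0 A] \cap U D_i$, so that $x \in U D_i$ and $f_i(x) \in U_0 A$. The goal is to find $j \ge i$ with $d_{ij}(x) \in U_0 D_j$. The key observation is that for \emph{every} $j \ge i$ the cocone condition gives $f_j(d_{ij}(x)) = f_i(x) \in U_0 A$, so the map $f_j \colon D_j \to A$ witnesses that $d_{ij}(x)$ is constructible from $D_j$. Since this holds at all later stages, the definition of a full diagram, applied to $x \in U D_i$, produces some $j \ge i$ with $d_{ij}(x)$ constructed by stage $D_j$, i.e.\ $x \in d_{ij}^{-1}[U_0 D_j]$.

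For (2), I would simply unfold ``$A$ is full'' as ``$A$ is full for $U A$''. For any morphism $f \colon A \to B$ the inclusion $U_0 A \subseteq U A \cap f^{-1}[U_0 B]$ holds because $U_0$ is a functor, so $x \in U_0 A$ forces $f(x) \in U_0 B$; and membership in $U A \cap f^{-1}[U_0 B]$ says exactly that $x \in U A$ is constructible from $A$ (witnessed by $f$). Hence the displayed equation holding for all $f$ is precisely the assertion that every constructible-from-$A$ element of $U A$ lies in $U_0 A$, which is fullness of $A$. For (3), I would take $x \in \bigcup_i f_i[U D_i]$ constructible from $A$, write $x = f_i(y)$ with $y \in U D_i$, and fix a witness $g \colon A \to C$ with $g(x) \in U_0 C$. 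For each $j \ge i$ the composite $g f_j$ satisfies $(g f_j)(d_{ij}(y)) = g(f_i(y)) = g(x) \in U_0 C$, so $d_{ij}(y)$ is constructible from $D_j$; fullness of $D$ then yields $j \ge i$ with $d_{ij}(y) \in U_0 D_j$, and pushing forward along $f_j$ gives $x = f_j(d_{ij}(y)) \in U_0 A$. The ``in particular'' clause is immediate, since being full is by definition being full for $U A$.

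The arguments are all short unfoldings, so there is no deep obstacle; the only point requiring genuine care is the bookkeeping with the two functors together with the cocone identities. Specifically, in both (1) and (3) one must check that the witness of constructibility fed into the fullness hypothesis is valid for \emph{all} $j \ge i$ and not merely for one stage, since the definition of a full diagram quantifies universally over later stages before producing a single good one.
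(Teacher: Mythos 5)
Your proposal is correct and follows essentially the same route as the paper's proof: in (1) and (3) you use the cocone maps $f_j$ (composed with the external witness $g$ in (3)) to show constructibility at every later stage before invoking fullness of $D$, and in (2) you unfold the definition directly; the only cosmetic difference is that the paper derives the forward direction of (2) by citing (1) for the one-object diagram, whereas you argue it from scratch. No gaps.
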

\begin{proof} \
  \begin{enumerate}
  \item Let $(D_i \xrightarrow{f_i} A)_{i \in I}$ be a cocone for $D$ and let $i \in I$. Let $x \in f_i^{-1}[U_0 A] \cap U D_i$. Let $y := f_i (x)$ (so $y \in U_0 A$). For any $j \ge i$, $f_j$ (and the fact that $y \in U_0 A$) witnesses that $d_{ij} (x)$ is constructible from $D_j$. By definition of a full diagram, there exists $j \ge i$ such that $d_{ij} (x)$ is constructed by stage $D_j$, i.e.\ $d_{ij} (x) \in U_0 D_j$. Thus $x \in d_{ij}^{-1} (U_0 D_j)$.
  \item If $A$ is full, then the previous part gives that for any morphism $A \xrightarrow{f} B$, $U A \cap f^{-1}[U_0 B] \subseteq U_0 A$. The reverse inclusion is immediate because $U_0$ is a subfunctor of $U$. Conversely, assume that for any morphism $A \xrightarrow{f} B$, $U A \cap f^{-1}[U_0 B] =  U_0 A$. Let $x \in U A$ be constructible from $A$. This means there exists $A \xrightarrow{f} B$ such that $f (x) \in U_0 B$, i.e.\ $x \in f^{-1}[U_0 B]$. By assumption, $x \in U_0 A$, so $x$ is constructed by stage $A$. 
  \item Assume that $D$ is full. Let $y \in \bigcup_{i \in I} f_i[U D_i]$ be constructible from $A$. There exists $i \in I$ such that $y = f_i (x)$ for some $x \in U D_i$. Since $y$ is constructible from $A$, $x$ is constructible from $D_i$. In fact, for any $j \ge i$, $f_j$ witnesses that $d_{ij} (x)$ is constructible from $D_j$. Since $D$ is full, there exists $j \ge i$ so that $d_{ij} (x) \in U_0 D_j$. Thus $y = f_j d_{ij} (x) \in f_j[U_0 D_j] \subseteq U_0 A$, so $y$ is constructed by stage $A$.
  \end{enumerate}
\end{proof}

Although we will in the end mostly be interested in full objects, it is often helpful (and easier) to first verify that a full \emph{diagram} exists. Its colimit will then usually be the desired full object. In order for full diagrams to exist, objects should not be too big. We will measure size using cofinality of a certain ``order of construction''.

\begin{defin}
  Let $\K$ be a construction category and let $A$ be an object.

  \begin{enumerate}
  \item Define a relation $\le$ on $U A$ as follows: $x \le y$ if for all $A \xrightarrow{f} B$, if $f (y)$ is constructed by stage $B$, then $f (x)$ is constructed by stage $B$. 
  \item Let $\|A\|$ denote the cofinality of $(U A, \le)$. 
  \end{enumerate}
\end{defin}

Note that $\le$ is a preorder on $U A$, and $\|A\| \le |U A|$. The following will be our main tool to verify existence of full diagrams.

\begin{thm}[Existence of full diagrams]\label{full-existence}
  Let $\K$ be a construction category and let $\lambda$ be an infinite cardinal. If $\|A\| \le \lambda$ for all objects $A$, and (for $\alpha < \lambda$) any $\alpha$-indexed diagram has a cocone, then $\K$ has a $\lambda$-indexed full diagram.
\end{thm}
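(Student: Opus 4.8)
The plan is to build the diagram as a chain $\langle D_\gamma : \gamma<\lambda\rangle$ by transfinite recursion, using the cocone hypothesis at limit stages and spending the bound $\|A\|\le\lambda$ to keep the bookkeeping down to $\lambda$-many tasks. The one fact I would isolate first is the monotonicity built into the order $\le$: if $x\le y$ in $U D_\gamma$ and, for some $\delta\ge\gamma$, $d_{\gamma\delta}(y)\in U_0 D_\delta$, then also $d_{\gamma\delta}(x)\in U_0 D_\delta$ (this is just the definition of $\le$ applied to the transition map $d_{\gamma\delta}\colon D_\gamma\to D_\delta$). Consequently it is natural to try to construct only the members of a cofinal subset of $(U D_\gamma,\le)$, relying on this fact to sweep up everything below. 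Since $\|D_\gamma\|\le\lambda$, I fix once and for all such a cofinal set $C_{D_\gamma}$ with $|C_{D_\gamma}|\le\lambda$, and I only ever schedule elements of the various $C_{D_\gamma}$ as construction tasks.

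For the recursion itself: at a limit stage $\delta<\lambda$ I set $D_\delta$ to be the apex of a cocone for the chain $\langle D_\gamma:\gamma<\delta\rangle$, which exists because $\delta<\lambda$ and the hypothesis provides cocones for all $\alpha$-indexed diagrams with $\alpha<\lambda$. At a successor stage I attend to one scheduled task $(\gamma,y)$, with $\gamma$ below the current stage and $y\in C_{D_\gamma}$: if the image of $y$ is constructible from the current object, I let the next object be a witness $B$ together with a morphism realizing that construction (so that $y$ becomes constructed by the next stage), and otherwise I do nothing. The scheduling is arranged, using $|\lambda\times\lambda|=\lambda$ and the fact that each $C_{D_\gamma}$ has size $\le\lambda$, so that every task $(\gamma,y)$ is attended cofinally often below $\lambda$; this is exactly the ``catching your tail'' bookkeeping already used in the proof of Theorem \ref{univ-ext}.

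Finally I would verify fullness. Given $\gamma<\lambda$ and $x\in U D_\gamma$ for which $d_{\gamma\delta}(x)$ is constructible from $D_\delta$ for every $\delta\ge\gamma$, I choose $y\in C_{D_\gamma}$ with $x\le y$ and aim to produce some $\delta\ge\gamma$ with $d_{\gamma\delta}(y)\in U_0 D_\delta$; the monotonicity fact then yields $d_{\gamma\delta}(x)\in U_0 D_\delta$, which is what fullness demands. Since $(\gamma,y)$ is attended cofinally often, it is enough to know that at some such stage the relevant image of $y$ is genuinely constructible, whereupon the construction step fires. Pinning this down is the delicate step and the main obstacle I expect: one must choose the cofinal sets $C_{D_\gamma}$ compatibly with the transition maps and then argue that a persistently constructible $x$ forces one of its $\le$-dominating representatives to be itself constructible at the stages where it is attended. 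This is the only place where the interaction between the order $\le$, the cofinality bound, and the elements that are \emph{never} constructible (which, being vacuously $\le$-above everything, sit on top of $(U A,\le)$ and can masquerade as cofinal representatives) must be handled with care; if a clean reduction to cofinal representatives cannot be forced, the fallback is to schedule the constructible elements directly and instead build the full diagram over a $\lambda$-directed index, letting $\lambda$-directedness rather than a single chain absorb the possibly many constructions.
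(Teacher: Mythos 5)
Your construction is, almost line for line, the paper's: the paper routes the theorem through Lemma \ref{full-existence-lem} (adding a rank function $R$ so the same recursion can be reused in Theorem \ref{univ-ext-pf}), but the bookkeeping surjection $\lambda\to\lambda\times\lambda$, the cocones at limit stages, and the scheduling of a cofinal subset of each $(U D_i,\le)$ are exactly what you describe. The ``delicate step'' you flag at the end is therefore the whole content of the proof, and your suspicion is well founded: the paper's verification disposes of it with the single phrase ``without loss of generality (using cofinality), $x=x_{\alpha,\beta}$,'' which is precisely the reduction you could not justify --- and it is not justifiable from the stated hypotheses. As you observe, the constructible elements of $U A$ form a downward closed set under $\le$, so a never-constructible element lies $\le$-above \emph{everything}; a cofinal set may consist entirely of such elements, in which case no scheduled task ever fires and a persistently constructible $x$ sitting below them is never constructed. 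This is not merely a defect of the proof: take $\lambda=\aleph_0$, let the category be the poset of countable subsets $s$ of $\omega_1$ under inclusion, let $U s=\omega_1\cup\{\star\}$ with every transition map the identity, and let $U_0 s=s$. Then $\star$ is never constructible, so $\{\star\}$ is cofinal and $\|s\|=1$, and every finite chain has a cocone; yet each $\alpha\in\omega_1$ is constructible from every $s$, so a full $\omega$-indexed diagram would need $\bigcup_{n}s_n=\omega_1$, which is impossible. So the statement requires a hypothesis stronger than $\|A\|\le\lambda$ --- for instance $|U A|\le\lambda$ (so one schedules every element and never reduces to representatives), or a bound $\le\lambda$ on the cofinality of the \emph{constructible} part of $(U A,\le)$ together with preservation of constructibility along the chain. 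Every application in the paper (Zorn's lemma, generics, algebraic closures, saturated models, Theorem \ref{univ-ext-pf}) satisfies one of these, which is why the intended argument succeeds there: with such a hypothesis your argument closes cleanly, since the task $(\gamma,y)$ fires when attended and your monotonicity observation transfers the conclusion from $y$ down to $x$.

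Two smaller remarks. Your fallback of building the diagram over a $\lambda$-directed index does not rescue the theorem as stated: the conclusion asks for a diagram indexed by the chain $\lambda$, and the hypothesis only provides cocones for chains of length $<\lambda$, not for larger directed posets. And your plan to ``choose the cofinal sets compatibly with the transition maps'' runs into the further wrinkle that constructibility need not be preserved along morphisms unless the domain is an amalgamation base (see the paper's remark preceding Lemma \ref{full-lem}), so even a representative that is constructible at the stage where it is chosen may fail to be constructible at the stage where its task is attended; this is a second place where the extra hypothesis is genuinely needed.
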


This will be a consequence of the following more general version (used in the proof of Theorem \ref{univ-ext-pf}), where we only require diagrams with ``small'' objects (according to a certain rank) to have an upper bound.

\begin{lem}\label{full-existence-lem}
  Let $\K$ be a construction category, let $\lambda$ be an infinite cardinal, and let $R : \K \to \lambda$ be given. If:

  \begin{enumerate}
  \item $\|A\| \le \lambda$ for all objects $A$.
  \item For any object $A$ and any $x \in U A$ that is constructible from $A$, there exists $A \xrightarrow{f} B$ such that $R B \le R A + 1$ and $f(x)$ is constructed by stage $B$.
  \item For every $\alpha < \lambda$ and every diagram $D: \alpha \to \K$, if $R D_i < \alpha$ for all $i < \alpha$, then $D$ has a cocone $(D_i \to A)_{i < \alpha}$ with $R A \le \alpha$.
  \end{enumerate}

  Then there is a $\lambda$-indexed full diagram $D: \lambda \to \K$ with $R D_i \le i$ for all $i < \lambda$.
\end{lem}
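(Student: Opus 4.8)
The plan is to build the diagram $D:\lambda\to\ck$ by transfinite recursion on $i<\lambda$, producing objects $D_i$ together with transition maps $d_{ij}:D_i\to D_j$ (for $i\le j$) that form a coherent chain, while maintaining the rank bound $R D_i\le i$. The base object $D_0$ is obtained by applying condition (3) to the empty diagram (the case $\alpha=0$), which yields an object of rank $\le 0$. At a limit ordinal $i$ the chain $\langle D_j:j<i\rangle$ already built is a diagram with $R D_j\le j<i$ for all $j<i$, so condition (3) applies and furnishes a cocone; I let $D_i$ be its apex and the $d_{ji}$ be the cocone legs, giving continuity and $R D_i\le i$. All the real work is at successor steps, where I will construct a single element: invoking condition (2) to pass from $D_i$ to $D_{i+1}$ with $R D_{i+1}\le R D_i+1\le i+1$, so the rank bound is preserved.

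The recursion is steered by bookkeeping. Since condition (1) gives $\|D_i\|\le\lambda$, once $D_i$ is built I fix a cofinal subset $C_i$ of the preorder $(U D_i,\le)$ of size at most $\lambda$ and enumerate it as $\langle c^i_\xi:\xi<\lambda\rangle$ (repeating entries if needed). I also fix a surjection $g:\lambda\to\lambda\times\lambda$ with $g(i)=(s,\xi)$ implying $s\le i$, arranged so that each pair $(s,\xi)$ is attained at unboundedly many stages. At the successor step $i\mapsto i+1$ I read $g(i)=(s,\xi)$, form $z:=d_{si}(c^s_\xi)\in U D_i$, and test whether $z$ is constructible from $D_i$: if it is, I use condition (2) to pick $D_i\to D_{i+1}$ with $R D_{i+1}\le i+1$ and with the image of $z$ constructed by stage $D_{i+1}$; if it is not, I set $D_{i+1}=D_i$. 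As $\lambda\times\lambda$ has cardinality $\lambda$, such a $g$ exists, and every requirement $(s,\xi)$ is treated at cofinally many stages.

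The remaining task, and the heart of the proof, is to verify that the chain just built is a \emph{full} diagram. Fix $s<\lambda$ and a persistent $x\in U D_s$, meaning $d_{sj}(x)$ is constructible from $D_j$ for every $j\ge s$; I must exhibit $j\ge s$ with $d_{sj}(x)\in U_0 D_j$. Choosing $c^s_\xi\in C_s$ with $x\le c^s_\xi$, the governing idea is that constructing the dominating element $c^s_\xi$ automatically constructs $x$: the preorder $\le$ is preserved by the transition maps, and by its very definition a constructed value of $c^s_\xi$ forces a constructed value of $x$. So it suffices to construct $c^s_\xi$ at some stage. To do this I use that constructibility is monotone along the maps in exactly one direction (by the basic properties recorded in the remark preceding Lemma \ref{full-lem}, if $f(w)$ is constructible from the codomain then $w$ is constructible from the domain, so non-constructibility, once acquired, persists). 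Combined with the fact that $(s,\xi)$ is addressed at cofinally many stages, this pins down a stage at which $d_{si}(c^s_\xi)$ is still constructible and is therefore constructed by the recursion.

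The main obstacle is precisely this reduction: a persistent $x$ is only guaranteed to be dominated in $C_s$ by some $c^s_\xi$ that may itself fail to be persistent, so one cannot simply "construct the cofinal witness''. Resolving it forces a careful interplay between the cofinal scheduling and the one-sided monotonicity of constructibility, in order to show that the downward-closed set of persistent elements is nonetheless caught by the constructions indexed by $C_s$ while staying within the $\lambda$-many available successor steps. This is the delicate interface between the counting (controlled by $\|D_i\|\le\lambda$) and the rank control (controlled by condition (3) at limits, which keeps $R D_i\le i$). Finally, Theorem \ref{full-existence} follows by taking $R\equiv 0$: then the conclusion $R D_i\le i$ is vacuous, condition (2) reduces to the definition of constructibility, and condition (3) reduces to the blanket cocone hypothesis stated there.
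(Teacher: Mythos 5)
Your construction is the same as the paper's: a transfinite recursion of length $\lambda$, with hypothesis (3) supplying cocones at limit stages (and the rank bound $R D_i\le i$), hypothesis (2) supplying the single-element construction at successor stages, and a bookkeeping surjection onto $\lambda\times\lambda$ hitting every pair cofinally often, applied to cofinal enumerations of the preorders $(U D_i,\le)$ provided by hypothesis (1). Up to notation this is exactly the paper's proof, including the derivation of Theorem \ref{full-existence} by taking $R\equiv 0$.

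The problem is that the verification of fullness --- which you yourself call ``the heart of the proof'' --- is not actually carried out. You correctly reduce to the following: given a persistent $x\in U D_s$ (meaning $d_{sj}(x)$ is constructible from $D_j$ for all $j\ge s$), pick $c^s_\xi$ in the cofinal set with $x\le c^s_\xi$; if some $d_{s,j}(c^s_\xi)$ is ever \emph{constructed}, then so is $d_{s,j}(x)$, by the definition of $\le$. But your recursion only acts on $c^s_\xi$ at a scheduled stage $j$ when $d_{sj}(c^s_\xi)$ is \emph{constructible from} $D_j$, and this does not follow from the persistence of $x$: constructibility is only inherited \emph{downward} along $\le$, and non-constructibility propagates forward along the chain, so a dominating element that fails to be constructible at one stage is lost forever. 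Worse, an element that is nowhere constructible satisfies $z\le y$ vacuously for every $z$, so it is a top element of the preorder and a cofinal set may well consist only of such elements, in which case the scheduled requirements never fire. You explicitly flag this as ``the main obstacle'' and then assert that ``resolving it forces a careful interplay between the cofinal scheduling and the one-sided monotonicity of constructibility'' --- but that sentence is a placeholder for the argument, not the argument. As it stands the proposal does not establish that the diagram produced is full. (For what it is worth, the paper's own write-up compresses this exact step into the phrase ``without loss of generality (using cofinality), $x=x_{\alpha,\beta}$'', so you have located the genuine pressure point of the proof; but a complete proof must either justify that reduction or make the choice of cofinal enumerations in a way that guarantees the scheduled witnesses are constructible when needed.)
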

\begin{proof}[Proof of Theorem \ref{full-existence}]
  Apply Lemma \ref{full-existence-lem}, with $R A = 0$ for all objects $A$.
\end{proof}
\begin{proof}[Proof of Lemma \ref{full-existence-lem}]
  First, we fix a function $F: \lambda \to \lambda \times \lambda$ such that for each pair $(\alpha, \beta)$ in $\lambda \times \lambda$, there exists unboundedly-many $i < \lambda$ so that $F (i) = (\alpha, \beta)$. This can be done by first partitioning $\lambda$ into $\lambda$-many disjoint pieces of cardinality $\lambda$, then bijecting each of these pieces with $\lambda \times \lambda$. Write $(\alpha_i, \beta_i)$ for $F (i)$.

  We inductively build objects $\seq{D_i : i < \lambda}$, morphisms $\seq{D_i \xrightarrow{d_{ij}} D_j : i \le j < \lambda}$, and sequences $\seq{x_{i, j} : i, j < \lambda}$ such that:

  \begin{enumerate}
  \item $R D_i \le i$ for all $i < \lambda$.
  \item $d_{ii} = \id_{D_i}$, $d_{jk} d_{ij} = d_{ik}$ for all $i \le j \le k < \lambda$.
  \item For each $i < \lambda$, $\seq{x_{i, j} ; j < \lambda}$ enumerates the elements of a cofinal set of $U D_i$ (perhaps with repetitions).
  \item\label{forcing-3} For each $i < \lambda$, if $\alpha_i \le i$ and $d_{\alpha_i, i} (x_{\alpha_i, \beta_i})$ is constructible from $D_i$, then $d_{\alpha_i, i + 1} (x_{\alpha_i, \beta_i})$ is constructed by stage $D_{i + 1}$. Moreover, if there exists an object that is full over $D_i$, then $D_{i + 1}$ is full over $D_i$.
  \end{enumerate}

  \underline{This is enough}: Let $D: \lambda \to \ck$ be the diagram with maps $d_{ij}$. Then $D$ is the desired full diagram. Indeed, let $i < \lambda$, and let $x \in U D_i$ be such that $d_{ij} (x)$ is constructible from $D_j$ for all $j \ge i$. Let $\alpha := i$. Without loss of generality (using cofinality), $x = x_{\alpha, \beta}$ for some $\beta < \lambda$. By construction, there exists $j \ge i$ such that $(\alpha_j, \beta_j) = (\alpha, \beta)$. By assumption, $d_{i, j} (x)$ is constructible from $D_j$, so by (\ref{forcing-3}), $d_{i, j + 1} (x)$ is constructed by stage $D_{j + 1}$, as desired.

  \underline{This is possible}: We proceed by induction on $j < \lambda$. Assume inductively that $\seq{D_i : i < j}$, $\seq{d_{ii'} : i \le i' < j}$, and $\seq{x_{i, i'} : i < j, i' < \lambda}$ have been constructed. We will build $D_j$, $\seq{d_{i j} : i \le j}$, and $\seq{x_{j, j'} : j' < \lambda}$. First assume that $j$ is limit or zero. By assumption, the diagram $D: j \to \ck$ with maps $\seq{d_{ii'} : i \le i' < j}$ has a cocone $(D_i \xrightarrow{d_{ij}} D_j)_{i < j}$, with $R D_j \le j$. Set $d_{jj} = \id_{D_j}$, and let $\seq{x_{j, j'} : j' < \lambda}$ be any enumeration of $U D_j$. Now assume that $j$ is a successor: $j = i + 1$. If $\alpha_i > i$, or $\alpha_i \le i$ but $d_{\alpha_i, i} (x_{\alpha_i, \beta_i})$ is not constructible from $D_i$, then set $B = D_i$, $f = \id_{D_j}$. If $\alpha_i \le i$ and $d_{\alpha_i, i} (x_{\alpha_i, \beta_i})$ is constructible from $D_i$, then let $D_i \xrightarrow{f} B$ witness this, with $R B \le i + 1$. Set $D_i = B$, $d_{i, j} = f$, and let $d_{j, j} = \id_{D_j}$, $d_{i_0, j} = d_{i, j} d_{i_0, i}$ for all $i_0 < i$, and $\seq{x_{j, j'} : j' < \lambda}$ be any enumeration of $U D_j$.
\end{proof}

Even if we are unable to directly build full objects, we can also find a lot of them in continuous-enough chains:

\begin{thm}\label{refl-thm}
  Let $\lambda$ be a regular uncountable cardinal and let $\K = (\ck, U, U_0)$ be a construction category where $\ck$ is just the ordered set $\lambda$ and $U$-images of morphisms are inclusions. If $\|j\| < \lambda$ for all $j < \lambda$, then the set $\{j < \lambda \mid j \text{ is full for } \bigcup_{i < j} U i \}$ is closed unbounded.
\end{thm}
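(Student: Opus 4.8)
The plan is to unwind the fullness condition into a statement about \emph{construction stages}, prove closedness directly, and prove unboundedness by a catching-your-tail argument of exactly the kind used in the proof of Theorem \ref{ap-thm}, with $\|\cdot\| < \lambda$ controlling how many stages must be caught. First I would fix notation: since $\ck$ is the poset $\lambda$ and the $U$-images of morphisms are inclusions, writing $V_i := U i$ and $W_i := U_0 i$ gives two increasing chains of sets with $W_i \subseteq V_i$, and every morphism acts as a genuine inclusion. For $x \in U j$, being constructible from $j$ is then equivalent to $x \in W^\infty := \bigcup_{k < \lambda} W_k$, while $x$ is constructed by stage $j$ iff $x \in W_j$. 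Setting $V^{<j} := \bigcup_{i<j} V_i$, the condition ``$j$ is full for $\bigcup_{i<j} U i$'' unwinds to $V^{<j} \cap W^\infty \subseteq W_j$, so I would work with $C := \{ j < \lambda \mid V^{<j} \cap W^\infty \subseteq W_j\}$ as the set to be shown club.

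For closedness, let $\seq{j_\xi : \xi < \eta}$ be increasing in $C$ with limit $j^\ast = \sup_\xi j_\xi < \lambda$. Because $j^\ast$ is a limit, $V^{<j^\ast} = \bigcup_\xi V^{<j_\xi}$, so any $x \in V^{<j^\ast} \cap W^\infty$ already lies in some $V^{<j_\xi} \cap W^\infty \subseteq W_{j_\xi} \subseteq W_{j^\ast}$; hence $j^\ast \in C$. This uses only monotonicity of $U_0$, not the cofinality hypothesis.

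Unboundedness is the heart of the proof. Fix $\alpha < \lambda$ and build an increasing sequence $\alpha = \beta_0 < \beta_1 < \cdots < \lambda$ closing off all constructible elements. Given $\beta_n$, I would pick a cofinal subset $P_n \subseteq (U \beta_n, \le)$ with $|P_n| = \|\beta_n\| < \lambda$; for each $p \in P_n$ constructible from $\beta_n$ fix a stage $k_p \in [\beta_n, \lambda)$ with $p \in W_{k_p}$, and set $\beta_{n+1} := \left(\sup_p k_p\right) + 1$, which is below $\lambda$ by regularity of $\lambda$ together with $|P_n| < \lambda$. The intended payoff is the claim that every $x \in U\beta_n \cap W^\infty$ already lies in $W_{\beta_{n+1}}$: choosing $p \in P_n$ with $x \le p$, the relation $x \le p$ transports $p \in W_{k_p}$ to $x \in W_{k_p} \subseteq W_{\beta_{n+1}}$. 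Taking $j := \sup_n \beta_n < \lambda$ (a limit, as $\lambda$ is regular uncountable), any $x \in V^{<j} \cap W^\infty$ lies in some $U\beta_n$ and is constructible, hence in $W_{\beta_{n+1}} \subseteq W_j$; so $j \in C$ and $j > \alpha$.

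The step I expect to be the main obstacle is precisely the claim inside the unboundedness argument. The witness $p \in P_n$ with $x \le p$ supplied by cofinality must itself be constructible for the implication $p \in W_{k_p} \Rightarrow x \in W_{k_p}$ to carry content, since if $p$ is never constructed then $x \le p$ holds vacuously and says nothing. Thus the real content to verify is that $\|\beta_n\| < \lambda$ forces $\sup\{ t(x) \mid x \in U\beta_n \cap W^\infty\} < \lambda$, where $t(x)$ denotes the least stage constructing $x$; equivalently, that a cofinal set for $(U\beta_n, \le)$ can be taken to witness constructible elements by constructible elements. Pinning this down is where the definition of $\le$ and the regularity of $\lambda$ must be used with care, and it is the one place where the cofinality hypothesis does genuine work.
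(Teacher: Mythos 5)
Your argument is the paper's argument: closedness from the fact that $\bigcup_{i<j^\ast} U i$ is the increasing union of the $\bigcup_{i<j_\xi}U i$, and unboundedness by an $\omega$-step ``catching your tail'' construction in which $\|\beta_n\|<\lambda$ and regularity of $\lambda$ are supposed to bound the stage by which everything in $U\beta_n$ that will ever be constructed has been constructed. The obstacle you flag at the end is genuine, and it cannot be ``pinned down with care'' from the hypothesis as literally stated: if $p\in U j$ is \emph{not} constructible from $j$, the implication defining $x\le p$ is vacuously true for every $x$, so $p$ is a top element of the preorder and $\{p\}$ alone is cofinal. A single non-constructible element therefore forces $\|j\|=1$ while saying nothing about how late the constructible elements are constructed. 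Concretely, take $\ck=\lambda$, $U j=\lambda\cup\{\ast\}$ for all $j$, and $U_0 j=\{\beta\mid \beta<j\}$: then $\|j\|=1$ for every $j$, every ordinal $\beta<\lambda$ is constructible, yet no $j\ge 1$ is full for $\bigcup_{i<j}U i$, so the club is empty. Thus the key inductive claim --- that $\sup\{i_x \mid x\in U\beta_n,\ x \text{ constructible}\}<\lambda$ --- simply fails under the stated hypotheses, and your proof (like the statement) does not go through as written.

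You should know that the paper's own proof makes exactly the same elision (``Since $\|\alpha_n\|<\lambda$ and $\lambda$ is regular, there exists $\alpha_{n+1}\ge i_x$ for all $x$\dots''), so you have located a defect in the statement rather than merely a hole in your write-up. The repair is either to strengthen the hypothesis to $|U j|<\lambda$ --- which is what every application in Example \ref{forcing-ex} and the club in the proof of Theorem \ref{ap-thm} actually supply, and under which no preorder is needed: each of the fewer than $\lambda$ constructible elements has a least construction stage and regularity bounds their supremum --- or to read $\|j\|$ as the cofinality of the sub-preorder of \emph{constructible} elements of $U j$. Under the latter reading your argument closes exactly as you intended: every constructible $x$ lies below a constructible $p$ in a cofinal set of size less than $\lambda$, the defining property of $\le$ transports $p\in U_0 k_p$ to $x\in U_0 k_p$, and regularity bounds $\sup_p k_p$. (The same care is needed in Theorem \ref{full-existence}, where the ``without loss of generality, $x=x_{\alpha,\beta}$'' step silently assumes the cofinal witness is at least as constructible as $x$.)
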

\begin{proof} Let $C := \{j < \lambda \mid j \text{ is full for } \bigcup_{i < j} U i \}$.
  
  \begin{itemize}
  \item \underline{$C$ is closed}: let $j < \lambda$ be a limit ordinal such that unboundedly-many $i < j$ are in $C$. Let $x \in \bigcup_{i < j} U i$ be constructible from $j$. Pick $i' \in C \cap j$ such that $x \in \bigcup_{i < i'} U i$. Then $x$ is constructible from $i'$ hence, by definition of $C$, $x$ is constructed by stage $i'$, hence by stage $j$.

  \item  \underline{$C$ is unbounded}: let $\alpha < \lambda$. We build $\seq{\alpha_n : n < \omega}$ an increasing sequence of ordinals below $\lambda$ such that for all $n < \omega$:
    
    \begin{enumerate}
    \item $\alpha_0 = \alpha$.
    \item Any $x \in U \alpha_n$ that is constructible in $\alpha_n$ is constructed by stage $\alpha_{n + 1}$.
    \end{enumerate}

    This is possible: given $\alpha_n$, for each $x \in U \alpha_n$ that is constructible in $\alpha_n$, there exists a least $i_x < \lambda$ such that $x$ is constructed by stage $i_x$. Since $\|\alpha_n\| < \lambda$ and $\lambda$ is regular, there exists $\alpha_{n + 1} > \alpha_n$ such that $\alpha_{n + 1} \ge i_x$ for all $x \in U \alpha_n$ constructible in $\alpha_n$.

    This is enough: let $\beta := \sup_{n < \omega} \alpha_n$. Since $\lambda$ is regular uncountable, $\beta < \lambda$. Moreover, $\beta$ is full for $\bigcup_{i < \beta} U i$. Indeed, if $x \in \bigcup_{i < \beta} U_i$ is constructible from $\beta$, then there exists $n < \omega$ such that $x \in U \alpha_n$ and $x$ is constructible from $\alpha_n$, hence constructed by stage $\alpha_{n + 1}$, hence by stage $\beta$.
  \end{itemize}
\end{proof}

Many well known constructions can easily be seen as special cases of the theorems just stated:

\begin{example}\label{forcing-ex} \
  \begin{enumerate}
  \item (Zorn's lemma) If $\Pp$ is a partially ordered set where each chain has an upper bound, then $\Pp$ has a maximal element. This can be obtained by applying Theorem \ref{full-existence} to $\lambda = |\Pp| + \aleph_0$, $\K = (\Pp, U, U_0)$, where $U p = \Pp$ for all $p \in \Pp$ and $U_0 p = \{q \in \Pp \mid q \le p\}$. Any upper bound to the full diagram gives the desired maximal element.
  \item (Existence of generics) Let $\Pp$ be the poset of all finite partial functions from $\omega$ to $\{0, 1\}$. Let $\K = (\Pp, U, U_0)$, where $U s = \omega$, $U_0 s = \dom (s)$. Then a full diagram $D: \omega \to \K$ corresponds to a (total) function $f: \omega \to \{0,1\}$. This generalizes to the existence of generics, in the sense of set-theoretic forcing \cite[14.4]{jechbook}. 
  \item (Existence of algebraic closure) Every field $F$ has an algebraic closure: take $\lambda = |F| + \aleph_0$, $\K = (\ck, U, U_0)$ where $\ck$ is the category of field extensions of $F$ of cardinality at most $\lambda$, $U A$ is the set of all polynomials with coefficients from $A$, and $U_0 A$ is the set of all such polynomials with a root in $A$. The colimit of the full diagram given by Theorem \ref{full-existence} is full (Lemma \ref{full-lem}(\ref{full-lem-3})), hence algebraically closed.
  \item (Existence of saturated models) Let $\K^\ast$ be an AEC with amalgamation and $\lambda > \LS (\K^\ast)$ be a regular cardinal such that $\K^\ast$ is stable in $\lambda$ and $\K_\lambda^\ast \neq \emptyset$. Let $\K = (\ck, U, U_0)$, where $\ck$ is the full subcategory of $\K^\ast$ with objects of cardinality $\lambda$, $U A$ is the set of all types over a substructure of $A$, and $U_0 A$ is the set of all such types that are realized in $A$. Note that we may well have $|U A| > \lambda$, but by stability we still have that $\|A\| \le \lambda$ (the types over $A$ form a cofinal set of size $\lambda$). The colimit of the full diagram given by Theorem \ref{full-existence} is full, hence is a saturated object of $\K_\lambda^\ast$ (i.e.\ all types over substructure of cardinality strictly less than $\lambda$ are realized).
  \item (Disjointness of filtrations on a club)  If $\lambda$ is a regular uncountable cardinal, $A \subseteq B$ are sets of cardinality $\lambda$, and $\seq{A_i : i < \lambda}$, $\seq{B_i : i < \lambda}$ are increasing continuous chains of subsets of cardinality strictly less than $\lambda$ such that $A = \bigcup_{i < \lambda} A_i$ and $B = \bigcup_{i < \lambda} B_i$, then the set of $i < \lambda$ such that $A \cap B_i = A_i$ is closed unbounded (in particular, if $A = B$ then $A_i = B_i$ on a closed unbounded set). Indeed, let $\K = (\lambda, U, U_0)$, where $U i = A_i \cup B_i$, $U_0 i = A_i \cap B_i$. By Theorem \ref{refl-thm}, the set of $i$ such that $i$ is full is closed unbounded. Now, if $i$ is full and $x \in A \cap B_i$, then $x \in A_j \cap B_j$ for some $j$, so $x$ is constructible from $i$, so is in $A_i \cap B_i \subseteq A_i$. Conversely, if $x \in A_i$ then it is constructible from $i$, hence in $A_i \cap B_i \subseteq A \cap B_i$. Thus $A \cap B_i = A_i$.
  \item Similarly to the previous example, density of reduced towers (in the study of uniqueness models, see e.g.\ \cite[5.5]{gvv-mlq}) can be seen as describing the existence of a full object in an appropriate construction category.
  \end{enumerate}
\end{example}

Let's now give more details on the proof of Theorem \ref{univ-ext}:

\begin{thm}\label{univ-ext-pf}
  Let $\K$ be an AEC, let $\lambda \ge \LS (\K)$ be such that $\K_\lambda$ has amalgamation, and let $\seq{M_i : i \le \lambda}$ be increasing continuous in $\K_\lambda$ such that $M_{i + 1}$ realizes all types over $M_i$. Then $M_\lambda$ is universal over $M_0$.
\end{thm}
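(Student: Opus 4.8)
The plan is to recast the tail-catching construction sketched after Theorem \ref{univ-ext} as a single application of the existence-of-full-diagrams machinery (Lemma \ref{full-existence-lem}). Fix a $\K$-embedding $f_0 : M_0 \to M_0'$; since universality over $M_0$ only concerns codomains of size at most $\lambda$, and amalgamating $M_0'$ with $M_1$ over $M_0$ in $\K_\lambda$ yields an object of $\K_\lambda$ extending $M_0'$, there is no loss in assuming $M_0' \in \K_\lambda$. I would then define a construction category $\K' = (\ck, U, U_0)$ whose objects are pairs $(i, f)$ with $i < \lambda$ and $f : M_i \to N$ a $\K$-embedding into some $N \in \K_\lambda$ with $M_0' \lea N$ and $f \rest M_0$ equal to $f_0$ followed by the inclusion $M_0' \lea N$; a morphism $(i,f) \to (j,g)$, allowed only when $i \le j$, is a $\K$-embedding $h : N \to N'$ fixing $M_0'$ with $hf = g \rest M_i$. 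Set $U(i,f) = UN$, $U_0(i,f) = \ran f$, and rank $R(i,f) = i$. Thus ``$x$ is constructed by stage $(i,f)$'' means $x$ lies in the range of $f$, and the goal of the construction is precisely to reach a stage at which $f$ is onto.

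The crucial point is condition (2) of Lemma \ref{full-existence-lem}. Given $(i, f : M_i \to N)$ and any $x \in UN$, consider the point $(M_i \xrightarrow{f} N, x)$ over $M_i$, whose type lies in $\gS(M_i)$. By hypothesis $M_{i+1}$ realizes every type over $M_i$, so this type is realized by some $b \in UM_{i+1}$; since $\K_\lambda$ has amalgamation, the category of points over $M_i$ has amalgamation, and having the same type coincides with joint connectedness (Lemma \ref{compat-lem}). Hence there are an object $N' \in \K_\lambda$ and embeddings $h : N \to N'$, $g : M_{i+1} \to N'$ agreeing on $M_i$ with $h(x) = g(b)$. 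After renaming $N'$ so that $h$ fixes $M_0'$, the object $(i+1, g)$ together with $h$ witnesses that $x$ is constructed by stage $(i+1, g)$, one rank step up. This gives condition (2) (in fact every element is constructible). Condition (1), $\|A\| \le \lambda$, is immediate since $|U(i,f)| = \lambda$. Condition (3), cocones for diagrams of rank below $\alpha < \lambda$, is supplied by the chain axiom: the union $N^*$ of the $N_\nu$'s stays in $\K_\lambda$, and continuity of $\seq{M_i : i \le \lambda}$ produces an induced $f^* : M_{i^*} \to N^*$ with $i^* = \sup_\nu i_\nu \le \alpha$, giving the required cocone object.

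With the hypotheses checked, Lemma \ref{full-existence-lem} yields a full diagram $D : \lambda \to \ck$, say $D_\nu = (i_\nu, f_\nu : M_{i_\nu} \to N_\nu)$ with $i_\nu \le \nu$. Taking its colimit in $\K$ gives $f^* : M_{i^\infty} \to N^*$ with $i^\infty = \sup_\nu i_\nu \le \lambda$, and by Lemma \ref{full-lem}(\ref{full-lem-3}) this is a full object. I would finish by proving $f^*$ surjective: any $x \in UN^*$ comes from some $x_\nu \in UN_\nu$ with $\nu < \lambda$; for every $\mu \ge \nu$ we have $i_\mu \le \mu < \lambda$, so by condition (2) the image of $x_\nu$ is constructible from $D_\mu$, and fullness of the diagram then provides $\mu \ge \nu$ with that image inside $\ran f_\mu$, whence $x \in \ran f^*$. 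Thus $f^*$ is a surjective $\K$-embedding, hence an isomorphism (all AEC morphisms are monos). Since $i^\infty \le \lambda$ we have $M_{i^\infty} \lea M_\lambda$, and the map $g$ obtained by composing the inclusion $M_0' \lea N^*$ with $(f^*)^{-1}$ and then the inclusion $M_{i^\infty} \lea M_\lambda$ is a $\K$-embedding $M_0' \to M_\lambda$ satisfying $g f_0 = $ the inclusion $M_0 \to M_\lambda$; this is exactly universality of $M_\lambda$ over $M_0$. The main obstacle is the verification of condition (2): turning ``$M_{i+1}$ realizes all types'' into a one-step extension that constructs a prescribed element, where amalgamation and the identification of equal types with joint connectedness do the real work. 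Everything about catching the tail is then delegated abstractly to Lemma \ref{full-existence-lem}, so that the present argument never needs to handle the bookkeeping by hand.
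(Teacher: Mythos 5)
Your proposal is correct and follows essentially the same route as the paper: you build the same construction category of embeddings $M_i \to N$ with $U_0$ the range of the embedding and rank $R(i,f)=i$, verify constructibility of every element via realization of types in $M_{i+1}$ together with amalgamation, and invoke Lemma \ref{full-existence-lem} to get a full diagram whose colimit map is surjective, hence invertible. The only cosmetic difference is that you build the target $f_0 : M_0 \to M_0'$ into the objects from the start, whereas the paper defines the category once and passes to the subcategory of objects above $f_0$ at the end.
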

\begin{proof}
  Let $\K^\ast = (\ck^\ast, V, V_0)$ be defined as follows:

  \begin{itemize}
  \item The objects of $\ck^\ast$ are morphisms $M_i \xrightarrow{f} M$ for $M \in \K_\lambda$, $i < \lambda$, such that $f \rest M_0 = \id_{M_0}$.
  \item A morphism from $M_i \xrightarrow{f} M$ to $M_j \xrightarrow{g} N$, with $i \le j < \lambda$, is a map $h: M \to N$ such that the following diagram commutes:

    $$
        \xymatrix@=3pc{
          M \ar[r]^h & N \\
          M_i \ar[u]_f \ar[r] & M_j \ar[u]_g \\
        }
        $$
  \item $V (M_i \to M) = U M$, and the $V$-image of a morphism $h$ from $M_i \to M$ to $M_j \to N$ is $U h$ (where $U$ is the universe functor from $\K$ to $\Set$)
  \item $V_0 (M_i \xrightarrow{f} M) = U f[M_i]$, and the $V_0$-image of a morphism $h$ from $M_i \to M$ to $M_j \to N$ is $U h \rest f[M_i]$.
  \end{itemize}

  This is easily checked to be a construction category. Now let $M_i \xrightarrow{f} N$ be given, with $i < \lambda$. We show that any $x \in U N$ is constructible from $f$. Indeed, $M_{i + 1}$ realizes all types over $M_i$, so in particular it realizes the type of $(x, M_i \xrightarrow{f} N)$. Thus there is $x' \in U M_{i + 1}$ and a commutative diagram:
  
      $$
        \xymatrix@=3pc{
          M \ar[r]^h & N \\
          M_i \ar[u]_f \ar[r] & M_{i + 1} \ar[u]_g \\
        }
        $$

        with $g (x') = h (x)$. In particular, $h (x) \in V_0 (g)$, so is constructed by stage $g$.

        Assume now that a diagram $D : \lambda \to \ck^\ast$ is full. Let $M_j \xrightarrow{f} N$ be a colimit (in $\K$) of $D$ (so $j \le \lambda$). From the previous discussion, it is easy to see that $f$ is surjective, hence an isomorphism. This then gives the result: for any $N_0 \in \K_\lambda$, $M_0 \xrightarrow{f_0} N_0$, the subcategory $\K^\ast_{f_0}$ of objects of $\K^\ast$ above $f_0$ satisfies the hypotheses of Lemma \ref{full-existence-lem} (with $R (M_i \to M) = i$), hence has a full diagram, whose colimit must therefore induce an embedding of $N_0$ into $M_\lambda$.
\end{proof}

We can similarly prove that model-homogeneous is equivalent to saturated (Remark \ref{mh-sat-rmk}):

\begin{thm}\label{mh-sat-pf}
  Let $\K$ be an AEC, let $\lambda \ge \LS (\K)$, and assume that $\K_{\lambda}$ has amalgamation. If $M \in \K_{\ge \lambda}$ realizes all types over every substructure of size $\lambda$, then $M$ is universal over every substructure of size $\lambda$.
\end{thm}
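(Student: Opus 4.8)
The plan is to mirror the construction-category argument of Theorem~\ref{univ-ext-pf}, replacing the chain $\seq{M_i}$ there (whose purpose was to realize one type per stage) by the single object $M$, which by hypothesis realizes every type over each of its size-$\lambda$ substructures. Since $\K$ is isomorphism-closed, after renaming it suffices to fix $M_0 \lea M$ of size $\lambda$ and an extension $M_0 \lea N_0$ with $|U N_0| \le \lambda$, and to produce a $\K$-embedding $g : N_0 \to M$ with $g \rest M_0$ the inclusion. I would set up a construction category $\K^\ast = (\ck^\ast, V, V_0)$ whose objects are $\K$-embeddings $K \xrightarrow{f} P$ with $K \lea M$, $M_0 \lea K$, $|U K| = \lambda$, with $N_0 \lea P$, $|U P| = \lambda$, and $f \rest M_0$ the inclusion $M_0 \hookrightarrow P$; a morphism to $K' \xrightarrow{f'} P'$ (where $K \lea K'$) is a $\K$-embedding $h : P \to P'$ fixing $N_0$ with $h f = f' \rest K$. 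Setting $V(K \xrightarrow{f} P) = U P$ and $V_0(K \xrightarrow{f} P) = U f[K]$, an element is constructed exactly when it lies in the image of $f$, so a full object is one where $f$ is a surjective $\K$-embedding, hence an isomorphism $K \cong P$; as $N_0 \lea P \cong K \lea M$ over $M_0$, any full object yields the desired $g$.

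The key step is the constructibility lemma: every $y \in U P$ is constructible from $K \xrightarrow{f} P$. Here I use saturation. The point $(K \xrightarrow{f} P, y)$ determines a type over $K$ (Definition~\ref{type-def}), and since $K \lea M$ has size $\lambda$, this type is realized in $M$; by the LST axiom and monotonicity of realization (Remark~\ref{type-monot}) I may take the realizing element $y'$ inside a size-$\lambda$ substructure $K' \lea M$ with $K \lea K'$, so that $(K \hookrightarrow K', y')$ has the same type over $K$ as $(K \xrightarrow{f} P, y)$. By the remark following Definition~\ref{type-def} (amalgamation makes same-type equivalent to joint connectedness, Lemma~\ref{compat-lem}), these two points are jointly connected over $K$, giving an amalgam $P'$ with maps $h : P \to P'$ and $f' : K' \to P'$ agreeing on $K$ and identifying $y$ with $y'$; after renaming I arrange $N_0 \lea P'$ with $h$ fixing $N_0$, and take $P'$ of size $\lambda$ by LST. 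Then $K' \xrightarrow{f'} P'$ is an object of $\ck^\ast$, $h$ is a morphism, and $h(y) = f'(y') \in V_0(K' \xrightarrow{f'} P')$, so $y$ is constructed by that stage. All amalgamations occur among objects of size $\lambda$, so the hypothesis that $\K_\lambda$ has amalgamation is exactly what is needed.

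Finally I would check the hypotheses of Theorem~\ref{full-existence} with the cardinal $\lambda$: each object satisfies $\|K \xrightarrow{f} P\| \le |U P| = \lambda$, and any $\alpha$-indexed ($\alpha < \lambda$) diagram in $\ck^\ast$ has a cocone obtained by taking the concrete colimits of the $K$- and $P$-coordinates in $\K$, the colimit of the bases lying $\lea M$ by the TV axiom and still having size $\lambda$. Theorem~\ref{full-existence} then produces a $\lambda$-indexed full diagram; its coordinatewise colimit $K^\ast \xrightarrow{f^\ast} P^\ast$ is again an object of $\ck^\ast$ with $K^\ast \lea M$, and by Lemma~\ref{full-lem}(\ref{full-lem-3}) it is full, since $U P^\ast$ is the union of the images of the cocone. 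Combined with the constructibility lemma this forces $f^\ast$ to be surjective, hence an isomorphism, and composing $N_0 \hookrightarrow P^\ast \xrightarrow{(f^\ast)^{-1}} K^\ast \hookrightarrow M$ gives an embedding over $M_0$, as required.

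The main obstacle I anticipate is precisely that saturation realizes only single types, so one cannot hope to extend a partial embedding $K \to M$ to an entire size-$\lambda$ substructure in a single step. The device of working with a freely built ``generic'' codomain $P$, rather than mapping directly into $M$, is what circumvents this: individual type realizations suffice to build $P$ by amalgamation, and it is fullness, not any one-step embedding of a whole substructure, that ultimately identifies $P^\ast$ with the image of a size-$\lambda$ substructure of $M$.
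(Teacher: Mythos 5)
Your proof is correct and takes essentially the same route as the paper: the paper's own proof of this theorem is a one-line instruction to rerun the construction-category argument of Theorem~\ref{univ-ext-pf} with objects the maps $K \xrightarrow{f} P$ where $K \lea M$ and both $K$ and $P$ have size $\lambda$, and your proposal is precisely that argument carried out in detail (including the correct observation that the ordinal rank function of Lemma~\ref{full-existence-lem} is no longer needed, so Theorem~\ref{full-existence} suffices). The only remark worth making is that your step producing $K' \lea M$ with $K \lea K'$ from $U K \subseteq U K'$ silently invokes the coherence axiom --- harmless here since the theorem is stated for AECs, though it runs against the coherence-free spirit the paper advertises for this argument.
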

\begin{proof}
  Similar to the proof of Theorem \ref{univ-ext-pf}: this time the objects of the construction categories are maps $M_0 \to N_0$ with $M_0 \lea M$ and $M_0, N_0$ both of size $\lambda$, and the rest of the definition is analogous.
\end{proof}

\section{A very short introduction to first-order stability}\label{fo-sec}

For the convenience of the unacquainted reader, I give a quick and self-contained construction of stable independence in the first-order case, and derive two consequences: the equivalence of stability (in terms of counting types) with no order property, and the ability to extract indiscernibles from long-enough sequences. All the material in this appendix is well known but I am not aware of a place where it appears in such compressed form. I assume a very basic knowledge of model theory, but not previous knowledge of stability theory. 

Throughout, we fix a complete first-order theory $T$ with only infinite models in a vocabulary $\tau$. For notational simplicity, we assume that $|T| = |\tau| + \aleph_0$. We fix a proper class sized ``monster model'' $\sea$ for $T$. This means that $\sea$ is universal and homogeneous, so for convenience we work inside $\sea$. We use the letters $\ba, \bb, \bc$ for (possibly infinite) sequences of elements from $\sea$, $\bx, \by, \bz$ for (possibly infinite) sequences of variables, $A, B, C$ for subsets of $\sea$, and $M, N$ for elementary substructures of $\sea$. For a sequence $\ba$, $\ran (\ba)$ denotes its set of elements (i.e.\ its range when thought of as a function). We may write $A \cup B$ instead of $AB$, $A \bb$ instead of $A \cup \ran (\bb)$, etc. Formulas are denoted by $\phi (\bx) , \psi (\bx)$, where $\bx$ is a sequence of variables that contains all free variables from $\phi$ (but may contain more -- $\phi$ always has finitely-many free variables of course). We write $\models \phi[\ba]$ instead of $\sea \models \phi[\ba]$, which means that $\phi$ holds in $\sea$ when $\ba$ replaces $\bx$ (we are very casual with arities). As usual, we often do not distinguish between $M$ and its universe $U M$. We also abuse notation by writing $\ba \in A$ instead of the more proper $\ba \in \fct{<\infty}{A}$. We write $\tp (\bb / A)$ (the type of $\bb$ over $A$) for the set of formulas $\phi (\bx, \ba)$, where $\ba \in A$ and $\models \phi (\bb, \ba)$. We write $\bb_1 \equiv_A \bb_2$ to mean that $\tp (\bb_1 / A) = \tp (\bb_2 / A)$. Recall that, by the compactness theorem, this holds if and only if there exists an automorphism $f$ of $\sea$ that fixes $A$ pointwise and sends $\bb_1$ to $\bb_2$. Thus this corresponds to the notion defined in Definition \ref{type-def}. We let $\Ss (A) := \{\tp (b / A) \mid b \in \sea\}$, and more generally $\Ss^{\alpha} (A) := \{\tp (\bb / A) \mid \bb \in \fct{\alpha}{\sea}\}$, $\Ss^{<\infty} (A) = \bigcup_{\alpha} \Ss^\alpha (A)$. Note that $|\Ss (A)| \le 2^{|T| + |A|}$, and that if $A \subseteq B$ then there is a natural surjection of $\Ss (B)$ into $\Ss (A)$ (so $|\Ss (A)| \le |\Ss (B)|$). For $p \in \Ss (B)$ and $A \subseteq B$, we write $p \rest A$ for the restriction of $p$ to $\Ss (A)$: the set of formulas from $p$ with parameters in $A$.

It is time to define the classes ``well-behaved'' of theories we will work with. As in Definition \ref{stable-def}, we will say that $T$ is \emph{$\lambda$-stable} (or \emph{stable in $\lambda$}) if for any $A$ of cardinality $\lambda$, $|\Ss (A)| = \lambda$ (of course, this is exactly the same as stability in $\lambda$ in the sense of \ref{stable-def}, in the AEC of models of $T$ ordered by elementary substructure). We say that $T$ is \emph{stable} if it is stable in some cardinal $\lambda \ge |T|$. The following closely related property will play a key role: 

\begin{defin}\label{op-def}
  $T$ has the \emph{order property} if there exists a sequence $\seq{\ba_i : i < \omega}$ and a formula $\phi (\bx, \by)$ such that $\models \phi[\ba_i, \ba_j]$ if and only if $i < j$.
\end{defin}

It turns out that $T$ is stable if and only if it does \emph{not} have the order property. We prove one direction now. The other will be dealt with after we have constructed stable independence.

\begin{thm}\label{stab-nop}
  If $T$ has the order property, then $T$ is unstable.
\end{thm}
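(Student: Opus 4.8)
The plan is to show that the order property forces, for \emph{every} infinite $\lambda \ge |T|$, a parameter set $A$ with $|A| \le \lambda$ but $|\Ss^{<\infty}(A)| > \lambda$; since $T$ is unstable precisely when it is stable in no cardinal $\lambda \ge |T|$, this is exactly what is needed. The engine is the usual ``stretching'' of the order property by compactness: if $\phi(\bx,\by)$ and $\seq{\ba_i : i < \omega}$ witness the order property, then for any linear order $(I,<)$, introducing a fresh copy of the variables for each $i \in I$, the assertion that these tuples are $\phi$-ordered according to $I$ is finitely satisfiable (a finite fragment mentions finitely many indices, whose induced suborder is a finite linear order and hence embeds into $\omega$, where the original $\ba_i$ realize it), and so is realized in $\sea$. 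Thus for every linear order $I$ we obtain $\seq{\ba_i : i \in I}$ with $\models \phi[\ba_i, \ba_j]$ iff $i < j$; the same compactness lets us insert a new point realizing any cut of $I$.

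Fix $\lambda \ge |T|$ and set $\mu := \min\{\kappa : 2^\kappa > \lambda\}$. Since $2^\lambda > \lambda$ we have $\aleph_0 \le \mu \le \lambda$, and minimality gives $2^{<\mu} = \sum_{\kappa<\mu} 2^\kappa \le \mu \cdot \lambda = \lambda$. I would take $I$ to be the tree $\fct{<\mu}{2}$ under its lexicographic linear order: then $|I| = 2^{<\mu} \le \lambda$, while each branch $b \in \fct{\mu}{2}$ determines a distinct Dedekind cut of $I$, and there are $2^\mu > \lambda$ of these. Stretching as above, fix $\seq{\ba_\eta : \eta \in I}$ that is $\phi$-ordered and put $A := \bigcup_{\eta \in I} \ran(\ba_\eta)$, so $|A| \le \lambda$. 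For each branch $b$, inducing the cut $(L_b, R_b)$, the partial type $\{\phi(\ba_\eta, \bx) : \eta \in L_b\} \cup \{\phi(\bx, \ba_\eta) : \eta \in R_b\}$ is finitely satisfiable (insert a point at the cut), hence realized by some $\bc_b$. If $b \ne b'$ then their cuts differ, so some $\ba_\nu$ lies in exactly one of $L_b, L_{b'}$, whence $\phi(\ba_\nu, \bx)$ separates $\tp(\bc_b/A)$ from $\tp(\bc_{b'}/A)$. Therefore $|\Ss^{n}(A)| > \lambda$, where $n$ is the length of $\bx$.

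It remains to convert ``more than $\lambda$ many $n$-types'' into a genuine failure of $\lambda$-stability, which is phrased for $1$-types. This rests on the routine fact that, for infinite $\lambda \ge |T|$, stability in $\lambda$ (which by restriction of types yields $|\Ss(B)| \le \lambda$ for all $B$ with $|B| \le \lambda$) propagates to all finite tuples: by induction on $n$, a complete $(n+1)$-type over $A$ is determined by its restriction to the first $n$ coordinates together with a $1$-type over $A$ enlarged by a realization $\ba$ of that restriction, and $|A \cup \ran(\ba)| \le \lambda$, giving $|\Ss^{n+1}(A)| \le \lambda \cdot \lambda = \lambda$. Hence under $\lambda$-stability $|\Ss^{n}(A)| \le \lambda$ for all finite $n$, so the set $A$ built above refutes stability in $\lambda$; as $\lambda \ge |T|$ was arbitrary, $T$ is stable in no cardinal, i.e.\ $T$ is unstable. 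The one genuinely nontrivial step, and the one I would take most care with, is the cardinal-arithmetic construction of a linear order of size $\le \lambda$ with more than $\lambda$ cuts, namely the tree $\fct{<\mu}{2}$ together with its $2^\mu$ branches; the compactness stretching and the $1$-type-to-$n$-type reduction are then formal.
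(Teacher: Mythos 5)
Your proof is correct and follows essentially the same route as the paper: stretch the order property by compactness along a linear order of size $\le\lambda$ with more than $\lambda$ Dedekind cuts, and note that distinct cuts yield distinct types over the size-$\le\lambda$ parameter set (the paper takes $\fct{<\sigma}{\lambda}$ for $\sigma$ minimal with $\lambda^\sigma>\lambda$ where you take $\fct{<\mu}{2}$ for $\mu$ minimal with $2^\mu>\lambda$, an inessential difference). One small caveat: in $\fct{<\mu}{2}$ two \emph{adjacent} branches ($\eta\frown\langle 0\rangle$ followed by all $1$'s versus $\eta\frown\langle 1\rangle$ followed by all $0$'s) induce the \emph{same} cut, so the branch-to-cut map is only $2$-to-$1$ rather than injective as you claim; since there are at most $2^{<\mu}\le\lambda$ such coincidences this still leaves $2^\mu>\lambda$ distinct cuts and the argument goes through, and your explicit reduction from $n$-types to $1$-types is a detail the paper's own proof leaves implicit.
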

\begin{proof}
  Fix a cardinal $\lambda \ge |T|$, and fix a linear order $I$ of cardinality $\lambda$ with strictly more than $\lambda$ Dedekind cuts (if $\lambda = \aleph_0$, the rationals are such an order; in general take $\sigma$ minimal such that $\lambda < \lambda^{\sigma}$ and the set $\fct{<\sigma}{\lambda}$ ordered lexicographically will do the trick). Fix a formula $\phi (\bx, \by)$ witnessing the order property. Using the compactness theorem, there exists a $\seq{\ba_i : i \in I}$ such that for all $i, j \in I$, $\models \phi[\ba_i, \ba_j]$ if and only if $i < j$. Since $\phi$ has finitely-many free variables we can of course assume wihout loss of generality that the $\ba_i$'s are of finite length. Now each Dedekind cut of $I$ induces a distinct type over $\bigcup_{i \in I} \ran (\ba_i)$, a set of size $\lambda$. Thus $T$ is not stable in $\lambda$.
\end{proof}

We now define what it means for two sets to be ``as independent as possible'' over a base. For simplicity, the base is required to be a model. More advanced introductions investigate what happens when the base is an arbitrary set.

\begin{defin}
  We write $\ba \nf_M \bb$, and say that \emph{$\ba$ and $\bb$ are independent over $M$}, if whenever $\bc \in M$ and $\models \phi[\ba, \bb, \bc]$, there exists $\ba' \in M$ such that $\models \phi[\ba', \bb, \bc]$.
\end{defin}

Note that if $\ba'$, $\bb'$ have the same range as $\ba$, $\ba'$ respectively, then $\ba \nf_M \bb$ if and only if $\ba' \nf_M \bb'$. Thus we will also write for example $A \nf_M B$ to mean that $\ba \nf_M \bb$ for some (equivalently any) enumerations $\ba$, $\bb$ of $A$ and $B$ respectively.

Another way of saying the same thing: if $M \subseteq B$ and $p \in \Ss^{<\infty} (B)$, we say that $p$ is \emph{free over $M$} if it is finitely satisfiable over $M$: any formula $\phi (\bx, \bb)$ in $p$ is realized in $M$. Note that $\ba \nf_M B$ if and only if $\tp (\ba / MB)$ is free over $M$. We will freely go back and forth between these two point of views (types and independence notion $\nf$). Which one is easier to work with depends on the specific concepts we are studying.

\begin{thm}[Properties of independence]\label{indep-constr}
  Assume that $T$ does not have the order property.

  \begin{enumerate}
  \item (Invariance) If $A \nf_M B$ and $f$ is an automorphism of $\sea$, then $f[A] \nf_{f[M]} f[B]$.
  \item (Normality) If $A \nf_M B$, then $A M \nf_M BM$.
  \item (Left and right monotonicity) If $A \nf_M B$ and $A_0 \subseteq A$, $B_0 \subseteq B$, then $A_0 \nf_M B_0$.
  \item (Base monotonicity) If $A \nf_M B$ and $M \preceq N \subseteq B$, then $A \nf_N B$.
  \item (Finite character) $A \nf_M B$ if and only if $A_0 \nf_M B_0$ for all finite $A_0 \subseteq A$, $B_0 \subseteq B$.
  \item (Disjointness) If $A \nf_M B$, then $A \cap B \subseteq M$.
  \item (Symmetry) $A \nf_M B$ if and only if $B \nf_M A$.
  \item (Transitivity) If $M_0 \preceq M_1 \preceq M_2$, $A \nf_{M_0} M_1$, and $A \nf_{M_1} M_2$, then $A \nf_{M_0} M_2$.
  \item (Local character) For any $A$ and $N$, there exists $M \preceq N$ of cardinality at most $|A| + |T|$ such that $A \nf_M N$.
  \item (Uniqueness) If $M \subseteq B$, $p, q \in \Ss^{<\infty} (B)$ are both free over $M$ and $p \rest M = q \rest M$, then $p = q$.
  \item (Extension) If $p \in \Ss^{<\infty} (M)$ and $M \subseteq B$ is a set, there exists $q \in \Ss^{<\infty} (B)$ that extends $p$ and is free over $B$.
  \end{enumerate}
\end{thm}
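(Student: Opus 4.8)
My plan is to separate the eleven properties into those that are purely formal consequences of the definition of finite satisfiability and those that encode the combinatorial content of the no-order-property hypothesis. The formal ones I would dispatch immediately. \emph{Invariance} is obtained by applying the automorphism $f$ to a witness $\ba'\in M$ of a formula. \emph{Left/right monotonicity} and \emph{finite character} hold because finite satisfiability is stated one formula at a time and each formula mentions only finitely many coordinates of $\ba$ and $\bb$. \emph{Base monotonicity} is immediate once one notes $MB=NB=B$ and that satisfiability in $M$ implies satisfiability in the larger set $N$. \emph{Normality} follows since elements of $M$ realize their own type over $M$, and \emph{disjointness} follows by feeding the formula $\bx=\bc$ (for $\bc\in A\cap B$) into the definition. \emph{Extension} is the one formal property requiring a construction rather than a remark: given $p\in\Ss^{<\infty}(M)$, every finite subset of $p$ is realized in $M$ (since $M\preceq\sea$), so the sets $\{\bar m\in M:\models\phi[\bar m]\}$ for $\phi\in p$ have the finite intersection property; extending them to an ultrafilter $\mathcal U$ on $M$-tuples and taking the average type $\{\psi(\bx,\bb):\{\bar m:\models\psi[\bar m,\bb]\}\in\mathcal U\}$ over $B$ yields a complete type extending $p$ and finitely satisfiable in $M$. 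None of this uses the no-order-property hypothesis.

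The heart of the theorem is a single combinatorial lemma, from which the remaining properties flow: \emph{assuming $T$ has no order property, every type finitely satisfiable in a model $M$ is definable over $M$} — that is, for each $\phi(\bx,\by)$ there is a formula $d_\phi(\by)$ with parameters in $M$ such that $\models\phi[\ba,\bb]\iff\models d_\phi[\bb]$ for the realizations in question. I would prove this by contradiction: if the $\phi$-trace of the type were not definable, then by compactness one could extract, using Ramsey's theorem, an infinite sequence on which $\phi$ alternates, and an alternating indiscernible pattern is exactly an order configuration for $\phi$, contradicting the hypothesis. This definability lemma is the step I expect to be the main obstacle, since it is where no order property is actually consumed; everything after it is comparatively soft.

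With definability in hand I would derive the substantial properties. \emph{Uniqueness}: if $p,q\in\Ss^{<\infty}(B)$ are both free over $M$ with $p\rest M=q\rest M$, their $\phi$-definitions agree on all of $M$, hence globally (being $M$-formulas agreeing on every $M$-tuple, with $M\preceq\sea$), forcing $\phi(\bx,\bb)\in p\iff\phi(\bx,\bb)\in q$ for every $\bb\in B$. \emph{Symmetry}: given $\ba\nf_M\bb$ and a formula $\psi(\by,\ba)$ over $M$ true of $\bb$, read it as a formula in $\ba$ with parameter $\bb$; its $M$-definition $d(\by)$ then satisfies $\models d[\bb]$, so $\exists\by\,d(\by)$ holds and is realized by some $\bb'\in M$, which yields $\models\psi[\bb',\ba]$ — precisely finite satisfiability of $\tp(\bb/M\ba)$ in $M$. \emph{Local character}: definability bounds $|\Ss(M)|$ by the number of possible definitions, so $T$ is stable; a standard chain-and-counting argument (build a continuous increasing chain of small submodels of $N$ at which freeness fails and note, by regularity of $|T|^+$, that the $|T|$-many witnessing formulas produce either an order configuration or too many types) then gives $M\preceq N$ of size $\le|A|+|T|$ with $A\nf_M N$. \emph{Transitivity} follows formally from uniqueness, extension and base monotonicity: extend $\tp(A/M_0)$ to a type $r$ over $M_2$ free over $M_0$; base monotonicity makes $r$ free over $M_1$, uniqueness identifies $r\rest M_1$ with $\tp(A/M_1)$ and then $r$ with $\tp(A/M_2)$, so $\tp(A/M_2)$ inherits freeness over $M_0$ from $r$.

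In summary, I would organize the proof as: formal closure properties first, then the definability lemma as the sole genuinely hard input, then uniqueness, symmetry, local character and transitivity as consequences. One caveat worth flagging to the reader is that the order of symmetry and local character may be interchanged, and some authors instead take symmetry as the primitive hard step, proved directly by a staircase construction producing an order configuration from any asymmetry; either route uses the no-order-property hypothesis exactly once, in establishing the definability (equivalently, the symmetry) of finitely satisfiable types.
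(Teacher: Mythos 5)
Your proposal is correct in its overall architecture but takes a genuinely different route from the paper's. The paper never isolates a definability lemma: it proves symmetry and uniqueness \emph{directly}, each by a staircase construction that converts a putative failure into two interleaved sequences $\seq{\ba_i}$, $\seq{\bb_i}$ of tuples of $M$ with $\models \phi[\ba_i,\bb_j]$ iff $i \le j$ --- an order configuration; transitivity is the ``left'' version read off from the definition, flipped by symmetry; and local character comes from an elementary-pair trick: choose $(M,M') \preceq (N,\sea)$ with $A \subseteq M'$ and $|M'| \le |A|+|T|$, observe that $N \nf_M M'$ holds by pure definition-chasing in the pair, and apply symmetry to get $A \nf_M N$. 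Extension is done by directly describing the extending type rather than by your ultrafilter average, but the two are interchangeable and neither uses the order property. Your organization --- definability of finitely satisfiable types as the single hard input, with uniqueness, symmetry and transitivity as soft corollaries --- is the classical textbook one and is viable; it buys a reusable tool (the defining schema) at the price of concentrating all the difficulty in one lemma. Your derivations of uniqueness (two $M$-definitions agreeing on all $M$-tuples agree everywhere since $M \preceq \sea$), of symmetry, and of transitivity (extension, base monotonicity, and two applications of uniqueness) are all correct.

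Two places need real work before this is a proof. First, the definability lemma itself: ``non-definable implies an alternating indiscernible pattern implies order property'' compresses the genuinely hard step, namely that bounded alternation of $\phi(\bx,\bb_i)$ along indiscernible sequences yields a majority-vote definition; written out, that argument is at least as long as the paper's two staircases combined, so the economy is partly illusory. Second, and more seriously, your local character sketch does not close as stated. The natural chain argument hands you, at each stage $j < |T|^+$, a formula $\phi(\bx,\bb_j)$ with $\bb_j \in M_{j+1}$ realized by $A$ but by no tuple of $M_j$; to turn this into an order configuration you would need, for each $j$, a tuple \emph{of $M_j$} realizing $\phi(\bx,\bb_i)$ simultaneously for all $i<j$, and an arbitrary small elementary submodel only realizes that type finitely. (This is why the analogous argument works for splitting but not, off the shelf, for finite satisfiability.) Likewise, ``definable over a small $M \preceq N$ implies finitely satisfiable in $M$'' is not free --- in the standard development it is itself a consequence of symmetry. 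The cleanest fix is to adopt the paper's pair-of-models argument for local character, which needs only the definition plus the symmetry you will already have established.
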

\begin{proof}
  Invariance, normality, the monotonicity properties, and finite character are immediate from the definition. To see disjointness, it is enough to see that if $a \nf_M a$ then $a \in M$. This follows from the definition applied with the formula $x = y$. Let us prove the other properties:

  \begin{itemize}
  \item \underline{Symmetry}: Suppose not. Fix $\ba$, $\bb$, and $M$ so that $\ba \nf_M \bb$ but $\bb \nnf_M \ba$. Without loss of generality, $\ba$ and $\bb$ are finite and we can pick a formula $\phi (\bx, \by)$ witnessing $\bb \nnf_M \ba$ that has all parameters from $M$ already incorporated in $\ba$: $\models \phi[\ba, \bb]$ but for all $\bb' \in M$, $\models \neg \phi[\ba, \bb']$ (we have swapped the role of $\bx$ and $\by$ for convenience in the proof that follows).

    We inductively build two sequences  $\seq{\ba_i : i < \omega}, \seq{\bb_i : i < \omega}$ of tuples in $M$ such that $\models \phi[\ba_i, \bb_j]$ if and only if $i \le j$, and $\models \phi[\ba_i, \bb]$ for all $i < \omega$.

    This is enough: set $\psi (\bx_1, \by_1, \bx_2, \by_2)$ to be $\phi (\bx_1, \by_2) \land \bx_1\by_1 \neq \bx_2\by_2$. Then $\psi$ and the sequence $\seq{\ba_i\bb_i : i < \omega}$ witness the order property, contradiction.

    This is possible: Fix $j < \omega$ and assume we are given $\ba_i$ and $\bb_i$ for all $i < j$. By the induction hypothesis, we know that:

    $$
    \models \phi (\ba, \bb) \land \bigwedge_{i < j} \phi (\ba_i, \bb) \land \bigwedge_{i < j} \neg \phi (\ba, \bb_i)
    $$

    (the last part is from the hypothesis that $\models \neg \phi (\ba, \bb')$ for any $\bb' \in M$). Because $\ba \nf_M \bb$, there exists $\ba' \in M$ such that:

    $$
    \models \phi (\ba', \bb) \land \bigwedge_{i < j} \phi (\ba_i, \bb) \land \bigwedge_{i < j} \neg \phi (\ba', \bb_i)
    $$

    Since $M \preceq \sea$, there exists $\bb' \in M$ such that:

    $$
    \models \phi (\ba', \bb') \land \bigwedge_{i < j} \phi (\ba_i, \bb') \land \bigwedge_{i < j} \neg \phi (\ba', \bb_i)
    $$

    Set $\ba_j := \ba'$, $\bb_j := \bb'$.
  \item \underline{Transitivity}: Using the definition of independence, it is easy to check the ``left'' version of transitivity: if $M_0 \preceq M_1 \preceq M_2$, $M_2 \nf_{M_1} A$, and $M_1 \nf_{M_0} A$, then $M_2 \nf_{M_0} A$. The ``right'' version of transitivity then follows from symmetry.
  \item \underline{Local character}: This is a downward Löwenheim-Skolem closure argument, that we could do explicitly. Instead, fix $A$ and $N$, and pick a pair of models $M \preceq M'$ such that $A \subseteq M'$, $|U M'| \le |A| + |T|$, and $(M, M') \preceq (N, \sea)$ (in the vocabulary with an additional predicate for $M$). From the definition of independence, it follows that $N \nf_M M'$, so by monotonicity $N \nf_M A$, hence by symmetry $A \nf_M N$.
  \item \underline{Uniqueness}: This is similar to symmetry: let $\bb$ be an enumeration of $B - M$. Suppose $p = \tp (\ba / M \bb)$, $q = \tp (\ba' / M \bb)$ both are free over $M$ (so $\ba \nf_M \bb$, $\ba' \nf_M \bb$), and $p \rest M = q \rest M$. We have to see that $p = q$. Without loss of generality, $\bb$, $\ba$, and $\ba'$ are finite. Suppose $p \neq q$, and let $\phi (\bx, \by)$ be such that $\models \phi[\ba, \bb] \land \neg \phi[\ba', \bb]$.

    Define sequences $\seq{\ba_i : i < \omega}$, $\seq{\bb_i : i < \omega}$ in $M$ such that for all $i, j < \omega$:

  \begin{enumerate}
    \item $\models \phi[\ba_i, \bb]$.
    \item $\models \phi[\ba_i, \bb_j]$ if and only if $i \le j$.
    \item\label{cond3} $\models \neg \phi[\ba, \bb_i]$.
  \end{enumerate}

  This is enough: Then $\psi (\bx_1, \by_1, \bx_2, \by_2) := \phi (\bx_1, \by_2) \land \bx_1 \by_1 \neq \bx_2 \by_2$ together with the sequence $\seq{\ba_i\bb_i : i < \omega}$ witness the order property.

  This is possible: Suppose that $\ba_i, \bb_i$ have been defined for all $i < j$. By the induction hypothesis, we have:

  $$
  \models \bigwedge_{i < j} \phi [\ba_i, \bb] \land \bigwedge_{i < j} \neg \phi [\ba, \bb_i] \land \phi [\ba, \bb]
  $$

  Since $\ba \nf_M \bb$, there is $\ba'' \in M$ such that: 

  $$
  \models \bigwedge_{i < j} \phi [\ba_i, \bb] \land \bigwedge_{i < j} \neg \phi [\ba'', \bb_i] \land \phi [\ba'', \bb]
  $$

  We also know that $\models \neg \phi[\ba', \bb]$. Combining this with the above and the fact that $\ba' \nf_M \bb$, hence by symmetry $\bb \nf_M \ba'$, we obtain a $\bb'' \in M$ such that:

  $$
  \models \bigwedge_{i < j} \phi [\ba_i, \bb''] \land \bigwedge_{i < j} \neg \phi [\ba'', \bb_i] \land \phi [\ba'', \bb''] \land \neg \phi[\ba', \bb'']
  $$

  Let $\ba_j := \ba''$, $\bb_j := \bb''$. It is easy to check that this works (for condition (\ref{cond3}), we use that $p \rest M = q \rest M$ so $\models \neg \phi[\ba', \bb'']$ implies $\models \neg \phi[\ba, \bb'']$).
\item \underline{Extension}: This is the only place where we use the compactness theorem. Consider the set $q$ of formulas $\phi (\bx, \bb)$ where $\bb \in B$, $p \cup \{\phi (\bx, \bb)\}$ is consistent, and there exists $\ba' \in M$ such that $\models \phi [\ba', \bb]$. Note that $q$ is closed under conjunctions, hence by construction and compactness is consistent. It remains to check that $q$ is complete. Indeed, assume that $\neg \phi (\bx, \bb) \notin q$. There are two cases. If $p \cup \{\neg \phi (\bx, \bb)\}$ is inconsistent, then $p \models \phi (\bx, \bb)$, so there is $\psi (\bx, \bc) \in p$ such that $\psi (\bx, \bc) \models \phi (\bx, \bb)$, with $\bc \in M$. We know that $\sea \models \exists \bx \psi (\bx, \bc)$, so $M \models \exists \bx \psi (\bx, \bc)$, hence $\phi (\bx, \bb)$ is also realized in $M$. In the second case, $p \cup \{\neg \phi (\bx, \bb)\}$ is consistent but $\neg \phi (\bx, \bb)$ is not realized in $M$. In particular, $\models \phi[\ba', \bb]$ for all $\ba' \in M$. As before, for any $\psi (\bx, \bc) \in p$, there exists $\ba' \in M$ so that $M \models \psi [\ba', \bc]$, so $M \models \psi[\ba', \bc] \land \phi[\ba', \bb]$, hence $p \cup \{\phi (\bx, \bb)\}$ is finitely consistent, hence consistent. This shows that $\phi (\bx, \bb) \in q$, as desired.
  \end{itemize}
\end{proof}

It is straightforward to check that $\nf$ as defined here yields a stable independence notion (in the sense of Definition \ref{indep-def}). The existence of an independence notion as in the theorem implies stability, thus we get:

\begin{thm}\label{nop-stab}
  If $T$ does not have the order property (or just the conclusion of Theorem \ref{indep-constr} holds), then $T$ is stable in every infinite cardinal $\lambda$ with $\lambda = \lambda^{|T|}$. In particular, $T$ is stable if and only if $T$ does not have the order property.
\end{thm}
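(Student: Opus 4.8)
The plan is to prove the quantitative statement first — $\lambda$-stability for every $\lambda=\lambda^{|T|}$ — by a type-counting argument that uses only the \emph{local character} and \emph{uniqueness} clauses of Theorem \ref{indep-constr}. These are the sole consequences of ``no order property'' that I will invoke, which is exactly why the parenthetical weakening (``or just the conclusion of Theorem \ref{indep-constr} holds'') suffices. Fix a set $A$ with $|A|=\lambda$. First I would pass to an elementary submodel $N\preceq\sea$ with $A\subseteq N$ and $|N|=\lambda$, obtained by downward Löwenheim--Skolem (legitimate since $\lambda\geq|T|$). As the text records, restriction induces a surjection $\Ss(N)\to\Ss(A)$, so $|\Ss(A)|\le|\Ss(N)|$ and it is enough to bound $|\Ss(N)|$.

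The heart of the argument is an injective encoding of the types in $\Ss(N)$. Given $p\in\Ss(N)$, pick a realization $a\models p$ and apply local character to the singleton $\{a\}$ and the model $N$: this yields a submodel $M_p\preceq N$ with $|M_p|\le|T|$ and $a\nf_{M_p}N$, i.e.\ $p$ is free over $M_p$. I claim the map $p\mapsto(M_p,\,p\rest M_p)$ is injective: if $M_p=M_q=M$ and $p\rest M=q\rest M$, then $p$ and $q$ are types over $N$ both free over $M$ and agreeing on $M$, so the uniqueness clause forces $p=q$. Hence $|\Ss(N)|$ is at most the number of such pairs. The number of admissible bases is bounded by the number of subsets of $N$ of size $\le|T|$, namely $\lambda^{|T|}=\lambda$; and for each fixed $M$ with $|M|\le|T|$ we have $|\Ss(M)|\le 2^{|T|+|M|}=2^{|T|}\le\lambda^{|T|}=\lambda$. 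Multiplying gives $|\Ss(N)|\le\lambda\cdot\lambda=\lambda$, hence $|\Ss(A)|\le\lambda$. The reverse inequality is immediate, since the types $\tp(a/A)$ for $a\in A$ are pairwise distinct (each $\tp(a/A)$ contains the formula $x=a$). Thus $|\Ss(A)|=\lambda$ and $T$ is $\lambda$-stable.

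For the ``in particular'' clause I would simply combine the two directions. If $T$ has the order property then $T$ is unstable by Theorem \ref{stab-nop}, so by contraposition stability implies failure of the order property. Conversely, if $T$ lacks the order property, then Theorem \ref{indep-constr} applies and the first part yields stability at any fixed point of $\lambda\mapsto\lambda^{|T|}$; such a cardinal exists, e.g.\ $\lambda=2^{|T|}$, for which $\lambda^{|T|}=2^{|T|\cdot|T|}=2^{|T|}=\lambda\ge|T|$, so $T$ is stable.

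I expect the main obstacle to be the bookkeeping of the cardinal arithmetic interlocking with the correct application of local character and uniqueness: the precise role of the hypothesis $\lambda=\lambda^{|T|}$ is that it simultaneously delivers $\lambda^{|T|}=\lambda$ (to count the bases $M$) and $2^{|T|}\le\lambda$ (to count the types over each base), and both are needed for the product bound. Once the injective encoding $p\mapsto(M_p,p\rest M_p)$ is set up, the rest is a routine assembly of the already-established independence calculus.
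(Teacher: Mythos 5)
Your proof is correct and follows essentially the same route as the paper: reduce to counting types over a model, use local character to attach to each type a base $M\preceq N$ of size $\le|T|$ over which it is free, and use uniqueness to see that a type is determined by that base together with its restriction to it, with $\lambda^{|T|}=\lambda$ and $2^{|T|}\le\lambda$ closing the count. The paper packages this as a double application of the pigeonhole principle to a putative sequence of $\lambda^+$ types rather than as your injective encoding $p\mapsto(M_p,p\rest M_p)$, but the two are the same argument.
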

\begin{proof}
  The ``in particular'' part will follow from Theorem \ref{stab-nop}. Now assume that $T$ does not have the order property, or just that the conclusion of Theorem \ref{indep-constr} holds. Fix an infinite cardinal $\lambda$ such that $\lambda = \lambda^{|T|}$. Since any set of cardinality $\lambda$ is contained in a model of cardinality $\lambda$, it is enough to count types over models. Fix $M$ of cardinality $\lambda$ and a sequence $\seq{p_i : i < \lambda^+}$ of types over $M$. We will show that there exists $i < j$ so that $p_i = p_j$. First, for each $i < \lambda^+$, local character tells us there exists $M_i \preceq M$ of cardinality at most $|T|$ such that $p$ is free over $M_i$. Note that $|\{M_i : i < \lambda^+\}| \le \lambda^{|T|} = \lambda$, so by the pigeonhole principle there exists $S \subseteq \lambda^+$ of cardinality $\lambda^+$ and $i_0 < \lambda^+$ such that for any $i \in S$, $p_i$ is free over $M_{i_0}$. Now $|\Ss (M_{i_0})| \le 2^{|T|} \le \lambda^{|T|} = \lambda$, so by the pigeonhole principle again, there exists $i < j$ in $S$ such that $p_i \rest M_{i_0} = p_j \rest M_{i_0}$. Since both $p_i$ and $p_j$ are free over $M_{i_0}$, uniqueness implies that $p_i = p_j$.
\end{proof}

From now on, we will freely use the equivalence between stability and no order property, as well as the conclusion of Theorem \ref{indep-constr} (but not the exact definition of independence).

\begin{lem}\label{split-lem}
  Assume that $T$ is stable. If $\ba \nf_M B$ and $\bb_1, \bb_2 \in B$ are such that $\bb_1 \equiv_M \bb_2$, then $\bb_1 \ba \equiv_M \bb_2 \ba$.
\end{lem}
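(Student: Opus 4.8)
The plan is to reduce the statement to the uniqueness property from Theorem \ref{indep-constr}. First observe that $\bb_1 \ba \equiv_M \bb_2 \ba$ is literally the same assertion as $\tp(\bb_1 / M\ba) = \tp(\bb_2 / M\ba)$: a formula $\phi(\by, \bz)$ over $M$ satisfied by $\bb_\ell \ba$ is exactly a formula $\phi(\by, \ba)$ over $M\ba$ satisfied by $\bb_\ell$ (and conversely, absorbing $\ba$ into the parameters). So it suffices to show these two types, which live in $\Ss^{<\infty}(M\ba)$, coincide. Since $T$ is stable, it does not have the order property (Theorem \ref{nop-stab}), so the full list of properties in Theorem \ref{indep-constr} is available; I intend to invoke only symmetry, right monotonicity, and uniqueness.

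To set up uniqueness I need two things: that $\tp(\bb_1/M\ba)$ and $\tp(\bb_2/M\ba)$ are both free over $M$, and that their restrictions to $M$ agree. For freeness, I start from the hypothesis $\ba \nf_M B$. Since $\ran(\bb_\ell) \subseteq B$, right monotonicity gives $\ba \nf_M \bb_\ell$ for $\ell = 1, 2$, and then symmetry gives $\bb_\ell \nf_M \ba$. By the reformulation recorded just after the definition of $\nf$ (namely $\bb \nf_M A$ iff $\tp(\bb/MA)$ is free over $M$), this says precisely that $\tp(\bb_\ell/M\ba)$ is free over $M$. For the restrictions, note $\tp(\bb_\ell/M\ba)\rest M = \tp(\bb_\ell/M)$, and the hypothesis $\bb_1 \equiv_M \bb_2$ means exactly $\tp(\bb_1/M) = \tp(\bb_2/M)$, so the two restrictions to $M$ coincide.

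With both types free over $M$ and agreeing over $M$, the uniqueness property of Theorem \ref{indep-constr} yields $\tp(\bb_1/M\ba) = \tp(\bb_2/M\ba)$, which as noted is the desired conclusion $\bb_1 \ba \equiv_M \bb_2 \ba$. There is no real obstacle here: the genuine work (stability forcing symmetry, local character, and uniqueness) has already been packaged into Theorem \ref{indep-constr}. The only points requiring a little care are the bookkeeping that $\bb_1, \bb_2$ may be infinite tuples (hence one works in $\Ss^{<\infty}$ rather than $\Ss$, which is exactly the generality in which uniqueness was stated) and the twofold passage $\ba \nf_M B \leadsto \ba \nf_M \bb_\ell \leadsto \bb_\ell \nf_M \ba$, which legitimately uses both monotonicity and symmetry before uniqueness can be applied.
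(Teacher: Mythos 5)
Your proof is correct, but it takes a different route from the paper's. You flip the independence via symmetry ($\ba \nf_M B \leadsto \ba \nf_M \bb_\ell \leadsto \bb_\ell \nf_M \ba$) and then apply uniqueness directly to the two types $\tp(\bb_1/M\ba)$ and $\tp(\bb_2/M\ba)$, both free over $M$ and agreeing on $M$. The paper instead keeps $\ba$ on the left throughout: it fixes an automorphism $f$ of $\sea$ over $M$ sending $\bb_1$ to $\bb_2$, sets $\ba' := f(\ba)$, and applies uniqueness to $\tp(\ba'/M\bb_2)$ and $\tp(\ba/M\bb_2)$ (free over $M$ by invariance and monotonicity respectively, and agreeing on $M$ since $f$ fixes $M$), then chains the two type equalities $\bb_1\ba \equiv_M \bb_2\ba'$ and $\bb_2\ba' \equiv_M \bb_2\ba$. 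The trade-off: your argument is shorter and avoids the monster-model automorphism entirely, but it leans on symmetry, which is the most delicate clause of Theorem \ref{indep-constr}; the paper's version needs only invariance, monotonicity, and uniqueness, so it would survive in a setting where symmetry is unavailable or not yet established. Since both symmetry and uniqueness are already packaged in Theorem \ref{indep-constr}, there is no circularity in your version, and your preliminary remark that $\bb_1\ba \equiv_M \bb_2\ba$ is the same assertion as $\tp(\bb_1/M\ba) = \tp(\bb_2/M\ba)$ (absorbing $\ba$ into the parameters) is the correct and needed bookkeeping step.
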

\begin{proof}
  Fix an automorphism $f$ of $\sea$ fixing $M$ and sending $\bb_1$ to $\bb_2$. Let $\ba' := f (\ba)$. Then $\bb_1 \ba \equiv_M \bb_2 \ba'$. On the other hand, $\ba \nf_M \bb_1$, so $\ba' \nf_M \bb_2$. By monotonicity, also $\ba \nf_M \bb_2$, so by uniqueness, $\bb_2 \ba' \equiv_{M} \bb_2 \ba$. Combining the two equality of types shows that $\bb_1 \ba \equiv_M \bb_2 \ba$.
\end{proof}

As a final application, we will show how to extract indiscernibles in stable theories.

\begin{thm}\label{indisc-extraction}
  Assume that $\lambda > |T|$ and $T$ is stable in $\lambda$. If $\seq{a_i : i < \lambda^+}$ is a sequence, there exists $I \subseteq \lambda^+$ of cardinality $\lambda^+$ such that $\seq{a_i : i \in I}$ is indiscernible.
\end{thm}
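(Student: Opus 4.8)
The plan is to extract the indiscernible subsequence by first thinning $\seq{a_i : i < \lambda^+}$ down to a \emph{nonforking sequence of constant type over a fixed small model}, and then showing that any such sequence is automatically indiscernible, using the uniqueness and stationarity properties of $\nf$ collected in Theorem \ref{indep-constr} and Lemma \ref{split-lem}.

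First I would build an increasing continuous chain $\seq{M_j : j < \lambda^+}$ of elementary submodels of $\sea$, each of size $\lambda$, with $a_j \in M_{j+1}$ (so $a_{j'} \in M_j$ whenever $j' < j$). For each $j$, local character gives a base $N_j \preceq M_j$ with $a_j \nf_{N_j} M_j$ and $|N_j| \le |T| < \lambda$ --- here the hypothesis $\lambda > |T|$ enters decisively. Restricting to the stationary set $S = \{j < \lambda^+ : \cf{j} = |T|^+\}$, each $N_j$ has size $\le |T| < \cf{j}$ and is therefore contained in some $M_{e(j)}$ with $e(j) < j$; base monotonicity then yields $a_j \nf_{M_{e(j)}} M_j$. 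Applying Fodor's lemma to the regressive map $e$ produces a stationary $S' \subseteq S$ and a single index $i^\ast$ with $e(j) = i^\ast$ for all $j \in S'$. Setting $M := M_{i^\ast}$, monotonicity makes $\seq{a_j : j \in S'}$ a nonforking sequence over $M$, i.e.\ $a_j \nf_M \{a_{j'} : j' \in S',\ j' < j\}$. Finally, since $|\Ss(M)| = \lambda$ by stability in $\lambda$ while $|S'| = \lambda^+$, a pigeonhole argument extracts $I \subseteq S'$ of size $\lambda^+$ on which $\tp(a_j / M)$ is constant, and monotonicity shows this subsequence is still nonforking over $M$.

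It then remains to prove that a nonforking sequence $\seq{b_k : k < \lambda^+}$ over $M$ (the increasing relabeling of $\seq{a_i : i \in I}$) with constant type over $M$ is indiscernible over $M$, hence indiscernible. I would argue by induction on $n$ that all increasing $(n+1)$-tuples from $I$ have the same type over $M$, the case $n=0$ being the constancy of the type. For the step, given increasing tuples $(k_0,\dots,k_n)$ and $(l_0,\dots,l_n)$, I would pick $m \in I$ above all indices and use \emph{uniqueness} (two $\nf_M$-extensions of the same $M$-type agree) to replace the last coordinates $b_{k_n},b_{l_n}$ by the common $b_m$; this is legitimate because $b_{k_n},b_{l_n},b_m$ have the same type over $M$ and are each independent over $M$ from the respective initial segments. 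This reduces the goal to $b_{k_0}\cdots b_{k_{n-1}} b_m \equiv_M b_{l_0}\cdots b_{l_{n-1}} b_m$. Since $b_m \nf_M \{b_l : l < m\}$ contains both initial segments, and $b_{k_0}\cdots b_{k_{n-1}} \equiv_M b_{l_0}\cdots b_{l_{n-1}}$ by the induction hypothesis, Lemma \ref{split-lem} upgrades this to the required equality of type over $M b_m$.

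The main obstacle is the first step: securing a \emph{single} base model $M$ over which cofinally many of the $a_j$ are simultaneously free. What makes this succeed --- and where $\lambda > |T|$ is used --- is that local character delivers bases of size at most $|T|$, which are bounded in any chain of cofinality $|T|^+$, so that Fodor's lemma can collapse them to one index. Once the nonforking, constant-type subsequence is obtained, the indiscernibility argument is a clean induction in which uniqueness lets me freely exchange the top element and Lemma \ref{split-lem} transports lower-dimensional indiscernibility past an independent element.
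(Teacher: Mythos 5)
Your proposal is correct and follows essentially the same route as the paper: build the continuous chain of models, use local character plus Fodor plus a pigeonhole on $\Ss(M_{i^\ast})$ to extract a constant-type subsequence that is nonforking over a single small base, then prove indiscernibility by induction on tuple length using uniqueness together with Lemma \ref{split-lem}. The only (harmless) difference is in the inductive step, where you swap both top elements for an auxiliary $b_m$ above all indices, whereas the paper compares $a_{i_n}$ and $a_{i_n'}$ directly via their common freeness over $M_{i_n'}$.
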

\begin{proof}
  First, build $\seq{M_i : i \le \lambda^+}$ an increasing continuous sequence of models of size $\lambda$ such that $a_i \in M_{i + 1}$ for all $i < \lambda^+$. Let $S := \{i < \lambda^+ \mid \cf{i} \ge |T|^+\}$. This is a stationary set. By local character, for each $i \in S$ there exists $j_i < i$ such that $a_i \nf_{M_{j_i}} M_i$. By Fodor's lemma, there exists $S_0 \subseteq S$ stationary and $j^\ast < \lambda^+$ such that $j_i = j^\ast$ for all $i \in S_0$. Let's prune a bit more: by stability $|\Ss (M_{j^\ast})| = \lambda$. Thus by the pigeonhole principle we can find an unbounded $I \subseteq S$ such that for $i < i'$ in $I$, $a_{i} \equiv_{M_{j^\ast}} a_{i'}$. We prove that $\seq{a_i : i \in I}$ is indiscernible over $M := M_{j^\ast}$.

  For this, we show by induction on $n < \omega$ that for any $i_0 < \ldots < i_{n}$, $i_0' < \ldots < i_{n}'$ in $I$, $a_{i_0} \ldots a_{i_n} \equiv_M a_{i_0'} \ldots a_{i_n'}$. The base case has just been observed. Assume now that $n = m + 1$, set $\bb_1 := a_{i_0} \ldots a_{i_m}$, $\bb_2 := a_{i_0'} \ldots a_{i_m'}$, and assume we know $\bb_1 \equiv_M \bb_2$. Without loss of generality, $i_n' \le i_n$. By monotonicity, $a_{i_n} \nf_M M_{i_n'}$, so by uniqueness $a_{i_n} \equiv_{M_{i_n'}} a_{i_n'}$. In particular, $\bb_2 a_{i_n} \equiv_M \bb_2 a_{i_n'}$. By Lemma \ref{split-lem}, we also have that $\bb_1 a_{i_n} \equiv_M \bb_2 a_{i_n}$. Combining these two type equalities gives that $\bb_1 a_{i_n} \equiv_M \bb_2 a_{i_n'}$, as desired.
\end{proof}

\bibliographystyle{amsalpha}
\bibliography{bfo-accessible}

\end{document}